\documentclass[11pt]{amsart}
%\pdfoutput=1
% !TEX encoding = UTF-8 Unicode
\usepackage[greek,english]{babel}
\languageattribute{greek}{polutoniko}
\usepackage[utf8x]{inputenc}
\usepackage[T1]{fontenc}
\usepackage{a4wide}
\usepackage{mathpazo}
\usepackage[dvipsnames]{xcolor}
\usepackage[colorlinks=true,allcolors=MidnightBlue,linktoc=page]{hyperref}

\usepackage{mathrsfs}
\usepackage{amssymb, amsmath}
\usepackage{enumitem}
\usepackage{graphicx}
\usepackage{pgf,tikz}
\usepackage{mathrsfs}
\usetikzlibrary{arrows}
\usepackage{xspace}
\usepackage[all]{xy}
\usepackage{bm}
\usepackage{scalerel,stackengine}
\stackMath
\newcommand\reallywidehat[1]{%
\savestack{\tmpbox}{\stretchto{%
  \scaleto{%
    \scalerel*[\widthof{\ensuremath{#1}}]{\kern-.6pt\bigwedge\kern-.6pt}%
    {\rule[-\textheight/2]{1ex}{\textheight}}%WIDTH-LIMITED BIG WEDGE
  }{\textheight}% 
}{0.5ex}}%
\stackon[1pt]{#1}{\tmpbox}%
}

\newcommand*{\mydprime}{^{\prime\prime}\mkern-1.2mu}

\newcommand{\A}{\mathbf A}
\newcommand{\B}{\mathbf B}
\newcommand{\C}{\mathbf C}

\newcommand{\CC}{\mathbf{C}}
\newcommand{\NN}{\mathbf{N}}

\newcommand{\ZZ}{\mathbf{Z}}
\newcommand{\RR}{\mathbf{R}}

\newcommand{\QQ}{\mathbf{Q}}
\newcommand{\KK}{\mathbf{K}}

\newcommand{\sU}{\mathscr{U}}

\newcommand{\sinf}{\mathcal{S}_\infty}

\newcommand{\se}{\subseteq}

\newcommand{\action}{\curvearrowright}

\newcommand{\da}[1]{{#1\!\downarrow}}

\DeclareMathOperator{\supp}{Supp}

\DeclareMathOperator{\Aut}{Aut}
\DeclareMathOperator{\Int}{Int}
\DeclareMathOperator{\Homeo}{Homeo}

\DeclareMathOperator{\Ends}{Ends}

\DeclareMathOperator{\Sym}{Sym}

\DeclareMathOperator{\Br}{Br}

\DeclareMathOperator{\Fix}{Fix}
\DeclareMathOperator{\Stab}{Stab}

\DeclareMathOperator{\CLO}{CCLO}
\DeclareMathOperator{\LO}{LO}

\theoremstyle{plain}
\newtheorem{thm}{Theorem}[section]
\newtheorem*{thm*}{Theorem}
\newtheorem{lem}[thm]{Lemma}
\newtheorem{prop}[thm]{Proposition}
\newtheorem{cor}[thm]{Corollary}
\theoremstyle{definition}
\newtheorem*{defn*}{Definition}
\newtheorem{defn}[thm]{Definition}
\newtheorem{example}[thm]{Example}
\newtheorem*{example*}{Example}
\newtheorem{rem}[thm]{Remark}

\newtheorem*{rem*}{Remark}

\begin{document}
\title{Topological properties of Wa\.zewski dendrite groups}
\author[B. Duchesne]{Bruno Duchesne}
\address{Institut Élie Cartan, UMR 7502, Université de Lorraine et CNRS, Nancy, France.}
%\email{bruno.duchesne@univ-lorraine.fr}
%\thanks{B.D. is supported in part by French projects ANR-14-CE25-0004 GAMME and ANR-16-CE40-0022-01 AGIRA}
%\author[N. Monod]{Nicolas Monod}
%\address{EPFL, 1015 Lausanne, Switzerland.}
%%\email{nicolas.monod@epfl.ch}
%%\thanks{Supported in part by the ERC}
\date{March 2019}
\begin{abstract}
Homeomorphism groups of generalized Wa\.zewski dendrites act on the infinite countable set of branch points of the dendrite and thus have a nice Polish topology. In this paper, we study them in the light of this Polish topology. The group of the universal Wa\.zewski dendrite $D_\infty$ is more characteristic than the others because it is the unique one with a dense conjugacy class.

For this group $G_\infty$,  we explore and prove some of its topological properties like the existence of a comeager conjugacy class, the Steinhaus property, automatic continuity, the small index subgroup property and characterization of the topology. 

Moreover, we describe the universal minimal flow of $G_\infty$ and of point-stabilizers. This allows us to prove that point-stabilizers in $G_\infty$ are amenable and to give a simple and completely explicit description of the universal Furstenberg boundary of $G_\infty$.
\end{abstract}
\thanks{The author is partially supported  by projects ANR-14-CE25-0004 GAMME and ANR-16-CE40-0022-01 AGIRA}
\thanks{This paper greatly benefited  from the participation to the conference \textit{Geometry and Structure of Polish Groups} held in Casa Mathemática Oaxaca, Mexico in June 2017. The author thanks the organizers for the invitation and people he had pleasure to speak with and who help to improve this paper: E. Glasner, A. Kwiatkowska, F. Le Maître, J. Melleray, L. Nguyen Van Thé, C. Rosendal,  T. Tsankov and P. Wesolek.}
\thanks{This paper would not have existed without previous works with N. Monod. It is pleasure to thank him for insightful discussions.}
\maketitle
%\begin{flushright}
% \textgreek{((ἀρχὴ γὰρ λέγεται μὲν ἥμισυ παντὸς ἐν ταῖς παροιμίαις ἔργου))}\\
%--- Plato, \textit{Laws}, VI, 753 e - 754 a\\
%\end{flushright}
%\
\section{Introduction}
A \emph{dendrite} is a continuum (i.e. a connected metrizable compact space) that is locally connected and such that any two points are connected by a unique arc (see \cite{Nadler} for background on continua and dendrites). The group of a dendrite is merely its homeomorphism group. Dendrites are tame topological spaces that appear in various domains as Berkovich projective line or Julia sets for examples. Groups acting by homeomorphisms on dendrites share some properties  with groups acting by isometries on $\RR$-trees (see e.g. \cite{DM_dendrites}) but some dendrite group properties are very far from properties of groups acting by isometries on $\RR$-trees, for example some have the fixed-point property for isometric actions on Hilbert spaces (the so-called property (FH)).

In \cite{DM_dendritesII}, some structural properties of dendrite groups were studied and it was observed that two natural topologies on dendrite groups actually coincide. If $X$ is a dendrite without free arc (i.e. any arc contains a branch point) then the uniform convergence on $X$ and the pointwise convergence on the set of branch points $\Br(X)$ yield the same topology on $\Homeo(X)$. Since  $\Br(X)$ is countable, this yields a topological embedding  $$\Homeo(X)\to\sinf$$ where $\sinf$ is the group of all permutations of the integers with its Polish topology, which is given by the pointwise convergence. The image of this embedding being closed, this means that $\Homeo(X)$ is a \emph{non-archimedean} Polish group and it becomes natural to discover which topological properties this group enjoys. For a nice survey on topological and dynamical properties of non-archimedean groups, we refer to  \cite{MR3469133}.

For a non-empty subset $S\subset\overline{\NN}_{\geq3}=\{3,4,5,\dots,\infty\}$, the \emph{generalized  Wa\.zewski dendrite} $D_S$ is the unique (up to homeomorphism) dendrite such that any branch point of $D_S$ has order in $S$ and for all $n\in S$, the set of points of order $n$ is arcwise-dense (i.e. meets the interior of any non-trivial arc). We denote $G_S=\Homeo(D_S)$ and if $S=\{n\}$, we simply denote $D_n$ and $G_n$ for the dendrite and its group. These dendrites $D_S$ are very homogeneous, for example, the closure of any connected open subset of $D_S$ is homeomorphic to $D_S$ itself \cite[Lemma 2.14]{DM_dendritesII}.\\

\subsection{Generic elements} The aim of this paper is to study some topological properties of the Polish group $G_S$ (endowed the non-archimedean topology described above).  Let us first start with a proposition that separates dramatically $D_{\infty}$ from the others Wa\.zewski dendrites.
 
 \begin{prop} The Polish group  $G_S$ has a dense conjugacy class if and only if $S=\{\infty\}$.
\end{prop}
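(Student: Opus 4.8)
The plan is to prove the two implications separately, the substance being the ``only if'' direction; note that $S\neq\{\infty\}$ exactly means $S$ contains some finite order $n$. Throughout I use the fact, recalled in the introduction, that the topology of $G_S$ is that of pointwise convergence on $\Br(D_S)$, so a basis of nonempty open sets is given by the sets $N(\phi)$ of homeomorphisms agreeing with $\phi$ on its domain, where $\phi$ runs over the finite partial injections of $\Br(D_S)$ that extend to homeomorphisms.

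\emph{If $S$ contains a finite order $n$, then $G_S$ has no dense conjugacy class.} From the self-similarity of $D_S$ (by \cite[Lemma 2.14]{DM_dendritesII} the closure of a connected component of $D_S\setminus\{x\}$ is homeomorphic to $D_S$, with $x$ an endpoint) and the homogeneity of $D_S$, there is $f_0\in G_S$ fixing a branch point $x_0$ of order $n$ and inducing a single $n$-cycle on the $n$ components of $D_S\setminus\{x_0\}$ (glue cyclically homeomorphisms between consecutive component-closures that all fix $x_0$). Choosing a branch point $c_1$ in one of these components and setting $c_i:=f_0^{\,i-1}(c_1)$, the basic open set $V:=N(\psi)$ with $\psi\colon x_0\mapsto x_0$ and $c_i\mapsto c_{i+1}$ for $1\le i\le n-1$ contains $f_0$; and every $f\in V$ fixes $x_0$ and cyclically permutes the $n$ components of $D_S\setminus\{x_0\}$, hence fixes none of them. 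Now suppose some conjugacy class $[g]$ is dense; pick two distinct branch points $z_1\neq z_2$ and set $U:=N(\phi)$ with $\phi$ the identity on $\{z_1,z_2\}$, so that $\mathrm{id}\in U$. Density provides $a,b\in G_S$ with $aga^{-1}\in U$ and $bgb^{-1}\in V$; pulling back, $g$ fixes the two distinct branch points $a^{-1}(z_1),a^{-1}(z_2)$, and also fixes the order-$n$ branch point $\xi:=b^{-1}(x_0)$ while fixing no component of $D_S\setminus\{\xi\}$. But one of $a^{-1}(z_1),a^{-1}(z_2)$, say $y$, differs from $\xi$, and then $g$ carries the component of $D_S\setminus\{\xi\}$ containing $y$ onto the component containing $g(y)=y$, i.e.\ onto itself --- a contradiction.

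\emph{If $S=\{\infty\}$, then $G_\infty$ has a dense conjugacy class.} I would build such a $g$ by hand. Enumerate the countably many admissible finite partial injections $\phi_1,\phi_2,\dots$ of $\Br(D_\infty)$, and for each $k$ fix $f_k\in G_\infty$ extending $\phi_k$ and fixing a suitable endpoint $e_k$. Fix a branch point $*$; among its infinitely many adjacent components choose distinct ones $C_1,C_2,\dots$, each $\overline{C_k}$ homeomorphic to $D_\infty$ with $*$ an endpoint. Via a homeomorphism $\theta_k\colon D_\infty\to\overline{C_k}$ sending $e_k$ to $*$, transport $f_k$ to $g_k:=\theta_kf_k\theta_k^{-1}\in\Homeo(\overline{C_k})$, which fixes $*$; let $g$ be $g_k$ on each $\overline{C_k}$ and the identity elsewhere. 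Since all the $g_k$ fix $*$, a continuity check near $*$ (where the $C_k$ accumulate) gives $g\in G_\infty$. The restriction of $g$ to $\overline{C_k}$ contains the copy $\theta_k\phi_k\theta_k^{-1}$ of $\phi_k$, and by the homogeneity of $D_\infty$ some $h_k\in G_\infty$ carries it back to $\phi_k$, whence $h_kgh_k^{-1}\in N(\phi_k)$; so $[g]$ is dense. (Alternatively, one may invoke the Kechris--Rosendal criterion: $G_\infty=\Aut(D_\infty)$ has a dense conjugacy class because the class of finite configurations of $D_\infty$ carrying a partial automorphism has the joint embedding property --- in fact the weak amalgamation property, which underlies the comeager conjugacy class obtained later --- the point being that every branch point of $D_\infty$ has infinitely many branches, leaving room for amalgamation.)

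The step I expect to require the most care is the ``if'' direction: checking that the local pieces $g_k$ really glue to a \emph{homeomorphism} of $D_\infty$ (continuity where the $C_k$ pile up at $*$), and, in the Fra\"iss\'e reformulation, checking that configurations-with-partial-automorphism can be amalgamated without disturbing the partial automorphisms --- which succeeds precisely because an order-$\infty$ branch point always offers a free branch to glue along or to keep fixed. This is exactly the feature that fails at an order-$n$ branch point with $n<\infty$, whose finitely many branches can be entirely occupied by a single configuration imposing a fixed-branch-free action, and the short argument above turns that rigidity into the non-existence of a dense conjugacy class.
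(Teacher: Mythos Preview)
Your proof is correct. The ``only if'' direction is essentially the paper's argument, phrased directly rather than through the Kechris--Rosendal criterion: the paper exhibits two systems in $\mathcal{K}_p$ with no joint embedding (a cyclic permutation of the $n$ components around an order-$n$ point, versus the identity on two points), which is exactly your pair of open sets $V$ and $U$.

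For the ``if'' direction you take a genuinely different route. The paper verifies the joint embedding property for $\mathcal{K}_p$ and invokes \cite[Theorem~2.1]{Kechris-Rosendal}; its key step is the claim that any partial isomorphism between finite substructures extends to a homeomorphism fixing a branch point \emph{and fixing pointwise one of its components}, so that two such extensions can be placed side by side via the disjunction and patchwork lemmas. You instead build a single element $g$ by gluing copies of \emph{all} partial isomorphisms into infinitely many separate components around one branch point~$*$. The gluing is handled by the patchwork lemma (Lemma~\ref{lem:patchwork}), which disposes of your continuity concern at~$*$: each $g_k$ is a homeomorphism of $C_k$ extending to the identity on the boundary $\{*\}$, and the lemma asks nothing more. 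One point you pass over is why each $f_k$ can be chosen to fix an \emph{end point}~$e_k$; the fixed-point property for dendrites gives only some fixed point~$p$. The paper's trick applies here: glue a fresh copy of $D_\infty$ to $p$ (identifying an end of the copy with~$p$), extend $f_k$ by the identity on it --- the result is still $D_\infty$ since $S=\{\infty\}$, and any end of the new copy is now fixed. Both arguments rest on the same feature (an order-$\infty$ branch point always has a free component), but the paper's JEP formulation feeds directly into the later WAP argument for a comeager conjugacy class, while your explicit construction is more self-contained and avoids the Fra\"iss\'e machinery.
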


So, this shows that $G_\infty$ is remarkable among groups of Wa\.zewski dendrites and the remaining of the paper is essentially devoted to $G_\infty$. 

An element in a Polish group is \emph{generic} if its conjugacy class is comeager, that is, contains a countable intersection of dense open subsets. The Polish group  $G_\infty$ has generic elements. This property is sometimes called the \emph{Rokhlin property} \cite{MR2353899}.

\begin{thm}\label{ccc}The Polish group $G_\infty$ has a comeager conjugacy class.
\end{thm}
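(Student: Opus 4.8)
The standard route to a comeager (generic) conjugacy class in a Polish group is the criterion of Kechris–Rosendal: it suffices to show that the conjugacy action of $G_\infty$ on itself is turbulent-free in the weak sense that there is a dense $G_\delta$ orbit, and this in turn follows from a \emph{joint embedding} and \emph{weak amalgamation} property for a suitable class of finite approximations to elements of $G_\infty$. Since $G_\infty$ is non-archimedean, acting faithfully and continuously on $\Br(D_\infty)$, the natural finite approximations are pairs $(A,\sigma)$ where $A$ is a finite $G_\infty$-invariant-type combinatorial gadget — concretely, a finite subtree (or finite ``subdendrite skeleton'') of $D_\infty$ together with a partial automorphism $\sigma$ of it — recording the action of a would-be group element on finitely many branch points and the arcs between them. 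The plan is therefore: (1) define the category $\mathcal{K}$ of such finite partial actions and the appropriate notion of morphism; (2) verify that $\mathcal{K}$ has the joint embedding property; (3) verify the weak amalgamation property (WAP) for $\mathcal{K}$; (4) invoke the Kechris–Rosendal machinery (in the form: a Polish group has a comeager conjugacy class iff the corresponding class of finite approximations has JEP and WAP) to conclude.

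First I would set up the combinatorial model precisely. Because $D_\infty$ contains every finite order at its branch points and the points of each order are arcwise dense, the relevant finite structures are finite trees $T$ with a choice of orientation/labelling encoding which complementary components of $D_\infty\smallsetminus T$ sit where, equipped with a tree automorphism $\sigma$ of $T$; a morphism $(T,\sigma)\hookrightarrow (T',\sigma')$ is an embedding of $T$ into a subdivision of $T'$ compatible with the partial maps. The key homogeneity input is \cite[Lemma 2.14]{DM_dendritesII}: the closure of any connected open subset of $D_\infty$ is again $D_\infty$, which is exactly what lets us realize any such finite gadget inside $D_\infty$ and glue two gadgets agreeing on a common part. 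Extending a finite partial automorphism of a subtree to an actual homeomorphism of $D_\infty$ uses the same homogeneity together with the back-and-forth character of $D_\infty$ (its branch structure is the Fraïssé-type limit).

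For step (2), joint embedding, I would take two finite partial actions, realize each inside a separate copy of $D_\infty$, and then use the fact that $D_\infty$ contains a homeomorphic copy of the ``wedge'' of two copies of $D_\infty$ (again homogeneity) to house both simultaneously, with the two partial automorphisms acting on disjoint parts; this is essentially routine. Step (3), weak amalgamation, is where the real work lies and is the main obstacle: given $(T_0,\sigma_0)$ embedding into both $(T_1,\sigma_1)$ and $(T_2,\sigma_2)$, I must produce a single $(T,\tau)$ into which both $T_1,T_2$ embed compatibly — but only after possibly enlarging $T_0$ first, which is the ``weak'' in WAP and is what saves us, since strict amalgamation can fail because $\sigma_1$ and $\sigma_2$ may force incompatible behaviour on the arcs joining $T_0$ to the rest. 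The trick, as in similar results for $\Homeo$ of other Fraïssé limits, is to first pass to a larger common structure $(T_0',\sigma_0')$ that ``resolves'' the conflicting arc-data — subdividing arcs and adding enough branch points of sufficiently high order (available since $\infty\in S$) so that the orbits of $\sigma_1$ and $\sigma_2$ on the new vertices can be matched up — and only then amalgamate. Checking that this can always be done, and that the resulting $\tau$ is a genuine tree automorphism realizable in $D_\infty$, is the heart of the argument; once WAP and JEP are in hand, Theorem~\ref{ccc} follows immediately from the abstract criterion.
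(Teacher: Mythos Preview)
Your overall strategy is exactly the paper's: regard $G_\infty$ as the automorphism group of the Fra\"iss\'e limit $\Br(D_\infty)$ with the betweenness relation, consider the associated class $\mathcal{K}_p$ of systems $\langle \mathbf{A},\varphi\colon\mathbf{B}\to\mathbf{C}\rangle$ of partial isomorphisms between finite substructures, verify JEP and WAP, and invoke the Truss/Kechris--Rosendal criterion. Your JEP sketch is also essentially the paper's argument (Proposition~\ref{prop:jep}): extend each partial map to an element of $G_\infty$ with a fixed branch point and a pointwise-fixed component, then wedge the two systems along those fixed components.

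Where your proposal has a genuine gap is the WAP step. You describe the remedy as ``subdividing arcs and adding enough branch points of sufficiently high order (available since $\infty\in S$),'' but the actual obstruction to amalgamation is not about order at all; it is dynamical. The same partial map $x\mapsto y$ can be extended by an element with a fixed point in $[x,y]$, or by one for which $[x,y]$ sits inside an austro-boreal arc, and these two extensions cannot be amalgamated (Remark~\ref{notAP}). The paper's cofinal class $\mathcal{L}$ (Definition~\ref{class}) is engineered precisely to resolve this dichotomy: every point in the initial set $\mathbf{B}_0$ must either be explicitly $\varphi$-periodic or must explicitly witness the center condition $c(\varphi^n x,\varphi^{2n}x,\varphi^{3n}x)\in\,]c(x,\varphi^nx,\varphi^{2n}x),\,c(\varphi^{2n}x,\varphi^{3n}x,\varphi^{4n}x)[$ that characterizes membership in $D(g^n)$ (Lemma~\ref{lem:c}); non-periodic orbits are bracketed between periodic points; and there are compatibility conditions governing how distinct austro-boreal orbits may intertwine. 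Showing $\mathcal{L}$ is cofinal (Proposition~\ref{prop:cofinal}) and has AP (Proposition~\ref{WAP}) each requires real case analysis and auxiliary lemmas about inserting periodic points near the boundary of a system (Lemmas~\ref{lem:addfixedpoint}, \ref{lem:fix}, \ref{lem:per}) and controlling how extensions can merge orbits (Lemma~\ref{atmostoneorbit}). Your proposal identifies the right abstract framework but does not contain, or gesture toward, this key dynamical idea; as written the WAP argument would not go through.
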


Our proof of this theorem relies on Fraïssé theory and $G_\infty$ appears as the automorphism group of some Fraïssé structure. In Section \ref{fraisse}, we detail this Fraïssé structure and some of the properties needed to prove Theorem~\ref{ccc} relying on results in \cite{MR1162490,Kechris-Rosendal}.

\subsection{Automatic continuity}A Polish group has the \emph{automatic continuity} property if any abstract group homomorphism to any separable topological group is actually continuous. This property is quite common for large Polish groups and we refer to   \cite{rosendal2009automatic} for a survey. Automatic continuity is a consequence of the following property.

\begin{defn}\label{Stein}A topological group $G$ has the \emph{Steinhaus property} if there is $k\in\NN$ such that for any symmetric subset $W\subset G$ such that there is $(g_n)_{n\in\NN}$ with
$$\bigcup_{n\in\NN}g_nW=G$$
then $W^k$ is a neighborhood of the identity.
\end{defn}

\begin{thm}\label{144-Steinhaus}The Polish group $G_\infty$ has the Steinhaus property.
\end{thm}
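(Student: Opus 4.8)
The plan is to follow the by-now standard route for establishing the Steinhaus property for automorphism groups of Fraïssé structures with a comeager conjugacy class, in the spirit of the work of Kechris--Rosendal and the treatment of $\Homeo$ of related structures: exhibit a concrete value of $k$ (one expects something like $k = 10$ or $k=20$, in any case an absolute constant) such that for every symmetric $W \subseteq G_\infty$ that is ``countably syndetic'' (i.e.\ countably many left-translates of $W$ cover $G_\infty$), the set $W^k$ contains a pointwise stabilizer of a finite subtree, hence a basic open neighborhood of the identity. The key inputs are the Fraïssé-theoretic description of $G_\infty$ from Section~\ref{fraisse}, the comeager conjugacy class provided by Theorem~\ref{ccc}, and the ample homogeneity / strong amalgamation features of the class of finite subtrees of $D_\infty$ together with their branch-order data.

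First I would reduce to a Baire-category statement: since $\bigcup_n g_n W = G_\infty$ and $G_\infty$ is Polish (hence Baire), some $g_n W$ is non-meager, so $W$ is non-meager, and since $W = W^{-1}$ the product $W^2 \supseteq W W^{-1}$ contains, by Pettis, a neighborhood of the identity \emph{after} we know $W$ has the Baire property; the whole point of Steinhaus is to avoid assuming the Baire property, so instead I work with the non-meagerness of $W$ directly. Fix a finite subtree (a finite ``pattern'' in the Fraïssé language) $A \subseteq D_\infty$; the basic clopen sets are the cosets of $V_A = \{g \in G_\infty : g|_A = \mathrm{id}\}$, and $W$ being non-meager means $W$ is comeager in some coset $hV_A$. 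The heart of the argument is a combinatorial ``moving-and-matching'' lemma: given finitely many copies of the pattern placed in $D_\infty$ in sufficiently generic position, one can find, using the extension property and arcwise-density of branch points of order $\infty$, a single element of $G_\infty$ conjugating any one prescribed finite configuration to any other, with the conjugating element lying in $W^2$ whenever both configurations are ``generic for $W$'' in the Baire-category sense. Iterating this a bounded number of times pushes an arbitrary element of $V_B$ (for a suitably larger finite tree $B$) into a bounded power $W^k$.

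Concretely, the steps in order are: (1) show $W$ is comeager in some basic clopen set $hV_A$ and reduce, after replacing $W$ by $h^{-1}W$ or $Wh$, to $W$ comeager in $V_A$; (2) pick a larger finite subtree $B \supseteq A$ with enough ``room'' (extra branch points of order $\infty$), and observe that for a comeager set of $g \in V_A$ the translate $gV_B$ still meets $W$ comeagerly, using the continuity of the $V_A$-action on the countably many cosets of $V_B$; (3) given arbitrary $\gamma \in V_B$, use the homogeneity of $D_\infty$ (Fraïssé ultrahomogeneity plus the amalgamation that underlies the comeager conjugacy class) to write $\gamma$ as a bounded-length product of elements each of which is conjugate, by an element of $G_\infty$, to something supported ``far from $B$'', and for each such factor find an element of $W$ with the right germ by a Baire-category intersection argument ($W$ comeager in a coset $\cap$ its translate is non-empty); (4) bookkeep the number of $W$-factors produced to read off the absolute constant $k$. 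I expect step (3)---the explicit decomposition of an arbitrary element of a point/tree-stabilizer into a bounded number of pieces each realizable inside $W W^{-1}$---to be the main obstacle, since it requires a genuinely effective version of ultrahomogeneity for $D_\infty$ (a ``four-subtree'' or ``several-copies'' amalgamation statement with uniform control), and getting the combinatorics of the subtrees and their branch orders to line up while keeping $k$ absolute is the delicate part; the rest is routine Baire category once that lemma is in hand.
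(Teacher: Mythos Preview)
Your proposal is not a proof but a sketch with a self-acknowledged gap at the crucial point, and the route you outline is not the one the paper takes. Two concrete issues.

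First, the ``standard Kechris--Rosendal route'' you invoke relies, in its strongest form, on ample generics, and the paper explicitly records (Proposition~\ref{nag}) that $G_\infty$ does \emph{not} have ample generics: the diagonal conjugacy action on $G_\infty\times G_\infty$ has no comeager orbit. So you cannot simply appeal to that machinery; some genuinely different mechanism is needed. Your step~(3), the ``bounded-length decomposition of an arbitrary element of $V_B$ into pieces each realizable inside $WW^{-1}$'', is exactly where that mechanism would have to live, and you do not supply it. The combinatorics of subtrees of $D_\infty$ is rich enough (there are both ``vertex'' directions, with a $\mathcal S_\infty$-like permutation of infinitely many branches, and ``edge'' directions, with an $\Aut(\QQ,<)$-like order action along each arc) that a uniform decomposition lemma of the kind you describe is not at all obvious and is not provided by mere ultrahomogeneity.

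Second, and relatedly, the paper's argument is structurally quite different from yours. After the opening step (which you do have: $W$ non-meager, so $W^2$ dense in some basic clopen $U=\Fix(F)$), the paper splits $U$ as a \emph{product} $U=V\times E$ along the finite tree $T=[F]$: $V$ is the product over vertices $v$ of the subgroup supported on the branches at $v$ not meeting $T$, and $E$ is the product over edges $e=\{x,y\}$ of the subgroup supported on $\overline{D(x,y)}$. These two factors are handled by separate moiety arguments in the style of Rosendal--Solecki's proof for $\Aut(\QQ,<)$: for $V$ one takes disjoint moieties of branches, finds one for which $W^2$ is ``full'', uses the comeager conjugacy class of Theorem~\ref{cccv} inside $V(X_n)\cong B^{\mathcal V}$ to get $V(X_n)\subset W^{12}$, and then uses a Sierpi\'nski almost-disjoint family and the Macpherson--Neumann product trick to climb up to all of $V\subset W^{140}$; for $E$ one runs the $\Aut(\QQ,<)$ moiety argument along each edge simultaneously to get $E\subset W^{96}$. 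The final constant is $k=236$. None of this is captured by your outline: the vertex/edge product decomposition, the Sierpi\'nski/almost-disjoint trick, and the reuse of the Rosendal--Solecki $\Aut(\QQ,<)$ argument are the actual content, and they replace (rather than instantiate) your missing step~(3).
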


In particular, we obtain the following corollary (see \cite[Proposition 2]{Rosendal-Solecki}).
\begin{cor}\label{autcont}The Polish group $G_\infty$ has the automatic continuity property.
\end{cor}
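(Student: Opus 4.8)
The plan is to deduce this directly from Theorem~\ref{144-Steinhaus} by the standard argument that the Steinhaus property forces automatic continuity, as recorded in \cite[Proposition~2]{Rosendal-Solecki}. Fix the constant $k\in\NN$ witnessing the Steinhaus property of $G_\infty$, and let $\phi\colon G_\infty\to H$ be an arbitrary abstract group homomorphism into a separable topological group $H$. Since a homomorphism between topological groups is continuous as soon as it is continuous at the identity, it suffices to prove that $\phi\inv(V)$ is a neighborhood of $1$ in $G_\infty$ for every neighborhood $V$ of $1_H$.

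Given such a $V$, I would first use continuity of multiplication and inversion in $H$ to choose a symmetric open neighborhood $U$ of $1_H$ with $U^k\se V$, and set $W=\phi\inv(U)$, which is symmetric. Next, picking a countable dense subset of $H$ and shrinking $U$ once more if needed, one covers $H$ (hence $\phi(G_\infty)$) by countably many left translates $\phi(g_n)U$ with $g_n\in G_\infty$; then for each $g\in G_\infty$ we have $\phi(g)\in\phi(g_n)U$ for some $n$, so $\phi(g_n\inv g)\in U$, i.e. $g\in g_nW$. Thus $\bigcup_{n\in\NN}g_nW=G_\infty$, and the Steinhaus property yields that $W^k$ is a neighborhood of $1$ in $G_\infty$. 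Since $\phi(W^k)\se U^k\se V$, we get $W^k\se\phi\inv(V)$, establishing continuity of $\phi$ at the identity and hence everywhere.

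There is essentially no obstacle here, as all the genuine work is already contained in Theorem~\ref{144-Steinhaus}; the deduction is soft and uniform in $H$. The only point one must keep straight is the order of quantifiers in Definition~\ref{Stein}: the same $k$ serves for every symmetric countably-covering $W$, so it may legitimately be applied to the particular $W=\phi\inv(U)$ manufactured from a given target neighborhood $V$.
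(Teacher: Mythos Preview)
Your proof is correct and matches the paper's approach exactly: the paper simply invokes \cite[Proposition~2]{Rosendal-Solecki} to pass from Theorem~\ref{144-Steinhaus} to automatic continuity, and you have written out precisely the standard argument underlying that proposition. The only minor imprecision is the phrase ``shrinking $U$ once more if needed''---what you mean is to pick an auxiliary symmetric $U_0$ with $U_0^2\se U$, cover $H$ by $\{d_nU_0\}$, and then for each $n$ with $d_nU_0\cap\phi(G_\infty)\neq\emptyset$ choose $g_n$ with $\phi(g_n)\in d_nU_0$, so that $\phi(g_n\inv g)\in U_0^2\se U$; with that clarification the deduction is complete.
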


It is well known that automatic continuity implies the uniqueness of the Polish group topology. So we have another proof of a particular case of a result due to Kallman. Actually, the uniqueness of the Polish group topology for $G_S$ (with any $S\subseteq\overline{\NN}_{\geq3}$) is  a direct application of \cite[Theorem 1.1]{MR831205}. So, we can speak about the Polish topology on $G_S$ without any ambiguity.
 
 \begin{rem} The proofs use intrinsically that $\Aut(\QQ,<)$ is Steinhaus. Moreover, $G_\infty$ has the Bergman property (any isometric action on a metric space has bounded orbits) but contrarily to $\Aut(\QQ,<)$ \cite[Corollary 7]{Rosendal-Solecki}, $G_\infty$ is far to have the fixed point property for (non-necessarily continuous) actions on compact metrizable spaces. For example, the action of $G_\infty$ on the dendrite $D_\infty$ is minimal \cite{DM_dendritesII}.
 \end{rem}
 
 Corollary \ref{autcont} means that the Polish topology on $G_\infty$ is maximal among separable group topologies on this space. The following theorem goes in the other direction and shows that the Polish topology on $G_S$ is a least element among Hausdorff group topologies on $G_S$ for any $S$. See Section \ref{umin} for details.

 \begin{thm}\label{tumin}For any $S\subset\overline{\NN}_{\geq3}$, the Polish group $G_S$ is universally minimal.
 \end{thm}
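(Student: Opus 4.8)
Recall that $G_S$ being universally minimal means that its Polish topology $\tau$ is contained in every Hausdorff group topology on $G_S$. So fix a Hausdorff group topology $\tau'$ on $G_S$; we must show $\tau\subseteq\tau'$, equivalently that $\mathrm{id}\colon(G_S,\tau')\to(G_S,\tau)$ is continuous. Since $G_S$ is non-archimedean, the pointwise stabilizers $\Stab(F)=\{g\in G_S:\ g(b)=b\ \text{for all }b\in F\}$ of finite sets $F\subset\Br(D_S)$ are open subgroups and form a neighbourhood basis of the identity for $\tau$; moreover $\Stab(F)=\bigcap_{b\in F}\Stab(b)$ is a finite intersection. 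Hence it suffices to prove that $\Stab(b)$ is a $\tau'$-neighbourhood of the identity for every branch point $b$. By the homogeneity of $D_S$, $G_S$ acts transitively on the branch points of each given order, so $\Stab(b)$ and $\Stab(b')$ are conjugate whenever $\operatorname{ord}(b)=\operatorname{ord}(b')$; since conjugation is a $\tau'$-homeomorphism, it even suffices to handle one branch point of each order occurring in $S$.

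The engine is Gaughan's theorem, that the topology of pointwise convergence is the least Hausdorff group topology on the infinite symmetric group $\Sym(\NN)$. There are natural copies of $\Sym(\NN)$ inside $G_S$ — permute the components of $D_S\setminus\{b\}$ rigidly, using that the closure of each such component is again homeomorphic to $D_S$ by \cite[Lemma~2.14]{DM_dendritesII} — and on such a subgroup $\tau$ restricts to the pointwise-convergence topology, so Gaughan's theorem applies to it. The real task, though, is to turn such local control of $\tau'$ into control of $\tau'$ on all of $G_S$, and for this one adapts the mechanism of Gaughan's proof intrinsically. The same homogeneity lemma produces, for any non-identity $h\in G_S$, pairwise disjoint closed subdendrites $E_1,E_2,\dots$ each homeomorphic to $D_S$, homeomorphisms of $D_S$ realizing arbitrary permutations of the $E_i$, and a shift $\rho$ with $\rho(E_i)=E_{i+1}$; an Eilenberg-swindle identity then expresses a homeomorphism supported on $E_1$ as a short commutator of such elements. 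Assuming for contradiction that some $\Stab(b)$ is not a $\tau'$-neighbourhood of the identity, one gets that every $\tau'$-neighbourhood of the identity contains branch-point-moving homeomorphisms; using the homogeneity of $D_S$ to move these into the above configuration, one writes, for every $\tau'$-neighbourhood $W$ of the identity, a single fixed non-identity homeomorphism as a product of at most $k$ elements of $W$ for an absolute constant $k$ — contradicting the Hausdorff property of $\tau'$.

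The main obstacle is this last step. Unlike in $\Sym(\NN)$, the ``support'' of an element of $G_S$ is a subdendrite rather than an arbitrary subset, so every move — the commutator identities of the swindle, the infinite products, the conjugacies — must be realized by honest homeomorphisms of $D_S$, and one must check that the infinite products are continuous and that the elements they produce are $\tau'$-separated from the identity. The genuinely delicate point is the bootstrapping: upgrading ``$W$ contains some branch-point-moving homeomorphism'' to ``$W^k$ contains a prescribed non-identity element''. That is where the full homogeneity of $D_S$ — arc-density of the branch points of each order, ultrahomogeneity of the associated configurations, $\overline{C}\cong D_S$ for components $C$ — has to be used. One also has to keep the argument uniform in $S$: if $S$ contains no infinite order, all branch points have finite order and permuting branches at a point only yields a finite group, so the swindle must be performed inside the components of $D_S\setminus\{b\}$, which are themselves copies of $D_S$; the argument should therefore be organised so as never to need $\infty\in S$.
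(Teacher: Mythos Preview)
Your proposal is not a proof but a proof \emph{plan}, and the crucial step is missing. You correctly reduce to showing that each $\Stab(b)$ is $\tau'$-open, and you correctly identify a Gaughan-style swindle as a possible mechanism. But the entire content of such an argument lies in the ``bootstrapping'' you flag as ``the main obstacle'': given a $\tau'$-neighbourhood $W$ of the identity containing some $h$ moving a branch point, you must actually \emph{produce} a fixed non-identity element as a word of bounded length in $W$. You describe what this would require (conjugating $h$ into a prescribed configuration, checking continuity of infinite products, handling the case where all orders are finite) but you do not carry any of it out. As it stands there is no argument, only a list of desiderata; and the difficulties you list are real --- in particular, the conjugators you need are elements of $G_S$, not of $W$, so the bounded-word-length claim needs care.

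The paper's proof takes a completely different and much shorter route, avoiding any swindle. The key observation (due to Kallman) is that in \emph{any} Hausdorff group topology, centralizers are closed, and one can express sets of the form $C(U,V^{\mathsf c})=\{g:\ g(U)\subseteq V^{\mathsf c}\}$ (for suitable open connected $U,V\subset D_S$) as intersections of preimages of centralizers:
\[
C(U,V^{\mathsf c})=\bigcap_{g,h}\{f:\ fgf^{-1}\in C_{G_S}(h)\},
\]
where $g$ ranges over elements supported in $\overline U$ and $h$ over elements supported in $V$. Hence each $O(U,V)=\{g:\ g(U)\cap V\neq\emptyset\}$ is $\tau'$-open. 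One then writes $\Fix(x)$, for a branch point $x$, as a union of finite intersections of such $O(U,V)$: if $U_1,U_2,U_3$ lie in three distinct components of $D_S\setminus\{x\}$ and $g(U_i)$ meets some $V_i$ with the $V_i$ also in three distinct components, then $g$ must fix the center $x$. This gives $\tau'$-openness of $\Fix(x)$ with no appeal to Gaughan, no infinite products, and no case analysis on $S$.
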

Combining Corollary \ref{autcont} and Theorem \ref{tumin}, we obtain the following characterization of the Polish topology on $G_\infty$.
 \begin{cor}There is a unique separable Hausdorff group topology on $G_\infty$.
 \end{cor}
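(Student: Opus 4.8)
The plan is simply to combine Corollary~\ref{autcont} (automatic continuity) with Theorem~\ref{tumin} (universal minimality), which pin down the Polish topology $\tau$ on $G_\infty$ from above and from below respectively. Let $\sigma$ be an arbitrary separable Hausdorff group topology on $G_\infty$; I want to show $\sigma=\tau$.

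First I would feed the identity map into automatic continuity. The identity $\mathrm{id}\colon (G_\infty,\tau)\to (G_\infty,\sigma)$ is an abstract group homomorphism, and its target $(G_\infty,\sigma)$ is by hypothesis a separable topological group, so Corollary~\ref{autcont} forces this homomorphism to be continuous. Continuity here means exactly that every $\sigma$-open set is $\tau$-open, i.e.\ $\sigma\subseteq\tau$. Second, I would apply Theorem~\ref{tumin}: universal minimality of $G_\infty$ says precisely that $\tau$ is the least element among Hausdorff group topologies on $G_\infty$, so since $\sigma$ is Hausdorff we get $\tau\subseteq\sigma$. The two inclusions give $\sigma=\tau$, proving uniqueness.

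There is no real obstacle here beyond bookkeeping of hypotheses: automatic continuity requires the \emph{other} topology to be separable (which is why the statement restricts to separable group topologies), while universal minimality only needs it to be Hausdorff. Both hypotheses are present in the statement, so once Corollary~\ref{autcont} and Theorem~\ref{tumin} are established the argument is a one-line sandwich.
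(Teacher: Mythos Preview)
Your proof is correct and is exactly the argument the paper has in mind: the statement is presented as an immediate corollary obtained by ``combining Corollary~\ref{autcont} and Theorem~\ref{tumin}'', and you have spelled out precisely that sandwich argument.
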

 If a Polish group has \emph{ample generics} then it has the Steinhaus property. So, exhibiting ample generics is a common way to prove the  Steinhaus property (see \cite[\S1.6]{Kechris-Rosendal}). In our situation, this is not possible.
 
 \begin{prop}\label{nag} The Polish group $G_\infty$ does not have ample generics.\end{prop}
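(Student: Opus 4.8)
The natural route is through the Fraïssé-theoretic characterization of ample generics. In Section~\ref{fraisse} the group $G_\infty$ is identified with $\Aut(\mathbf{D})$ for an explicit Fraïssé structure $\mathbf{D}$ carried by $\Br(D_\infty)$ (encoding the betweenness/median relation together with the branching data at each point). By the results of \cite{Kechris-Rosendal} (building on \cite{MR1162490}), $\Aut(\mathbf{D})$ has ample generics if and only if, for every $n\geq 1$, the class $\mathcal{K}^{(n)}$ of \emph{$n$-systems} --- a finite substructure of a model of the age of $\mathbf{D}$ together with an $n$-tuple of partial automorphisms --- has the joint embedding property and the weak amalgamation property; the comeager $G_\infty^{\,n}$-orbit is then the isomorphism type of the generic $n$-system. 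Theorem~\ref{ccc} is exactly the case $n=1$, so that case is unobstructed, and the point is to show that the criterion already fails for $n=2$.

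Accordingly, the plan is to exhibit one system $(\mathbf{A};\sigma,\tau)\in\mathcal{K}^{(2)}$ at which the weak amalgamation property breaks down: for every extension $(\mathbf{A}';\sigma',\tau')\supseteq(\mathbf{A};\sigma,\tau)$ in $\mathcal{K}^{(2)}$ there are two further extensions with no common amalgam inside $\mathcal{K}^{(2)}$. I would set this up inside a single arc $[\xi,\eta]$ of $D_\infty$ together with the branching directions at its branch points; on this ``arc plus branching data'' piece one can transplant the obstruction to amalgamating two partial order-automorphisms already present for $(\QQ,<)$. Concretely, let $\sigma$ fix a branch point $a$ on the arc while moving a neighbour $b>a$ strictly upward, and let $\tau$ send $a$ to $b$; then the $\langle\sigma,\tau\rangle$-orbit of $a$ is forced to accumulate at $a$ from above in the manner prescribed by $\sigma$ while being pinned near $b$ by $\tau$, and by prescribing the branching directions around $a$ and $b$ incompatibly one arranges that the two minimal completions of any given finite approximation disagree on which component of a complement a forced point lies in. Since such a conflict only propagates under further extension, $\mathcal{K}^{(2)}$ fails the weak amalgamation property; hence $G_\infty$ has no comeager orbit in $G_\infty^{\,2}$, and a fortiori no ample generics.

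The step I expect to be delicate is making this counterexample genuinely live in $\mathbf{D}$ and not merely in $(\QQ,<)$: the structure on $\Br(D_\infty)$ is strictly richer than a linear order, so one must check both that the non-amalgamable pair $(\sigma,\tau)$ is realised by an honest member of $\mathcal{K}^{(2)}$ and, more importantly, that the extra freedom in choosing off-arc branching directions does not secretly allow the conflict to be resolved inside some larger admissible structure. A more routine matter is to pin down the exact form of the criterion being invoked --- that it is the \emph{weak} amalgamation property (with the intermediate step $\mathbf{A}'$) together with the joint embedding property for $\mathcal{K}^{(2)}$, and that the age of $\mathbf{D}$ satisfies the local finiteness hypotheses needed for \cite{Kechris-Rosendal} to apply (a substructure generated by finitely many branch points being finite) --- and to observe the optional reformulation at the level of $G_\infty$ itself: the isomorphism type of the generic pair of partial automorphisms is nothing but a Baire-measurable conjugacy invariant on $G_\infty^{\,2}$, and the construction above exhibits it as one that is not generically constant.
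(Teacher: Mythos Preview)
Your route and the paper's diverge sharply. The paper does \emph{not} attempt to defeat weak amalgamation for $2$-systems in the Fra\"iss\'e class of $\Br(D_\infty)$. Instead it argues by contradiction at the level of the groups: assuming a generic pair $(f,g)\in G_\infty^2$, one passes to the open subgroup $U=\Fix(x,y)$ for two branch points $x,y$, uses the identification $\Br(D_\infty)\cap\,]x,y[\ \simeq\QQ$ to obtain a continuous open surjection $\Pi\colon U\to\Aut(\QQ,<)$, and shows that the image $(\Pi f,\Pi g)$ is turbulent with dense orbit in $\Aut(\QQ,<)^2$. This contradicts Hodkinson's theorem that $\Aut(\QQ,<)$ has no comeager diagonal $2$-orbit. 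The argument is purely topological (turbulence plus an open quotient map) and never touches amalgamation of $2$-systems for the dendrite structure.

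Your proposal, by contrast, has a genuine gap exactly where you flag it. You want to transplant Hodkinson's $(\QQ,<)$ obstruction onto an arc of $D_\infty$ and then argue that it cannot be amalgamated away. But the specific pair $(\sigma,\tau)$ you describe --- $\sigma$ fixing $a$ and pushing $b$ upward, $\tau$ sending $a$ to $b$, with some unspecified incompatibility in ``branching directions'' --- is not shown to resist amalgamation. In $D_\infty$ every branch point has infinite order, so there is enormous freedom to absorb conflicts by routing orbits into fresh components; you would need a concrete invariant (computable from a finite $2$-system and preserved under extension) that your two candidate extensions disagree on, and you have not produced one. Saying the conflict ``only propagates under further extension'' is precisely the assertion that needs proof. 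The paper's quotient argument buys you exactly this: it converts the problem into one entirely inside $\Aut(\QQ,<)$, where Hodkinson has already done the hard combinatorics, and the openness of $\Pi$ guarantees that genericity and turbulence push forward without any amalgamation bookkeeping on your side.
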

 Actually, the diagonal action of $G_\infty$ on $G_\infty\times G_\infty$ by conjugation does not have any comeager orbit. Our proof relies on the same result for $\Aut(\QQ,<)$ due to Hodkinson (see \cite{MR2354899}). %See Proposition \ref{nco} for a little bit more precise statement.
 
 \subsection{Small index property} A Polish group has the \emph{small index property} if any subgroup of small index, i.e. of index less than  $2^{\aleph_0}$, is open. 
 
 \begin{thm}\label{sis}The Polish group $G_\infty$ has the small index property.
 \end{thm}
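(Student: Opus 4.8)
I would start from the remark that, in a non-archimedean Polish group, a subgroup is open exactly when it contains a basic open subgroup — here a pointwise stabiliser $G_{(\bar b)}=\{g\in G_\infty : gb_i=b_i \text{ for all }i\}$ of a finite tuple $\bar b$ of branch points — and that such a $G_{(\bar b)}$ has countable index, because the orbit $G_\infty\cdot\bar b$ lies in the countable set $\Br(D_\infty)^{|\bar b|}$. Thus every open subgroup has countable index. Conversely, a subgroup $H\le G_\infty$ of countable index is open: writing $G_\infty=\bigcup_n g_nH$ and using that $H$ is symmetric and equal to each of its powers, the Steinhaus property (Theorem~\ref{144-Steinhaus}) forces $H$ to be a neighbourhood of the identity, hence open; equivalently, one may apply automatic continuity (Corollary~\ref{autcont}) to the homomorphism $G_\infty\to\Sym(G_\infty/H)$, whose target is Polish. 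So for $G_\infty$ \emph{openness coincides with countable index}, and since $|G_\infty|=2^{\aleph_0}$ the small index property reduces to the purely structural claim: every non-open subgroup $H\le G_\infty$ has index exactly $2^{\aleph_0}$.

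For that claim the plan is a Cantor-scheme construction exploiting the self-similar tree-like structure of $D_\infty$ encoded by the Fraïssé presentation of Section~\ref{fraisse}. Deleting a branch point $b$ splits $D_\infty$ into countably many pieces whose closures are copies of $D_\infty$ (homogeneity, \cite[Lemma 2.14]{DM_dendritesII}); correspondingly $\Stab(b)$ contains a closed subgroup $L_b\cong\sinf$ realising all permutations of these pieces, and a basic open subgroup $G_{(\bar b)}$ decomposes, relative to the finite subtree spanned by $\bar b$, into finitely many such $\sinf$-factors — together with copies of $\Aut(\QQ,<)$ coming from the dense chains of branch points along the arcs between the $b_i$, and homeomorphism groups of the sub-dendrites hanging off the subtree. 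Given $H\le G_\infty$ of index $<2^{\aleph_0}$, each intersection of $H$ with such a copy $L\cong\sinf$ (or $L\cong\Aut(\QQ,<)$) has index $<2^{\aleph_0}$ in $L$, hence is open there by the small index property of $\sinf$ (Dixon--Neumann--Thomas) and of $\Aut(\QQ,<)$; so $H$ already contains the pointwise stabiliser of a finite family of pieces. Running a recursion over the spanned subtree one tries to assemble a full $G_{(\bar b)}$ inside $H$; if the assembly gets stuck, one instead manufactures a continuum of distinct cosets exactly as in the Dixon--Neumann--Thomas argument — choosing a null sequence of pairwise disjoint sub-dendrites $U_n$ and homeomorphisms $g_n$ supported in $U_n$ that witness at each finite stage that $H$ omits the relevant stabiliser, so that $\eta\mapsto\bigl(\prod_{n:\eta(n)=1}g_n\bigr)H$ is a well-defined injection $2^\omega\hookrightarrow G_\infty/H$ (the infinite products converge because $\diam U_n\to0$).

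The main obstacle is the independence bookkeeping inside this construction. On disjoint supports the $g_n$ commute and the products converge, but one must guarantee that $F\mapsto\bigl(\prod_{n\in F}g_n\bigr)H$ is injective on finite $F\subseteq\omega$, i.e.\ that modifications carried out in different sub-dendrites cannot cancel one another modulo $H$; securing this is precisely where the combinatorics of the Fraïssé class — strong amalgamation with ``enough room'' and the separation of distinct branches at a vertex — must be brought in, in the spirit of the classical small-index arguments for $\sinf$ and $\Aut(\QQ,<)$ but now threaded through a tree-like rather than merely set-like structure. Arranging the recursion over the spanned subtrees so that it actually terminates — each step pinning down more of the relevant finite configuration — is the remaining point requiring care.
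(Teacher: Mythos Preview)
Your first paragraph is sound: the Steinhaus property does give that countable-index subgroups are open, so the small index property reduces to ruling out indices strictly between $\aleph_0$ and $2^{\aleph_0}$. But from there your proposal remains a plan. You correctly isolate the two genuine difficulties --- arranging the independence of the $g_n$ modulo $H$ in the Cantor scheme, and making the subtree recursion terminate --- and then stop. Those are exactly the places where a direct Dixon--Neumann--Thomas argument for $G_\infty$ would require new work beyond the $\sinf$ and $\Aut(\QQ,<)$ cases: knowing that $H$ meets each factor $L\cong\sinf$ (or $\cong\Aut(\QQ,<)$) in an open subgroup of $L$ does not by itself produce a single basic open subgroup $G_{(\bar b)}\le H$, because the finite data pinned down in the separate factors need not be mutually compatible.

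The paper circumvents this bookkeeping by a different, more modular reduction. First, a general lemma (Lemma~\ref{sisf}) says a closed subgroup of $\sinf$ has the small index property iff a point stabiliser does; so it suffices to treat the stabiliser $G_b$ of a single branch point. Second, $G_b\cong G_\xi\wr_{\mathcal{C}_b}\sinf$, a permutational wreath product with base the stabiliser $G_\xi$ of an end point (\cite[Lemma~7.1]{DM_dendritesII}). Third, the paper proves a transfer theorem (Theorem~\ref{wr}): if $G\le\sinf$ is closed, has the small index property, and has a comeager conjugacy class, then $G\wr\sinf$ also has the small index property. The Dixon--Neumann--Thomas moiety machinery is run once, inside that proof, but only on the \emph{outer} $\sinf$-factor; the comeager conjugacy class of the base is what replaces your independence bookkeeping, since via Lemma~\ref{nsi} it forces $H$ to swallow the entire base group $G^\Sigma$ over a suitable moiety $\Sigma$ in one stroke. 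Finally, $G_\xi$ has the small index property by a theorem of Droste--Holland--Macpherson on automorphism groups of $2$-homogeneous semilinear trees (Proposition~\ref{sip}), so Theorem~\ref{wr} applies. Your direct strategy is in the right spirit, but the wreath-product packaging is what actually makes the argument close.
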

By definition of the topology of pointwise convergence on branch points, a basis of neighborhoods of the identity is given by pointwise stabilizers of finitely many branch points. The number of  branch points being countable, these subgroups have countable index. So, Theorem \ref{sis} shows that subgroups of small index contains the pointwise stabilizer of some finite set of branch points and thus have countable index. 

Let us point out that the property that subgroups of countable index are open is equivalent to the fact that any homomorphism to $\mathcal{S}_\infty$ is continuous. This last property is a particular case of the automatic continuity property. 

This theorem enlightens the idea that the Polish topology on $G_\infty$ is indeed an algebraic datum: it can be recovered by subgroups of small index.

\subsection{Universal minimal flows}The group $G_\infty$ is the automorphism group of a countable structure, the set of branch points with the betweenness relation, and it is also a group of dynamical origin since it comes with its action on the compact space $D_\infty$. So, it is natural to try to understand possible $G_\infty$-\emph{flows}, that are continuous actions of $G_\infty$ on compact spaces.
\begin{rem}The group of homeomorphisms of a  metrizable compact space (endowed with the topology of uniform convergence) is separable. So, Theorem \ref{autcont} implies that any action of $G_\infty$ on a  metrizable compact space by homeomorphisms is actually continuous.\end{rem}
Let $G$ be a topological group. A $G$-flow is \emph{minimal} if every orbit is dense. It is  a remarkable fact there is a minimal $G$-flow which has the following universal property: Any other minimal $G$-flow is a continuous equivariant image of this largest flow. It is called the \emph{universal minimal flow} of $G$ (see for example \cite{GlasnerLNM} for details).

Usually this universal $G$-flow is very large and not explicit at all. For $G_\infty$, we identify this universal minimal flow with a subset of the compact space of  linear orders on the set of branch points. This subset consists of linear orders that are  \textit{converging and convex}. They reflect the \textit{dendritic} nature of $D_\infty$. We refer to Section~\ref{umf} for definitions.
\begin{thm} The universal minimal flow of $G_\infty$ is the set of convex converging linear orders on the set of branch points of $D_\infty$.
\end{thm}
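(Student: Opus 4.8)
The plan is to identify the universal minimal flow of $G_\infty$ by exhibiting a concrete minimal flow and checking the universal property via the Kechris–Pestov–Todorcevic correspondence, adapted to the Fraïssé framework set up in Section~\ref{fraisse}. Recall that $G_\infty = \Aut(\mathcal{M})$ where $\mathcal{M}$ is the countable structure of branch points of $D_\infty$ equipped with the betweenness relation $B$. The KPT machine says that if we expand the Fraïssé class $\mathcal{K}$ of finite substructures by adding an ordering in a way that the resulting class $\mathcal{K}^*$ is again a Fraïssé class, has the \emph{ordering property} and the \emph{Ramsey property} (relative to $\mathcal{K}$), then the universal minimal flow of $\Aut(\mathcal{M})$ is the space $X^*$ of expansions of $\mathcal{M}$ lying in the closure of the orbit of the generic expansion. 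The first step, then, is to pin down what the right expansion is: not all linear orders on $\Br(D_\infty)$, but the \emph{convex} ones (those in which the betweenness relation is compatible with the order, so that each arc is an interval) that are moreover \emph{converging} (the combinatorial trace of the fact that in a dendrite one cannot have an infinite strictly decreasing sequence of ``directions'' at a point escaping to a limit without accumulating — I expect the precise definition in Section~\ref{umf} to encode exactly the constraint that distinguishes genuine dendrite orders from arbitrary convex orders). One checks that the set $\sT$ of convex converging linear orders on $\Br(D_\infty)$ is a closed, hence compact, $G_\infty$-invariant subspace of $\LO(\Br(D_\infty))$, and that $G_\infty$ acts minimally on it — minimality should follow from the homogeneity of $D_\infty$ (any two finite convex-order configurations are equivalent under $G_\infty$, by the Fraïssé-type extension property for ordered structures).

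The substance is then the Ramsey statement: the class of finite betweenness structures (finite subtrees) equipped with convex converging orders is a Ramsey class. Here the key observation is that a finite convex linear order on a finite subtree of a dendrite is essentially the data of a planar embedding of that finite tree — equivalently, a cyclic/linear ordering of the edges at each vertex — and the Ramsey property for such objects is closely related to (and I would prove it by reduction to) the Ramsey property of $\Aut(\QQ,<)$, i.e. the Ramsey theorem for finite linear orders, which is classical Ramsey theory. Concretely, one colours a large ordered finite subtree's copies of a fixed small ordered subtree; by organizing the branch points along the convex order one converts this into a colouring of tuples from a finite linear order, applies the finite Ramsey theorem, and pulls back the monochromatic set to a monochromatic convex-ordered subtree. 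The \emph{ordering property} — that for every finite subtree $A$ there is a finite subtree $B$ such that every convex converging order on $B$ induces, on some copy of $A$, every convex converging order on $A$ — should again follow from the rich homogeneity of $D_\infty$: one takes $B$ large enough to contain, around a fixed copy of $A$, enough branching structure to realize all local orderings.

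Once Ramsey and ordering are in hand, KPT gives that the universal minimal flow is the closure of the orbit of the generic convex converging order in $\LO(\Br(D_\infty))$; the remaining step is to verify that this closure is all of $\sT$, i.e. that \emph{every} convex converging order is a limit of $G_\infty$-translates of the generic one. This is where the ``converging'' condition does its work: an arbitrary convex order could fail to be in the orbit closure if it had the wrong asymptotic behaviour at some point, and the definition of converging is chosen precisely so that the finite approximations of any convex converging order are realized inside the generic one — which is exactly the statement that $\sT$ carries a dense orbit, equivalently that the expanded Fraïssé class has the amalgamation/joint-embedding needed to make $\sT$ itself the expansion space. I expect the main obstacle to be this last identification: proving that the abstractly-defined orbit closure coincides with the hands-on set $\sT$ of convex converging orders, which amounts to showing the expanded class $\mathcal{K}^*$ (finite subtrees + convex converging orders) is a Fraïssé class with $\sT$ as its space of structures on $\mathcal{M}$ — the amalgamation of ordered subtrees along a common ordered subtree, respecting both convexity and the convergence constraint, is the delicate combinatorial point and will require care in how one interleaves the two orders near shared branch points.
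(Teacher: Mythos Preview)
Your overall strategy—KPT via an order expansion—is correct in spirit and is morally what the paper does, but the packaging and one substantive step differ.

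The paper does not run KPT directly on the betweenness structure. Instead it fixes an end point $\xi$, passes to the rooted structure $(\Br(D_\infty),\leq_\xi,\wedge)$ (a treeable meet-semilattice), and identifies its Fraïssé class as the class $\mathcal{CT}$ of finite treeable semilattices with convex linear extensions. The Ramsey property for $\mathcal{CT}$ gives extreme amenability of $G^*=\Stab(\prec_0)$ for a generic order $\prec_0$. The paper then argues directly: $G^*$ is oligomorphic, hence co-precompact, and $\CLO(D_\infty)$ is a minimal $G_\infty$-flow (proved by an explicit hands-on argument in Lemma~\ref{minimality}, not via an abstract ordering/expansion property), so $\widehat{G_\infty/G^*}\simeq\CLO(D_\infty)$ and the universal property follows from extreme amenability of $G^*$. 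Your route via an expansion class with Ramsey + ordering property is the classical KPT packaging and would also work; the paper's choice of rooted semilattices rather than unrooted betweenness structures is what makes the relevant Ramsey class one that is already available in the literature.

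The real gap in your proposal is the Ramsey argument. Reducing to the classical Ramsey theorem for linear orders by ``organizing the branch points along the convex order'' does not work: a convex-ordered subtree is not determined by a subset of a linear order, because the embedding must preserve the meet (equivalently, the center/betweenness relation), not merely the order. A monochromatic set of vertices produced by ordinary Ramsey has no reason to span a copy of your target tree $A$ with the correct meets. The relevant Ramsey theorem—for finite treeable semilattices with convex linear extensions—is a genuine structural Ramsey result; the paper does not prove it but invokes Soki\'c \cite{MR3436366}. If you wanted to reprove it, you would need a Milliken-type tree Ramsey theorem or an iterated product argument, not a one-line reduction to $(\QQ,<)$. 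Your identification of the orbit closure with $\CLO(D_\infty)$ is also left vague; in the paper this is exactly Lemma~\ref{minimality}, whose proof is a concrete inductive construction using the patchwork lemma and the convexity of the orders, not an abstract Fraïssé-theoretic fact.
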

During this work, a more general result was proved in  \cite{kwiatkowska2018universal} but the description is a bit different. Our point is to show that the stabilizer of some generic converging convex linear order is actually \emph{extremely amenable}, i.e. has the fixed point property on compacta. Following \cite{MR2140630}, this is equivalent to the Ramsey property for the underlying structure. For this Ramsey property, we rely on \cite{MR3436366}.

We also obtain a description of the universal minimal flow of end stabilizers and this knowledge allows us to obtain amenability results in  a non usual way. Let us observe that for a locally finite tree, the amenability of stabilizers of vertices or end points is easy but for dendrites it is not clear if stabilizers of points are amenable in general.

\begin{thm}\label{ufb} For any point $x$ in $D_\infty$, the stabilizer of $x$ in $G_\infty$ is an amenable topological group.\end{thm}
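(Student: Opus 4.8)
The plan is to use the standard criterion that a Polish group is amenable if and only if its universal minimal flow carries an invariant Borel probability measure --- for the nontrivial direction, given any flow one pushes the measure on the universal minimal flow forward along the factor map onto a minimal subflow (see \cite{GlasnerLNM}) --- together with the descriptions of universal minimal flows obtained above. The first move is to reduce every point $x\in D_\infty$ to the case of an endpoint.

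Suppose $x$ has order $2$. Then $D_\infty\setminus\{x\}$ has two connected components $U_1,U_2$ and, by \cite[Lemma 2.14]{DM_dendritesII}, each closure $\overline{U_i}$ is homeomorphic to $D_\infty$ via a homeomorphism sending $x$ to an endpoint. A homeomorphism of $D_\infty$ fixing $x$ and preserving each $U_i$ restricts to homeomorphisms of the $\overline{U_i}$ fixing $x$, and two such restrictions glue back to a homeomorphism of $D_\infty$; since $\Br(D_\infty)=\Br(U_1)\sqcup\Br(U_2)$, tracking the topology of pointwise convergence identifies the index-two subgroup of $\Stab(x)$ preserving each $U_i$ with $\Stab_{G_\infty}(e)\times\Stab_{G_\infty}(e)$ for $e$ an endpoint. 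As amenability is inherited by finite products and by finite-index overgroups, $\Stab(x)$ is amenable once endpoint stabilizers are. If $x$ has order $\infty$, the same bookkeeping shows that the pointwise stabilizer $N$ of the (countably many, and forming a null family) components of $D_\infty\setminus\{x\}$ is a closed normal subgroup of $\Stab(x)$ isomorphic to $\prod_{n\in\NN}\Stab_{G_\infty}(e)$, hence amenable, and that the quotient $\Stab(x)/N$, acting faithfully on the set of components, is all of $\sinf$: every permutation of the components is realised by gluing homeomorphisms between the corresponding $\overline{U_i}$, using that $x$ has a neighbourhood basis of open sets containing all but finitely many components. Since $\sinf$ is amenable and amenability is stable under group extensions, again $\Stab(x)$ is amenable provided endpoint stabilizers are.

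It remains to prove that $\Stab_{G_\infty}(e)$ is amenable for an endpoint $e$. Here I would feed the explicit description of the universal minimal flow of $\Stab_{G_\infty}(e)$ obtained above (some explicit space of convex converging linear orders on $\Br(D_\infty)$ reflecting the rooting at $e$) into the criterion, and exhibit an invariant probability measure on it through a random construction: choosing, recursively along the tree order of $\Br(D_\infty)$ based at $e$, independent uniformly distributed labels, one obtains almost surely an order lying in this flow, and since $\Stab_{G_\infty}(e)$ acts on the label space by measure-preserving coordinate permutations the law of this order is invariant. Pushing it to the universal minimal flow and applying the criterion gives amenability of $\Stab_{G_\infty}(e)$, hence of every point stabilizer.

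The main obstacle is this last step. For $G_\infty$ itself the flow of convex converging linear orders carries no invariant measure --- consistently with $G_\infty$ being non-amenable --- so the real content is to see that fixing an endpoint is exactly what reconciles the ``converging'' condition with the existence of a probability measure, i.e. to verify that convexity and convergence hold almost surely for the rooted random order. By contrast, the reductions above are routine once one has checked carefully that gluing yields a homeomorphism of $D_\infty$ and that every permutation of the branches at a point of infinite order is realised.
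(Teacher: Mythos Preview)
Your approach is essentially the paper's: reduce arbitrary points to endpoints via the semidirect-product structure of stabilizers (the paper quotes \cite[Lemma 7.1]{DM_dendritesII} for $G_x\simeq G_\xi^2\rtimes\ZZ/2\ZZ$ when $x$ is regular and $G_x\simeq\bigl(\prod_{i}G_{\xi_i}\bigr)\rtimes\sinf$ when $x$ is a branch point), then prove the endpoint stabilizer $G_\xi$ is amenable by exhibiting an invariant probability measure on its universal minimal flow $\CLO(D_\infty)_\xi$.

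The one place where the paper is sharper is exactly the step you flag as the obstacle. You propose a random construction and worry about verifying that the resulting order is almost surely convex and converging. The paper sidesteps this entirely with a structural lemma (Lemma~\ref{identification}): once the root is fixed at $\xi$, a convex converging linear order on $\Br(D_\infty)$ is \emph{the same data} as an independent choice of linear order on $\mathcal{C}_{b,\xi}$ for every branch point $b$, i.e.\ $\CLO(D_\infty)_\xi\cong\prod_{b\in\Br(D_\infty)}\LO(\mathcal{C}_{b,\xi})$ as $G_\xi$-spaces. There is no ``almost surely'' to check: every point of the product is already a convex converging order rooted at $\xi$, and conversely. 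The invariant measure is then just the product of copies of the $\sinf$-invariant measure on $\LO(\NN)$ (whose existence uses only amenability of $\sinf$), transported along fixed bijections $\mathcal{C}_{b_0,\xi}\to\mathcal{C}_{b,\xi}$; invariance under $G_\xi$ is a short cylinder-set computation. So your instinct about where the content lies is right, but the resolution is a clean bijection rather than a probabilistic verification.
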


Conversely, it is known that an amenable group acting continuously on a dendrite stabilizes a subset with at most two points \cite{ShiYe2016}.

This amenability result allows to identify the universal Furstenberg boundary of $G_\infty$, that is the universal strongly proximal minimal $G_\infty$-flow. Let $\xi$ be some end point of $D_\infty$ and $G_\xi$ its stabilizer. Let us denote by $\widehat{G_\infty/G_\xi}$ the completion of $G_\infty/G_\xi$ for the uniform structure coming from the right uniform structure on $G_\infty$. We obtain a first description of the universal Furstenberg boundary.

\begin{thm}\label{ufba} The universal Furstenberg boundary of $G_\infty$ is $\widehat{G_\infty/G_\xi}$.
\end{thm}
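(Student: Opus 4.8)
The plan is to realize the universal Furstenberg boundary $B$ of $G_\infty$ (the universal strongly proximal minimal $G_\infty$-flow) as a \emph{metrizable} flow carrying a \emph{comeager orbit} whose point stabilizer is $G_\xi$, and then to apply the strongly-proximal analogue of Zucker's description of the universal minimal flow: if a Polish group $G$ admits a metrizable flow $X$ with a comeager orbit $G\cdot x_0$, then the orbit map $gG_{x_0}\mapsto g x_0$ is a $G$-isomorphism $X\cong\widehat{G/G_{x_0}}$. (The point is that a non-meager orbit of a Polish group on a Polish space is Polish in its subspace topology, by Effros, and the orbit map onto it is open; hence the orbit carries exactly the quotient of the right uniformity, and $X$, being a compactification of that orbit, is its completion.) Thus it suffices to prove $(a)$ that $B$ is metrizable, $(b)$ that $B$ has a comeager orbit, and $(c)$ that the corresponding stabilizer is $G_\xi$.

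For $(a)$: by universality of the universal minimal flow $M(G_\infty)$ there is a continuous $G_\infty$-equivariant surjection $M(G_\infty)\twoheadrightarrow B$; and $M(G_\infty)$, being the space of convex converging linear orders on the countable set $\Br(D_\infty)$ — a closed subspace of the Cantor space $2^{\Br(D_\infty)\times\Br(D_\infty)}$ — is compact metrizable, so $B$ is compact metrizable. For the rest I would use the canonical $G_\infty$-equivariant surjection $\pi\colon B\twoheadrightarrow D_\infty$ coming from the fact that $G_\infty\curvearrowright D_\infty$ is itself a $G_\infty$-boundary, i.e.\ strongly proximal and minimal.

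Now $G_\infty$ has a comeager orbit on $D_\infty$ (by homogeneity of the dendrite), and since the endpoints of $D_\infty$ form a $G_\infty$-invariant comeager subset (indeed a dense $G_\delta$), this comeager orbit consists of endpoints; fix such an endpoint $\xi$, so that its stabilizer is the end stabilizer $G_\xi$. The heart of the argument is the claim that the fibre $\pi\inv(\xi)$ — a compact space on which $G_\xi$ acts — is a $G_\xi$-boundary. Granting this, Theorem~\ref{ufb} provides a $G_\xi$-invariant probability measure on $\pi\inv(\xi)$; by strong proximality this measure is a point mass, and by $G_\xi$-minimality of the fibre it follows that $\pi\inv(\xi)=\{b_0\}$ for some $G_\xi$-fixed point $b_0$. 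Then $\Stab_{G_\infty}(b_0)\supseteq G_\xi$, while $\pi(b_0)=\xi$ forces $\Stab_{G_\infty}(b_0)\subseteq\Stab_{G_\infty}(\xi)=G_\xi$, so $\Stab_{G_\infty}(b_0)=G_\xi$. By $G_\infty$-equivariance $\pi\inv(G_\infty\cdot\xi)=G_\infty\cdot b_0$; this set is a $G_\delta$ in $B$ (the preimage of a $G_\delta$) and is dense by minimality of $B$, hence comeager. Applying the principle of the first paragraph with $x_0=b_0$ gives $B\cong\widehat{G_\infty/\Stab_{G_\infty}(b_0)}=\widehat{G_\infty/G_\xi}$, as claimed.

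I expect the genuine difficulty to lie in the key claim that $\pi\inv(\xi)$ is a $G_\xi$-boundary — a relative strong proximality statement for $\pi$ over a generic endpoint, expressing that the universal boundary does not ``blow up'' the generic endpoints of $D_\infty$. This is exactly the step at which amenability of end stabilizers becomes indispensable, and it may be cleaner to establish it directly from the combinatorics of the dendrite and the Fraïssé description of $\Br(D_\infty)$ than to quote a general relative-strong-proximality lemma; a secondary technical point is to make precise, in the strongly proximal setting, the Zucker-type correspondence invoked in the first paragraph.
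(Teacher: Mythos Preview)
Your approach differs substantially from the paper's. The paper works directly with the candidate space $\widehat{G_\infty/G_\xi}$: it is compact because $G_\xi$ is oligomorphic (hence co-precompact); it is minimal as a factor of the universal minimal flow $\widehat{G_\infty/H}$ where $H=\Stab(\prec_0)\leq G_\xi$; and its strong proximality is established \emph{by hand} via an explicit contraction argument on the dendrite (Lemma~\ref{basis} supplies a neighborhood basis in the completion, and one pushes any probability measure toward a Dirac by translating along an arc $[\eta,\eta']$). Universality then follows from amenability of $G_\xi$: given any $G_\infty$-boundary $X$, a $G_\xi$-invariant $\mu\in\Prob(X)$ exists, the orbit map $gG_\xi\mapsto g\mu$ extends to $\widehat{G_\infty/G_\xi}\to\Prob(X)$, and the image --- being a minimal subflow --- must equal $X$ itself.

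Your route via the abstract boundary $B$ and its projection $\pi$ to $D_\infty$ is sound in outline, and the step you flag as ``the genuine difficulty'' is exactly where the content lies --- but you have not proved it, and it is not soft. Neither $G_\xi$-minimality nor $G_\xi$-strong proximality of $\pi^{-1}(\xi)$ follows formally from the $G_\infty$-properties of $B$: a proper closed $G_\xi$-invariant $F\subsetneq\pi^{-1}(\xi)$ can still satisfy $\overline{G_\infty F}=B$, and the elements contracting a fiber-supported measure toward a Dirac may need to leave $G_\xi$. Establishing the fiber claim ``from the combinatorics of the dendrite'' would in effect reproduce the paper's explicit strong-proximality argument for $\widehat{G_\infty/G_\xi}$, so the detour through fibers buys nothing. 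A cleaner salvage is to drop the fiber analysis altogether: once $\widehat{G_\infty/G_\xi}$ is known to be compact and minimal, amenability of $G_\xi$ plus strong proximality of $B$ already force a $G_\xi$-invariant measure on $B$ to lie in $B$ (i.e.\ to be a Dirac mass), giving the $G_\xi$-fixed point directly --- which is essentially the paper's argument, or alternatively one may quote Zucker's general theorem as the paper's subsequent remark indicates.
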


Even if the set of end points is a dense $G_\delta$-orbit in $D_\infty$, the natural map $\widehat{G_\infty/G_\xi}\to D_\infty$ is not a homeomorphism (Proposition~\ref{nothomeo}) and thus $D_\infty$ is a Furstenberg boundary of $G_\infty$ but not the universal one. This result should be compared to the fact that the universal Furstenberg boundary of $\Homeo(\mathbf{S}^1)$ is $\mathbf{S}^1$ itself.\\

At the end of this paper, we give another description of this universal Furstenberg boundary. It appears as a closed subset of a natural countable product of totally disconnected compact spaces. The description is simple and shows that it is a countable collection of $G_\infty$-orbits. This universal Furstenberg boundary is the space $K$ that appears in  Subsection~\ref{isomorphic}.

\setcounter{tocdepth}{1}
\tableofcontents

\section{Wa\.zewski dendrites and their homeomorphism groups}\label{wazewski}
\subsection{Wa\.zewski dendrites} A \emph{dendrite} is a connected metrizable compact space that is locally connected and such that any two points $x,y$ are connected by a unique arc $[x,y]$. Simple examples are given by compactifications of locally finite simplicial trees. Some examples are more complicated. For example, the Julia set of the polynomial map $z\mapsto z^2+i$ of the complex line $\CC$ is a dendrite (See Figure \ref{Julia}). A \emph{subdendrite} is a closed and connected subset $S$ of a dendrite $D$. It is a dendrite on its own and there is a retraction $\pi_S\colon D\to S$ such that for any $x\in D$ and $y\in S$, $\pi_S(x)\in[x,y]$. This retraction is also called the \emph{first-point map} to $S$.

\begin{figure}[b]
\includegraphics[width=.8\textwidth]{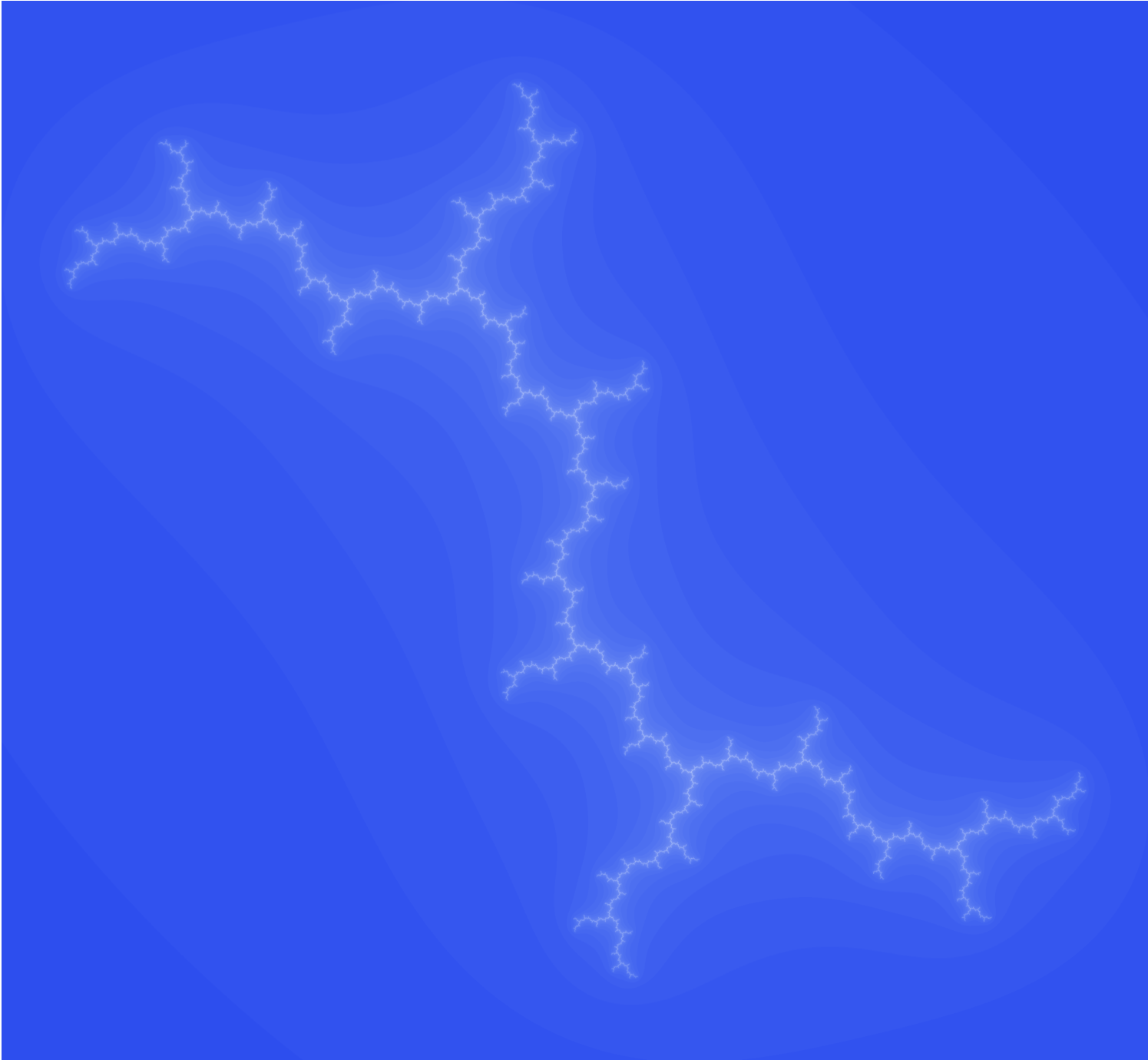}
\caption{The Julia set of $z\mapsto z^2+i$. Picture realized with \textit{Mathematica}.}\label{Julia}
\end{figure}
 In a dendrite $X$, there are three types of points $x\in X$, according to the cardinal of $\mathcal{C}_x$, the space of connected components of $X\setminus\{x\}$. This number is at most countable and is called the \emph{order} of $x$.

\begin{itemize}
\item If the complement $X\setminus\{x\}$ remains connected, $x$ belongs to the set $\Ends(X)$ of \emph{end points}.
\item If $x$ separates $X$ into two components, it is a \emph{regular point}.
\item Otherwise, it is at least 3 and $x$ belongs to the set $\Br(X)$ of \emph{branch points}.
\end{itemize}

Let $X$ be a dendrite and  $c\colon X^3\to X$ be the center map, that is $c(x,y,z)=[x,y]\cap[y,z]\cap[z,x]$, which is reduce to a unique point. Let us do a few observations:

\begin{itemize}
\item $c$ is symmetric,  
\item $c(x,y,z)=z$ if and only if $z\in[x,y]$,
\item if $c(x,y,z)\notin\{x,y,z\}$, $c(x,y,z)$ is a branch point 
\item and in particular, $\Br(X)$ is $c$-invariant, i.e. $c(\Br(X))=\Br(X)$. 
\end{itemize}
In Bowditch's terminology, $(X,c)$ is a median space \cite{BowditchAMS}. For two points $x\neq y$ in a dendrite $X$, we denote by $D(x,y)$ the connected component of $X\setminus\{x,y\}$ that contains $]x,y[$. Let us say that a subset $Y$ of $X$ is $c$-\emph{closed} if for any $x,y,z\in Y$, $c(x,y,z)\in Y$. For $Y\subset X$, we define the $c$-\emph{closure} of $Y$ to be $c(Y^3)$, which happens to be the smallest $c$-closed subset of $X$ containing $Y$.

\begin{lem} For any $Y\subset X$, $c(Y^3)$ is $c$-closed.
\end{lem}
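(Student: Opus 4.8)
The plan is to reduce the statement to the following pointwise fact and then iterate it three times:
$(\star)$ for all $x_1,x_2,x_3,y,z\in X$ one has $c(c(x_1,x_2,x_3),y,z)=c(x_i,y,z)$ for some $i\in\{1,2,3\}$.
Granting $(\star)$, the lemma is immediate. Let $a,b,d\in c(Y^3)$ and write $a=c(p_1,p_2,p_3)$, $b=c(q_1,q_2,q_3)$, $d=c(r_1,r_2,r_3)$ with all the $p_i,q_j,r_k$ in $Y$. Applying $(\star)$ to $c(a,b,d)=c(c(p_1,p_2,p_3),b,d)$ yields $c(a,b,d)=c(p_i,b,d)$ for some $i$; using the symmetry of $c$ and $(\star)$ once more on $c(c(q_1,q_2,q_3),d,p_i)$ gives $c(a,b,d)=c(q_j,d,p_i)$ for some $j$; a third application to $c(c(r_1,r_2,r_3),p_i,q_j)$ gives $c(a,b,d)=c(r_k,p_i,q_j)\in c(Y^3)$, as desired.

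To prove $(\star)$, put $w=c(x_1,x_2,x_3)$; one may assume $w\notin\{x_1,x_2,x_3\}$, since otherwise $(\star)$ is trivial, and then $x_1,x_2,x_3$ lie in three pairwise distinct connected components $C_1,C_2,C_3$ of $X\setminus\{w\}$. The first observation is that $c(x,y,z)$ is exactly the first-point $\pi_{[y,z]}(x)$ of the arc $[y,z]$ as seen from $x$: indeed $\pi_{[y,z]}(x)\in[x,y]\cap[x,z]\cap[y,z]=\{c(x,y,z)\}$ by the defining property of $\pi_{[y,z]}$. So the goal is to show that $m:=\pi_{[y,z]}(w)$ equals $\pi_{[y,z]}(x_i)$ for some $i$. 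If $w\in[y,z]$, so $m=w$, then $[y,z]\setminus\{w\}$ is contained in the union of at most two components of $X\setminus\{w\}$, hence some $C_i$ is disjoint from $[y,z]$; since any arc joining $x_i\in C_i$ to a point of $[y,z]$ must pass through $w$, we get $w\in[x_i,y]\cap[x_i,z]\cap[y,z]$, that is $c(x_i,y,z)=w=m$. If instead $w\notin[y,z]$, then the connected set $[y,z]$ lies in a single component $C_0$ of $X\setminus\{w\}$; choose $i$ with $C_i\neq C_0$ (at least two indices qualify). Every arc from $x_i$ to $[y,z]$ crosses $w$, so $\pi_{[y,z]}(x_i)\in[x_i,y]\cap[x_i,z]$; moreover the initial sub-arc of $[x_i,y]$ running from $x_i$ to $w$ stays in $C_i$ and is thus disjoint from $[y,z]$, so $\pi_{[y,z]}(x_i)$, which lies on $[y,z]$, must lie beyond $w$ on both $[x_i,y]$ and $[x_i,z]$; hence $\pi_{[y,z]}(x_i)\in[w,y]\cap[w,z]=[w,c(w,y,z)]=[w,m]$, and since $[w,m]\cap[y,z]=\{m\}$ we conclude $c(x_i,y,z)=m$. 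This establishes $(\star)$.

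Along the way I would use only standard facts about dendrites: arcs are unique, the intersection of two arcs is again an arc, a point interior to $[a,b]$ separates $a$ from $b$, the components of $X\setminus\{w\}$ are open, and $[w,y]\cap[w,z]=[w,c(w,y,z)]$. The only genuine subtlety is the bookkeeping in $(\star)$: one must separate the case $w\in[y,z]$, where the pigeonhole is run against the (at most two) ``sides'' of $w$ inside the arc $[y,z]$, from the case $w\notin[y,z]$, where it is run against the single component of $X\setminus\{w\}$ that contains all of $[y,z]$. Conceptually, $(\star)$ says that the first-point retraction onto $[y,z]$ commutes with the center map and that the center of three points of an arc is the middle one, so the heart of the matter is really the verification that $\pi_{[y,z]}$ is a morphism of median algebras; I expect that to be the step requiring the most care.
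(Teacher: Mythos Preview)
Your proof is correct, but the paper argues more directly and avoids the auxiliary identity $(\star)$ altogether. Given $x_1,x_2,x_3\in c(Y^3)$ with $m=c(x_1,x_2,x_3)\notin\{x_1,x_2,x_3\}$, the paper observes that since each $x_i$ is the center of a triple from $Y$, not all three members of that triple can lie in $C_{x_i}(m)$ (the component of $X\setminus\{x_i\}$ containing $m$), so one may pick $y_i\in Y$ with $x_i\in[y_i,m]$; then $m=c(y_1,y_2,y_3)$ immediately. Your route instead isolates the median-algebra fact $(\star)$ that $c(c(x_1,x_2,x_3),y,z)$ always equals some $c(x_i,y,z)$, and peels off the three nested centers one at a time. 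This is a genuinely different decomposition: the paper picks all three witnesses $y_i$ simultaneously using the geometry around $m$, while you reduce the nesting depth inductively. The paper's argument is shorter; yours has the advantage that $(\star)$ is a reusable statement about the center map (essentially that $\pi_{[y,z]}$ respects medians), and your iteration scheme would work verbatim in any median algebra once $(\star)$ is known.
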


\begin{proof} Let $x_1,x_2,x_3\in c(Y^3)$ and $m=c(x_1,x_2,x_3)$. If $m\in\{x_1,x_2,x_3\}$ then we are done. Otherwise, for each  $i=1,2,3$, one can find a point $y_i\in Y$ that is not in $C_{x_i}(m)$, the connected component of $X\setminus\{x_i\}$ that contains $m$. Thus $m=c(y_1,y_2,y_3)\in c(Y^3)$.\end{proof}

Let $S$ be a non-empty subset of $\overline{\NN}_{\geq3}=\{3,4,5,\dots,\infty\}$. The \emph{Wa\.zewski dendrite} $D_S$ is the unique (up to homeomorphism) dendrite such that all orders of branch points belong to $S$ and for any $n\in S$, the set of points of orders $n$ is arcwise-dense. See  \cite[\S12]{DM_dendrites} for a few historical references and a reference to the proof of this characterization. With this characterization, it easy to see that the closure of any open connected subset in $D_S$ is actually homeomorphic to $D_S$. We denote by $G_S$ the homeomorphism group of $D_S$. If $S=\{n\}$, we simply denote $D_n$ and $G_n$ for the dendrite and its group. For example,  $D_\infty$ appears to be homeomorphic to the Berkovich projective line over $\CC_p$. See \cite[Figure 1] {Hrushovski-Loeser-Poonen} for explanations and a nice picture of this dendrite.

Let us recall some properties of the groups $G_S=\Homeo(D_S)$ proved in \cite[\S 6 \& 7]{DM_dendritesII}. The first one shows how homogeneous the dendrite $D_S$ is.
To any finite subset $F$ of the dendrite $D_S$, we associate a finite vertex-labelled simplicial tree $\langle F \rangle$ as follows. The sub-dendrite $[F]$, i.e. the smallest subdendrite containing $F$,  is a finite tree in the topological sense, i.e.\ the topological realization of a finite simplicial tree. Such a simplicial tree is not unique because degree-two vertices can be added or removed without changing the topological realization. We choose for $\langle F \rangle$ to retain precisely one degree-two vertex for each element of $F$ which is a regular point of the dendrite $[F]$. Thus, $\langle F \rangle$ is a tree whose vertex set contains $F$. Finally, we label the vertices of $\langle F\rangle$ by assigning to each vertex its order in $D_S$.

\begin{prop}[{\cite[Proposition 6.1]{DM_dendritesII}}]\label{prop:extension}
Given two finite subsets $F, F'\se D_S$, any isomorphism of labelled graphs $\langle F \rangle \to \langle F' \rangle$ can be extended to a homeomorphism of $D_S$.
\end{prop}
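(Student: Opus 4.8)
The plan is to build the homeomorphism of $D_S$ extending the given labelled-tree isomorphism $\phi\colon\langle F\rangle\to\langle F'\rangle$ by hand, piece by piece, using the homogeneity of $D_S$ encoded in its defining characterization. First I would reduce to understanding the pieces that $D_S$ decomposes into once we remove the subdendrites $[F]$ and $[F']$. Concretely, for each edge $e=\{u,v\}$ of $\langle F\rangle$, the arc $[u,v]\se[F]$ is a regular arc of $[F]$, and $D_S\setminus[F]$ is a disjoint union of open connected sets; the closure of each such component attaches to $[F]$ at a single point, which is either a vertex of $\langle F\rangle$ or lies in the interior of one of the edge-arcs. Using \cite[Lemma 2.14]{DM_dendritesII} (the closure of a connected open subset of $D_S$ is homeomorphic to $D_S$), each of these closed pieces is again a copy of $D_S$, rooted at its attaching point. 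So the strategy is: (i) define $\phi$ on $[F]$ as a homeomorphism $[F]\to[F']$ extending the vertex map and respecting the attaching points of the complementary pieces; (ii) match up the complementary pieces of $[F]$ with those of $[F']$ and extend $\phi$ across them using abstract homeomorphisms of $D_S$; (iii) check the result is a homeomorphism of all of $D_S$.

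For step (i), the key point is that $[F]\to[F']$ can be chosen so that it not only implements $\phi$ on vertices but also sends the arcwise-dense set of order-$n$ points of $[F]$ (for each $n\in S$) appropriately, and in particular realizes a bijection between the attaching points of $\overline{D_S\setminus[F]}$-components lying on a given edge-arc $[u,v]$ and those on $[\phi(u),\phi(v)]$; since each edge-arc is a regular arc, any order-preserving (or reversing) bijection of a dense subset extends to a homeomorphism of the arc, and we just need compatibility of the two dense sets of attaching points, which itself is arranged by the characterization. For step (ii), a complementary piece $C$ attached at a point $p$: if $p$ is an interior point of an edge-arc, $C$ is a copy of $D_S$ rooted at an end-type point (order one inside $C$, but of course the total order in $D_S$ is larger); if $p$ is a vertex $v$ of $\langle F\rangle$ of order $k=\deg_{\langle F\rangle}(v)$ in $[F]$ and order $n$ in $D_S$, then the union of all complementary pieces attached at $v$, together with the germ at $v$ of the edge-arcs, must reassemble into $D_S$ with the right order at $v$; here one uses that $\phi$ preserves labels so $v$ and $\phi(v)$ have the same order $n$ in $D_S$ and the same degree $k$ in the trees, hence the same number $n-k$ of "extra" directions (interpreting $\infty - k = \infty$), and one can match these directions and extend using homeomorphisms of $D_S$.

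The main obstacle I expect is the bookkeeping at branch vertices where $n=\infty$: one must match up a countable family of complementary pieces at $v$ with a countable family at $\phi(v)$, each piece being a copy of $D_S$, and simultaneously keep the arcwise-density conditions globally satisfied so that the assembled map is genuinely a homeomorphism rather than just a bijection — i.e. one needs the local homeomorphisms on the pieces to glue continuously at the attaching points, which requires a small-neighborhood/control argument using that neighborhoods of $v$ in $D_S$ are spanned by the complementary pieces together with short initial segments of the edge-arcs. A clean way to package all of this is to invoke the defining characterization of $D_S$ directly: the space obtained by gluing $[F']$ to the chosen copies of $D_S$ along the prescribed attaching points is a dendrite whose branch-point orders lie in $S$ and in which each order-$n$ set is arcwise dense, hence is homeomorphic to $D_S$ by uniqueness; running this on both the $F$ side and the $F'$ side and composing with $\phi$ on the common tree gives the desired self-homeomorphism of $D_S$. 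I would therefore present the argument as: extend $\phi$ to $[F]\to[F']$; observe both sides exhibit $D_S$ as such a glued space; conclude by the uniqueness clause of the characterization, with the continuity at attaching points checked via the neighborhood basis described above.
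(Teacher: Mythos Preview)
The paper does not prove this proposition; it is quoted verbatim from \cite[Proposition~6.1]{DM_dendritesII} and used as a black box, so there is no proof here to compare against. On its own merits your sketch is headed in the right direction, but two points deserve tightening.

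First, the decomposition you choose---extend $\phi$ to a homeomorphism $[F]\to[F']$ of the finite trees and then match the countably many complementary components hanging off each edge-arc---forces you to produce, for every edge $\{u,v\}$, an order-preserving bijection between the attaching points in $]u,v[$ and those in $]\phi(u),\phi(v)[$ that simultaneously respects the $D_S$-order of each attaching point. That can be done by a back-and-forth (arcwise density of order-$n$ points for each $n\in S$), but it is more bookkeeping than necessary. A cleaner decomposition, and the one typically used, carves $D_S$ along the \emph{vertices} of $\langle F\rangle$ rather than along $[F]$: for each edge $\{u,v\}$ take the whole piece $\overline{D(u,v)}$, and for each vertex $v$ take the closures of the components of $D_S\setminus\{v\}$ that miss $[F]$. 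Every such piece is a copy of $D_S$ with one or two marked end points, and one needs only a one- or two-pointed homogeneity statement for $D_S$ on each piece; the patchwork lemma (Lemma~\ref{lem:patchwork} here) then does the gluing and the continuity check you worry about.

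Second, your proposed shortcut via the uniqueness clause of the characterization of $D_S$ is not actually a shortcut: uniqueness yields \emph{some} homeomorphism between the two assembled dendrites, not one that agrees with $\phi$ on the vertices of $\langle F\rangle$. To force agreement on those finitely many points you are back to needing the pointed homogeneity statements above, so invoking the characterization buys nothing over the direct piecewise construction.
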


This simple proposition have strong corollaries for $G_S$. For example, $G_S$ acts 2-transitively on the set of points of a given order. Moreover, $G_S$ is a simple group and  if $S$ is finite then the action of $G_S$ on the set of branch points (which is countable) is oligomorphic \cite[Corollary 6.7]{DM_dendritesII}. For an introduction to oligomorphic groups, we refer to \cite{MR2581750}.

The structure of the group $G_S$ completely determines the dendrite $D_S$: if $G_S$ and $G_{S’}$ are isomorphic then $S=S’$ and an automorphism of $G_S$ is always given by a conjugation \cite[Corollary 7.4 \& 7.5]{DM_dendritesII}.

Since the action on the set of branch points $\Br(D_S)$ completely determines the action, $G_S$ embeds as a closed subgroup of $\mathcal{S}_\infty$. With this topology, if $S$ is finite then the Polish group $G_S$ has the strong Kazhdan property (T) \cite[Corollary 6.9]{DM_dendritesII}. Without the assumption of finiteness of $S$, the discrete group $G_S$ has Property (OB) (every action  by isometries on a metric space has bounded orbits) \cite[Corollary 6.12]{DM_dendritesII}. \\

We will need to construct global homeomorphisms from  patches of partial homeomorphisms. This is possible thanks to the following lemma \cite[Lemma 2.9]{DM_dendritesII}.
\begin{lem}[Patchwork lemma]\label{lem:patchwork}
Let $\sU$ be a family of disjoint open connected subsets of a dendrite $X$ and let $(f_U)_{U\in \sU}$ be a family of homeomorphisms $f_U \in\Homeo(U)$ for $U\in\sU$. Suppose that each $f_U$ can be extended continuously to the closure $\overline U$ by the identity on the boundary $\overline U\setminus U$.

Then the map $f\colon X\to X$ given by $f_U$ on each $U\in \sU$ and the identity everywhere else is a homeomorphism.
\end{lem}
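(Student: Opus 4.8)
The plan is to verify that $f$ is a continuous bijection of the compact metrizable space $X$: a continuous bijection of a compact Hausdorff space onto itself is automatically a homeomorphism (and in any case $f\inv$ is assembled from the $f_U\inv$ in exactly the same way), so it suffices to prove continuity of $f$. Bijectivity is immediate: the members of $\sU$ are pairwise disjoint and each $f_U$ maps $U$ onto $U$, while a point of $\overline U\setminus U$ lies in no member of $\sU$ (an open set meeting $\overline U$ meets the dense subset $U$, so the $f_U$ do not clash on boundaries). Continuity of $f$ at a point of $\bigcup_{U\in\sU}U$ is clear, since there $f$ coincides on an open set with the homeomorphism $f_U$. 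The work is therefore at a point $x\notin\bigcup_{U\in\sU}U$, where $f(x)=x$.

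The one substantial ingredient, which I would isolate as a preliminary lemma, is a uniform finite-ramification property of dendrites: \emph{for every $\epsilon>0$ there is a finite set $F\se X$ such that every connected subset of $X$ disjoint from $F$ has diameter $<\epsilon$.} To prove it, pick a finite set $P\se X$ whose spanning subdendrite $[P]$ is $\epsilon$-dense, then enlarge $P$ to a finite set $F$ containing all branch points and endpoints of the finite tree $[P]$ together with enough further points of $[P]$ that every segment of $[P]$ between consecutive points of $F$ has diameter $<\epsilon$. A sub-arc of $[P]$ disjoint from $F$ then lies inside one such segment; and for an arbitrary arc $[a,b]$ of $X$ disjoint from $F$, applying the first-point map onto $[P]$ to $a$ and $b$ (and using continuity of $(a,b)\mapsto[a,b]$ into the hyperspace and the vanishing of the diameter on the diagonal) shows that $a$ and $b$ are $\epsilon$-close to a common short sub-arc of $[P]$ — or to a common attaching point of a component of $X\setminus[P]$ — so $[a,b]$, and hence any connected set disjoint from $F$, is $\epsilon$-small.

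Granting this, fix $F$ as above for a given $\epsilon>0$. Since $\sU$ is disjoint and $F$ finite, only finitely many members $U_1,\dots,U_N$ of $\sU$ meet $F$; every other $U\in\sU$ has $\diam(\overline U)=\diam(U)<\epsilon$, so $f$ moves each point of $\overline U$ by less than $\epsilon$. Choose open neighbourhoods $W_1,\dots,W_N$ of $x$: if $x\notin\overline{U_j}$, let $W_j$ be disjoint from $\overline{U_j}$; if $x\in\overline{U_j}$ then $x\in\overline{U_j}\setminus U_j$, so the continuous extension of $f_{U_j}$ fixes $x$, and by its continuity there is $W_j$ with $f(W_j\cap U_j)\se B(x,\epsilon)$. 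Put $W=B(x,\epsilon)\cap\bigcap_{j=1}^{N}W_j$. For $y\in W$: if $y$ is in no member of $\sU$ then $f(y)=y\in B(x,\epsilon)$; if $y\in U_j$ then $f(y)\in B(x,\epsilon)$ by the choice of $W_j$; and if $y\in U$ for some $U\in\sU$ with $U\cap F=\emptyset$ then $f(y)\in\overline U$ and $d(x,f(y))\le d(x,y)+\diam(\overline U)<2\epsilon$. Hence $f(W)\se B(x,2\epsilon)$, and since $\epsilon$ was arbitrary, $f$ is continuous at $x$.

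The heart of the argument is the finite-ramification lemma: disjointness of $\sU$ alone does not force its members to be small — a non-locally-connected "comb" shows this — so one genuinely has to use compactness and local connectedness of the dendrite, packaged as above, to separate $\sU$ into the finitely many members that must be handled by hand near $x$ and the infinitely many that are automatically too small to disturb continuity there.
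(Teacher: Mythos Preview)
The paper does not prove this lemma at all: it is quoted verbatim as \cite[Lemma~2.9]{DM_dendritesII} and used as a black box throughout. So there is no ``paper's own proof'' to compare against, and your argument has to be judged on its own merits.

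Your overall strategy is correct and is a natural way to prove the statement. Bijectivity is handled correctly (boundary points of one $U$ lie in no other member of $\sU$), continuity on $\bigcup_{U}U$ is immediate, and the reduction at a point $x\notin\bigcup_U U$ to ``finitely many large members of $\sU$ plus a null family of small ones'' is exactly the right idea. The final $\epsilon$--$2\epsilon$ estimate is clean.

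The one place that needs tightening is your sketch of the auxiliary ``finite-ramification'' lemma. Requiring only that $[P]$ be $\epsilon$-dense does not, for an arbitrary compatible metric, force the components of $X\setminus[P]$ to be small (distance to $[P]$ need not be realised along the arc to the first point). Two clean fixes: either (i) work with a convex metric on the dendrite, which exists by the Bing--Moise theorem for Peano continua, so that $d(c,[P])=d(c,\pi_{[P]}(c))$ and your $\epsilon$-density argument goes through; or (ii) invoke directly the standard fact that for every $\epsilon>0$ a dendrite contains a finite subtree $T$ such that every component of $X\setminus T$ has diameter $<\epsilon$ (see e.g.\ Nadler, \emph{Continuum Theory}, \S10), then subdivide $T$ as you do. With either fix your argument is complete. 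The hyperspace/diagonal remark you make is not really doing work and can be dropped.
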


\subsection{Dynamics of individual elements}

Let $g$ be a homeomorphism of a dendrite $X$. An arc $[x,y]\subset X$ is \emph{austro-boreal} for $g$ if $x\neq y$ are fixed and there is no fixed-point in $]x,y[$. Observe that in this case, the restriction of the action of $g$ on $]x,y[$ is conjugated to an action by a non-trivial translation on the real line.

For a non-trivial arc $[x,y]$, we denote by $D(x,y)$ the connected component of $X\setminus\{x,y\}$ that contains $]x,y[$. We denote by $D(g)$ the union of all $D(I)$ where $I$ is an austro-boreal arc for $g$ and by $K(g)$ its complement in $X$.

The following proposition \cite[Proposition 10.6]{DM_dendrites} describes the dynamics of a homeomorphism of a dendrite.

\begin{prop}\label{prop:indiv}
The decomposition $X=D(g) \sqcup K(g)$ has the following properties.
\begin{enumerate}[label=(\roman*)]
\item $D(g)$ is a (possibly empty) open $g$-invariant set on which $g$ acts properly discontinuously. In particular, $K(g)$ is a non-empty compact $g$-invariant set.\label{pt:tectonic:dec}
\item $K(g)$ is a disjoint union of subdendrites of $X$. Moreover, $g$ preserves each such subdendrite and has a connected fixed-point set in each.\label{pt:tectonic:K}
\end{enumerate}
\end{prop}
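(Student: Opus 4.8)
The plan is to set up two soft facts about first‑point maps and then run a case analysis on how austro‑boreal arcs sit relative to one another. For a subdendrite $Y\se X$ write $\pi_Y\colon X\to Y$ for the first‑point retraction; I will use repeatedly that (a) if $g(Y)=Y$ then $\pi_Y$ is $g$‑equivariant, since $\pi_Y(x)$ is the unique point of $Y$ lying on every arc $[x,y]$, $y\in Y$, a characterization preserved by the homeomorphism $g$, and (b) each fibre $\pi_Y^{-1}(s)$, $s\in Y$, is again a subdendrite, being closed and convex (if $\pi_Y(x)=\pi_Y(x')=s$ and $z\in[x,x']$ then $\pi_Y(z)\in[x,s]\cap Y=\{s\}$). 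If $I=[a,b]$ is austro‑boreal then $g(I)=I$ (the endpoints are fixed and arcs are unique), hence $g(]a,b[)=]a,b[$ and $g(D(I))=D(I)$; moreover $D(I)=\pi_I^{-1}(]a,b[)$, because $p\in D(I)$ iff neither $a$ nor $b$ separates $p$ from $]a,b[$, i.e.\ iff $\pi_I(p)\in\,]a,b[$, so that $X=\pi_I^{-1}(a)\sqcup D(I)\sqcup\pi_I^{-1}(b)$ is a partition into $D(I)$ and two disjoint closed subdendrites. Finally, since $]a,b[$ contains no fixed point, for every $z\in\Fix(g)$ we get $\pi_I(z)=g\pi_I(z)\in\Fix(g)\cap[a,b]=\{a,b\}$, so $\Fix(g)\cap D(I)=\varnothing$. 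These remarks already give the easy parts: $D(g)$ is open, being a union of connected components of sets of the form $X\setminus\{a,b\}$, and $g$‑invariant; hence $K(g)$ is closed in the compact space $X$, so compact, and $g$‑invariant; and $\Fix(g)\se K(g)$, so $K(g)\ne\varnothing$ (using that a homeomorphism of a dendrite has a fixed point; or, if $\Fix(g)=\varnothing$, there are no austro‑boreal arcs and $K(g)=X$).

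The crux is the lemma that the sets $D(I)$, $I$ austro‑boreal, are pairwise disjoint. Let $I=[a,b]$ and $J=[c,d]$ be distinct austro‑boreal arcs. Since $c,d\in\Fix(g)$ we have $\pi_I(c),\pi_I(d)\in\{a,b\}$, and symmetrically $\pi_J(a),\pi_J(b)\in\{c,d\}$. If $\pi_I(c)$ and $\pi_I(d)$ are the two distinct endpoints of $I$, then from $a\in[c,b]$ and $b\in[a,d]$ one checks that $[c,d]=[c,a]\cup[a,b]\cup[b,d]$ is a genuine arc (the three pieces meet only at shared endpoints, using $]a,b[\ne\varnothing$), so $I\se J$; but then $a,b\in\Fix(g)\cap[c,d]=\{c,d\}$ forces $I=J$, a contradiction. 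So $\pi_I(c)=\pi_I(d)$, say both equal $a$; then $J\se\pi_I^{-1}(a)$ by convexity of the fibre, and a short arc‑chase — using once more that the fixed point $a$ does not retract into the fixed‑point‑free interior $]c,d[$, so that any path from $]a,b[$ into $]c,d[$ must leave $I$ through $a$ — gives $D(J)\se\pi_I^{-1}(a)$, and therefore $D(I)\cap D(J)=\varnothing$. I expect this case analysis to be the main obstacle: no purely topological argument can succeed, because two austro‑boreal arcs sharing a subarc and having overlapping sets $D(\cdot)$ is a topologically realizable picture, ruled out only by the dynamical inputs — equivariance of $\pi_I$ and the absence of fixed points in austro‑boreal interiors — which must be invoked at every step.

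Granting the lemma, part~\ref{pt:tectonic:dec} follows. Now $D(g)=\bigsqcup_I D(I)$ is a disjoint union of open $g$‑invariant sets, so a compact $C\se D(g)$ meets only finitely many $D(I)$, each $C\cap D(I)$ is clopen in $C$ hence compact, and it suffices to bound $\{n\in\mathbf{Z}:g^nC\cap C\ne\varnothing\}$ for $C$ contained in a single $D(I)$, $I=[a,b]$. There the restriction $\pi_I\colon D(I)\to\,]a,b[$ is a continuous, $g$‑equivariant, surjective map onto an open arc homeomorphic to $\mathbf{R}$ on which $g$ acts as a fixed‑point‑free homeomorphism; this homeomorphism is orientation‑preserving, since it extends to $[a,b]$ fixing both endpoints, and so is conjugate to $t\mapsto t+1$. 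As $g^nC\cap C\ne\varnothing$ forces $g^n\pi_I(C)\cap\pi_I(C)\ne\varnothing$, while $\pi_I(C)$ is a compact subset of $\mathbf{R}$ that the translation iterates eventually push off itself, only finitely many $n$ survive; thus $g$ acts properly discontinuously on $D(g)$.

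For part~\ref{pt:tectonic:K}, the connected components of the closed set $K(g)$ are closed and connected, hence subdendrites $K_\alpha$. To see that $g$ preserves each $K_\alpha$, fix $p\in K(g)$; for every austro‑boreal $I=[a,b]$ we have $\pi_I(p)\in\{a,b\}$ and $\pi_I(gp)=g\pi_I(p)=\pi_I(p)$, so $p$ and $gp$ lie in one common fibre $\pi_I^{-1}(a)$ or $\pi_I^{-1}(b)$; that fibre is a subdendrite contained in $X\setminus D(I)$, so $[p,gp]\se X\setminus D(I)$, and since $I$ was arbitrary, $[p,gp]\se K(g)$, i.e.\ $gp$ lies in the component of $p$. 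Lastly, if $\Fix(g)\cap K_\alpha$ were disconnected we could choose $u,v$ in it with a non‑fixed point between them; as $\Fix(g)\cap[u,v]$ is closed in the arc $[u,v]\se K_\alpha$, there is a subarc $[u',v']\se[u,v]$ with $u',v'$ fixed and $]u',v'[$ fixed‑point‑free — an austro‑boreal arc whose interior lies in $D(g)\cap K(g)=\varnothing$, absurd; and $\Fix(g)\cap K_\alpha$ is non‑empty by the dendrite fixed‑point property applied to $g|_{K_\alpha}$.
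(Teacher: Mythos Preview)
The paper does not give its own proof of this proposition: it is quoted from \cite[Proposition~10.6]{DM_dendrites}, so there is no in-paper argument to compare your attempt against.

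Your proof is correct and self-contained. The organizing idea---first-point retractions $\pi_I$ onto austro-boreal arcs, their $g$-equivariance, and the observation that fixed points retract to the endpoints $\{a,b\}$---is the natural one. The one place a reader might want an extra line is the ``short arc-chase'' in Case~2 of the disjointness lemma: the clean way to finish is to note that the boundary of the fibre $\pi_I^{-1}(a)$ in $X$ is exactly $\{a\}$; since $a\notin D(J)$ (because $\pi_J(a)\in\{c,d\}$, as you observe), the set $\pi_I^{-1}(a)\cap D(J)$ is clopen in the connected open set $D(J)$ and contains $]c,d[$, hence equals $D(J)$. Everything else---the reduction of proper discontinuity to a translation on $\mathbf{R}$ via $\pi_I$, the convexity argument giving $[p,gp]\subseteq K(g)$, and the extraction of an austro-boreal subarc to prove connectedness of $\Fix(g)\cap K_\alpha$---is clean.
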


The subdendrites that appear in (ii) in the above proposition are actually the connected component of $K(g)$. Let us precise the action of $g$ on these connected components.
 
\begin{lem}\label{rem:kg}If $C$ is a connected component of $K(g)$,  then $g$ permutes the connected components of $C\setminus\Fix(g)$ where $\Fix(g)$ is the set of fixed points of $g$. Moreover none of these connected components of $C\setminus\Fix(g)$ is invariant.
\end{lem}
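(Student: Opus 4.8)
The plan is to leverage the structure already established in Proposition~\ref{prop:indiv}. Fix a connected component $C$ of $K(g)$. By part~\ref{pt:tectonic:K} of that proposition, $C$ is a subdendrite preserved by $g$ and $\Fix(g)\cap C$ is connected and non-empty; write $F = \Fix(g)\cap C$, which is therefore a (non-empty) subdendrite of $C$. First I would check the easy half: since $g$ is a homeomorphism of $X$ fixing $F$ setwise (indeed pointwise) and preserving $C$, it permutes the connected components of the open set $C\setminus F = C\setminus\Fix(g)$. This is immediate because a homeomorphism sends connected components of an open invariant set to connected components.

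The substance of the lemma is the second assertion: no component $U$ of $C\setminus F$ is $g$-invariant. Suppose for contradiction that $g(U)=U$ for some such component. Since $F$ is a subdendrite, $U$ meets $F$ in its boundary; in fact, because $C$ is a dendrite and $F$ is a connected subdendrite, the first-point retraction $\pi_F\colon C\to F$ is constant on $\overline U$, so $\overline U\setminus U$ is a single point $p\in F$, and $\overline U = U\cup\{p\}$ is itself a subdendrite with $p$ as its unique point of attachment to $F$. Now $g(p)=p$ since $p\in F$, and $g$ preserves $\overline U$. The key step is then to produce an austro-boreal arc for $g$ inside $\overline U$, contradicting the fact that $\overline U\subseteq C\subseteq K(g)$ and that $K(g)$ by definition contains no $D(I)$ for any austro-boreal arc $I$. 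Pick any point $x\in U$ with $g(x)\neq x$ (such $x$ exists: if $g$ fixed all of $U$ then $U\subseteq F$, absurd). Consider the arc $[p,x]$ and its image $[p,g(x)]=g([p,x])$. I would argue that the restriction of $g$ to the arc from $p$ in the direction of $x$ has a ``next'' fixed point $q$ after $p$ (possibly $q$ is an endpoint at infinity of a ray, but in a dendrite $\overline U$ compactness gives a genuine fixed point), and then $[p,q]$ — or a suitable subarc on which $g$ has no interior fixed point — is austro-boreal for $g$. More carefully: the set of fixed points of $g$ in $\overline U$ that lie ``between $p$ and $U$'' — i.e.\ in the closure of $U$ — is a closed subset; taking $q$ to be a fixed point realizing the situation that the component of $\overline U\setminus\{p,q\}$ containing $]p,q[$ has no fixed point in its interior along the arc, one gets an austro-boreal arc $[p,q]$ with $D(p,q)\subseteq \overline U\setminus F$. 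But $D(p,q)\subseteq D(g)$ by definition of $D(g)$, while $D(p,q)\subseteq \overline U\subseteq K(g)$, and $D(g)\cap K(g)=\emptyset$ — contradiction.

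The main obstacle I anticipate is the existence of the ``first'' fixed point $q$ along the arc emanating from $p$ into $U$, and more precisely the extraction of a genuinely austro-boreal subarc: a priori $\Fix(g)$ restricted to a given arc could accumulate at $p$, so one cannot simply take a minimal distance. The way around this is to use that $\Fix(g)\cap\overline{U}$ is a \emph{subdendrite} (being the intersection of the subdendrite $\Fix(g)\cap C = F$ with $\overline U$, or directly: $\Fix(g)$ of a dendrite homeomorphism restricted to an invariant subdendrite is connected) together with $p$ being its unique point — wait, that is exactly the configuration: $\Fix(g)\cap\overline U=\{p\}$ since $U\cap F=\emptyset$ and $\overline U\setminus U=\{p\}$. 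So in fact there is \emph{no} interior fixed point in $\overline U$ at all, and for \emph{any} $x\in U$ lying on a ray from $p$ one should instead run the argument on the whole of $\overline U$: $g$ preserves $\overline U$, fixes only $p$, hence by applying Proposition~\ref{prop:indiv} to $g$ acting on the dendrite $\overline U$ (or directly analyzing) one sees $g$ cannot have a connected — in fact singleton — fixed set in a subdendrite that is a component of $K(g)$ \emph{unless} that subdendrite is $\{p\}$ itself; since $U\neq\emptyset$, we would need $\overline U\not\subseteq K(g)$, i.e.\ $\overline U$ meets $D(g)$, again contradicting $\overline U\subseteq C\subseteq K(g)$. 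I would phrase the final contradiction in whichever of these two equivalent ways is cleanest given the exact statement of Proposition~\ref{prop:indiv}; the invariance of $U$ forces $U=\emptyset$, which is impossible, completing the proof.
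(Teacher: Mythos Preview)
Your reduction is correct and matches the paper's: you show that a hypothetical $g$-invariant component $U$ of $C\setminus F$ has closure $\overline U$ a subdendrite on which $g$ acts with a \emph{unique} fixed point $p$, and that $p$ is an end point of $\overline U$. The paper proves exactly this. The divergence---and the gap---is in the final contradiction.

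Neither of your two proposed endings works as stated. For the austro-boreal route: an austro-boreal arc has two \emph{distinct} fixed endpoints, but you have just shown $\Fix(g)\cap\overline U=\{p\}$, so no such arc can exist inside $\overline U$. For the route via Proposition~\ref{prop:indiv} applied to $g|_{\overline U}$: that proposition is perfectly consistent with $K(g|_{\overline U})=\overline U$ and a singleton fixed set (a singleton is connected). The rotation of a tripod by a third shows that a dendrite homeomorphism can have $D=\emptyset$ and a single fixed point; what you need is the extra fact that this single fixed point cannot be an \emph{end} point, and nothing in Proposition~\ref{prop:indiv} says that.

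This is exactly the content of the lemma the paper invokes from \cite{DM_dendritesII} (their Lemma~4.8): a homeomorphism of a non-degenerate dendrite with a fixed end point has a second fixed point. If you want to avoid the citation, here is the missing argument. Pick any $x\in U$ and set $m=c(p,x,g(x))$; since $p$ is an end point of $\overline U$, $m\neq p$, and both $m$ and $g(m)$ lie on the arc $[p,g(x)]$, hence are comparable for the order $a\leq b\iff a\in[p,b]$. If $m\leq g(m)$ then $g^n(m)$ is increasing for $\leq$; any accumulation point $q$ in the compact $\overline U$ satisfies $g^j(m)\in[p,q]$ for all $j$, so the sequence actually converges along $[p,q]$ to $q$, and continuity gives $g(q)=q$ with $q\geq m>p$. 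If $g(m)\leq m$, run the same argument with $g^{-1}$. Either way you get a fixed point other than $p$ in $\overline U$, the desired contradiction.
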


\begin{proof} The connected component $C$ is $g$-invariant by Proposition~\ref{prop:indiv} and contains at least one fixed point by the fixed point property for dendrites (See for example \cite[Lemma 2.5]{DM_dendrites}). So $g$ permutes the connected components of $C\setminus\Fix(g)$. Let $C'$ be a connected component of 
$C\setminus\Fix(g)$. Let $x\in C'$ and $y\in\overline{C’}$ be distinct points. There is a sequence $(x_n)$ converging to $y$ such that $x_n\in C'$ for all $n\in\NN$. Since $C'$ is connected $[x,x_n]\subset  C'$ for all $n\in\NN$. For all $z\in ]x,y[$ and $n$ large enough, $z\in [x,x_n]$. Thus $[x,y[\subset C'$. For $y' \in\overline{C'}\setminus C'$ such that $y’\neq y$, the point $c(x,y,y')\in C'$ separates $y$ and $y'$. Since $\Fix(g)\cap C$ is connected then $\overline{C'}$ contains exactly one fixed point. This fixed point is an end point of the subdendrite $\overline{C'}$ because $C’$ is connected. Assume $C'$ is $g$-invariant, then by \cite[Lemma 4.8]{DM_dendritesII}, $g$ has two fixed points  in $\overline{C'}$ and we have a contradiction. \end{proof}

Let $X$ be a dendrite and $g\in\Homeo(X)$. It will be useful for us to decide where a point $x\in X$ lies  in the decomposition $X=K(g)\sqcup D(g)$ from Proposition \ref{prop:indiv}, using only finitely many points in the $g$-orbit of $x$.

\begin{lem}\label{lem:c}Let $g\in\Homeo(X)$ and $x\in X$. Then,  
\begin{itemize}
\item $x$ is in the interior of some austro-boreal arc if and only if $g(x)\in]x,g^2(x)[$,
\item $x\in D(g)$ if and only if $c(g(x),g^2(x),g^3(x))\in]c(x,g(x),g^2(x)),c(g^2(x),g^3(x),g^4(x)[$,
\item $x\in K(g)$ if and only if $[x,g(x)]\cap\Fix(g)\neq\emptyset$. 
\end{itemize}
\end{lem}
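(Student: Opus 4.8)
The three bullets concern, respectively, a single point being in the interior of an austro-boreal arc, membership in $D(g)$, and membership in $K(g)$. I would prove them in this order, as the second builds on the first and the third is essentially the complement of the first two.

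For the first bullet, the point is that $x$ lies in the interior of an austro-boreal arc $[a,b]$ precisely when the $g$-orbit of $x$ is confined to $]a,b[$ and moves monotonically, which locally looks like a translation on $\RR$. One direction is immediate: if $x\in\,]a,b[$ with $a,b\in\Fix(g)$ and no fixed point strictly between, then $g$ restricted to $]a,b[$ is conjugate to a non-trivial translation of $\RR$, so $g(x)$ lies strictly between $x$ and $g^2(x)$. For the converse, suppose $g(x)\in\,]x,g^2(x)[$. First, $x$ cannot be fixed (else $g(x)=x\notin\,]x,g^2(x)[$). The condition $g(x)\in\,]x,g^2(x)[$ says that the first-point map to $[x,g^2(x)]$ sends $g(x)$ into the interior; applying $g$ repeatedly and using that $g$ is a homeomorphism, I would show $[x,g(x)]\subset[x,g^2(x)]\subset\cdots$ forms a nested chain, so the points $g^n(x)$ march monotonically along an arc. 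This arc $I=\bigcup_n[x,g^n(x)]$ (together with the negative iterates) has closure $\overline I$; the two endpoints of $\overline I$ must be fixed (the forward endpoint is a limit of $g^n(x)$, and $g$ fixes it by continuity since $g$ shifts the sequence), and there is no fixed point in the open arc because the dynamics there is a strict monotone shift. Hence $x$ is in the interior of the austro-boreal arc $\overline I$. The mild technical point to check is that $\overline I$ is genuinely an arc and that its endpoints are distinct fixed points; this follows from compactness of $X$ and the monotonicity just established.

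For the second bullet, recall $D(g)$ is by definition the union of the components $D(I)$ over austro-boreal arcs $I$. A point $x$ lies in some $D(I)$ iff the first-point map $\pi_{[a,b]}$ (for the endpoints $a,b$ of $I$) carries $x$ to a point $m_x$ in the interior of $I$; and one checks that $m_x=c(x,a,b)$. The trick is that we do not get to name $a,b$ directly, so we extract them from the orbit: the centers $c(x,g(x),g^2(x))$, $c(g(x),g^2(x),g^3(x))$, $c(g^2(x),g^3(x),g^4(x))$ are the images of $x,g(x),g^2(x)$ under $\pi_I$ when $x\in D(I)$, hence lie on $I$, hence are themselves in the interior of an austro-boreal arc, and by the first bullet applied to these center points, the middle one lies between the outer two exactly when the shift is non-trivial — i.e. exactly when $x\in D(g)$ rather than $K(g)$. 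Conversely, if that betweenness holds then applying the first bullet to $y:=c(x,g(x),g^2(x))$ shows $y$ sits in the interior of an austro-boreal arc $I$; since $y\in[x,\cdot]$ this forces $x\in D(I)\subseteq D(g)$. I would need to verify carefully the identity $c(g^k(x),g^{k+1}(x),g^{k+2}(x))=g^k\big(c(x,g(x),g^2(x))\big)$ (immediate from equivariance of $c$) and the claim $c(y,g(y),g^2(y))=c(x,g(x),g^2(x))$ when $x\in D(I)$, i.e. that $\pi_I\circ g=g\circ\pi_I$ on $D(I)$, which holds because $I$ is $g$-invariant and $\pi_I$ is the first-point map.

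The third bullet is the cleanest: $x\in K(g)$ iff $x\notin D(g)$, and $x\in D(g)$ exactly when $x$ flows away from $\Fix(g)$ along an austro-boreal direction, which forces $[x,g(x)]$ to miss $\Fix(g)$ entirely; conversely if $[x,g(x)]\cap\Fix(g)=\emptyset$ then, picking the component $C$ of $X\setminus\Fix(g)$ containing $[x,g(x)]$ and using Proposition~\ref{prop:indiv} together with Lemma~\ref{rem:kg}, $C$ cannot be contained in $K(g)$ (no component of $C$ would be $g$-invariant, yet if $C\subseteq K(g)$ the fixed set of $g$ in the ambient subdendrite is connected and $C$ meets it), so $C\subseteq D(g)$ and hence $x\in D(g)$; equivalently $x\in K(g)$ forces $[x,g(x)]$ to hit $\Fix(g)$. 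So I would argue: if $x\in K(g)$, let $C$ be the component of $K(g)$ containing $x$; by Proposition~\ref{prop:indiv}(ii), $\Fix(g)\cap C$ is a non-empty connected subdendrite, and since $g$ moves $x$ strictly toward/away from this fixed set along the arc $[x,g(x)]$ unless $x$ itself is fixed, continuity and connectedness of $\Fix(g)\cap C$ give $[x,g(x)]\cap\Fix(g)\neq\emptyset$; the converse is the contrapositive of the $D(g)$ description via the second bullet.

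The main obstacle I anticipate is the converse direction of the first bullet — reconstructing the full austro-boreal arc from the single inequality $g(x)\in\,]x,g^2(x)[$ and verifying that the nested chain of arcs has a genuine arc as its closure with two distinct fixed endpoints and no interior fixed point. Everything else is bookkeeping with the center map, equivariance, and first-point retractions, plus citations to Proposition~\ref{prop:indiv} and Lemma~\ref{rem:kg}.
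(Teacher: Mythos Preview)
Your plan is correct and follows essentially the same route as the paper: build the austro-boreal arc as the closure of $\bigcup_{n\in\ZZ}[g^n(x),g^{n+1}(x)]$ for the first bullet, reduce the second bullet to the first via the first-point retraction onto the austro-boreal arc (noting that the three centers are $g$-iterates of the projection $\pi_I(x)$), and for the third bullet use Lemma~\ref{rem:kg} for the forward direction and the absence of fixed points in $D(g)$ for the converse.

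One small correction: the identity $c(y,g(y),g^2(y))=c(x,g(x),g^2(x))$ you propose to verify (with $y=c(x,g(x),g^2(x))$) is false --- if $p=\pi_I(x)$ then $y=gp$, so $c(y,g(y),g^2(y))=g^2p\neq gp$. Fortunately you do not need this identity; what you need is simply $g(y)\in\,]y,g^2(y)[$, which holds because $y,g(y),g^2(y)$ are three consecutive iterates on $]a,b[$, and then the first bullet applies. Also, in the third bullet, the converse (a fixed point in $[x,g(x)]$ forces $x\in K(g)$) is more direct than going through the second bullet: each $D(I)$ is connected and $g$-invariant, so $x\in D(I)$ would give $[x,g(x)]\subseteq D(I)$, contradicting the presence of a fixed point since $g$ acts properly discontinuously on $D(g)$.
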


\begin{proof} Let us start with elements in the interior of some austro-boreal arc. Let $[y,z]$ be some austro-boreal arc for $g$. If $x\in]y,z[$ then $g(x)\in]x,g^2(x)[$ because the action of $g$ on $]y,z[$ is conjugated to an action on the real line by translation. Conversely, if $g(x)\in]x,g^2(x)[$ then for any $n\in\ZZ$, $g^n(x)\in]g^{n-1}x,g^{n+1}(x)[$ and thus $\overline{\cup_{n\in\ZZ}[g^n(x),g^{n+1}(x)]}$ is an austro-boreal arc.

If $x\in D(y,z)$ for some austro-boreal arc $[y,z]$ then let $p$ be the image of $x$ under the first-point to $[y,z]$. The point $p$ lies in $]y,z[$ then $gp=c(x,g(x),g^2(x))\in ]y,z[$ and thus $gc(x,g(x),g^2(x))\in[c(x,g(x),g^2(x)),g^2c(x,g(x),g^2(x))]$ that is $c(g(x),g^2(x),g^3(x))$ belongs to $]c(x,g(x),g^2(x)),c(g^2(x),g^3(x),g^4(x)[$. Conversely, if $$c(g(x),g^2(x),g^3(x))\in]c(x,g(x),g^2(x)),c(g^2(x),g^3(x),g^4(x)[,$$ then by the first part this means that $c(g(x),g^2(x),g^3(x))$ lies in some austro-boreal arc. Let $y,z$ be the ends of this arc. Then, by construction, $x\in D(y,z)$.

Let $x\in K(g)$, let $C$ be its connected component in $K(g)$ and let $C_0$ be its connected component in $C\setminus\Fix(g)$. By Lemma~\ref{rem:kg} $gC_0\cap C_0=\emptyset$ and thus there is a fixed point $p$ in $[x,g(x)]$. Conversely, if there is a fixed point $p$ in $[x,g(x)]$ then $d\notin D(g)$ since the action of $g$ on $D(g)$ is properly discontinuous. 
\end{proof}

The decomposition $X=K(g)\sqcup D(g)$ is not really a group invariant for the cyclic group $\langle g\rangle$ generated by $g$. Each part is $\langle g\rangle$-invariant but the decomposition is not the same for every element of $\langle g\rangle$. Let us illustrate this phenomenon.

\begin{example} Let us fix $\xi_\pm$ two end points of the Wa\.zewski dendrite $D_3$. Let $x$ be some regular point of $D_3$ and let $C_1,C_2$ be the two connected components of $D_3\setminus\{x\}$. Let $\varphi_i$ be an homeomorphism from $D_3\setminus\{\xi_+\}$ to $C_i$. Let $\gamma$ be some homeomorphism of $D_3$ such that $[\xi_-,\xi_+]$ is austro-boreal for $\gamma$. We define an homeomorphism $g$ of $D_3$ fixing $x$ and such that $g|_{C_1}=\varphi_2\circ\varphi_1^{-1}$ and $g|_{C_2}=\varphi_1\circ\gamma\circ\varphi_2^{-1}$. The map $g$ is well-defined thanks to Lemma~\ref{lem:patchwork}. By construction, we have $K(g)=D_3$ and $D(g)=\emptyset\ $ but $K(g^2)=\{x\}$ and $D(g^2)=C_1\cup C_2$. \end{example}

Nonetheless, we have the following inclusions.

\begin{lem}\label{KD}Let $X$ be a dendrite and $g\in\Homeo(X)$. For any $n\in\NN$, $D(g)\subset D(g^n)$ and $K(g^n)\subset K(g)$.
\end{lem}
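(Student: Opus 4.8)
The claim has two parts, and they are essentially equivalent: since $K(g^n) = X \setminus D(g^n)$ and $K(g) = X \setminus D(g)$, the inclusion $K(g^n) \subseteq K(g)$ is exactly the contrapositive of $D(g) \subseteq D(g^n)$. So I would prove $D(g) \subseteq D(g^n)$ and remark that the other inclusion follows by taking complements. Fix $n \in \NN$ and a point $x \in D(g)$; the goal is to show $x \in D(g^n)$, i.e. $x$ lies in $D(I)$ for some arc $I$ that is austro-boreal for $g^n$.

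First I would recall the structure of $D(g)$: by definition $x \in D(g)$ means $x \in D(y,z)$ for some arc $[y,z]$ that is austro-boreal for $g$. The cleanest route is to use the first-point map $p = \pi_{[y,z]}(x) \in [y,z]$. I would split into two cases. If $p \in \,]y,z[\,$, then $p$ lies in the open interior of an austro-boreal arc for $g$, so (as observed after Proposition~\ref{prop:indiv}, the action of $g$ on $]y,z[$ is conjugate to a nontrivial translation on $\RR$) the $g$-orbit of $p$ is a bi-infinite sequence marching monotonically toward $z$, hence the $g^n$-orbit of $p$ does the same: the closure $\overline{\bigcup_{k \in \ZZ}[g^{nk}(p), g^{n(k+1)}(p)]}$ is an arc $[y', z']$ austro-boreal for $g^n$ (its endpoints are fixed by $g^n$, no interior fixed point). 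Since $\pi_{[y',z']}(x) = p \in \,]y',z'[\,$, we get $x \in D(y',z') \subseteq D(g^n)$. I would lean on Lemma~\ref{lem:c} here — the criterion "$g(p) \in \,]p, g^2(p)[\,$" transfers immediately to "$g^n(p) \in \,]p, g^{2n}(p)[\,$" by monotonicity.

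If instead $p \in \{y, z\}$, say $p = y$, then $y$ is a fixed point of $g$, hence of $g^n$; but $x \in D(y,z) \setminus [y,z]$ so $x$ is a genuine branch-direction off the arc at $y$ — I would instead argue directly that $x \in D(g)$ with $x \notin K(g)$, and since $g$ acts properly discontinuously on $D(g)$, so does $g^n$ (a subgroup of a properly discontinuous action is properly discontinuous); in particular the $g^n$-orbit of $x$ is not precompact near $x$, so $x \notin K(g^n)$, giving $x \in D(g^n)$. Actually the slickest formulation is global: $D(g)$ is open, $g$-invariant, and $g$ acts properly discontinuously on it (Proposition~\ref{prop:indiv}\ref{pt:tectonic:dec}); hence $g^n$ also acts properly discontinuously on $D(g)$. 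It remains to see that proper discontinuity forces $D(g) \subseteq D(g^n)$, equivalently $D(g) \cap K(g^n) = \emptyset$: if some $x \in D(g)$ lay in $K(g^n)$, it would lie in a subdendrite component $C$ of $K(g^n)$ on which $g^n$ has a fixed point, and every point of $C$ has a $g^n$-orbit accumulating somewhere (by compactness of $C$), contradicting proper discontinuity of the $g^n$-action on the open set $D(g) \supseteq$ (a neighborhood of $x$ inside $C$?) — this is the delicate point, because $C$ need not be contained in $D(g)$.

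\textbf{The main obstacle.}
The subtle step is the case $p \in \{y,z\}$, or more precisely handling points of $D(g)$ that sit "around" a fixed point of $g$ rather than around the open part of an austro-boreal arc. I expect the honest fix is not to use proper discontinuity as a black box but to use Lemma~\ref{lem:c} directly: $x \in D(g)$ iff $c(g(x),g^2(x),g^3(x)) \in \,]c(x,g(x),g^2(x)),\, c(g^2(x),g^3(x),g^4(x))[\,$, which says the "barycenters" $m_k := c(g^k(x), g^{k+1}(x), g^{k+2}(x))$ eventually lie on an austro-boreal arc of $g$ and march monotonically; passing to the subsequence $m_{nk}$ shows the same for $g^n$, yielding $x \in D(g^n)$. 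Making that monotonicity-along-a-subsequence argument precise — in particular checking the $m_k$ are genuinely ordered along an arc once $k$ is large, so that every $n$-th one still satisfies the Lemma~\ref{lem:c} criterion — is where the real work sits; everything else is formal. I would present the proof in the two-case form above, citing Proposition~\ref{prop:indiv} for the properly-discontinuous half and Lemma~\ref{lem:c} for the austro-boreal half.
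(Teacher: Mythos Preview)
Your proposal arrives at a correct conclusion but vastly overcomplicates what is a one-line argument in the paper. The paper simply observes: if $[y,z]$ is austro-boreal for $g$, then it is austro-boreal for $g^n$ as well. Indeed $y,z$ are $g$-fixed hence $g^n$-fixed, and since $g$ acts on $]y,z[$ conjugate to a nontrivial translation of $\RR$, so does $g^n$, whence $g^n$ has no fixed point in $]y,z[$. Thus every $D(y,z)$ contributing to $D(g)$ already contributes to $D(g^n)$, and $D(g)\subset D(g^n)$ follows immediately; the inclusion $K(g^n)\subset K(g)$ is the complement.

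Your detour through the first-point map, the case split, and Lemma~\ref{lem:c} is unnecessary. More to the point, your ``delicate'' Case~2 ($p\in\{y,z\}$) is in fact vacuous: for any $x\in D(y,z)$ one always has $\pi_{[y,z]}(x)=c(x,y,z)\in\,]y,z[\,$. To see this, pick any $w\in\,]y,z[\,$; since $D(y,z)$ is an open connected (hence arc-connected) subset of $X\setminus\{y,z\}$ containing both $x$ and $w$, the arc $[x,w]$ lies entirely in $D(y,z)$ and so avoids $y$ and $z$. If $c(x,y,z)=y$ then $y\in[x,z]$, and since $w\in[y,z]\subset[x,z]$ with order $x,\dots,y,\dots,w$, we would get $y\in[x,w]$, a contradiction. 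So the projection never lands on an endpoint, and all the worry about proper discontinuity and accumulation in $K(g^n)$ addresses a situation that cannot occur.

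Even in your Case~1 you do more than needed: the arc $[y',z']$ you build as the closure of the $g^n$-orbit segment of $p$ is just $[y,z]$ again, since the $g$-orbit of any interior point accumulates exactly at $y$ and $z$. There is no need to manufacture a new austro-boreal arc when the original one already works.
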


\begin{proof} It suffices to prove the first inclusion and pass to the complement to get the other one. If an arc is austro-boreal for $g$, it is austro-boreal for any of its non-trivial power and thus $D(g)\subset D(g^n)$. 
\end{proof}

 \section{Fraïssé theory and generic elements}\label{fraisse} We use the notations of \cite{Kechris-Rosendal} and denote by $\KK$ the Fraïssé structure associated to the action of $G_S$ on the countable set $\Br(D_S)$ (see \cite[\S 1.2]{Kechris-Rosendal}) and by $\mathcal{K}$ the Fraïssé class of finite substructures of $\KK$. In particular, $\KK$ is the Fraïssé limit of $\mathcal{K}$ and $\Aut(\KK)=G_S$. Let us briefly explain what is this structure. The structure $\KK$ is the set $\Br(D_S)$ with the all relations $R_{i,n}\subset \Br(D_S)^n$ given by orbits of the diagonal actions of $G_S$ on $\Br(D_S)^n$. 
 
 %This structure is \emph{ultra-homogeneous}, that is any isomorphism between finite substructures of $\KK$ can be extend to an isomorphism of $\KK$.
 Let us briefly recall what it means to be a Fraïssé class. The class $\mathcal{K}$ is a countable class of finite structures over some fixed countable signature that enjoys the following properties:
 \begin{enumerate}
 \item\textbf{Hereditary property.} For any $\mathbf{B}\in\mathcal{K}$ and $\mathbf{A}\leq\mathbf{B}$ (i.e. $\mathbf{A}$ can be embedded in $\mathbf{B}$), $\mathbf{A}\in\mathcal{K}$.
 \item\textbf{Joint embedding property.} For any $\mathbf{A},\mathbf{B}\in\mathcal{K}$, there is $\mathbf{C}$ such that $\mathbf{A},\mathbf{B}\leq\mathbf{C}$.
 \item\textbf{Amalgamation property. }For $\mathbf{A},\mathbf{B},\mathbf{C}\in\mathcal{K}$, if $f\colon\mathbf{A}\to\mathbf{B}$ and $g\colon\mathbf{A}\to\mathbf{C}$ are embeddings then there is $\mathbf{D}\in\mathcal{K}$ and embeddings $r\colon\mathbf{B}\to\mathbf{D},\ s\colon\mathbf{C}\to\mathbf{D}$ such that $r\circ f=s\circ g$.
 \end{enumerate}
 The structure $\mathbf{K}$, the Fraïssé limit of $\mathcal{K}$, is a countable structure over the same signature such that any finite substructure belongs to $\mathcal{K}$ and $\mathbf{K}$ is ultra-homogeneous: any partial isomorphism between finite substructures extends to a global isomorphism.

 \begin{rem}\label{structure} The structure $\KK$ is, a priori, given by $\Br(D_S)$ (as underlying set) and infinitely many relations corresponding to orbits in $\Br(D_S)^n$ for $n\in\NN$. But actually, $G_S$ can be realized as the automorphism group of a structure given by $\Br(D_S)$ and a unique relation: the betweenness relation $B$ where $B(z;x,y)\iff z\in[x,y]$. A bijection of $\Br(D_S)$ that preserves the betweenness relation is actually given by a homeomorphism of $D_S$ \cite[Proposition 2.4]{DMW}.  
 
A betweenness relation $B$ is of \emph{positive type} if for any $x,y,z$ there is $w$ such that $B(w;x,y)\wedge B(w;y,z)\wedge B(w;z,x)$. Moreover, a finite set with a betweenness relation with positive type (as it is the case for finite subsets of $\Br(D_\infty)$ closed under the center map) has a tree structure \cite[Lemma 29.1]{MR1388893} and thus embeds in the set of branch points of the universal dendrite $D_\infty$. We refer to \cite{MR1388893} for details about betweenness relations and being of positive type.  By Proposition \ref{prop:extension}, any isomorphism between two finite subtree of $\Br(D_\infty)$ can be realized as the restriction of some element of $G_\infty$. This way $\Br(D_\infty)$ with the betweenness relation is the Fraïssé limit of the class of finite betweenness structures with positive type. 
 
Let us observe that the center map can defined only in terms of the betweenness relation. Actually, $c(x,y,z)=w$ is equivalent to $B(w;x,y)\wedge B(w;y,z)\wedge B(w;z,x)$.\end{rem}
 
 We also denote by $\mathcal{K}_p$ the class of systems $\mathcal{S}=\langle \mathbf{A},\varphi\colon\mathbf{B}\to\mathbf{C}\rangle$ where $\mathbf{B},\mathbf{C}\subseteq \mathbf{A}$ are finite substructures of $\KK$ and $\varphi$ is an isomorphism between these substructures. Let $\mathcal{S}=\langle \mathbf{A},\varphi\colon\mathbf{B}\to\mathbf{C}\rangle$ and $\mathcal{T}=\langle \mathbf{D},\psi\colon\mathbf{E}\to\mathbf{F}\rangle$ be two systems of $\mathcal{K}_p$. An \emph{embedding} of $\mathcal{S}$ into $\mathcal{T}$ is an embedding of structures $f\colon \mathbf{A}\to\mathbf{D}$ that induces an embedding of $\mathbf{B}$ in $\mathbf{E}$, an embedding of $\mathbf{C}$ in $\mathbf{F}$ and such that $f\circ\varphi\subseteq \psi\circ f$. In that case, we also say that $\mathcal{T}$ is an \emph{extension} of $\mathcal{S}$. This notion of embeddings allows us to speak about the joint embedding property (JEP) or the amalgamation property (AP) for $\mathcal{K}_p$. A subclass $\mathcal{L}$ of $\mathcal{K}_p$ is \emph{cofinal} if for any system $\mathcal{S}\in\mathcal{K}_p$, there is $\mathcal{T}\in\mathcal{L}$ and an embedding of $\mathcal{S}$ into $\mathcal{T}$.
 For a system $\mathcal{S}=\langle \mathbf{A},\varphi\colon\mathbf{B}\to\mathbf{C}\rangle$ and $g\in G_S$, we say that $g$ \emph{induces} $\varphi$ if there is $A\subset \Br(D_S)$ and an isomorphism $f\colon \A\to A$ such that $\varphi=f^{-1}gf$. In this case, by an abuse of notation, we consider $\A$ as a subset of $\Br(D_S)$ and forgot about $f$.
 
% \begin{defn}[JEP \& AP conditions] Let $\mathcal{K}$ be a class of finite structures with some fixed countable signature.
% \begin{enumerate}
% \item The class $\mathcal{K}$ satisfies the \emph{joint embedding property} (JEP) if for any structures $\mathbf{A},\BB\in\mathcal{K}$ there is $\CC\in\mathcal{K}$ such that $\mathbf{A}$ and $\BB$ can be embedded in $\CC$.
% \item A subclass $\mathcal{L}$ of $\mathcal{K}$ has the \emph{amalgamation property} (AP) if for any embeddings $f_1\colon\mathbf{A}\to\BB_1$ and $f_2\colon\mathbf{A}\to\BB_2$, there $\CC\in\mathcal{L}$ with two embeddings $g_i\colon \BB_i\to\CC$ such that $g_1\circ f_1=g_2\circ f_2$.
% \end{enumerate}
% The class $\mathcal{K}$ has the \emph{cofinal amalgamation property} (CAP) if there is a cofinal subclass $\mathcal{L}$ that has (AP).
% \end{defn}
%As an observation on this Fraïssé structure, we have the following fact.

%\begin{lem}Assume $S$ is finite. The Fraïssé structure $\KK$ is $\aleph_0$-categorical.\end{lem}

%\begin{proof}If $S$ is finite then $G_S$ is oligomorphic and thus $\KK$ is $\aleph_0$-categorical. See \cite[p.15]{MR3469133} and \cite{Cameron_oligo}.\end{proof}
\subsection{Existence of a dense conjugacy class}
The following proposition shows that $G_\infty$ is remarkable amongst all the Wa\.zewski groups. 
\begin{prop}\label{prop:jep} The Polish group $G_S$ has a dense conjugacy class if and only if $S=\{\infty\}$.
\end{prop}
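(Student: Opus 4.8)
The plan is to go through Fraïssé theory, exactly as the label of the proposition suggests: by a theorem of Truss, in the form recorded in \cite{MR1162490, Kechris-Rosendal}, $G_S = \Aut(\KK)$ has a dense conjugacy class if and only if the class $\mathcal K_p$ of systems of partial isomorphisms has the joint embedding property. So the proposition reduces to showing that $\mathcal K_p$ has JEP precisely when $S = \{\infty\}$. The conceptual reason is that $\infty$ is the only admissible order at which a branch point is never \emph{saturated} by a finite subtree: at a branch point of finite order $n$, a finite subtree of $D_S$ can occupy \emph{all} $n$ directions, and that rigidity defeats JEP, whereas at an infinite-order point there is always a free direction, hence always room to amalgamate.

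I would treat the failure of JEP (the case $S \ne \{\infty\}$) first, as it is the crisp part. Pick a finite $n \in S$ and a branch point $o$ of order $n$, and let $V_1, \dots, V_n$ be the components of $D_S \setminus \{o\}$. Since $G_S$ contains a homeomorphism fixing $o$ and cyclically permuting the subdendrites $\overline{V_i} \cong D_S$, there is $r \in G_S$ with $r(o) = o$ and $r(V_i) = V_{i+1}$ (indices mod $n$); fixing a branch point $a_1 \in V_1$ and setting $a_i = r^{i-1}(a_1)$, we get $a_i \in V_i$ and $r(a_i) = a_{i+1}$. Let $F = \{o, a_1, \dots, a_n\}$, so that $[F]$ is a ``spider'' using all $n$ directions at $o$, and consider the systems $\mathcal S_1 = \langle F, r|_F\rangle$ and $\mathcal S_2 = \langle F', \mathrm{id}_{F'}\rangle$, with $F'$ an isomorphic copy of the same spider centred at $o'$. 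I claim they admit no common extension. Indeed, from an alleged common extension $\langle \mathbf D, \psi\rangle$, realize $\mathbf D$ in $\Br(D_S)$, extend $\psi$ to $g \in G_S$, and write $O, A_i$ (resp. $O', A'_i$) for the images of $o, a_i$ (resp. $o', a'_i$). Embeddings preserve orders and betweenness, so $O$ has order $n$ with all $n$ components of $D_S \setminus \{O\}$ containing exactly one $A_i$ apiece (and similarly for $O'$), while $g$ satisfies $g(O) = O$, $g(A_i) = A_{i+1}$, $g(O') = O'$, $g(A'_j) = A'_j$. If $O' = O$, then $g$ permutes the $n$ components of $D_S \setminus \{O\}$, and this permutation sends the component of $A_i$ to that of $A_{i+1}$ (a nontrivial $n$-cycle) while fixing the component of each $A'_j$ (the identity), which is impossible for $n \ge 2$. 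If $O' \ne O$, then $O'$ lies in a single component $C$ of $D_S \setminus \{O\}$, which contains some $A_{i_0}$; since $g$ fixes $O$ and $O'$ it satisfies $g(C) = C$, whereas $g(C)$ is the component containing $g(A_{i_0}) = A_{i_0+1}$, a different one. In both cases we contradict the common extension, so $\mathcal K_p$ fails JEP and $G_S$ has no dense conjugacy class.

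For the converse, $S = \{\infty\} \Rightarrow$ JEP for $\mathcal K_p$, I would argue directly. Given systems $\mathcal S_1 = \langle \mathbf A_1, \varphi_1\rangle$ and $\mathcal S_2 = \langle \mathbf A_2, \varphi_2\rangle$, I would use the homogeneity of $D_\infty$ (Proposition~\ref{prop:extension}) to realize the two underlying structures inside disjoint subdendrites $\overline{V_1}$, $\overline{V_2}$, where $V_1, V_2$ are components of $D_\infty \setminus \{w\}$ for a branch point $w$; this is possible because $w$ has infinite order, so there is always room. Then I would extend each $\varphi_i$ to a homeomorphism of $\overline{V_i} \cong D_\infty$ fixing the attaching point $w$, and patch these together with the identity on the remaining components by the Patchwork Lemma~\ref{lem:patchwork}; the resulting $g \in G_\infty$ has a restriction that induces a common extension of $\mathcal S_1$ and $\mathcal S_2$, so $\mathcal K_p$ has JEP.

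The step I expect to be the main obstacle — and the real content of this direction — is showing that $\varphi_i \cup \{w \mapsto w\}$ is a genuine partial isomorphism, i.e. that $\varphi_i$ respects the direction pointing towards $w$: a priori the first-point projection of $w$ to the convex hull of the domain of $\varphi_i$ need not correspond under $\varphi_i$ to the projection to the hull of its range. This is exactly where infinite order is indispensable. I would handle it by first enlarging the two systems harmlessly — adjoining to the domain and range of each $\varphi_i$ the relevant ``gate'' point (and its image under a fixed extension of $\varphi_i$ inside $G_\infty$) — and then choosing the realizations so that the other structure hangs off that gate in a \emph{fresh} direction. Since every branch point has infinitely many directions, all equivalent under the local structure and only finitely many ever occupied, such a fresh direction always exists and is consistently tracked by $\varphi_i$; this is precisely the room that disappears when some order is finite. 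Once this bookkeeping is set up, the remainder is routine.
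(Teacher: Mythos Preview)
Your ``only if'' direction is correct and is essentially the paper's argument: you build a system whose every extension to $G_S$ permutes the $n$ components around a fixed point $O$ by a full $n$-cycle, and a second system that forces the same permutation to have a fixed component. The paper makes the second system slightly simpler (two fixed branch points rather than a second spider), but the mechanism is identical.

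The ``if'' direction has a genuine gap at exactly the point you flag. You want to realize $\mathbf A_1$ inside $\overline{V_1}$ and extend $\varphi_1$ to a homeomorphism of $\overline{V_1}$ fixing the end point $w$. The obstacle is real: the first-point projections of $w$ to $[\mathbf B_1]$ and to $[\mathbf C_1]$ are determined by how $\mathbf A_1$ sits in $V_1$, and there is no reason for them to correspond under $\varphi_1$. Your proposed fix --- adjoin a ``gate'' $p$ and its image $g_1(p)$, then hang the other structure off $p$ in a fresh direction --- does not resolve this. Adding $p$ and $g_1(p)$ to the system does not force $g_1(p)=p$; so the fresh direction at $p$ is carried by any extension to a fresh direction at $g_1(p)\neq p$, and the extension cannot be the identity there. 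In other words, your patching needs the gate to be \emph{fixed}, and nothing in your sketch produces a fixed gate.

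What is missing is the fixed-point property for dendrites together with a gluing trick, and this is exactly how the paper proceeds. Extend $\varphi$ arbitrarily to $g\in G_\infty$; then $g$ has a fixed point $x\in D_\infty$. If $x$ is already a branch point, glue one fresh copy of $D_\infty$ at $x$; if not, glue countably many. The result is again $D_\infty$, the point $x$ is now a branch point, and $g$ extends by the identity on the glued copies, so it fixes pointwise an entire component of $D_\infty\setminus\{x\}$. With this claim in hand, the JEP is immediate: realize $\varphi_1$ by $f$ fixing a branch point $x$ and a component $C_x$, realize $\varphi_2$ by $g$ fixing $y$ and a component $C_y$, arrange $y\in C_x$ and $x\in C_y$; then the supports of $f$ and $g$ are disjoint and the patchwork lemma gives a common extension. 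The single idea you are missing is that the ``fresh direction'' must be chosen at a \emph{fixed} point of an extension, and such a point exists by the fixed-point property (and can be upgraded to a branch point with a fresh component by gluing).
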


\begin{proof} Thanks to \cite[Theorem 1.1]{Kechris-Rosendal}, it suffices to show that $\mathcal{K}_p$ satisfies (or not) the joint embedding property. 

Assume that $S$ contains $n\neq\infty$. Choose a point $x\in D_S$ of order $n$ and $x_1,\dots,x_n$ in distinct connected components of $D_S\setminus\{x\}$ such that there exists $g\in G_S$ with $gx_i=x_{i+1}$ ($i\in\ZZ/n\ZZ$). We set $\mathbf{A}=\mathbf{B}=\mathbf{C}=\{x,x_1,\dots,x_n\}$, $\varphi$ to be the restriction of $g$ on $\mathbf{B}$ and $\mathcal{S}=\langle \mathbf{A},\varphi\colon\mathbf{B}\to\mathbf{C}\rangle$. Let $\mathcal{T}=\langle \mathbf{D},\psi\colon\mathbf{E}\to\mathbf{F}\rangle$ where $\mathbf{D}=\mathbf{E}=\mathbf{F}$ are two points and $\psi$ is the identity. Now $\mathcal{S}$ and $\mathcal{T}$ do not have a joint  embedding because any extension of $\varphi$ in $\Homeo(D_S)$ has a unique fixed point, namely $x$.

Now, assume $S=\{\infty\}$. We claim that a partial isomorphism between finite substructures can be extended to a homeomorphism of $D_\infty$ fixing a branch point and fixing pointwise a connected component of the complement of this fixed point. 

Assume the claim holds true. Let $\mathcal{S}=\langle \mathbf{A},\varphi\colon\mathbf{B}\to\mathbf{C}\rangle$ and $\mathcal{T}=\langle \mathbf{D},\psi\colon\mathbf{E}\to\mathbf{F}\rangle$ be elements of $\mathcal{K}_p$. Thanks to the claim, we assume that $\varphi$ is induced by $f\in G_\infty$ that fixes a point $x\in\Br(D_\infty)$ and $\psi$ is induced by $g\in G_\infty$ that fixes a point $y\in\Br(D_\infty)$. Moreover, thanks to the disjunction lemma \cite[Lemma 4.3]{DM_dendrites}, we may assume that $y$ (resp. $x$) is in a component of $\mathcal{C}_x$ (resp. $\mathcal{C}_y$) pointwise fixed by $f$ (resp. $g$). Now define $h$ to be the identity on $D(x,y)$, acts like $f$ on the support of $f$ and like $g$   on the support of $g$. This $h$ yields a joint extension of $\varphi$ and $\psi$.

Let us prove the claim. Any $g\in G_\infty$ has a fixed point $x\in D_\infty$ and thus permutes the components of $D_\infty\setminus\{x\}$. These components are all homeomorphic to $D_\infty\setminus\{\xi\}$ where $\xi$ is some end point. If $x$ is a branch point, we may glue a new copy $D$ of $D_\infty$ by identifying some end point  in $D$ with  $x$. If $x$ is not a branch point, we glue countably many copies of $D_\infty$. The new dendrite is $D_\infty$ once again and $x$ is a branch point. One obtain a new dendrite homeomorphic to $D_\infty$ and one can extend $g$ by the identity on the new copies of $D_\infty$.\end{proof}

\subsection{Existence of a comeager conjugacy class} Let us recall that for a dendrite $X$ and two points $x,y\in X$, we denote by $D(x,y)$ the connected component of $X\setminus\{x,y\}$ that contains $]x,y[$. 
%If $x\in X$, we denote by $\mathcal{C}_x$ the set of connected components of $X\setminus\{x\}$.

For the remaining of this section, we consider only the dendrite $D_\infty$ and its associated Fraïssé class $\mathcal{K}$. In \cite{MR1162490}, Truss introduced a general way to prove existence of generic elements in automorphism groups of countable structures. To prove this existence, it suffices to show that $\mathcal{K}_p$ has the joint embedding property (JEP) and the amalgamation property (AP) defined above. Actually a cofinal version of (AP) is sufficient. In \cite{Kechris-Rosendal}, a weaker condition, the weak amalgamation property (WAP) has been shown to be the necessary and sufficient amalgamation condition. 

\begin{rem}\label{notAP} The class $\mathcal{K}_p$ does not have (AP). Let us consider the simple example $\mathcal{S}=\langle \mathbf{A},\varphi\colon\mathbf{B}\to\mathbf{C}\rangle$ where $x,y$ are two  distinct points of $\Br(D_\infty)$, $B=\{x\}$, $C=\{y\}$, $A=\{x,y\}$ and $\varphi(x)=y$. Actually, $\varphi$ can be realized by an automorphism $g_1$ that fixes a point $p$ in $[x,y[$ or by an element $g_2$ such that $[x,y]$ is included in some austro-boreal arc for $g_2$. If $\varphi$ is extended by $\varphi_1$ the restriction  of $
g_1$ on $\{x,p\}$ and by  $\varphi_2$ the restriction  of $g_2$ on $\{x,y\}$, it is not possible to amalgamate $\varphi_1$ and $\varphi_2$ over $\varphi$. Actually if $\psi$ is an amalgamation, it is given by an element $g\in G_\infty$ that has a fixed point in $[x,y]$ because of $\varphi_1$ and simultaneously such that $[x,y]$ is included in some austro-boreal arc for $g$ because of the first point in Lemma \ref{lem:c}. Thus we have a contradiction.\\
\end{rem}

Below, we define a subclass $\mathcal{L}$ of $\mathcal{K}_p$ for which we show cofinality and the amalgamation property. As explained in Remark~\ref{structure}, we consider the structure $\mathbf{K}$ with the betweenness relation and the associated center map $c$ and for a finite structure of positive type $\A\in\mathcal{K}$ (that is a $c$-closed subset of $\mathbf{K}$) and points $x,y,z\in\A$, we write $x\in[y,z]$ if $c(x,y,z)=x$ that is $B(x;y,z)$. For $x,y\in\A$, we define $D(x,y)=\{z\in \A, c(x,y,z)\notin\{x,y\}\}$. Let us observe that if $\A$ is embedded in $\Br(D_\infty)$ then these definitions are consistent with the ones in $D_\infty$.  For a system $\langle \mathbf{A},\varphi\colon\mathbf{B}\to\mathbf{C}\rangle\in\mathcal{K}_p$ and $x\in \B$, we write $\varphi^n(x)$ for $n\in\NN$ if $\varphi(x),\dots,\varphi^{n-1}(x)\in\B$ and define $\varphi^n(x)$ to be $\varphi(\varphi^{n-1}(x))$. In particular, when this notation is used it implies implicitly that $\varphi(x),\dots,\varphi^{n-1}(x)$ are well-defined and  belong to $\B$. If there is $n\in \NN$ such that  $\varphi^n(x)=x$ then we say that $x$ is $\varphi$-\emph{periodic}. In that case, its \emph{period} is $\inf\{n>0,\ \varphi^n(x)=x\}$. 

Let  $\mathcal{S}=\langle \mathbf{A},\varphi\colon\mathbf{B}\to\mathbf{C}\rangle\in\mathcal{K}_p$ be a system. We define a $\varphi-$\emph{orbit} to be an equivalence class under the equivalence relation on $\mathbf{B}\cup\mathbf{C}$ generated $x\sim_\varphi y\iff y=\varphi(x)$ or $x=\varphi(y)$.

\begin{defn}\label{class}We consider the subclass $\mathcal{L}\subseteq\mathcal{K}_p$ of systems  $\mathcal{S}=\langle \mathbf{A},\varphi\colon\mathbf{B}\to\mathbf{C}\rangle\in\mathcal{K}_p$ with $\A,\B$ and $\C$ of positive type  and that satisfy the following conditions. There is $\B_0\subset \B$ such that 
\begin{enumerate}
\item for any $y\in\B$, there is a $x\in\B_0$ and $p$ non-negative integer such that $y=\varphi^p(x)$.
\item For any $x\in \B_0$, $x$ is $\varphi$-periodic or there is $n\in \NN$ such that  $$c\left(\varphi^n(x),\varphi^{2n}(x),\varphi^{3n}(x)\right)\in\left]c\left(x,\varphi^n(x),\varphi^{2n}(x)\right),c\left(\varphi^{2n}(x),\varphi^{3n}(x),\varphi^{4n}(x)\right)\right[.$$
\item For any $x\in \B$ such that $x$ is not $\varphi$-periodic, there are $y,z\in \B$ $\varphi$-periodic points such that  $x\in D(y,z)$.
\item For any $x,y\in\B_0$ such that there are $n,m\in\NN$ with $\varphi^n(x)\in]x,\varphi^{2n}(x)[$ and $\varphi^m(y)\in]y,\varphi^{2m}(y)[$. 
\begin{itemize}
\item if the $\varphi^n$-orbit of $x$ and the $\varphi^m$-orbit of $y$ are separated (i.e. no point of one of the orbit is between two points of the other) then there is $z\in\B$, $\varphi$-periodic point such that $z\in [x,y]$,
 \item in the other case there are $x_0,y_0\in \B_0$ and $k\in\NN$ such that 
 \begin{itemize}
 \item the $\varphi$-orbit of $x$ is $\{x_0,\dots,\varphi^{k}(x_0)\}$, the $\varphi$-orbit of $y$ is $\{y_0,\dots,\varphi^{k}(y_0)\}$, 
 \item $y_0\in[x_0,\varphi^l(x_0)]$ or $x_0\in[y_0,\varphi^l(y_0)]$ where $l$ is the minimum such that  $\varphi^l(x_0)\in]x_0,\varphi^{2l}(x_0)[$ or $\varphi^l(y_0)\in]y_0,\varphi^{2l}(y_0)[$ 
\item and $k$ is a multiple of $l$.
\end{itemize}
 \end{itemize}
 \item If $x\in\B$ and $y,z\in \B$ are $\varphi$-periodic points such that $x\in D(y,z)$ then the length of the $\varphi$-orbits of $x$ and of $c(x,y,z)$ are the same.
  \item The set $\A$ is the $c$-closure of $\B$ and $\C$. That is, for any $x\in \A$, there are $x_1,x_2,x_3\in \B\cup\C$ such that $x=c(x_1,x_2,x_3)$.  
  \end{enumerate}\end{defn}
 
 Let us explain a bit this definition. The first point means that there is some initial set $\B_0$ such that any point of $\B$ lies in some positive $\varphi$-orbit of $\B_0$. For the second point, it means that any point of $\B_0$ is a fixed point of $g^n$ or lies in $D(g^n)$ for any $g\in G_\infty$ that induces $\varphi$ (Lemma~\ref{lem:c}). This removes the indetermination that appears in Remark~\ref{notAP}. More precisely, these conditions imply that no extension of such a system can merge distinct $\varphi$-orbits (Lemma~\ref{atmostoneorbit}). The third point means that if $x\in D(g^n)$ where $g$ induces $\varphi$ then it lies in the connected component between two fixed points of $g^n$. Condition (4) means that if $x,y$ lie in some austro-boreal part  for some power of $g$ inducing $\varphi$ and the orbits under these powers do not intertwine then they are separated by some periodic point.

\begin{lem}\label{fixpt} The class of systems  $\mathcal{S}=\langle \mathbf{A},\varphi\colon\mathbf{B}\to\mathbf{C}\rangle\in\mathcal{K}_p$ such that there is $x_0\in \mathbf{B}$ with $\varphi(x_0)=x_0$ is cofinal in $\mathcal{K}_p$.
\end{lem}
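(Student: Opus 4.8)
The plan is to reduce the statement to the fixed-point property of dendrites. Start from an arbitrary $\mathcal{S}=\langle\mathbf{A},\varphi\colon\mathbf{B}\to\mathbf{C}\rangle\in\mathcal{K}_p$. By ultra-homogeneity of $\KK$ (Proposition~\ref{prop:extension}, read through Remark~\ref{structure}), the partial isomorphism $\varphi$ extends to an automorphism of $\KK$; equivalently, $\varphi$ is induced by some $g\in G_\infty$, and we may take $\mathbf{A}\se\Br(D_\infty)$ with $g|_{\mathbf{B}}=\varphi$ and $g(\mathbf{B})=\mathbf{C}$. The goal is then to enlarge $\mathcal{S}$ by adjoining a branch point fixed by $g$.

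Next I would invoke the construction used in the proof of Proposition~\ref{prop:jep}. Every element of $G_\infty$ has a fixed point in $D_\infty$ by the fixed-point property for dendrites \cite[Lemma 2.5]{DM_dendrites} (equivalently, $K(g)\neq\emptyset$ and $\Fix(g)$ meets each component of $K(g)$ by Proposition~\ref{prop:indiv}). Fix such a fixed point $x_0$. If $x_0$ is already a branch point, keep it; otherwise glue countably many copies of $D_\infty$ along $x_0$ and extend $g$ by the identity on the new copies, as in the proof of Proposition~\ref{prop:jep}: the enlarged dendrite is again $D_\infty$, and now $x_0\in\Br(D_\infty)$ is fixed by (the extended) $g$. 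The key point to check here is that, since the gluing is performed at the single point $x_0\notin\mathbf{A}$, neither the orders of the points of $\mathbf{A}$ nor the betweenness relation induced on $\mathbf{A}$ are affected, so we still have $\mathbf{A}\se\Br(D_\infty)$ as the very same finite substructure and $g$ still induces $\varphi$.

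With $x_0$ a branch point fixed by $g$ in hand, I would set $\mathbf{A}'$ to be a finite substructure of $\KK$ containing $\mathbf{A}\cup\{x_0\}$ (for instance its $c$-closure, which is finite, $c$-closed, hence of positive type), and put $\mathbf{B}'=\mathbf{B}\cup\{x_0\}$, $\mathbf{C}'=\mathbf{C}\cup\{x_0\}$, $\varphi'=g|_{\mathbf{B}'}$. Since $g$ is an automorphism of $\KK$ with $g(x_0)=x_0$ and $g(\mathbf{B})=\mathbf{C}$, the map $\varphi'\colon\mathbf{B}'\to\mathbf{C}'$ is an isomorphism of finite substructures, so $\mathcal{T}=\langle\mathbf{A}',\varphi'\colon\mathbf{B}'\to\mathbf{C}'\rangle\in\mathcal{K}_p$ and $x_0\in\mathbf{B}'$ with $\varphi'(x_0)=x_0$. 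Finally, the inclusion $\mathbf{A}\hookrightarrow\mathbf{A}'$ is an embedding of structures inducing embeddings $\mathbf{B}\hookrightarrow\mathbf{B}'$ and $\mathbf{C}\hookrightarrow\mathbf{C}'$, and $\varphi'|_{\mathbf{B}}=g|_{\mathbf{B}}=\varphi$, so it is an embedding of $\mathcal{S}$ into $\mathcal{T}$; this exhibits the required extension and proves cofinality.

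The only step that is not pure bookkeeping is the second one: arranging that the fixed point produced by the fixed-point property can be taken to be a branch point without perturbing the prescribed finite configuration. This is precisely the gluing trick from Proposition~\ref{prop:jep}, and the thing to be careful about is that enlarging $D_\infty$ along one point outside $\mathbf{A}$ changes neither its homeomorphism type nor the substructure induced on $\mathbf{A}$. Everything else is routine verification inside $\mathcal{K}_p$.
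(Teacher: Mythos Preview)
Your proposal is correct and follows essentially the same approach as the paper's own proof: extend $\varphi$ to some $g\in G_\infty$, use the fixed-point property for dendrites to obtain a fixed point, promote it to a branch point by the gluing trick (exactly as in Proposition~\ref{prop:jep}), and then adjoin it to the system. The paper's proof is slightly terser but identical in substance; your additional remark that the gluing at $x_0\notin\mathbf{A}$ leaves the induced substructure on $\mathbf{A}$ unchanged is a helpful clarification.
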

\begin{proof}Since $\Br(D_\infty)$ is a Fraïssé limit, we know that for any system $\mathcal{S}=\langle \mathbf{A},\varphi\colon\mathbf{B}\to\mathbf{C}\rangle$, we may identify $\mathbf{A}$ with a subset of $\Br(D_\infty)$ and $\varphi$ with some restriction to $\mathbf{A}$ of an automorphism $g\in\Homeo(D_\infty)$. Since dendrites have the fixed point property, $g$ has a fixed point $x$ in $D_\infty$. If this point $x$ is a branch point, it suffices to add it to $\mathbf{A},\mathbf{B}$ and $\mathbf{C}$ (and possibly the finitely many points $c(x,y,z)$ for $y,z\in \mathbf{A},\mathbf{B}$ or $\mathbf{C}$) and to extend $\varphi$ with $\varphi(x)=x$ (or by the value of $g$ on the points $c(x,y,z)$ for $y,z\in\mathbf{B}$). If $x$ is not a branch point then we can reduce to the situation where $x$ is a fixed branch point by the following construction. We glue infinitely many copies of $D_\infty$ to $x$ by identifying a branch point of each copy with $x$. We extend $g$ on the new branches around $x$ by the identity (which is possible thanks to the patchwork lemma). The new dendrite is homeomorphic to $D_\infty$ once again and we are back in the situation where $g$ has a fixed branch point.
\end{proof}

Let  $\mathcal{S}=\langle \mathbf{A},\varphi\colon\mathbf{B}\to\mathbf{C}\rangle\in\mathcal{K}_p$ be a system with a fixed point $x_0\in\mathbf{B}$. We define a \emph{branch} around $x_0$ to be an equivalence class in $\mathbf{A}\setminus\{x_0\}$ under  the relation $x\sim_{x_0} y \iff  \neg B(x_0;x,y)$.
For two branches around $x_0$, we write $D_1\sim_\varphi D_2$ if there is $x\in D_1\cap \mathbf{B}$ such that $\varphi(x)\in  D_2$. Observe that for another $y\in \mathbf{B}\cap D_1$ then $\varphi(y)\in D_2$ because $\varphi$ preserves the betweenness relation. In that case, we write $\varphi(D_1)=D_2$ even if this equality is not true for the underlying sets (we only have $\varphi(D_1\cap \mathbf{B})\subset D_2$). We also take the liberty to write recursively $\varphi^n(D_1)$ for $\varphi(\varphi^{n-1}(D_1))$ if $\varphi^{n-1}{(D_1)}\cap \mathbf{B}\neq\emptyset$. We still denote by $\sim_\varphi$ the equivalence relation generated by this relation. A $\varphi$-\emph{orbit of branches} is an equivalence class of branches under this equivalence relation. 

\begin{lem}\label{phi-orbit} For any $\varphi$-orbit of branches $E$ around $x_0$, there is a branch $D$ and $n\in\NN$ with $D\cap B\neq\emptyset$ such that $E=\{D,\varphi(D),\dots,\varphi^{n-1}(D)\}$.
\end{lem}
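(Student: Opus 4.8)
The plan is to reduce the statement to the elementary combinatorics of a partial bijection acting on the finite set of branches around $x_0$.

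First I would check that $\varphi$ is compatible with the partition into branches. Since $x_0\in\mathbf B$ with $\varphi(x_0)=x_0$ and $\varphi$ preserves the betweenness relation on its domain, for $x,y\in\mathbf B$ we have $B(x_0;x,y)\iff B(x_0;\varphi(x),\varphi(y))$; hence $x\sim_{x_0}y$ iff $\varphi(x)\sim_{x_0}\varphi(y)$. Consequently, for each branch $D$ with $D\cap\mathbf B\neq\emptyset$, all the points of $\varphi(D\cap\mathbf B)$ lie in a single branch, which is exactly the branch denoted $\varphi(D)$ in the text; write $\Phi(D)$ for it. The same computation shows $\Phi$ is injective, and since $\varphi\colon\mathbf B\to\mathbf C$ is a bijection, $\Phi$ is a bijection from $\{D:D\cap\mathbf B\neq\emptyset\}$ onto $\{D:D\cap\mathbf C\neq\emptyset\}$, two (possibly distinct) subsets of the finite set $\mathcal D$ of all branches around $x_0$.

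Next I would observe that the equivalence relation $\sim_\varphi$ on $\mathcal D$ is precisely the orbit equivalence relation of the partial bijection $\Phi$, so a $\varphi$-orbit of branches $E$ is an orbit of $\Phi$. As $\mathcal D$ is finite, such an orbit is either a cycle $D_0\to\Phi(D_0)\to\cdots\to\Phi^{n-1}(D_0)\to D_0$, or a finite chain $D_0\to\Phi(D_0)\to\cdots\to\Phi^{k}(D_0)$ in which $D_0$ is not in the range of $\Phi$ and $\Phi^{k}(D_0)$ is not in its domain. In the cyclic case every branch of the orbit lies in the domain of $\Phi$, hence meets $\mathbf B$, and $D=D_0$ gives $E=\{D,\varphi(D),\dots,\varphi^{n-1}(D)\}$ with $D\cap\mathbf B\neq\emptyset$. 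In the chain case, when $k\geq 1$ the branch $D_0$ lies in the domain of $\Phi$, so $D_0\cap\mathbf B\neq\emptyset$, and $D=D_0$, $n=k+1$ works: each of $D_0,\dots,D_{k-1}$ is in the domain of $\Phi$, so the iterates $\varphi^i(D)$ are well-defined through $\varphi^{k}(D)=\Phi^{k}(D_0)$. A chain of length $0$ is either a $\Phi$-fixed branch, which meets $\mathbf B$ and is handled with $n=1$, or a branch meeting neither $\mathbf B$ nor $\mathbf C$; such a branch is $\sim_\varphi$-isolated and plays no role in the construction to follow (and under condition~(6) of Definition~\ref{class} it cannot occur, since the $c$-closure of $\mathbf B\cup\mathbf C$ meets no branch disjoint from $\mathbf B\cup\mathbf C$).

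The argument is essentially bookkeeping; the only point requiring any care is that the domain and range of $\Phi$ may differ, so one must treat the chain orbits alongside the cyclic ones and check that the required iterates $\varphi^i(D)$ are defined. I expect this to be the sole, and mild, obstacle.
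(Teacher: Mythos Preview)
Your proposal is correct and follows essentially the same approach as the paper: both arguments first check that $\varphi$ induces a well-defined injective partial map $\Phi$ on the finite set of branches around $x_0$, and then analyze the orbits of this partial bijection. The paper does this by hand---choosing a maximal chain $\{D,\varphi(D),\dots,\varphi^{n-1}(D)\}$ with distinct elements and showing every $D'\in E$ lies in it---whereas you invoke the standard dichotomy (cycle or chain) for orbits of a partial bijection on a finite set; these are the same argument packaged differently. One minor slip: a $\Phi$-fixed branch is a cycle of length~$1$, not a chain of length~$0$; your ``chain of length~$0$'' is precisely the isolated-branch case, which you then correctly identify and dispose of.
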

\begin{proof} Let us first prove that if $D$ and $D’$ are two branches around $x_0$ such that $\varphi(D)=\varphi(D’)$ then $D=D’$. In fact, if $x\in D$, $y\in D’$ with $\neg B(x_0;\varphi(x),\varphi(y))$ then $\neg B(x_0;x,y)$ and thus $D=D’$.

If $D,D’$ are in $E$ then there is chain $D_0,\dots,D_k$ such that $D_0=D$, $D_k=D’$ and $\varphi(D_i)=D_{i+1}$ or $\varphi(D_{i+1})=D_i$ for each $i=0,\dots,k-1$. One shows by induction on the length of the chain that $D=\varphi^k(D’)$ or $D’=\varphi^k(D)$.

Now, let $\{D,\varphi(D),\dots,\varphi^{n-1}(D)\}$ be a maximal such chain with distinct elements (which exists since $E$ is finite). Let $D’\in E$ then there is a minimal $k\in\NN$ such that $\varphi^k(D)=D’$ or $\varphi^k(D’)=D$. In the first case, by maximality of $\{D,\varphi(D),\dots,\varphi^{n-1}(D)\}$, $k\leq n-1$ and $D’\in\{D,\varphi(D),\dots,\varphi^{n-1}(D)\}$. In the second case, by maximality again, one has $\{D’,\varphi(D’),\dots,\varphi^{n-1+k}(D’)\}=\{D,\dots,\varphi^{n-1}(D)\}$. Thus $E=\{D,\varphi(D),\dots,\varphi^{n-1}(D)\}$.
\end{proof}

Observe that in Lemma~\ref{phi-orbit}, it is possible that $\varphi^{n-1}(D)\cap B\neq\emptyset$ and $\varphi^n(D)=D$.
Let $E$ be a $\varphi$-orbit of branches and $\mathbf{E}$ the union of its branches (which is $c$-closed). We define $\mathcal{S}_E=\langle \mathbf{A}_E,\varphi_E\colon \mathbf{B}_E\to \mathbf{C}_E\rangle$ where $\mathbf{A}_E=(\mathbf{A}\cap \mathbf{E})\cup\{x_0\}$, $\mathbf{B}_E=\mathbf{B}\cap\mathbf{A}_E$, $\mathbf{C}_E=\mathbf{C}\cap\mathbf{A}_E$ and $\varphi_E$ is the restriction of $\varphi$ to $\mathbf{B}_E$. Observe 
that $\mathcal{S}_E\in\mathcal{K}_p$ and if $\mathcal{S}\in\mathcal{L}$ then  $\mathcal{S}\in\mathcal{L}$ as well.

\begin{lem}\label{lem:addfixedpoint} Let $\mathcal{S}=\langle \mathbf{A},\varphi\colon \mathbf{B}\to\mathbf{C}\rangle\in \mathcal{K}_p$ be a system with a fixed point $x_0$ and point $x_1,\dots,x_k\in\mathbf{A}$ such that $x_1,\dots,x_{k-1}\in\B$, $x_1\notin \C$, $[x_{i+1},x_0]\subset [x_i,x_0]$ and $\varphi(x_i)=x_{i+1}$ pour all $i\leq k-1$. Let $\B_1=\{x\in \B\setminus\{x_1\},\ x_1\in[x_0,x]\ \textrm{and}\ x_1\in[x_0,\varphi(x)]\}$. Then there exists an extension $\mathcal{S}’=\langle \mathbf{A}’,\varphi’\colon \mathbf{B}’\to\mathbf{C}’\rangle$ of $\mathcal{S}$ such that  $\mathbf{A}’=\A\cup\{y\}$, $\mathbf{B}’=\B\cup\{y\}$, $\mathbf{C}’=\C\cup\{y\}$ and $\varphi’(y)=y$. Moreover, for any  $b\in \B_1, b’\in \B\setminus \B_1$, $y\in [b,b’]$.
\end{lem}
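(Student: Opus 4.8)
The plan is to realise the system geometrically and then to produce $y$ as a well-chosen branch point. Using the description of $\mathbf K$ by the betweenness relation (Remark~\ref{structure}), identify $\mathbf A$ with a finite $c$-closed subset of $\Br(D_\infty)$ and $\varphi$ with the restriction to $\mathbf A$ of some $g\in\Homeo(D_\infty)$; as in Lemma~\ref{fixpt} we may take $g(x_0)=x_0$. The hypothesis $[x_{i+1},x_0]\subset[x_i,x_0]$ puts $x_1,x_2,\dots,x_k$, in this order, on the arc $[x_0,x_1]$, the segment being pushed towards $x_0$ by $\varphi$. Since $g$ fixes $x_0$ and $g(x_1)=x_2\in[x_0,x_1]$, the map $g$ sends the components of $D_\infty\setminus\{x_1\}$ bijectively onto those of $D_\infty\setminus\{x_2\}$, carrying the one containing $x_0$ to the one containing $x_0$; in particular every component not on the $x_0$-side goes into the single component $C$ of $D_\infty\setminus\{x_2\}$ containing $x_1$. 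As each $b\in\mathbf B_1$ has $\varphi(b)$ still beyond $x_1$, hence in $C$, injectivity forces all of $\mathbf B_1$ to lie in one component $T$ of $D_\infty\setminus\{x_1\}$.

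If $\mathbf B_1=\emptyset$, take $y$ in a component of $D_\infty\setminus\{x_1\}$ meeting $\mathbf A$ only at $x_1$; the ``moreover'' clause is then vacuous. Otherwise let $m\in\mathbf A$ be the $c$-closure point of $\mathbf B_1\cup\{x_1\}$, so that $[x_1,m]=\bigcap_{b\in\mathbf B_1}[x_1,b]$ is an arc inside $\overline T$, and let $y$ be a branch point of $D_\infty$ lying in the open sub-arc $]x_1,m[$, chosen inside a gap of $\mathbf A$ along that arc and so placed that exactly $\mathbf B_1$ lies on the $m$-side of $y$, while $x_0$, the points $x_1,\dots,x_{k-1}$, and all of $\mathbf B\setminus\mathbf B_1$ lie on the other side.

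Then I would check that $\mathcal S':=\langle\mathbf A\cup\{y\},\ \varphi\cup\{(y,y)\}\colon\mathbf B\cup\{y\}\to\mathbf C\cup\{y\}\rangle$ lies in $\mathcal K_p$ and extends $\mathcal S$. First, $\mathbf A':=\mathbf A\cup\{y\}$ is again of positive type: since $y$ lies in a gap of $\mathbf A$ along an arc joining two points of $\mathbf A$, for any $a,a'\in\mathbf A$ the centre $c(y,a,a')$ is either $y$ or the centre of a triple of points of $\mathbf A$, hence lies in $\mathbf A'$. Second, $\varphi':=\varphi\cup\{(y,y)\}$ preserves the betweenness relation; the cleanest argument is to realise it by a homeomorphism. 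Modifying $g$ inside $\overline T$ by the Patchwork Lemma~\ref{lem:patchwork}, leaving it unchanged on the finite tree $\langle\mathbf A\rangle$, one can arrange that it also fixes $y$: this is possible precisely because $\varphi$ keeps $\mathbf B_1$ beyond $x_1$ (so that $g$ sends the $m$-side of $y$ to a set on which the images of $\mathbf B_1$ still sit past $y$) and because $x_1\notin\mathbf C$ (so that no point of $\mathbf B$ is dragged across $y$). A homeomorphism fixing both $x_0$ and $y$ preserves betweenness, hence so does $\varphi'$. By Proposition~\ref{prop:extension} the abstract $\mathcal S'$ is then realised inside $\Br(D_\infty)$, so $\mathcal S'\in\mathcal K_p$. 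Finally, for $b\in\mathbf B_1$ and $b'\in\mathbf B\setminus\mathbf B_1$ the point $y$ separates $b$ from $b'$ by the choice of $y$, that is, $y\in[b,b']$, which is the ``moreover'' assertion.

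The main obstacle is the placement of $y$ together with the betweenness check in the last step; everything else is bookkeeping. The delicate point is that $x_1$ is not fixed by $\varphi$, so $y$ cannot be hung directly off $x_1$, nor put on $]x_0,x_1[$ above $x_k$, without breaking $\varphi'(y)=y$; it is the hypotheses on the $\varphi$-chain $x_1\to\cdots\to x_k$ heading into $x_0$, the condition $x_1\notin\mathbf C$, and the exact definition of $\mathbf B_1$ that allow $y$ to be placed just beyond $x_1$, in front of $\mathbf B_1$, so as to be simultaneously fixable by a realising homeomorphism and to effect the required separation.
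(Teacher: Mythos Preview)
Your overall strategy matches the paper's: realise $\mathbf A$ inside $\Br(D_\infty)$, extend $\varphi$ to some $g\in G_\infty$ fixing $x_0$, observe that $\mathbf B_1$ lies in a single component $T$ of $D_\infty\setminus\{x_1\}$, and then place a new fixed branch point $y$ just beyond $x_1$ inside $T$, in front of every element of $\mathbf B_1$. Your argument that $\mathbf B_1$ sits in one component is correct and is essentially the same observation as in the paper (where that component is identified with the side $U$ of the austro-boreal arc not containing $x_0$).

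The gap is in the step where you produce the homeomorphism fixing $y$. You write that one can modify $g$ ``inside $\overline T$ by the Patchwork Lemma, leaving it unchanged on the finite tree $\langle\mathbf A\rangle$'' so that it also fixes $y$, justified by ``$g$ sends the $m$-side of $y$ to a set on which the images of $\mathbf B_1$ still sit past $y$''. But the definition of $\mathbf B_1$ only guarantees that $\varphi(\mathbf B_1)$ lies past $x_1$, not past $y$; for $b\in\mathbf B_1$ one may perfectly well have $\varphi(b)$ landing in another branch at $x_1$, or in $T$ but between $x_1$ and $y$. So the heuristic you give does not by itself show that $\varphi\cup\{(y,y)\}$ preserves betweenness, nor does it specify any region on which a patchwork modification could be supported without disturbing $\varphi$ on $\mathbf B$. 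The hypothesis $x_1\notin\mathbf C$ alone does not rescue this.

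The paper closes exactly this gap by exploiting the dynamics of $g$: since $g(x_1)=x_2\in[x_0,x_1]$, the point $x_1$ lies in the interior of an austro-boreal arc $I$ of $g$, and the paper places $y$ \emph{on} $I$, in $U$, close enough to $x_1$ that $y\in[x_1,b]$ for every $b\in\mathbf B_1$ and $g(y)\in]x_1,y[$. Then $g(y)$ is a nearby point on $I$, and one can choose a small sub-arc $[z,z']\subset I$ containing $[g(y),y]$ whose preimage under the first-point map to $I$ misses $\mathbf B\cup\mathbf C$ entirely. A homeomorphism $h$ of $D(z,z')$ sending $g(y)$ to $y$, extended by the identity elsewhere, then gives $g'=h\circ g$ with $g'(y)=y$ and $g'|_{\mathbf B}=g|_{\mathbf B}$ (since $h$ fixes all of $\mathbf C$). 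Placing $y$ on the austro-boreal arc is the concrete mechanism you are missing; once $y$ is on $I$, the required local modification is explicit and the separation property in the ``moreover'' clause also follows from comparing projections onto $I$.
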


\begin{proof} Let us identify $\A$ with a subset of $\Br(D_\infty)$ and let $g\in G_\infty$ such that $\varphi$ is the restriction of $g$ on $\B$. Par construction, $x_1$ belongs to the interior of an austro-boreal arc $I$ of $g$. This arc is contained in the union of two connected components of $D_\infty\setminus\{x_1\}$. Let $U$ be the one that does not contain $x_0$. By definition of $\B_1$, $\B_1\subset U$. Let choose a branch point $y$ in the interior of  $I$ and in $U$ such that for any $b\in \B_1$, $y\in [x_1,b]$ and $gy\neq x_1\in [x_1,y]$. By construction, no element of $\B$ lies in $D(y,g(y))$. Let choose a slightly larger arc $[z,z’]$ in the interior of $I$ containing $[gy,y]$ and such that the preimage in $\B\cup\C$ of $[z,z’]$ by the first point map to $I$ is empty. Let $h$ be a homeomorphism of $D(z’,z)$ such that $h(g(y))=y$ and fixing $z,z’$. Let us extend $h$ to an element of $G_\infty$ by setting $h$ to be trivial outside $D(z,z’)$. Now let $g’=h\circ g$ and $\varphi’$ to be its restriction on $\B’=\B\cup\{y\}$.
\end{proof} 

\begin{prop}\label{prop:cofinal}The class $\mathcal{L}$ is cofinal in $\mathcal{K}_p$.
\end{prop}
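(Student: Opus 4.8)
The plan is to start from an arbitrary $\mathcal S=\langle\mathbf A,\varphi\colon\mathbf B\to\mathbf C\rangle\in\mathcal K_p$, realize it by a genuine homeomorphism, and then enlarge $\mathbf A,\mathbf B,\mathbf C$ step by step---adjoining branch points of $D_\infty$, extending $\varphi$, and freely replacing the realizing homeomorphism by another one inducing the same partial map---until all of conditions (1)--(6) of Definition~\ref{class} hold. Throughout one identifies $\mathbf A$ with a finite subset of $\Br(D_\infty)$ and fixes $g\in G_\infty$ with $\varphi=g|_{\mathbf B}$, and one uses the decomposition $D_\infty=K(g)\sqcup D(g)$ (Proposition~\ref{prop:indiv}), the description of the dynamics inside components of $K(g)$ (Lemma~\ref{rem:kg}), the inclusion $D(g)\subset D(g^n)$ (Lemma~\ref{KD}), and the finite-witness criteria of Lemma~\ref{lem:c}. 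First I would apply Lemma~\ref{fixpt} to arrange a fixed branch point $x_0\in\mathbf B$, then split $\mathbf A\setminus\{x_0\}$ into $\varphi$-orbits of branches around $x_0$ (Lemma~\ref{phi-orbit}). Since any two points lying in distinct branch-orbits are separated by the periodic point $x_0$, the relevant instances of condition (4) are automatic for such pairs, so it suffices to treat one branch-orbit $E=\{D,\varphi(D),\dots,\varphi^{m-1}(D)\}$ at a time; after closing $E$ up (adjoining iterates, or modifying $g$ off $\mathbf B\cup\mathbf C$) one may assume $g^m(\overline D)=\overline D$, so that everything about $E$ is encoded by the restriction to $\overline D\cong D_\infty$ together with the powers $g^i$.

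Next I would recurse inside $D$. The return map $g^m$ of $\overline D$ has a fixed point, and that fixed point may be taken in $D$ rather than at the root $x_0$: if $\Fix(g^m)\cap\overline D$ were $\{x_0\}$ then $K(g^m)$ would have a unique component $C\ni x_0$, and since $x_0$ is an end point of $\overline D$ the connected set $C\setminus\{x_0\}$ would be a single $g^m$-invariant component, forcing $C=\{x_0\}$ by Lemma~\ref{rem:kg}; but then $D(g^m)=D$ would be a nonempty union of austro-boreal arcs of $g^m$, each needing two points of $\Fix(g^m)$, a contradiction. Adjoining this fixed point puts us back in the case of a homeomorphism of a copy of $D_\infty$ with a fixed branch point, now for $g^m$. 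Running the recursion, each structure point $p$ is eventually exposed to one of two behaviours relative to the appropriate power $g^n$: either $p$ is $\varphi$-periodic, whereupon I adjoin its whole finite $\varphi$-orbit---this realizes the periodic alternative of (2) and makes $p$ eligible as a bracketing or separating point---or $p\in D(g^n)$ lies on an austro-boreal arc of $g^n$, whereupon I adjoin $\varphi^{n}(p),\varphi^{2n}(p),\dots,\varphi^{4n}(p)$ so that the inequality of Lemma~\ref{lem:c} becomes part of the structure (this gives (2) and the hypotheses used in (4)), and then cap both ends of that arc by fixed branch points via Lemma~\ref{lem:addfixedpoint}, placed far enough out that every structure point currently sitting on the arc lies strictly between them. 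These caps, together with $x_0$, supply the periodic points demanded by (3) and by the ``separated'' case of (4). For the ``intertwined'' case of (4) and for (5) I would add, for each relevant triple, enough of the $\varphi$-orbit of $c(x,y,z)$: because $\varphi$ is the honest restriction of $g$ one has $\varphi\circ c=c\circ\varphi$ wherever both sides are defined, so the orbit-length equalities and the nesting compatibilities required by (4)--(5) hold automatically. Finally I would replace $\mathbf A$ by the (finite) $c$-closure of $\mathbf B\cup\mathbf C$, giving (6), and take $\mathbf B_0$ to be a transversal for forward $\varphi$-iteration inside $\mathbf B$, giving (1).

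The hard part will be the termination and coherence of this recursion: one must verify that after finitely many passes each of the finitely many structure points has received one of the two behaviours above---equivalently, that no structure point remains ``odometer-like'', i.e.\ non-periodic yet contained in $K(g^n)$ for every $n$. This is exactly where the freedom to change the realizing homeomorphism matters: whenever a point still fails to be periodic or to enter some $D(g^n)$, one replaces $g$ by $h\circ g$ with $h$ supported on a connected region disjoint from $\mathbf B\cup\mathbf C$ through which the tail of the point's orbit is routed---the device already used in the proof of Lemma~\ref{lem:addfixedpoint}---turning that tail into a clean translation and placing the point on an austro-boreal arc of the modified homeomorphism without disturbing orbits already handled. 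The remaining work is bookkeeping: checking that these successive modifications can always be performed on fresh regions, that interactions between distinct branch-orbits are genuinely governed by $x_0$, and that conditions (1)--(6), once arranged for one branch-orbit, survive the treatment of the others. Some care with (4) and (5) is unavoidable, since these are precisely the conditions that resolve the ambiguity exhibited in Remark~\ref{notAP}.
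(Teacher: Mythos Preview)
Your overall strategy matches the paper's proof closely: reduce to a fixed branch point via Lemma~\ref{fixpt}, decompose into $\varphi$-orbits of branches around $x_0$ via Lemma~\ref{phi-orbit}, and then recurse on each branch-orbit, passing to the return map $g^m$ and invoking Lemma~\ref{lem:addfixedpoint} to manufacture the periodic points that conditions (3) and (4) demand. The paper organizes this recursion as an explicit case analysis (their Cases~A.1--A.3 according to the position of $c(x_0,b_0,c_0)$, where $b_0$ is the $\leq$-minimal element of $\mathbf B\setminus\{x_0\}$, and Cases~B.1--B.2 according to whether the branch-orbit closes up), but the content is what you describe.

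That said, two of your shortcuts are not innocent. First, your claim that the intertwined case of~(4) and condition~(5) ``hold automatically'' because $\varphi\circ c=c\circ\varphi$ is too quick. Condition~(5) requires the \emph{lengths} of the $\varphi$-orbits of $x$ and of $c(x,y,z)$ to coincide, and the second bullet of~(4) requires the two $\varphi$-orbits to have a common length $k$ that is moreover a multiple of the austro-boreal period $l$. Equivariance of $c$ does not give this for free: the paper has to engineer it by hand (in Subcase~A.2 they explicitly force all orbits in $D(y,x_0)$ to start and end in a common interval and to have length $>4$, and in Subcase~B.2 they uniformize all orbit lengths to a common multiple of $n$ before passing to $\psi=\varphi^n$). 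Your sketch needs an analogous normalization step, not just an appeal to naturality.

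Second, your argument that $g^m$ has a fixed branch point \emph{inside} $D$ presupposes that $g^m(\overline D)=\overline D$, which you say follows ``after closing $E$ up''. But when the branch-orbit does not close (the paper's Subcase~B.1, i.e.\ $\varphi^{n-1}(D)\cap\mathbf B=\emptyset$), one cannot simply force $g^n(D)=D$ without disturbing the data; the paper instead redefines $\varphi'$ on $g^{n-1}(\mathbf B_0)$ to be $g^{-(n-1)}$, making every point $\varphi'^n$-periodic outright. Your recursion should bifurcate here as well.
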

\begin{proof}Let $\mathcal{S}=\langle \mathbf{A},\varphi\colon\mathbf{B}\to\mathbf{C}\rangle\in\mathcal{K}_p$. Thanks to Lemma~\ref{fixpt}, we may assume that $\varphi$ has a fixed point $x_0$.  
Moreover, if $g\in G_\infty$ induces $\varphi$, one can replace $\B$ by the $c$-closure of $\B\cup \left(\A\setminus \B\cup\C\right)$, $\C$ by the image of this new $\B$ by $g$ and $\A$ the $c$-closure of $\B\cup \C$. Thanks to the patchwork lemma, we may reduce to the case where $\varphi$ has a unique orbit of branches $E$ around $x_0$ and thus $\mathcal{S}=\mathcal{S}_E$. Actually, we can deal with each orbit of branches separately and patch them together at the end. So let us assume that $\mathcal{S}=\mathcal{S}_E$ and let $D$ be given by Lemma~\ref{phi-orbit}. 

The proof will be done thanks to different reductions and inductions.\\

\textbf{Case A.} The orbit of branches $E$ is reduced to $D$, that is $n=1$ in Lemma~\ref{phi-orbit}. On $\mathbf{A}\setminus\{x_0\}$, we define a partial order $x\leq y \iff x\in[x_0,y]$. This is a semi-linear order (see \cite[\S 5]{DM_dendritesII}) and thus, since $\mathbf{B}’=\mathbf{B}\setminus\{x_0\}$ and $\mathbf{C}’=\mathbf{C}\setminus\{x_0\}$ are $c$-closed, they have a unique minimum that we denote respectively by $b_0$ and $c_0$. Since $\varphi$ preserves betweenness, we know that $\varphi(b_0)=c_0$. We set $\mathbf{F}=\mathbf{B}\cup\mathbf{C}\cup\{c(x_0,b_0,c_0)\}$.

\textit{Subcase A.1} The point $c(x_0,b_0,c_0)\notin\{b_0,c_0\}$. In this case, no point of $\mathbf{B}’$ is between points of $\mathbf{C}’$ and vice versa. In particular, they are disjoint. We can define an extension $\langle \mathbf{A},\psi\colon  \mathbf{F}\to\mathbf{F}\rangle$ where $\psi|_\mathbf{B}=\varphi$, $\psi(c(x_0,b_0,c_0))=c(x_0,b_0,c_0)$ and for $c\in\mathbf{C}$, $\psi(c)=b\in\mathbf{B}$ is the unique point $b\in\mathbf{B}$ such that $\varphi(b)=c$. This extension belongs to $\mathcal{K}_p$ and it satisfies  Definition~\ref{class} with $\mathbf{F}_0=\mathbf{B}\cup\{c(x_0,b_0,c_0)\}$. Actually, for any $b\in\mathbf{F}_0$, $\psi^2(b)=b$ and thus Condition (2) is satisfied. Conditions (3) \& (4) are empty and $\A$ is still the $c$-closure of $\mathbf{F}$.

\textit{Subcase A.2} The point $c(x_0,b_0,c_0)$ is $c_0$. Observe that for any $g\in G_\infty$ that induces $\varphi$, $b_0$ belongs to an austro-boreal arc for $g$ because $[b_0,x_0]$ is mapped to $[c_0,x_0]$ and thus $c_0=g(b_0)$ belongs to $[b_0,g^2(b_0)]$. Let us denote by $x_1,\dots, x_k$ the $\varphi$-orbit of $b_0$ such that $\varphi(x_i)=x_{i+1}$ (in particular $b_0=x_{k-1}$ and $c_0=x_k$). Let $\mathbf{B}_1=\{x\in \mathbf{B},\ x_1\notin]x,\varphi(x)[\ \textrm{and}\  x_1\in]x,x_0[\}$.  
%If one choose $g\in G_\infty$ that induces $\varphi$ then by applying Lemma~\ref{lem:addfixedpoint}to $g$ and the arc $[x,x_0]$ where $x=\lim_{n\to-\infty} g^{n}(b_0)$ and $z\in]x,x_0[$ is some separating $x_1$ from $\mathbf{B}_1$. 

Thanks to Lemma~\ref{lem:addfixedpoint}, we may assume that $\varphi$ has a fixed point $y\in\mathbf{B}_1$ between $x_1$ and any other point of $\B\setminus\mathbf{B}_1$. Let $\mathbf{C}_1$ to be $\varphi(\mathbf{B}_1)$ and $\A_1$ to be $\{y\}$ and the union of the branches in $\A$ around $y$ that do not contain $b_0$. In particular, $\A_1$ is $c$-closed, contains  $\B_1\cup\C_1$. We define $\mathcal{S}_1=\langle\A_1, \varphi_1\colon \B_1\to\C_1\rangle$ where $\varphi_1$ is the restriction of $\varphi$ to $\B_1$. By an induction on the number of $\varphi_1$-orbits which is less than the number of $\varphi$-orbits, we may assume that $\mathcal{S}_1$ embeds in $\mathcal{S}_1’\in\mathcal{L}$.

Let us define $\A_2=(\A\setminus \A_1)\cup\{y\}$, $\B_2=\B\cap \A_2$, $\C_2=\C\cap\A_2$ and let $\varphi_2$ be the restriction of $\varphi$ on $\B_2$. Let choose $g\in G_\infty$ that induces $\varphi$. We set $\B_2’$ to be $\B_2$ and we add successively points to this set. For any point $x$ in $\B_2\cap]y,x_0[$, we know that $x\geq b_0$ and $g(x)\leq x_1$. We add to $\B_2’$ all points $g^m(x),g^{m+1}(x),\dots,g^l(x)$ such $m$ is the maximal integer such that $g^m(x)\geq x_1$ and $l$ is the minimal integer such that $g^l(x)\geq b_0$. For $z\in \B_2$ not in $[y,x_0]$, let $x_z\in]y,x_0[$ be $c(z,x_0,y)\in \B_2$. Let $m,l$ be the corresponding integers for $x_z$, we add $\{g^m{z},\dots,g^l{z}\}$ to $\B_2’$ in order to guarantee Condition (5). Finally, we  replace $\B’_2$ by its $c$-closure (this adds only finitely many points). Let $\C_2’=g(\B_2’)$. Let us define $(\B_2’)_0=\left(\B_2’\cap \overline{D(y,x_1)}\right)\cup\{x_0\}$. Up to add $g^i((\B_2’)_0)$ to $\B’_2$ for $i=1,2,3,4$ (and $g^i((\B_2’)_0)$ to $\C’_2$ for $i=2,3,4,5$) we may assume that $l-m>4$. Now, let $\A_2’$ be the $c$-closure of $\A_2\cup\B_2’\cup\C_2’$ and $\varphi’_2$ be the $g|_{\B_2’}$ . The system $\langle \A_2, \varphi_2\colon\B_2\to\C_2\rangle$ embeds in $\mathcal{S}_2’=\langle \A_2’, \varphi_2’\colon\B_2’\to\C_2’\rangle\in\mathcal{K}_p$. Moreover, the latter one satisfies Condition (2) in Definition \ref{class} with $n=1$ for all points. Actually Condition (1) is obtained by construction of $(\B_2’)_0$, $x_0$ and $y$ are fixed points and all the other points of $(\B_2’)_0$ are in $D(g)$ thus Conditions (2) \& (3) follow. For Condition (4), any two points of $\B’$ that lie in $]y,x_0[$ have intertwined $\varphi$-orbit and the last possibility of Condition (4) occurs. So $\mathcal{S}'_2\in\mathcal{L}$.

By the patchwork lemma, there is $g\in D_\infty$ inducing $\varphi_1'$ and $\varphi_2'$. Thus if $\varphi'$ is the restriction of $g$ on  $\B_1\cup\B_2$ $\mathcal{S}'=\langle \A_1\cup\A_2',\ \varphi'\colon \B_1\cup\B_2'\to\C_1\cup\C_2'\rangle\in\mathcal{L}$ is an extension of $\mathcal{S}$.

\textit{Subcase A.3} The point $c(x_0,b_0,c_0)$ is $b_0$. This subcase is very similar to subcase A.2 and we only indicate what should be modified. The points $x_1,\dots x_k$ are the $\varphi$-orbit of $b_0$ but this time $x_1=b_0$ and $x_{i+1}=\varphi(x_i)$.  Let $\mathbf{B}_1=\{x\in \mathbf{B},\ x_k\notin]x,\varphi(x)[ \ \textrm{and}\  x_k\in ]x_0,x[\}$.  

Thanks to Lemma~\ref{lem:addfixedpoint} applied to $\langle \A,\varphi^{-1}\colon \C\to\B\rangle$, we may assume that $\varphi$ has a fixed point $y\in\mathbf{B}_1$ between $x_k$ and any other point of $\B\setminus\mathbf{B}_1$. Let $\mathbf{C}_1$ to be $\varphi(\mathbf{B}_1)$ and $\A_1$ to be $\{y\}$ and the union of the branches in $\A$ around $y$ that do not contain $b_0$. In particular, $\A_1$ is $c$-closed and  contains  $\B_1\cup\C_1$. We define $\mathcal{S}_1=\langle\A_1, \varphi_1\colon \B_1\to\C_1\rangle$ where $\varphi_1$ is the restriction of $\varphi$ to $\B_1$. By an induction on the number of $\varphi_1$-orbits, we may assume that $\mathcal{S}_1$ embeds in some $\mathcal{S}_1’\in\mathcal{L}$ where $\varphi_1$ is the restriction of $\varphi$ on $\B_1$.

Let us define $\A_2=\A\setminus \A_1\cup\{y\}$, $\B_2=\B\cap \A_2$, $\C_2=\C\cap\A_2$ and let $\varphi_2$ be the restriction of $\varphi$ on $\B_2$. Let choose $g\in G_\infty$ that induces $\varphi$. We set $\B_2’$ to be $\B_2$ and we add successively points to this set. For any point $x$ in $\B_2\cap]y,x_0[$, we know that $x\geq b_0$ and $x\leq x_k$. We add to $\B_2’$ all points $g^m(x),g^{m+1}(x),\dots,g^l(x)$ such that $l$ is the minimal integer such that $g^m(x)\geq x_k$ and $l$ is the minimal integer such that $g^l(x)\geq b_0$. For $z\in \B_2$ not in $[y,x_0]$, let $x_z\in]y,x_0[$ be $c(z,x_0,y)\in \B_2$. Let $m,l$ be the corresponding integers for $x_z$, we add $\{g^m{z},\dots,g^l{z}\}$ to $\B_2’$. Finally, we  replace $\B’_2$ by its $c$-closure (this adds only finitely many points). Let $\C_2’=g(\B_2’)$. Let us define $(\B_2’)_0=\left(\B_2’\cap \overline{D(x_1,x_2)}\right)\cup\{x_0\}$. Up to add $g^i((\B_2’)_0)$ to $\B’_2$ for $i=1,2,3,4$ (and $g^i((\B_2’)_0)$ to $\C’_2$ for $i=2,3,4,5$) we may assume that $l-m>4$. Now, let $\A_2’$ be the $c$-closure of $\A_2\cup\B_2’\cup\C_2’$ and $\varphi’_2$ be the $g|_{\B_2’}$ . The system $\langle \A_2, \varphi_2\colon\B_2\to\C_2\rangle$ embeds in $\mathcal{S}_2’=\langle \A_2’, \varphi_2’\colon\B_2’\to\C_2’\rangle\in\mathcal{K}_p$ and $\mathcal{S}_2’\in\mathcal{L}$ for the same reasons as above. We conclude similarly as in Subcase A.2.
\bigskip

\textbf{Case B.} The integer $n$ is larger than 1. The idea is then to reduce to Case A by making a precise definition for $\psi=\varphi^n$ and apply Case A to $\psi$.

\textit{Subcase B.1} $\varphi^{n-1}(D)\cap \mathbf{B}=\emptyset$. This case is quite similar to subcase A.2.
Let us identify $\mathbf{A}$ with a subset of $\Br(D_\infty)$ and $\varphi$ with the restriction of some $g\in\Homeo(D_\infty)$. Let $\widetilde{D}$ be the connected component of $D_\infty\setminus\{x_0\}$ that contains the points of $D$. Let $\B_0$ be the $c$-closure of $\widetilde{D}\cap\left(\bigcup_{0\leq k< n-1}g^{-k}(\B)\right)\cup\{x_0\}$. This is a finite set. Let $\mathbf{C}’=\mathbf{B}’=\bigcup_{0\leq k< n-1}g^k(\B_0)$. We define $\varphi’$ to coincide with $g$ on $\bigcup_{0\leq k< n-1} g^k(\B_0)$ and $g^{-(n-1)}$ on $g^{n-1}(\B_0)$. The class $\mathcal{S}’=\langle \mathbf{A}’,\varphi\colon\mathbf{B}’\to\mathbf{C}’\rangle$ which is an extension of $\mathcal{S}$ belongs to $\mathcal{L}$ with this integer $n$ and all points  of $\B_0$ are $\varphi^n$-fixed points.

\textit{Subcase B.2} $\varphi^n(D)\cap \mathbf{B}\neq\emptyset$. So $\varphi^n(D)=D$.  Up to choose $g\in G_\infty$ and add points in each $\varphi$-orbits, we may assume that all $\varphi$-orbits start, finish in $D$ and have  all the same length that is at least $4n$. That is,  we may assume that $\B_0=D\cap\mathbf{B}$ satisfies $\bigcup_{0\leq k \leq ln-1}\varphi^k(\B_0)=\mathbf{B}$ with $l\geq4$ and $\B_0$ is $c$-closed. This will guarantee the very last point of Condition (4) in Definition~\ref{class}. We define the system $\mathcal{T}=\langle \A\cap D\cup\{x_0\},\psi\colon\bigcup_{0\leq k \leq l-1}\varphi^{kn}(\B_0)\to\bigcup_{0\leq k \leq l-1}\varphi^{(k+1)n}(\B_0)\rangle$ where $\psi$ is the restriction of $\varphi^n$ to $\bigcup_{0\leq k \leq l-1}\varphi^{kn}(\B_0)$. Observe that the number of $\psi$-orbits is at most the same number of $\varphi$-orbits. Now, $\mathcal{T}\in \mathcal{K}_p$ and falls in Case A. So we can find an embedding of $\mathcal{T}$ in some $\mathcal{T}’=\langle \A’,\psi’\colon \B’\to\C’\rangle\in\mathcal{L}$. Let us define $\B"=\bigcup_{0\leq i<n}g^i(\B’)$ and $\varphi’$ to be $g$ on $\bigcup_{0\leq i<n-1}g^i(\B’)$ and $\psi’\circ g^{-(n-1)}$ on $g^{n-1}(\B’)$. Let $\C"=\varphi’(\B")$ and $\A"=\A’\cup\B"\cup\A$. The system $\mathcal{S}’=\langle\A",\varphi’\colon \B"\to\C"\rangle$ lies in $\mathcal{K}_p$ and contains an embedding of $\mathcal{S}$. Moreover, $\mathcal{S}’\in\mathcal{L}$ because $\mathcal{T}’\in\mathcal{L}$. Actually if $\B_0$ is the initial set for $\mathcal{T}’$ as in Definition~\ref{class} then it is also an initial set for $\mathcal{S}’$ and for $x\in\B_0$ and $n’\in\NN$ is as in Definition~\ref{class}.(2) for $\mathcal{T’}$ then $nn’$ satisfies Conditions (2)-(4) for $x$ and $\varphi’$. Condition (5) is satisfied since it is satisfied for $\mathcal{T’}$.

Before the subdivision into cases A and B, Condition (6) was guaranteed and during the extension that we did in cases A and B, no point outside the $c$-closure of $\B\cup\C$ was added to $\A$ and thus Condition (6) is still satisfied at the end.
\end{proof}

\begin{prop}\label{WAP} The class $\mathcal{L}$ has the amalgamation property.\end{prop}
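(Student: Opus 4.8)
The goal is to show $\mathcal{L}$ has the amalgamation property: given systems $\mathcal{S}, \mathcal{T}_1, \mathcal{T}_2 \in \mathcal{L}$ with embeddings $\mathcal{S} \hookrightarrow \mathcal{T}_1$ and $\mathcal{S} \hookrightarrow \mathcal{T}_2$, produce $\mathcal{U} \in \mathcal{L}$ with compatible embeddings of $\mathcal{T}_1$ and $\mathcal{T}_2$. The strategy is to work inside $D_\infty$: realize all three systems via concrete homeomorphisms of $D_\infty$, and use the rigidity conditions (1)--(6) of Definition~\ref{class} to control how the $\varphi$-orbits of $\mathcal{T}_1$ and $\mathcal{T}_2$ interact over $\mathcal{S}$. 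The key point conditions (2)--(4) achieve (as remarked after Definition~\ref{class}, via Lemma~\ref{lem:c}): for a system in $\mathcal{L}$ each initial point is unambiguously either periodic (lies in $K(g)$ with the whole orbit fixed by a power) or lies in $D(g^n)$ for a specified $n$, \emph{for every} $g \in G_\infty$ inducing $\varphi$. So the ``merging of orbits'' indeterminacy flagged in Remark~\ref{notAP} cannot occur inside $\mathcal{L}$.

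\textbf{Key steps.} First I would prove the technical lemma that is referenced just before Proposition~\ref{WAP} (call it Lemma~\ref{atmostoneorbit}): \emph{no extension of a system $\mathcal{S} \in \mathcal{L}$ inside $\mathcal{K}_p$ can merge two distinct $\varphi$-orbits of $\mathcal{S}$}, i.e. if $x, y \in \B$ lie in different $\varphi$-orbits, then in any extension $\langle \mathbf{D}, \psi \colon \mathbf{E} \to \mathbf{F}\rangle$ the $\psi$-orbits of (the images of) $x$ and $y$ remain disjoint. This follows because Conditions (2)--(5) pin down, purely in terms of the betweenness relation on $\B \cup \C$ (hence invariantly under embedding), whether each orbit is periodic-of-a-given-length or translation-like-for-a-given-power and \emph{where} it sits relative to other orbits; two orbits with incompatible such data can never be fused, and two orbits with a common period could only be fused by identifying points, which embeddings of $\mathcal{K}_p$ do not do. Second, using Lemma~\ref{fixpt}/Proposition~\ref{prop:cofinal} I may assume all systems have a common fixed point $x_0$ and, after applying the patchwork lemma (Lemma~\ref{lem:patchwork}) branch-by-branch around $x_0$ as in the proof of Proposition~\ref{prop:cofinal}, reduce to the case of a single $\varphi$-orbit of branches. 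Third, realize $\mathcal{T}_1, \mathcal{T}_2$ by homeomorphisms $g_1, g_2 \in G_\infty$ agreeing on a sub-dendrite carrying $\mathcal{S}$; by Lemma~\ref{atmostoneorbit} the ``new'' orbits of $g_1$ (those in $\B_1 \setminus \B$) and of $g_2$ can be placed in disjoint regions of $D_\infty$ away from each other — here Proposition~\ref{prop:extension} lets us move a finite labelled subtree of $\B_1$ into a branch of $D_\infty$ disjoint from the corresponding subtree of $\B_2$ — and then glue via the patchwork lemma to get a single $g \in G_\infty$ inducing a common extension $\varphi$ on $\B_1 \cup \B_2$. Fourth, set $\mathbf{D}$ to be the $c$-closure of $\B_1 \cup \C_1 \cup \B_2 \cup \C_2$ and verify that $\mathcal{U} = \langle \mathbf{D}, \varphi \colon \B_1 \cup \B_2 \to \C_1 \cup \C_2\rangle$ satisfies Conditions (1)--(6): (1) and (6) by construction (take $\B_0$ to be the union of the two initial sets), (2)--(4) because these are inherited orbit-by-orbit and each orbit of $\mathcal{U}$ comes from $\mathcal{T}_1$ or $\mathcal{T}_2$, after possibly enlarging orbits as in Subcase A.2 to equalize lengths where a periodic point must separate two translation-like orbits, and (5) by the length-matching already present in $\mathcal{T}_1, \mathcal{T}_2$.

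\textbf{Main obstacle.} The delicate part is Condition (4) — the bookkeeping when a point of $\mathcal{T}_1$ and a point of $\mathcal{T}_2$ both lie in austro-boreal parts for powers of $\varphi$. If their respective orbits are ``separated'' we must exhibit a $\varphi$-periodic point of $\mathbf{D}$ between them; this forces us, when gluing $g_1$ and $g_2$, to either place the two orbits so that some already-present fixed point lies between them, or to \emph{insert} a new fixed branch point in between (this is exactly what Lemma~\ref{lem:addfixedpoint} is for) and, by Condition (5), to then lengthen the two orbits so that the newly separating periodic point has matching orbit lengths on both sides. The non-separated (intertwined) case is more rigid still: Condition (4) demands the two orbits have equal length $k$ with $k$ a multiple of the translation power $l$, so one must check this is preserved — which it is, since by Lemma~\ref{atmostoneorbit} intertwined orbits in the amalgam can only be intertwined orbits already present in one of $\mathcal{T}_1$, $\mathcal{T}_2$. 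Keeping all of Conditions (2)--(5) simultaneously true while performing these gluings — rather than each in isolation — is the real content, and it is handled exactly by the careful orbit-padding device (adding $g^i(\B_0)$ for $i = 1, 2, 3, 4$ to make orbits long enough) used throughout the proof of Proposition~\ref{prop:cofinal}.
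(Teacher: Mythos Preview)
Your plan identifies the right key lemma (Lemma~\ref{atmostoneorbit}) but skips the central construction. Your Step~2 reduction to ``a single $\varphi$-orbit of branches'' is borrowed from the proof of cofinality (Proposition~\ref{prop:cofinal}), where one is free to \emph{replace} the system by an extension; in the amalgamation problem $\mathcal{S},\mathcal{T}_1,\mathcal{T}_2$ are given and the paper makes no such reduction. More seriously, your Steps~3--4---place the new $\mathcal{T}_1$- and $\mathcal{T}_2$-orbits in disjoint fresh branches of $D_\infty$---only cover the easiest of the three cases the paper actually treats. The paper fixes a single $g\in G_\infty$ realizing $\varphi$ and then, for each $x\in\B_i\setminus\B$, distinguishes: (A) the $\varphi_i$-orbit of $x$ contains a $\varphi$-orbit of some $y\in\B$; (B) it does not, but $x\in D(y,z)$ for some $\varphi$-periodic $y,z\in\B$; (C) neither. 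Case~(C) is your ``disjoint regions'' move. In Case~(A), Condition~(2) forces $y\in D(g^n)$, so the image of $x$ must be placed \emph{on the actual $g$-orbit of $y$}---there is no freedom to relocate it. The heart of the proof is Case~(B): both $\mathcal{T}_1$ and $\mathcal{T}_2$ may deposit new orbits inside the \emph{same} region $D(y,z)$, and these cannot be pushed out to disjoint branches without destroying the embedding over $\mathcal{S}$. The paper resolves this via Lemmas~\ref{period}, \ref{lem:fix}, \ref{lem:per}: insert a new $g$-periodic point $t\in\,]y,z[$ of the correct period, then use auxiliary homeomorphisms $l_1,l_2$ to squeeze the $\mathcal{T}_1$-material into $D(y,t)$ and the $\mathcal{T}_2$-material into $D(t,z)$ (with three further subcases according to how the new orbits sit relative to $\B\cap D(y,z)$). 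Your ``Main obstacle'' paragraph mentions inserting periodic points only as a post-hoc check of Condition~(4); in fact this insertion \emph{is} how the amalgam is built.

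A minor point: your Step~5 (verifying the amalgam lies in $\mathcal{L}$) is not carried out in the paper---the amalgam is produced only in $\mathcal{K}_p$, which together with cofinality already yields WAP for $\mathcal{K}_p$.
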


To prove this proposition, we rely on a few lemmas.

\begin{lem}\label{period} Let $\mathcal{S}=\langle \A,\varphi\colon\B\to\C\rangle\in\mathcal{L}$ and $\mathcal{S}\to \mathcal{T}=\langle \mathbf{D},\psi\colon \mathbf{E}\to\mathbf{F}\rangle$ be an embedding. If $x,y\in\B$ are $\varphi$-periodic points with periods $n\leq m$. Assume that $]x,y[\cap\B$ does not contain any periodic point then the components $g^k(D(x,y))$ are disjoint for $0\leq k<m$ and for any $z\in \mathbf{D}$, $\psi$-periodic point with $z\in]x,y[$, the period of $z$ is $m$.
\end{lem}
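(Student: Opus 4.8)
The statement to prove (Lemma~\ref{period}) concerns an embedding $\mathcal{S}\to\mathcal{T}$ in $\mathcal{L}$, with $x,y\in\B$ being $\varphi$-periodic points of periods $n\leq m$, such that $]x,y[\cap\B$ contains no periodic point. We must conclude: (a) the translates $g^k(D(x,y))$ are pairwise disjoint for $0\leq k<m$, where $g$ is an element inducing $\psi$ (or $\varphi$ — to be clarified); and (b) every $\psi$-periodic point $z\in]x,y[$ has period exactly $m$.

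The plan is as follows. First I would realize the abstract picture concretely: by the Fraïssé property, identify $\mathbf{D}$ as a finite subset of $\Br(D_\infty)$ and $\psi$ as the restriction of some $g\in G_\infty$; since the embedding $\mathcal{S}\to\mathcal{T}$ respects the structure, $x,y$ remain $\psi$-periodic with periods $n$ and $m$, and the hypothesis that $]x,y[\cap\B$ has no $\varphi$-periodic point must be leveraged against Condition (3) of Definition~\ref{class} for $\mathcal{S}$: any non-periodic point of $\B$ sits in some $D(p,q)$ for $\varphi$-periodic $p,q\in\B$. Combined with Condition (5) (the orbit lengths of $z$ and $c(z,p,q)$ agree) and Condition (6) ($\A$ is the $c$-closure of $\B\cup\C$), I expect to pin down that $x$ and $y$ are consecutive periodic points in $\A$ along the arc, so that no periodic point of $\A$ (hence of any extension consistent with the structure) lies strictly between them except possibly new ones introduced in $\mathbf{D}$.

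For part (b): let $z\in\,]x,y[$ be $\psi$-periodic of period $k$. The $\psi$-orbit of $z$ partitions into orbit points; I would use that $c(x,z,\psi^j(z))$ and the betweenness relation force the orbit of $z$ to ``rotate'' compatibly with the orbits of $x$ and $y$. Since $g^n$ fixes $x$ and the arc $[x,z]$ starts at the fixed point $x$, the element $g^n$ maps $[x,z]$ to $[x,\psi^n(z)]$; a counting/semilinear-order argument (the order $\leq$ from $x_0$ used in Case A of the cofinality proof, here anchored at $x$ and $y$) should show $k$ divides into the structure in a way that forces $k=m$ rather than $k\mid n$ or an intermediate divisor — the key point being that $y$ has period $m>n$, so the ``return time'' of the arc $[x,y]$ under $g$ is exactly $m$, and $z$ being on this arc inherits this return time. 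For part (a): if $g^i(D(x,y))=g^j(D(x,y))$ with $0\leq i<j<m$, then $g^{j-i}$ stabilizes $D(x,y)$, hence (by invariance of endpoints of such a component, cf.\ the reasoning around Lemma~\ref{rem:kg}) permutes $\{x,y\}$; since $g^{j-i}(x)=x$ would force $n\mid(j-i)$ giving $g^{j-i}(y)=y$ contradicting $j-i<m$, and $g^{j-i}$ swapping $x,y$ is impossible because $x,y$ have different periods $n\neq m$ (an element swapping them would conjugate one orbit to the other). Hence the translates are disjoint.

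The main obstacle I anticipate is part (a) in the degenerate subcase $n=m$, where the ``different periods'' argument collapses; there one must instead use that a $g^{j-i}$ swapping $x$ and $y$ would have a fixed point strictly inside $[x,y]$ (by the fixed-point property for the subdendrite $\overline{D(x,y)}$ together with Lemma~\ref{rem:kg}-type non-invariance), and that fixed point would be $\psi$-periodic, contradicting — after checking it survives to a point recorded in $\mathbf{D}$ via $c$-closure and Conditions (5)--(6) — the hypothesis that $]x,y[\cap\B$ has no periodic point, or more precisely forcing its period to be incompatible. Making this last step rigorous, i.e.\ controlling which periodic points are ``visible'' in the finite structures $\B$ versus the larger $\mathbf{E}$, is the delicate bookkeeping, and I would handle it by invoking Lemma~\ref{lem:c} to detect the position of points in $K(g^m)$ versus $D(g^m)$ purely from finitely many orbit points that do lie in $\B$.
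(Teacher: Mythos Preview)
Your argument for part (a) has a genuine gap: you prove only that the translates $g^k(D(x,y))$ are pairwise \emph{distinct}, not pairwise \emph{disjoint}. You write ``if $g^i(D(x,y))=g^j(D(x,y))$ then $g^{j-i}$ stabilizes $D(x,y)$, hence permutes $\{x,y\}$,'' but the lemma asks for disjointness, and two sets of the form $D(a,b)$, $D(c,d)$ can overlap without being equal (for instance when $a=c$ and $d\in\,]a,b[$, both contain $]a,d[$). So even granting the endpoint-permutation step, you have not ruled out overlap. The paper's argument is different and does not pass through stabilization at all: it uses the $\varphi$-fixed point $p\in\B$ guaranteed by $\mathcal{S}\in\mathcal{L}$. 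Since $m>1$, the component of $D_\infty\setminus\{x\}$ containing $y$ does not contain $p$, and likewise after applying $g^k$. When $k$ is not a multiple of $n$, the center $c(p,x,g^k(x))$ separates $D(x,y)$ from $g^k(D(x,y))$. When $k$ is a multiple of $n$ (so $g^k(x)=x$ but $g^k(y)\neq y$), an overlap forces one to look at $c(x,y,g^k(y))$, which lies in $\B$ by $c$-closure (since $x,y,g^k(y)\in\B$), and this produces a $\varphi$-periodic point in $]x,y[\cap\B$, contradicting the hypothesis. Your outline never invokes $p$ or the $c$-closure of $\B$, which are exactly the tools that make disjointness (rather than mere distinctness) go through.

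For part (b) your sketch is vague and does not land on a clean mechanism. The paper's route is short once (a) is in hand: disjointness of the $m$ translates forces the $\psi$-period of $z$ to be at least $m$; then $g^m$ fixes both $x$ and $y$, hence preserves $]x,y[$, so if $g^m(z)\neq z$ one gets $g^m(z)\in\,]z,g^{2m}(z)[$ and Lemma~\ref{lem:c} places $z$ in $D(g^m)$, contradicting (via Lemma~\ref{KD}) the periodicity of $z$. Your divisibility and ``return time'' heuristics point in this direction but don't isolate the use of Lemma~\ref{lem:c}, which is what actually closes the argument.
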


\begin{proof} Let $g\in G_\infty$ inducing $\psi$. We claim that for all $g^k(D(x,y))$ are disjoint for $0\leq k<m$ and $m$ is a multiple of $n$. This implies that the period of $z$ is at least $m$. Now, we have $g^m(]x,y[)=]x,y[$. If the period of $z$ is not $m$ then $g^m(z)\in]z,g^{2m}(z)[$ and $z\in D(g^m)$. Thanks to Lemma \ref{lem:c} and \ref{KD}, this leads to a contradiction. 

It remains to prove the claim.  If $m=1$, the claim is straightforward. So let us assume that $m>1$. Since $\mathcal{S}\in\mathcal{L}$, $g$ has some fixed point $p\in\B$. Since $m>1$, $y$ is not in the element of $\mathcal{C}_x$ that contains $p$. So, if $k$ is not a multiple of $n$, then $c(p,x,g^k(x))$ separates $D(x,y)$ and $g^k(D(x,y))$. Thus $D(x,y)$ and $g^k(D(x,y))$ are disjoint. Now, if $D(x,y)$ and $g^k(D(x,y))$ are not disjoint then the point $c(x,y,g^k(y))$ is necessarily a non-periodic point because otherwise it would belongs to $\B$ which is $c$-closed. By assumption there is no such point. 
\end{proof}

\begin{lem}\label{lem:fix}Let $g\in G_\infty$, let  $x,y$ be $g$-fixed points in $D_\infty$ and let $M$ be some finite set in $]x,y[$. There are $z\in ]x,y[$ such that $M\subset]x,z[$, $p\in]z,y[$ and $g'\in G_\infty$ that is equal to $g$ on $D_\infty\setminus D(z,y)$ and that fixes $p$.
\end{lem}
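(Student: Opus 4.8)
The plan is to construct $g'$ explicitly by modifying $g$ inside the arc $]x,y[$ near the endpoint $y$, using the homogeneity of $D_\infty$ together with the patchwork lemma. First I would choose $z\in{}]x,y[$ so that the finite set $M$ lies in $]x,z[$; this is possible since $M$ is finite and $]x,y[$ is homeomorphic to an open interval. I then want to replace the action of $g$ on the half-open region "past" $z$ toward $y$ by something that has a genuine fixed point $p\in{}]z,y[$, while keeping $g$ untouched on $D_\infty\setminus D(z,y)$ — in particular on the whole subdendrite containing $x$ and $M$, so that the two agree where required.

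The key point is to understand how $g$ behaves on the subarc $[z,y]$ and the component $D(z,y)$. Since $x$ and $y$ are fixed, $g$ preserves the arc $[x,y]$, hence maps $[z,y]$ into $[x,y]$; pushing $z$ slightly closer to $x$ if necessary, I can arrange that $g(z)$ and $g^{-1}(z)$ both lie in $]x,y[$ on the $x$-side of $z$, so that $g$ maps $D(z,y)$ strictly inside itself toward $y$ or behaves in a controlled way near $y$. Then, much as in the proof of Lemma~\ref{lem:addfixedpoint}, I would pick a point $p\in{}]z,y[$ and an auxiliary homeomorphism $h$ supported on a slightly enlarged component $D(z',y)$ with $z'\in{}]z,p[$, designed so that $h\circ g$ (or $g\circ h$) fixes $p$: concretely $h$ should send $g(p)$ back to $p$ (if $g(p)\neq p$) while fixing the boundary point $z'$, and then $g':=h\circ g$ agrees with $g$ outside $D(z',y)\subseteq D(z,y)$ and fixes both $p$ and $y$. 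One uses Lemma~\ref{lem:patchwork} to glue $h$ (acting on the connected open set $D(z',y)$, extended by the identity on the boundary) to the identity elsewhere, so $h\in G_\infty$, and hence $g'\in G_\infty$.

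A slightly cleaner route, avoiding case analysis on the dynamics of $g$ near $y$: choose $z\in{}]x,y[$ with $M\subset{}]x,z[$, then pick $z'\in{}]z,y[$ and $p\in{}]z',y[$; note $g(D(z',y))$ is some component of $D_\infty$ minus a point, homeomorphic to $D(z',y)$ itself, and $g$ restricted there extends by the identity on $z'$. Using Proposition~\ref{prop:extension} (homogeneity of $D_\infty$) I can find a homeomorphism $k$ of $D_\infty$ supported in $\overline{D(z',y)}$, fixing $z'$ and $p$, such that $k$ undoes the displacement of $p$ by $g$ — i.e. $k(g(p))=p$ — and set $g'=k\circ g$. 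Since $k$ is the identity off $D(z',y)\subseteq D(z,y)$, we get $g'=g$ on $D_\infty\setminus D(z,y)\supseteq D_\infty\setminus D(z',y)$, as required, and $g'(p)=p$, $g'(y)=y$.

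The main obstacle I anticipate is ensuring consistency at the boundary: one must verify that $g$ indeed sends the chosen subregion near $y$ into a region on which the correcting homeomorphism can be supported without disturbing $D_\infty\setminus D(z,y)$, i.e. that after shrinking $z$ toward $x$ we genuinely have control of $g([z',y])\subseteq[x,y]$ and of the component $g(D(z',y))$, so that the patched map is a well-defined homeomorphism fixing $p$. This is a finiteness/continuity argument — $M$ is finite and $[x,y]$ is an interval on which $g$ acts continuously fixing both ends — so pushing $z$ close enough to $x$ always works; the verification that $g'\in G_\infty$ is then immediate from Lemma~\ref{lem:patchwork}.
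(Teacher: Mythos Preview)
Your overall strategy---modify $g$ near $y$ by post-composing with a correcting homeomorphism supported in some $D(z',y)$---is different from the paper's, and as written it has two real gaps.

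First, a slip: you ask for $k$ to fix $p$ \emph{and} to satisfy $k(g(p))=p$. By injectivity this forces $g(p)=p$, which you cannot assume. Presumably you meant that $k$ fixes the boundary points $z'$ and $y$.

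More substantively, your claim that $g':=k\circ g$ agrees with $g$ on $D_\infty\setminus D(z,y)$ (or even on $D_\infty\setminus D(z',y)$) is not automatic. For $q\notin D(z,y)$ one has $g'(q)=k(g(q))$, and this equals $g(q)$ only when $g(q)$ lies outside $\supp(k)\subseteq D(z',y)$. Since $g(D_\infty\setminus D(z,y))=D_\infty\setminus D(gz,y)$, you need $D(z',y)\subseteq D(gz,y)$, i.e.\ $z'\in[gz,y]$. You never arrange this: your choices place $z'\in{}]z,y[$ with no reference to $gz$. Likewise, for $k$ (supported in $D(z',y)$) to move $g(p)$ at all you need $g(p)\in D(z',y)$, which you gesture at via continuity but do not secure---and your order of choices, $z'$ before $p$, makes it awkward to enforce.

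Both gaps are repairable (choose $z'\in{}]\max(z,gz),y[$, then take $p$ close enough to $y$ that $g(p)\in{}]z',y[$), but the paper's argument is shorter and avoids them entirely. The paper does not post-compose with a correction; it simply \emph{replaces} $g$ on $D(z,y)$. One picks any $p\in{}]z,y[\cap{}]gz,y[$ and, using Proposition~\ref{prop:extension}, a homeomorphism $f\colon \overline{D(z,y)}\to \overline{D(gz,y)}$ with $f(z)=g(z)$, $f(y)=y$ and $f(p)=p$. Since $f$ and $g$ agree on the boundary $\{z,y\}$, the map $g'$ equal to $f$ on $D(z,y)$ and to $g$ elsewhere is a well-defined homeomorphism; it equals $g$ off $D(z,y)$ by construction and fixes $p$. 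No tracking of $g(p)$ or of where $g$ sends $D(z',y)$ is required.
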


\begin{proof}Since $M$ is finite, one can find $z\in ]x,y[$ such that $M\subset]x,z[$. Now, choose $p\in]z,y[\cap]gz,y[\subset]x,y[$. Find a homeomorphism $f$ from $D(z,y)$ to $D(gz,y)$ fixing $p,y$ and such that $f(z)=g(z)$ (this is possible thanks to \cite[Proposition 6.1]{DM_dendritesII}). Now define, $g'$ to be $f$ on $D(z,y)$ and $g$ elsewhere.
\end{proof}

\begin{lem}\label{lem:per} Let $\mathcal{S}=\langle \A,\varphi\colon\B\to\C\rangle\in\mathcal{L}$ and $\mathcal{S}\to \mathcal{T}=\langle \mathbf{D},\psi\colon \mathbf{E}\to\mathbf{F}\rangle$ be an embedding. Let $x,y\in\B$ be $\varphi$-periodic points. Assume that $]x,y[\cap\B$ does not contain any periodic point. Then there is an embedding $\mathcal{T}\to \mathcal{T}'=\langle \mathbf{D}’,\psi’\colon \mathbf{E}’\to\mathbf{F}’\rangle$ such that there is $p\in \mathbf{D}’$ with $\mathbf{D}’=\mathbf{D}\cup\{p\}$, $\mathbf{E}’=\mathbf{E}\cup\{p\}$, $\mathbf{F}’=\mathbf{F}\cup\{p\}$, $p\in ]x,y[$, $z$ is $\psi’$-periodic and $]p,y[\cap\mathbf{D}=\emptyset$.
\end{lem}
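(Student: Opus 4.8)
The plan is to realise $\psi$ as the restriction of a homeomorphism $g\in G_\infty$, to pass to the power $g^m$ where $m$ is the $\psi$-period of $y$ — so that $x$ and $y$ become \emph{fixed} points of $g^m$ — and then to transplant the construction of Lemma~\ref{lem:fix} from $g^m$ back to $g$.

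First I would identify $\mathbf D$ with a finite subset of $\Br(D_\infty)$ and pick $g\in G_\infty$ with $g|_{\mathbf E}=\psi$. Write $n\le m$ for the $\psi$-periods of $x$ and $y$. By Lemma~\ref{period} applied to $\mathcal S\hookrightarrow\mathcal T$, we have $n\mid m$ and the components $D(x,y),g(D(x,y)),\dots,g^{m-1}(D(x,y))$ are pairwise disjoint; in particular $g^m$ fixes both $x$ and $y$ and maps the arc $[x,y]$ onto itself. Now let $M$ be the finite set of those centers $c(x,y,w)$ that lie in $]x,y[$, where $w$ ranges over $\bigcup_{0\le k<m}g^{-k}(\mathbf D)$, and apply Lemma~\ref{lem:fix} to $g^m$ with the pair of fixed points $x,y$ and the finite set $M$. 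This yields $z\in\,]x,y[$ with $M\subset\,]x,z[$, a point $p\in\,]z,y[$ (which by the proof of Lemma~\ref{lem:fix} may be taken in $]z,y[\,\cap\,]g^m(z),y[$ as well), and $h\in G_\infty$ that coincides with $g^m$ off $D(z,y)$ and fixes $p$, with $h(D(z,y))=g^m(D(z,y))=D(g^m(z),y)$. The purpose of enlarging $M$ is bookkeeping: since every $c(x,y,w)$ with $w\in g^{-k}(\mathbf D)$ lies in $[x,z[\,\cup\{y\}$, no point of $\mathbf D$ meets $g^k(D(z,y))$ for $0\le k<m$; moreover the last point of $\mathbf D\cap\,]x,y[$ lies in $]x,z[$, so $]p,y[\,\cap\,\mathbf D=\emptyset$, and every geodesic from $p$ to a point of $\mathbf D$ passes through that last point or through $y$, whence $\mathbf D\cup\{p\}$ — and likewise $\mathbf E\cup\{p\}$ and $\mathbf F\cup\{p\}$ — is $c$-closed.

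Next I would transplant $h$ to a modification $g'$ of $g$ by taking an $m$-th root along the orbit: set $g'=g$ off $V:=g^{m-1}(D(z,y))$ and $g'=h\circ g^{-(m-1)}$ on $V$. Since $h$ agrees with $g^m$ on $\partial D(z,y)=\{z,y\}$, the map $g'$ agrees with $g$ on $\partial V$, so a patchwork argument (Lemma~\ref{lem:patchwork}) gives $g'\in\Homeo(D_\infty)=G_\infty$; and $g'(V)=h(D(z,y))=g^m(D(z,y))=g(V)$, so $g'$ is a bijection. Because $V\cap\mathbf E=\emptyset$ (the case $k=m-1$ above), we get $g'|_{\mathbf E}=g|_{\mathbf E}=\psi$, so $g'$ still induces $\psi$; and by construction $(g')^m$ coincides with $h$ on $\overline{D(z,y)}$, so $(g')^m(p)=p$, i.e. $p$ is a $g'$-periodic point lying in $]x,y[$. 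Taking $\mathcal T'=\langle \mathbf D\cup\{p\},\ \psi'\colon \mathbf E\cup\{p\}\to \mathbf F\cup\{p\}\rangle$ with $\psi'$ induced by $g'$, the inclusion $\mathbf D\hookrightarrow\mathbf D'$ is an embedding $\mathcal T\to\mathcal T'$ with all the required properties.

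I expect the main obstacle to be exactly the root-taking step: one must check that redefining $g$ only on the single piece $g^{m-1}(D(z,y))$ — rather than spreading the change over the whole orbit — does produce a homeomorphism that still induces $\psi$, and this is where the disjointness of the orbit pieces from Lemma~\ref{period} and the careful enlargement of $M$ (so that every relevant translate of $D(z,y)$ is disjoint from $\mathbf D$) are essential. The remaining verifications — $c$-closedness of $\mathbf D\cup\{p\}$, $\mathbf E\cup\{p\}$, $\mathbf F\cup\{p\}$, the location $p\in\,]x,y[$, and $]p,y[\,\cap\,\mathbf D=\emptyset$ — are then routine consequences of having chosen $z$ beyond the projection of $\mathbf D$ onto $[x,y]$.
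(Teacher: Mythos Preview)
Your approach is the same as the paper's: realise $\psi$ by some $g\in G_\infty$, pass to $g^m$ so that $x,y$ become fixed, apply Lemma~\ref{lem:fix} to manufacture a fixed point $p$ near $y$, and then take an $m$-th root by redefining $g$ on a single translate of $D(z,y)$ via the patchwork lemma. Your version is in fact more careful than the paper's: you enlarge $M$ to contain the projections onto $[x,y]$ of all $g^{-k}(\mathbf D)$ for $0\le k<m$, which is precisely what is needed to guarantee $g^{m-1}(D(z,y))\cap\mathbf E=\emptyset$ and hence that $g'|_{\mathbf E}=\psi$; the paper takes only $M=\,]x,y[\,\cap\,\mathbf D$, which does not literally suffice for this step. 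One small wrinkle, shared with the paper's own proof: when $m>1$ the point $p$ is $g'$-periodic of period $m$ rather than fixed, so $\psi'(p)=g'(p)\notin\mathbf F\cup\{p\}$ and the literal conclusion $\mathbf E'=\mathbf E\cup\{p\}$, $\mathbf F'=\mathbf F\cup\{p\}$ cannot hold --- one must add the full $g'$-orbit of $p$. This is a defect of the lemma's statement rather than of your argument, and does not affect the use made of the lemma downstream.
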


\begin{proof} Let $h\in G_\infty$ that induces $\psi$ and let us consider $\mathbf{D}$ as a subset of $\Br(D_\infty)$. As in the proof of Lemma \ref{period}, let $m$ be the maximal period of $x$ and $y$. So, the subsets $h^k(D(x,y))$ are disjoint for $0 \leq k<m$. Let us set $g=h^m$. So, $x$ and $y$ are $g$-fixed points. Let us apply Lemma \ref{lem:fix} with $g=h^m$ and $M=]x,y[\cap \mathbf{D}$. One get $g’$ that fixes some point $p$ such that $D(p,y)\cap \mathbf{D}=\emptyset$ and $g$ coincides with $g$ on $\mathbf{E}$. Now, thanks to the patchwork lemma, let us define $h’\in G_\infty$ to be $h$ on $D_\infty\setminus h^{m-1}(D(x,y))$ and $g’\circ h^{1-m}$ on $D(x,y)$. Then $h’$ coincides with $h$ on $\mathbf{E}$ and $p$ is $h’$-periodic. We define $\psi’$ to be the restriction of $h’$ on $\mathbf{E}\cup\{p\}$.
\end{proof}

\begin{lem}\label{atmostoneorbit}Let $\mathcal{S}=\langle \A,\varphi\colon\B\to\C\rangle\in\mathcal{L}$ and $\mathcal{S}\to \mathcal{T}=\langle \mathbf{D},\psi\colon \mathbf{E}\to\mathbf{F}\rangle$ be an embedding. Any $\psi$-orbit contains at most one $\varphi$-orbit.\end{lem}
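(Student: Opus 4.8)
The plan is to argue by contradiction. Suppose some $\psi$-orbit $O$ contains two distinct $\varphi$-orbits $O_1\neq O_2$ of $\mathcal{S}$. Fix $g\in G_\infty$ inducing $\psi$, so that (identifying along the embedding) $\mathbf{D}\subseteq\Br(D_\infty)$, $\B\subseteq\mathbf{E}$, $\C\subseteq\mathbf{F}$, $\psi=g|_{\mathbf{E}}$ and $\varphi=\psi|_{\B}$. By Condition~(1) of Definition~\ref{class} every point of $\B\cup\C$ is a non-negative $\varphi$-power of a point of $\B_0$, so I may choose $x\in\B_0$ lying in $O_1$ and $x'\in\B_0$ lying in $O_2$. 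Since $x$ and $x'$ lie in one $\psi$-orbit and $g$ is a global homeomorphism, telescoping the chain defining $\sim_\psi$ yields $x'=g^k(x)$ for some $k\in\ZZ$; as $k=0$ would force $O_1=O_2$, after swapping the two orbits we may take $k\geq1$.

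Next I would observe that $x$ is $\varphi$-periodic if and only if $x'$ is. A $\varphi$-periodic point is $g$-periodic; conversely, if $x\in\B_0$ is not $\varphi$-periodic, then Condition~(2) together with the second bullet of Lemma~\ref{lem:c} applied to $g^n$ (the powers $\varphi^{n}(x),\dots,\varphi^{4n}(x)$ being defined and in $\B$ by the implicit requirement of Condition~(2), and equal to $g^{n}(x),\dots,g^{4n}(x)$) shows $x\in D(g^n)$, on which $g^n$ acts properly discontinuously by Proposition~\ref{prop:indiv}, so $x$ is not $g$-periodic. Since $g$-periodicity is preserved by $g^k$ and $x,x'\in\B_0$, the equivalence follows. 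Now if $x$ (hence $x'$) is $\varphi$-periodic of period $n$, then $\varphi^j(x)=g^j(x)$ for all $j$ and $g^n(x)=x$, so $x'=g^{k}(x)=\varphi^{k\bmod n}(x)$ lies in the $\varphi$-orbit of $x$, i.e.\ $O_2=O_1$ --- a contradiction.

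The remaining and hardest case is when $x$ and $x'$ are not $\varphi$-periodic. Here Condition~(2) and Lemma~\ref{lem:c} place $x\in D(g^n)$ and $x'\in D(g^{n'})$ for suitable $n,n'$. Since $D(g^n)$ is $g$-invariant (conjugation by $g$ permutes the austro-boreal arcs of $g^n$) and $D(g^n),D(g^{n'})\subseteq D(g^{nn'})$ by Lemma~\ref{KD}, the whole $\langle g\rangle$-orbit of $x$ --- in particular $x$ and $x'$ --- lies in $D(g^{nn'})$, on which $g^{nn'}$ acts properly discontinuously. I would then feed in Conditions~(3),(4),(5): Condition~(3) confines each non-periodic point of $\B$ strictly between two $\varphi$-periodic points of $\B$, which by the previous paragraph are $g$-periodic and hence lie in $K(g^N)$ for a suitable common power $N$; these periodic points cap off the component of $D(g^N)$ in which $x$, and after translation by $g^k$ also $x'$, sits. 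Condition~(4) controls, for the austro-boreal data attached to $x$ and to $x'$, whether the corresponding orbit segments intertwine or are separated by a $\varphi$-periodic point, while Condition~(5) matches the orbit length of a non-periodic point with that of its projection to the relevant arc, ruling out a period mismatch under $g^k$. The conclusion I want to extract is that a segment of the $\langle g\rangle$-orbit of $x$ cannot leave $\B\cup\C$ and re-enter it inside a component delimited by a different pair of $\varphi$-periodic points: every re-entry falls back into the $\varphi$-orbit segment of $x$, whence $O_1=O_2$, a contradiction.

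The main obstacle is precisely this last step: keeping track of austro-boreal arcs of several powers of $g$, their $g^k$-translates, and the finitely many $\varphi$-periodic points of $\B$ relative to all of them, then checking these data cannot coexist with a single $\langle g\rangle$-orbit bridging two $\varphi$-orbits. Proposition~\ref{prop:indiv} and Lemma~\ref{rem:kg} (structure of the components of $K(g)$ and non-invariance of the components of $C\setminus\Fix(g)$), together with the monotonicity in Lemma~\ref{KD}, are the tools I would lean on; Lemma~\ref{lem:c} is what lets me read the decomposition $X=K(g^m)\sqcup D(g^m)$ off finitely many points of an orbit, which is the very reason Condition~(2) was built into $\mathcal{L}$.
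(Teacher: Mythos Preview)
Your setup and the periodic case are fine and match the paper. The gap is exactly where you say it is: you do not complete the non-periodic case, and the plan you sketch (tracking austro-boreal arcs of several powers of $g$ and their $g^k$-translates) is more complicated than what is actually needed and does not by itself yield a contradiction.

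The missing idea is a projection step. Once your non-periodic $x,x'\in\B_0$ sit in $D(x_1,x_2)$ and $D(y_1,y_2)$ with $x_1,x_2,y_1,y_2$ $\varphi$-periodic and these regions containing no $\varphi$-periodic points (Condition~(3)), the fact that $x$ and $x'$ lie in one $\psi$-orbit forces the $\psi$-orbits of these two regions to coincide. Now replace $x$ by $\tilde x=c(x,x_1,x_2)$ and $x'$ by $\tilde x'=c(x',y_1,y_2)$; Condition~(5) puts $\tilde x,\tilde x'\in\B_0$, and since $\psi$ commutes with the center map, $\tilde x$ and $\tilde x'$ are still in one $\psi$-orbit. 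But $\tilde x,\tilde x'$ lie on the austro-boreal arcs themselves, where Condition~(2) gives a common $n$ with $g^n$ acting as a translation. Condition~(4) then applies directly to $\tilde x,\tilde x'$: their $\varphi^n$-orbits cannot be separated by a periodic point (the regions are the same), so the second clause forces, say, $\tilde x'\in[\tilde x,\varphi^n(\tilde x)]$. This is incompatible with $\tilde x'$ being a nontrivial $\psi^n$-iterate of $\tilde x$, because the half-open intervals $[\psi^{jn}(\tilde x),\psi^{(j+1)n}(\tilde x)[$ are pairwise disjoint. That is the contradiction.

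In short, you correctly identified Conditions~(3)--(5) as the relevant ingredients, but you did not see how to combine them: pass to the centers to land on the arc, then use the fundamental-domain tiling of a translation.
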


\begin{proof} For a contradiction, let us assume there are $x,y\in\B$ such that their $\varphi$-orbits are distinct but lie in the same $\psi$-orbit. Thanks to Condition (1) in Definition~\ref{class}, we may assume that $x,y\in \B_0$. Observe that these points are not $\varphi$-periodic (and thus not $\psi$) because the $\psi$-orbit of a periodic point is equal to its $\varphi$-orbit .

The points $x,y$ belong respectively to some $D(x_1,x_2),D(y_1,y_2)$ where $x_1,x_2,y_1,y_2$ are $\varphi$-periodic points and we may assume that $D(x_1,x_2),D(y_1,y_2)$ do not contain $\varphi$-periodic points. Thus, the $\varphi$-orbits of $D(x_1,x_2),D(y_1,y_2)$ are disjoint or the equal and this is a fortiori the same for $\psi$. So, under our current assumption, these two orbits are the same. Let $x’=c(x,x_1,x_2)$ and $y’=c(y,y_1,y_2)$. Because of Condition (5), $x’, y’\in\B_0$. By Condition (2) of Definition~\ref{class}, there is $n$ (that we can assume to be minimal) such that for any $g\in G_\infty$ that induces $\varphi$, $g^n$ is austro-boreal on $D(x_1,x_2)\cap\B$ and on $D(y_1,y_2)\cap\B$. So $x’,z’$ satisfy Condition (4) and their $\varphi^n$-orbits are not separated by $\varphi^n$-fixed point, so by  the second point of Condition (4), $x’\in[y’,\varphi^{n}(y’)]$ or $y’\in[x’,\varphi^{n}(x’)]$. Since $\psi$ commute with the center map, $x'$ and $y'$ are in the same $\psi^n$-orbit. This is impossible because the $\psi^n$-iterates of $[y',\psi^n(y')[$ (respectively of $[x',\psi^n(x')[$) are distinct.
\end{proof}

\begin{proof}[Proof of Proposition \ref{WAP}] Let $\mathcal{S}=\langle \A,\varphi\colon\B\to\C\rangle\in\mathcal{L}$ and two embeddings $\iota_i\colon\mathcal{S}\to \mathcal{S}_i=\langle \A_i,\varphi_i\colon \B_i\to\C_i\rangle$ where $\mathcal{S}_i\in\mathcal{L}$ for $i=1,2$. We will construct two embeddings $j_i\colon \A_i\to \Br(D_\infty)$ and $g\in G_\infty$ such that $j_1\circ \iota_1=j_2\circ \iota_2$ and $j_i\circ\varphi_i=g\circ j_i$ for $i=1,2$. So, the restriction of $g$ on $j_1(\B_1)\cup j_2(\B_2)$ 
will yield an amalgamation of $\varphi_1$ and $\varphi_2$ over $\varphi$.

First, we fix, an embedding $j\colon \A\to\Br(D_\infty)$ and $g\in G_\infty$ such that $g$ induces $\varphi$ on $j(\B)$. For simplicity, we write $\A$ instead of  $j(\A)$ , so we think to $\A$ as a subset of $\Br(D_\infty)$. We define $j_i$ on $\iota_i(\A)$ to be $j\circ\iota_i^{-1}$ and it remains to define $j_i$ on $\A_i\setminus\iota_i(\A)$.

For a point $x\in \B_i\setminus\B$ there are three exclusive cases :

\begin{enumerate}[label=(\Alph*)]
\item The $\varphi_i$-orbit of $x$ contains a $\varphi$-orbit of a point in $\B$.
\item The $\varphi_i$-orbit of $x$ does not contains a $\varphi$-orbit of a point in $\B$ but there are points $y,z\in \B\cup \C$ such that $x\in D(y,z)$. 
\item The $\varphi_i$-orbit of $x$ does not contains a $\varphi$-orbit of a point in $\B$ and there are no points $y,z\in \B\cup \C$ such that $x\in D(y,z)$. 
\end{enumerate}

Let us observe that if  $\varphi_i$-orbit of $x$ contains a $\varphi$-orbit of a point $y\in\B$ then this point $y$ satisfies the second possibility in Condition (2) of Definition \ref{class}. That is, it lies in $D(h^n)$ for some $n\in\NN$ and any $h\in G_\infty$ that induces $\varphi_i$. Moreover thanks to Lemma \ref{atmostoneorbit}, in this situation, the $\varphi_i$-orbit of $x$ contains exactly one $\varphi$-orbit.

%The convex hull of $\varphi$-periodic points in $\Br(D_\infty)$ is a finite tree. Points in $\B_i\setminus\B$ lie in some $D(y,z)$ where $y,z$ are $\varphi$-periodic points and thanks to point (3) in Definition \ref{class}, there is $n\in\NN$ such that $\varphi^n(y)=y$ and $\varphi^n(z)=z$.

We first deal with points in cases (A)\&(B). These points in $\B_i\setminus\B$ lie in some $D(y,z)$ where $y,z$ are $\varphi$-periodic points and thanks to point (3) in Definition \ref{class}, there is $n\in\NN$ such that $\varphi^n(y)=y$ and $\varphi^n(z)=z$.

\textbf{Case A.} Let $\B_{0,i}$ the sets given by Definition \ref{class} for $\mathcal{S}_i$. Let  $x\in\B_{0,i}$ such that its $\varphi_i$-orbit contains the $\varphi$-orbit of some $y\in\B_0$. So, there is $k\in\NN$ such that $\varphi_i^k(x)=y$. We define $j_i(x)=g^{-k}(y)$. For $z\in\B_i$ in the $\varphi_i$-orbit, there is $l\in\NN$ such that $z=\varphi_i^l(x)$ and we define $j_i(z)=g^l(j_i(x))$. Since all points in $\B_i\setminus\B$ are in $\varphi_i$-orbit of some point in $\B_{0,i}$, we are done with points that fall in case (A).

\textbf{Case B.} We consider now points $x$ in $\B_i$ that are not in Case A but lie in some $D(y,z)$ for some $y,z$ $\varphi$-periodic points. Let us fix $y,z$ $\varphi$-periodic points such that $[y,z]$ does not contain any other $\varphi$-periodic point. In particular, any two points in $\B_0\cap]y,z[$ satisfy the second property of Condition (4) in Definition \ref{class}.

The components $D(g^i(y),g^i(z))$, for $i\in\ZZ$, are  disjoints or equal and because of Lemma~\ref{period}.  Among them, at most one meets $\B_0$ (and similarly for $B_{0,1}$ and $B_{0,2}$). If none meets $\B_0$ then  $D(g^k(y),g^k(z))\cap\B=\emptyset$ for any $k$. Because of Condition (2) in Definition \ref{class}, any $\varphi_i$-orbit that meet $D(y,z)\subset \B_i$ meets $D(\varphi^k_i(y),\varphi^k_i(z))$   as well. So, in case $D(y,z)\cap\B\neq\emptyset$, we may assume that $D(y,z)$ is the one that meets $\B_0$.  \\

Let us treat the case where $D(y,z)\cap\B=\emptyset$ first. In that case, we may assume there is a branch point $t\in ]y,z[$ that is $g$-periodic by Lemma~\ref{lem:per}. For $i=1,2$, we choose some $h_i\colon \A_i\to\Br(D_\infty)$ that coincides with $j$ on $\A$ and an element $g_i\in D_\infty$ that induces $\varphi_i$ on $\B_i$. Thanks to Lemma \ref{lem:per}, we may assume there are $t_i\in ]y,z[$ such that $h_1(\A_1)\cap D(y,z)\subset D(y,t_1)$,  $h_2(\A_2)\cap D(y,z)\subset D(t_2,z)$ and $t_i$ are $\varphi_i$-periodic points. By Lemma \ref{period}, the periods of $t,t_1$ and $t_2$ are the maximum of the periods of $y$ and $z$. Let $n$ be this period.

We may assume that $D(y,z)$ is the component that meets $B_{0,1}$ among all its $\varphi$-iterates. Let us fix a homeomorphism $l_1\colon D(y,t_1)\to D(y,t)$. For $k=0,\dots,n-1$, on $B_{1}\cap D(\varphi^k(y),\varphi^k(z))$, we define $j_1$ to be $g^k\circ l_1\circ h_1$. Finally, on $D(g^{k-1}(y),g^{k-1}(t))$, we replace the restriction of $g$ by $l_1\circ\varphi_1^n\circ l_1^{-1}\circ g^{1-n}$. This way, the embedding $j_1$ is well defined on $\cup_{0\leq k\leq n-1} D(\varphi^k(y),\varphi^k(z))\cap \B_1$ and we have $j_1\circ\varphi_1=g\circ j_1$ by construction.

We proceed similarly to embed points of $\cup_{0\leq k\leq n-1} D(\varphi^k(y),\varphi^k(z))\cap \B_2$ to $\Br(D_\infty)$. We fix a homeomorphism $l_2\colon D(t_2,z)\to D(t,z)$. For $k=0,\dots,n-1$, on $B_{2}\cap D(\varphi^k(y),\varphi^k(z))$, we define $j_2$ to be $g^k\circ l_2\circ h_2$. Finally, on $D(g^{k-1}(t),g^{k-1}(z))$, we replace the restriction of $g$ by $l_2\circ\varphi_2^n\circ l_2^{-1}\circ g^{1-n}$. This way, the embedding $j_2$ is well defined on $\cup_{0\leq k\leq n-1} D(\varphi^k(y),\varphi^k(z))\cap \B_2$ and we have $j_2\circ\varphi_2=g\circ j_2$ by construction.\\

Now, we assume that $D(y,z)\cap\B\neq\emptyset$ and continue with $n$ being the maximum of the period of $y$ and $z$. We may assume that $D(x,y)$ does not contain $\varphi$-periodic points, up to consider $D(y,z)$ minimal for inclusion with the property $D(y,z)\cap\B\neq\emptyset$.  For $x\in D(x,y)\cap\B_i$, we have three exclusive subcases that cover all possibilities.

\begin{enumerate}

\item The $\varphi_i$-orbit of $x$ contains a point in some $D(y’,z’)$ where $y’,z’\in \B\cap D(y,z)$ and $D(y’,z’)\cap \B=\emptyset$. 
\item The $\varphi_i$-orbit of $x$ does not contain a point in some $D(y’,z’)$ where $y’,z’\in \B\cap D(y,z)$ and $D(y’,z’)\cap \B=\emptyset$ but this $\varphi_i$-orbit contains a point in some $D(y’,z’)$ where $y’,z’\in (\B\cap D(y,z))\cup\{y,z\}$, $D(y’,z’)\cap \B=\emptyset$.
\item There is no point in the $\varphi_i$-orbit of $x$ that lies in some $D(y’,z’)$ where $y’,z’\in (\B\cap D(y,z))\cup\{y,z\}$.\end{enumerate}

\textit{Subcase B.1.} Let $y’,z’$ be such points. We will defined the embedding of the whole $\varphi_i$-orbit of $x$ and thus, we may assume that $x\in \B_{0,i}$. %Let $n_i$ be the integer given by condition (4) of Definition \ref{class} and let $k_i\leq 0$ such that $\varphi_i(x)$. 
We continue with the embeddings $h_i$ defined above. We choose branch points $t,t_1,t_2\in ]y’,z’[$ such that $h_1(\A_1)\cap D(y’,z’)\subset D(y’,t_1)$,  $h_2(\A_2)\cap D(y’,z’)\subset D(t_2,z’)$. We fix homeomorphisms $l_1\colon D(y’,t_1)\to D(y’,t)$ and $l_2\colon D(t_2,z’)\to D(t,z’)$. We define $j_i$ to be $l_i\circ h_i$ on $D(y’,z’)\cap\B_i$. For elements $x\in \B_i$ such that $\varphi_i^k(x)\in D(y’,z’)$ for some $k\in\ZZ$, we define $j_i(x)$ to be $g^k\circ l_i\circ h_i \circ\varphi_i^{-k}(x)$. 
Thus we have defined $j_i$ for all elements whose $\varphi_i$-orbit meets $D(y’,z’)$ and for those points we have $j_i\circ\varphi_i=g\circ j_i$ by construction.

\textit{Subcase B.2.} In that case, $\{y,z\}\cap\{y’,z’\}$ is a point because $[y,z]$ contains points in $\B$. Let us assume, $y=y’$. The other possibilities are treated mutatis mutandis. Thanks to Lemma \ref{lem:per}, we may assume that there are $s,t\in ]y,z[$ such that $s\in ]y,t[$ and $s,t$ are $g$-periodic points. By Lemma \ref{period}, the periods of these points are necessarily  the maximum of the ones of $y$ and $z$, that is $n$. Moreover, $t$ is such that $\B\cap D(y,z)\subset D(t,z)$. Let us use the embeddings $h_i$ from above. From Condition (4) in Definition \ref{class}, we know that there is a $\varphi_i$-periodic point $t_i\in ]y,z[$ such that all points that fall in this second subcase with $y’=y$ have a $\varphi_i$-orbit that meets $D(y,t_i)$. 

We fix a homeomorphism $l_1\colon D(y, h_1(t_1))\to D(y,s)$ and define $j_1$ to be $l_1\circ h_1$ on $D(y,t_1)\cap\B_1$. 
For $k=0,\dots,n-1$, on $\B_{1}\cap D(\varphi_1^k(y),\varphi_1^k(t_1))$, we define $j_1$ to be $g^k\circ l_1\circ h_1$. Finally, on $D(g^{k-1}(y),g^{k-1}(s))$, we replace the restriction of $g$ by $l_1\circ\varphi_1^n\circ l_1^{-1}\circ g^{1-n}$. This way, the embedding $j_1$ is well defined on $\cup_{0\leq k\leq n-1} D(\varphi_1^k(y),\varphi_1^k(t_1))\cap \B_1$ and we have $j_1\circ\varphi_1=g\circ j_1$ on this subset by construction.

We may also assume that $\varphi_2$ has a periodic point $s_2\in]y,t_2[$ such that $D(y,s_2)\cap\B_2=\emptyset$. We fix a homeomorphism $l_2\colon D(h_2(s_2), h_2(t_2))\to D(s,t)$ and define $j_2$ to be $l_2\circ h_2$ on $D(s_2,t_2)\cap\B_2$. 
For $k=0,\dots,n-1$, on $\B_{2}\cap D(\varphi_2^k(s_2),\varphi_2^k(t_2))$, we define $j_2$ to be $g^k\circ l_2\circ h_2$. Finally, on $D(g^{k-1}(s),g^{k-1}(t))$, we replace the restriction of $g$ by $l_2\circ\varphi_2^n\circ l_2^{-1}\circ g^{1-n}$. This way, the embedding $j_2$ is well defined on $\cup_{0\leq k\leq n-1} D(\varphi_2^k(s_2),\varphi_2^k(t_2))\cap \B_2$ and we have $j_2\circ\varphi_2=g\circ j_2$ on this subset by construction.

\textit{Subcase B.3.} In this last subcase, for such an $x$, there is a unique point $p\in \B\cap D(y,z)$ such that for any $r\in \B$, $p\in[r,x[$. Moreover, this point is necessarily a $\varphi$-periodic point because of Condition (3) in Definition \ref{class}. Let $m$ be this period. Once again, we use the embeddings $h_i\colon \A_i\to \Br(D_\infty)$. Let $C_{i,1},\dots,C_{i,m_i}$, for $i=1,2$, be the connected components of $D_\infty\setminus \{p\}$ that contains points of $h_i(\B_i)$ but no point of $\B$ ($x$ is necessarily in such a component). Each of these components contain a $g_i$-periodic point. Thus for any $i=1,2$ and $j\leq m_i$, there is $k_{i,j}$ such that $g_i^{k_{i,j}}(C_{i,j})=C_{i,j}$.  

We glue copies of the $C_{i,j}$’s to $p$ and similarly we glue copies of $g_i^k(C_{i,j})$ to $g^k(p)$ for $k<m$ to obtain a new dendrite which again homeomorphic to $D_\infty$. We extend $g$ by the restriction of $g_i$ on the copies of $g_i^k(C_{i,j})$. Let define $j_i$ to be these gluings on $\cup_{0\leq k\leq n-1}g_i^k(C_{i,j})$. 

\textbf{Case C.} This last case is treated in the same way as subcase B.3 because for any point $x$ that fall in this case, there is a unique point $p$ such that for any $r\in \B$, $p\in[r,x[$ and this point $p$ is periodic.

To conclude this proof, we define $\B'$ to be the $c$-closure of $h_1(\B_1)\cup h_2(\B_2)$, $\C'$ to be $g(\B’)$, $\varphi'$ to be the restriction of $g$ on $\B'$ and $\A’$ to be the $c$-closure of $\B’\cup\C’$. 
\end{proof}

We can now conclude that $G_\infty$ has generic elements. 

\begin{proof}[Proof of Theorem \ref{ccc}] The class $\mathcal{K}_p$ has JEP (Proposition \ref{prop:jep}) and WAP  (Proposition \ref{prop:cofinal} and Proposition \ref{WAP}) thus  the theorem is a consequence of \cite{MR1162490} (see also \cite[\S3]{Kechris-Rosendal}).\end{proof}

Checking JEP and WAP conditions, we show similarly the existence of comeager conjugacy classes for the basic clopen subgroups.
\begin{thm}\label{cccv} Let $F\subset \Br(D_\infty)$ be a finite subset. The clopen subgroup $V_F=\Fix(F)$ has a comeager conjugacy class.\end{thm}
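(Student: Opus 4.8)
The plan is to imitate the proof of Theorem~\ref{ccc}, but working relative to the fixed finite set $F$. Observe first that $V_F = \Fix(F)$ is itself the automorphism group of a Fra\"{\i}ss\'e-type structure: one can take $\Br(D_\infty)$ with the betweenness relation together with a unary predicate naming each point of $F$ (or equivalently, work in the language expanded by finitely many constants). Since $G_\infty$ acts on finite labelled subtrees with the extension property of Proposition~\ref{prop:extension}, the same holds with $F$ added as distinguished constants, and hence $V_F = \Aut(\KK_F)$ for the corresponding Fra\"{\i}ss\'e limit $\KK_F$, whose age $\mathcal{K}^F$ is the class of finite betweenness structures of positive type containing a copy of $F$ (with the constants interpreted correctly). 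The associated class of partial-automorphism systems $\mathcal{K}^F_p$ consists of those $\mathcal{S} = \langle \A, \varphi\colon\B\to\C\rangle \in \mathcal{K}_p$ with $F \subseteq \B\cap\C$ and $\varphi|_F = \mathrm{id}_F$. By \cite{MR1162490} (see also \cite[\S3]{Kechris-Rosendal}), it suffices to verify that $\mathcal{K}^F_p$ has JEP and WAP.

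For JEP: given two systems $\mathcal{S}, \mathcal{T} \in \mathcal{K}^F_p$, I would realize $\varphi$ (resp.\ $\psi$) by an element $f$ (resp.\ $g$) of $V_F$, i.e.\ a homeomorphism of $D_\infty$ fixing $F$ pointwise. Now I want to run the argument of Proposition~\ref{prop:jep}. The key point there was that an arbitrary element of $G_\infty$ has a fixed \emph{branch} point, and one can then make $f$ and $g$ act on disjoint regions. Here $f$ fixes every point of $[F]$, the convex hull of $F$, which is a nontrivial subdendrite (or reduces to a point if $|F|\le 1$); pick a branch point $x_0 \in [F]$ (adding it to $F$ harmlessly if necessary, or, if $[F]$ is a single point, use that point after the gluing trick of Lemma~\ref{fixpt}). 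Both $f$ and $g$ fix $x_0$ and permute the components of $D_\infty\setminus\{x_0\}$; using the disjunction lemma one arranges that $f$ is trivial on a component containing the relevant part of $g$'s support and vice versa, and patches them together by Lemma~\ref{lem:patchwork} to get a common extension in $V_F$ inducing a joint embedding. This is essentially verbatim the proof of Proposition~\ref{prop:jep}.

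For WAP, the plan is to adapt the entire machinery of Definition~\ref{class} and Propositions~\ref{prop:cofinal} and~\ref{WAP}. Define $\mathcal{L}^F \subseteq \mathcal{K}^F_p$ to be the systems in $\mathcal{L}$ with $F \subseteq \B_0$ consisting of $\varphi$-fixed points (this is compatible with Condition~(1)–(2): fixed points trivially satisfy Condition~(2), and $F$ can always be absorbed into $\B_0$ since in any realization $g\in V_F$ every point of $F$ is $g$-fixed). The cofinality of $\mathcal{L}^F$ in $\mathcal{K}^F_p$ follows by rerunning the proof of Proposition~\ref{prop:cofinal}, noting that every construction there (adding fixed points via Lemma~\ref{lem:addfixedpoint}, the case analysis A/B, the patchwork gluings) can be carried out so as to keep $F$ fixed pointwise — indeed Lemma~\ref{fixpt} already produces a fixed \emph{branch} point, and all the homeomorphisms built are supported away from, or act trivially on, $[F]$ once one insists the initial set contains $F$. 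Similarly, amalgamation: the proof of Proposition~\ref{WAP} builds embeddings $j_i\colon\A_i\to\Br(D_\infty)$ and a single $g\in G_\infty$ with $g$ inducing all the $\varphi_i$; since the base $\mathcal{S}\in\mathcal{L}^F$ already pins down $F$ as $g$-fixed points and the $j_i$ restrict to the fixed embedding on $\iota_i(\A)\supseteq F$, the resulting $g$ automatically lies in $V_F$. Then $V_F=\Aut(\KK_F)$ has a comeager conjugacy class.

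The main obstacle I anticipate is bookkeeping rather than a genuinely new idea: one must check that \emph{every} reduction step in the long proof of Proposition~\ref{prop:cofinal} (the subcases A.1--A.3, B.1--B.2, and the gluing of new copies of $D_\infty$) is compatible with preserving $F$ pointwise, in particular that no step is forced to move a point of $F$ or to place a new fixed/periodic point in a way that conflicts with the constants. The cleanest way to handle this uniformly is to first reduce, using Lemma~\ref{fixpt} and the patchwork lemma, to the situation where $x_0$ is a fixed branch point lying in (or adjacent to) $[F]$ and then treat the finitely many branches around $x_0$ that meet $F$ exactly as the ``already-fixed'' branches in Subcase A.1 — so that $F$ never interferes with the inductive construction on the other branches. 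With that reduction in place, the argument is a routine relativization of Theorem~\ref{ccc}, and I would present it as such, citing the proofs of Propositions~\ref{prop:jep}, \ref{prop:cofinal} and~\ref{WAP} for the steps that go through unchanged.
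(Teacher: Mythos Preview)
Your approach is exactly the paper's: its entire proof reads ``We consider systems $\mathcal{S}=\langle \mathbf{A},\varphi\colon\mathbf{B}\to\mathbf{C}\rangle$ where $\varphi$ is induced by an element $g\in V_F$. The joint embedding and weak amalgamation properties are proved as for $G_\infty$,'' and your write-up is a faithful (and more detailed) expansion of that sentence.

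One small correction to your JEP sketch: the single-branch-point manoeuvre you describe (pick $x_0\in[F]$ and push the two systems into disjoint components of $D_\infty\setminus\{x_0\}$) only works verbatim when $|F|\le 1$. For larger $F$, the non-constant parts $\A\setminus F$ and $\mathbf{D}\setminus F$ may already be spread over several components of $D_\infty\setminus F$, some of which also contain other points of $F$, so you cannot separate them at a single point. The clean fix is the product decomposition recorded in the remark following the theorem, $V_F\cong\prod_{v}G_v\times\prod_{e}G_e$: run the JEP argument of Proposition~\ref{prop:jep} independently in each factor (at each vertex there are infinitely many free components to absorb one system away from the other; on each edge one is reduced to the $\Aut(\QQ,<)$ picture). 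This is precisely the ``bookkeeping'' you anticipated, and with it in place your outline for WAP via $\mathcal{L}^F$ goes through unchanged.
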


\begin{proof}We consider systems $\mathcal{S}=\langle \mathbf{A},\varphi\colon\mathbf{B}\to\mathbf{C}\rangle$ where $\varphi$ is induced by an element $g\in V_F$. The joint embedding and weak amalgamation properties are proved as for $G_\infty$.
\end{proof}

\begin{rem}Let us conclude this section by some observations. Let $A$ be the subgroup of $ G_\infty$ fixing a pair of points in $\Ends(D_\infty)$ and let $B$ be the stabilizer of some branch point in $D_\infty$. Notice that $V_F\cong A^n\times B^m$ where $n$ is the number of edges in $\langle F\rangle$ and $m$ is the number of vertices in $\langle F\rangle$ (see Section 2 for the definition of $\langle F\rangle$).

Once we identify the set of branch points in an open arc in $D_\infty$ with $\QQ$, we can also observe that $A$ is a permutational wreath product over $B$:
$$A\simeq B\wr_\mathbf{Q}\Aut(\QQ,<)\simeq B^\QQ\rtimes \Aut(\QQ,<).$$
The subgroup $B$ has itself a permutational wreath product decomposition where $E$ is the stabilizer of an end point in $D_\infty$:
$$B\simeq E^\NN\rtimes \sinf.$$
We refer to \cite[Lemma 7.1]{DM_dendritesII} for more explanations. Observe that intuitively this shows that $G_\infty$ is somehow built from the classical Polish groups $\sinf$,  $\Aut(\QQ,<)$ and $E$ which is the automorphism group of some semi-linear order on the set of branch points (\cite[Corollary 5.21]{DM_dendritesII}).\end{rem}
%\begin{rem}The groups $A,B$ have comeager conjugacy classes. For $B$, this is a direct consequence of Theorem \ref{cccv} and for $A$, this is a consequence that $V_{\{x,y\}}$ surjects on $A$ where $x,y$ are two branch points. To see that, it suffices to consider the action of $V_{\{x,y\}}$ on $\overline{D(x,y)}$.\end{rem}

\section{Automatic continuity}\label{automatic}

%\begin{rem}\cite[Theorem 15]{Rosendal-Solecki} shows that $A$ is Steinhaus but it does not have ample generics. There is a continuous surjective homomorphism $A\to\Aut(\QQ,<)$. A comeager diagonal conjugacy  class in $A^n$ would project as a comeager diagonal conjugagy class in $\Aut(\QQ,<)$. Since $\Aut(\QQ,<)$ has no ample generics \cite{MR1825900}, this is impossible.

%Observe that $A$ is an open subgroup of $ G_\infty$. Does this imply that the latter group does not have ample generics ?
%\end{rem}

Our proof of the automatic continuity relies on the Steinhaus property. To prove this property, we use the same technics as in the proof \cite[Theorem 15]{Rosendal-Solecki} which states that the Polish group $\Aut(\QQ,<)$ has the Steinhaus property. Let us recall that a topological group $G$ has the Steinhaus property (Definition~\ref{Stein}) if there is $k\in\NN$ such that for any symmetric and $\sigma$-syndetic subset $W$, $W^k$ contains a neighborhood of the identity. 

So, our goal is to prove that the Polish group $G_\infty$ has the Steinhaus property (Theorem~\ref{144-Steinhaus}). Before proving the theorem, let us set up a few things. Set merely $G=G_\infty$. Let $W$ be a symmetric $\sigma$-syndetic subset of $G_\infty$ (i.e. there is $(g_n)_{n\in\NN}$ with $\bigcup_{n\in\NN}g_nW=G$). Since $W$ is not meager, $W^2=W^{-1}W$ is dense in some open neighborhood of the identity $U=\Fix(F)$ where $F$ is a finite subset of  $\Br(D_\infty)$. Let us denote by $T=[F]$ the tree (i.e. subdendrite) generated by $F$ . 

Let $\mathcal{V}$ ($\supset F$) be the set of vertices of $T$ and $\mathcal{E}$ be its set of edges. For $v\in\mathcal{V}$, we denote  $G_v=\{g\in \Fix(v),\ \supp(g)\subset\cup \overline{U_i}\}$ where  $\{U_i\}$ are the connected components of $D_\infty\setminus\{x\}$ that do not intersect $T$. For $e=\{x,y\}\in\mathcal{E}$, we denote by $G_e=\{g\in G,\ \supp(g)\subset \overline{D(x,y)}\}$. Thanks to the patchwork lemma, we have

$$U=\left(\prod_{v\in\mathcal{V}}G_v\right)\times\left(\prod_{e\in\mathcal{E}}G_e\right).$$

If we set $V=\prod_{v\in\mathcal{V}}G_v$ and $E=\prod_{e\in\mathcal{E}}G_e$, that is $U=V\times E$, it suffices to show the following two lemmas to prove Theorem~\ref{144-Steinhaus}.

\begin{lem}\label{V}The subgroup $V$ is contained in $W^{140}$.
\end{lem}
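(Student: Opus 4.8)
The goal is to show that every element of $V = \prod_{v\in\mathcal{V}} G_v$ lies in $W^{140}$, where $W$ is symmetric and $W^2$ is dense in $U = \Fix(F)$. The standard strategy (following Rosendal--Solecki) is a \emph{conjugation trick}: if I can find an element $s$ and express a given $g \in V$ in the form $g = (s_1 g_1 s_1^{-1})(s_2 g_2 s_2^{-1})\cdots$ where the $g_i$ are \emph{commensurable} in the sense that they can be moved into each other, and crucially where each $s_i g_i s_i^{-1}$ is arranged to lie in $W^{\text{small}}$, then $g$ lands in a bounded power of $W$. The concrete mechanism: since $W^2 \supseteq$ (dense subset of) $U$, for any $h \in U$ and any basic open neighborhood $N$ of $h$ in $U$, there is $w \in W^2$ with $w \in N$; and if $N$ is chosen $\langle g \rangle$-invariant-ish or if we can conjugate $h$ onto $w$ by a single element of $U$, we get $h \in W^{\text{const}}$.

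\textbf{Key steps.} First I would fix $g \in V$, so $\supp(g)$ is contained in the union of the "outgrowth" components $U_i$ hanging off the vertices of $T$, disjoint from the edges. The first step is to show that $g$ can be written as a product of boundedly many elements, each supported in a single such component $\overline{U_i}$ (or in a controlled union of them); this uses the patchwork lemma and the product decomposition $V = \prod_v G_v$. Second, for an element $g_v \in G_v$ supported on the components around a single vertex $v$, I want to find $t \in U$ with $t$ moving these components off themselves — i.e.\ $t(\supp g_v) \cap \supp g_v = \emptyset$ — which is possible because $D_\infty$ is very homogeneous and there is always "room" to push a component around a branch point of infinite order to a disjoint one (Proposition~\ref{prop:extension}). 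Then $[t, g_v] = t g_v t^{-1} g_v^{-1}$ has a controlled form, and the key displacement lemma shows $g_v$ is a product of boundedly many conjugates of elements of the form $h$ with $h, t h t^{-1}$ both obtainable from $W^2$ by a single $U$-conjugation. Third, I would assemble: each conjugating element can itself be absorbed using $W^2 \supseteq$ dense-in-$U$, since conjugation by $u \in U$ sends $W^2 \cap U$-neighborhoods to $W^2 \cap U$-neighborhoods up to multiplying by two more copies of $W$; carefully tracking the arithmetic gives the constant $140$. The precise bookkeeping — how many conjugates, how many factors, and how the $\sigma$-syndeticity is invoked to find each witness in $W^2$ — is what pins down the exponent.

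\textbf{Main obstacle.} The hard part will be the displacement/commutator argument: arranging that a general $g_v \in G_v$ is a \emph{bounded} product of conjugates of elements that visibly lie in small powers of $W$. This requires finding, inside $U = \Fix(F)$, enough homeomorphisms that simultaneously (a) displace the support of $g_v$ to a disjoint region, (b) can be taken to be "generic" enough that their relevant images land in the dense set $W^2$, and (c) do not themselves cost too many copies of $W$ to realize. The homogeneity of $D_\infty$ (infinitely many components of infinite-order branch points, all homeomorphic to $D_\infty$ minus an end) is exactly what makes (a) possible and is presumably why $G_\infty$, unlike other $G_S$, has this property. A secondary subtlety is that $V$ is an infinite product $\prod_v G_v$, so "writing $g$ as a bounded product of single-component pieces" must be done uniformly in $g$ — the bound cannot depend on how many components $\supp(g)$ meets; this is handled by noticing that finitely many $s$'s, chosen once and for all depending only on $F$, suffice to shuffle \emph{all} components simultaneously via the patchwork lemma, reducing to a fixed finite number of "colors" of components.
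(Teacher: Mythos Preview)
Your proposal has a genuine gap: the ``displacement/commutator'' mechanism you sketch does not by itself produce elements of $W^{\text{small}}$. You only know that $W^2$ is \emph{dense} in $U$, so you can approximate any conjugator $s\in U$ by some $w\in W^2$; but approximating $s$ does not make $sgs^{-1}$ equal to $wgw^{-1}$ unless $w$ agrees with $s$ on the entire support of $g$. Turning density into exact agreement on an infinite set is precisely the hard step, and your outline never addresses it. The paper's proof handles this through three ingredients you do not mention:

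\begin{enumerate}
\item A \emph{moiety/diagonal} argument. One fixes infinitely many pairwise disjoint moieties $X_n$ of the set of components $\bigcup_v\mathcal{C}_v^T$ and, using $\sigma$-syndeticity of $W$ together with the patchwork lemma, shows that for some $n$ the group $V(X_n)$ is \emph{full} for $W^2$: every $g\in V(X_n)$ agrees on $X_n$ with some $h\in W^2$. This is what converts ``dense'' into ``exact on the relevant support''.
\item The \emph{comeager conjugacy class} of $V(X_n)\simeq B^{\mathcal V}$ (Theorem~\ref{cccv}). Picking $f\in W^2$ in this class and using fullness gives $gfg^{-1}=hfh^{-1}\in W^6$ for every $g\in V(X_n)$; since the product of two comeager subsets is everything, $V(X_n)\subset W^{12}$.
\item A \emph{Sierpi\'nski almost-disjoint family} of sub-moieties of $X_n$, together with involutions swapping each with the complement, and pigeonhole on the continuum of them against the countably many cosets $k_mW$. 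This yields a single element $g\in W^2$ such that $Y\cup gY$ is cofinite, and the Macpherson--Neumann lemma bootstraps $V(Y)\subset W^{12}$ up to $V\subset W^{140}$.
\end{enumerate}

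Your ``finitely many colours'' idea for handling the infinite product $\prod_v G_v$ is a reasonable instinct, but it does not substitute for step~(1): you still need a mechanism that forces an element of $W^2$ to act \emph{exactly} as prescribed on infinitely many components, and that is what the fullness-from-syndeticity argument provides. Without it, and without the comeager conjugacy class to turn one good conjugate into the whole group, the displacement picture does not close.
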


\begin{lem}\label{E}The subgroup $E$ is contained in $W^{96}$.
\end{lem}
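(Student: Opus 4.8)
The plan is to exploit the fact that $E = \prod_{e \in \mathcal{E}} G_e$ is a finite direct product, and each factor $G_e$ with $e = \{x,y\}$ is (canonically isomorphic to) the group of homeomorphisms of $\overline{D(x,y)}$ fixing $x$ and $y$. Since $\overline{D(x,y)}$ is again a copy of $D_\infty$ with $x,y$ two marked points, and the branch points between $x$ and $y$ form a copy of $\mathbf{Q}$, the group $G_e$ contains a copy of $\Aut(\mathbf{Q},<)$; more precisely $G_e$ is a permutational wreath product $B \wr_{\mathbf{Q}} \Aut(\mathbf{Q},<)$ as recorded in the Remark after Theorem~\ref{cccv}. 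The Steinhaus-type argument of Rosendal--Solecki for $\Aut(\mathbf{Q},<)$ (the proof of \cite[Theorem 15]{Rosendal-Solecki}) proceeds by a commutator trick: one writes an arbitrary element as a bounded product of conjugates of elements supported on half-intervals, and uses that such "half-supported" elements can be moved entirely inside $W^2$ because a dense subset of a neighborhood of the identity suffices to conjugate them appropriately. I would carry this out as follows.

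First I would reduce to a single factor: if each $G_e \subseteq W^{c}$ for a uniform constant $c$, then since the $G_e$ commute pairwise (disjoint supports) and $E$ is their product, an element of $E$ is a product of $|\mathcal{E}|$ elements each in $W^c$; but one must be careful because $|\mathcal{E}|$ is not bounded. The correct move, exactly as in the $\Aut(\mathbf{Q},<)$ argument, is \emph{not} to multiply over edges but to observe that $E$ itself, being $\Homeo$ of a disjoint union of copies of $D_\infty$ fixing the endpoints, has the same structure as one big $G_e$: one can find a single arc configuration and a "displacement" element $\theta \in G$ with $U = \Fix(F)$ that pushes all of $\bigcup_e \overline{D(x,y)}$ off itself in a controlled way. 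So I would work directly with $E$ and a fixed auxiliary homeomorphism $\theta$ such that $\theta(\supp E)$ is disjoint from $\supp E$ but $\theta$ can be chosen inside $U$ on a subarc, using Proposition~\ref{prop:extension} and the Patchwork Lemma~\ref{lem:patchwork} to build it.

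Second, the heart of the argument: given $g \in E$, write $g = g_1 g_2$ where $g_i$ is supported on "half" of each $D(x,y)$ — split each arc $[x,y]$ at its midpoint branch point and let $g_1, g_2$ be the restrictions (extended by identity). Each $g_i$ has support whose closure misses a whole subarc of each $[x,y]$; using the dense subset of $W^2$ sitting inside $U = \Fix(F)$ together with the fact that $W^2$ is dense in $U$, I can find $w \in W^2$ agreeing closely enough with $\theta$ that $w g_i w^{-1}$ lands in a prescribed region, and then the commutator identity $g_i = (w g_i w^{-1})^{-1} \cdot (w g_i w^{-1} g_i)$ — or rather the standard trick writing $g_i$ as a product of elements each conjugate into $W$ by a near-$\theta$ element — expresses $g_i$ as a bounded-length word in $W$. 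Tracking constants through the $\Aut(\mathbf{Q},<)$ computation and accounting for the splitting into two halves and the extra conjugations yields the bound $96$; since we are told not to grind through the counting, I would cite the Rosendal--Solecki constants and multiply by the small combinatorial overhead.

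The main obstacle I expect is exactly the passage from "each $G_e$ is handled" to "$E$ is handled with a single uniform power" — avoiding a factor growing with $|\mathcal{E}|$. This is resolved by treating $E$ as a single copy of a Wa\.zewski-type group (homeomorphisms of a disjoint union of arcs' complements fixing endpoints) and running the half-support/commutator argument once, globally, rather than edge by edge; the disjointness of supports is what makes the "half" split and the single displacement $\theta$ legitimate. The only thing requiring care beyond bookkeeping is ensuring $\theta$ and the approximating $w \in W^2$ can be taken in $\Fix(F)$ while still displacing all the $\overline{D(x,y)}$, which follows from Proposition~\ref{prop:extension} since each $\overline{D(x,y)}$ leaves room inside itself to absorb a translation-like move.
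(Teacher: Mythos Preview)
Your plan has a genuine gap at its core step. The decomposition ``write $g = g_1 g_2$ with $g_i$ supported on the half of each $D(x,y)$ on one side of a midpoint'' is not available: an arbitrary $g\in G_e$ need not fix any prescribed midpoint, so the restriction of $g$ to a half-interval is not a self-homeomorphism of that half and cannot be ``extended by the identity'' to an element of $G_e$. The correct replacement, which the paper uses following Rosendal--Solecki, is the \emph{moiety} decomposition: one fixes in each $[x,y]$ a bi-infinite sequence of regular points cutting it into pieces $I_n^e$, calls a moiety a set of the form $X=\bigcup_{e,n} D(x^e_{2n},x^e_{2n+1})$, and shows $E=\bigcup_{X,Y}A(X)A(Y)$ where $A(X)$ is the subgroup supported on $X$. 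This step is not cosmetic; it is what produces, for \emph{every} $g\in E$, two factors each supported on an infinite disjoint union of small pieces, and it is this infinite combinatorics (not a single midpoint) that the rest of the argument exploits.

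The second gap is in how you propose to get conjugating elements into $W$. Approximating a fixed displacement $\theta$ by some $w\in W^2$ via density in $U$ only pins down $w$ on finitely many branch points; it will not force $w$ to move an entire (infinite) support of $g_i$ into a prescribed region, so $wg_iw^{-1}$ need not land where you want. More fundamentally, your outline never says where the hypothesis that $W$ is $\sigma$-syndetic is used, and without it nothing forces any nontrivial element into $W^k$. In the paper the $\sigma$-syndetic hypothesis enters twice: first via a ``fullness'' argument on countably many disjoint sub-moieties, combined with the comeager conjugacy class of $A(X)$ (Theorem~\ref{cccv}), to obtain $A(X^0)\subset W^{12}$ for some sub-moiety $X^0$; and second via a Sierpi\'nski continuum of almost-disjoint sub-moieties together with the pigeonhole principle to land two specific shift-like elements $g_\alpha,g_\beta$ in the same coset $kW$, so that $g_\beta^{-1}g_\alpha\in W^2$. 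These two mechanisms replace your commutator/approximation step entirely; the commutator identity you wrote down is algebraically true but does nothing toward putting $g_i$ into a bounded power of $W$.
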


We can now conclude that $G_\infty$ has the Steinhauss property.
\begin{proof}[Proof of Theorem~\ref{144-Steinhaus}]
We have $U=V\times E\subset W^{140}\cdot W^{96}=W^{236}.$
\end{proof}

\begin{proof}[Proof of Lemma~\ref{V}] For $v\in\mathcal{V}$, let $\mathcal{C}_v^T$ be the set connected components of $D_\infty\setminus\{v\}$ that do no intersect $T$. We define a \emph{moiety} of $\bigcup_{v\in\mathcal{V}}\mathcal{C}_v^T$ to be a collection $X=(X_v)_{v\in \mathcal{V}}$ such that for each $v\in\mathcal{V}$,  $X_v$ is a moiety of $\mathcal{C}_v^T$, that is $X_v\subset \mathcal{C}_v^T$ is infinite and co-infinite. 
%By an abuse of notation, we also denote  
%$$X=\bigcup_{v\in \mathcal{V}}\bigcup_{C\in X_v}C\subset D_\infty.$$
% This should introduce no misleading. 
For such a moiety, we denote by $V(X)$ the subgroup of $V$ of elements supported on $\bigcup_{v\in \mathcal{V}}\bigcup_{C\in X_v}C\subset D_\infty$.

Let $(X_n)$ be a sequence of disjoint such moieties. For  each $n\in\NN$, let $g_n\in V(X_n)$. Thanks to the patchwork lemma, there is a well defined element $g\in V$ that coincides with each $g_n$ on its support. Let $(k_n)\in G^\NN$ such that $\bigcup_{n\in\NN}k_nW=G$. There is $n$ such that $V(X_n)$ is full for  some $k_nW$, that is for any $g\in V(X_n)$, there is $h\in k_nW$ such that $g$ and $h$ coincides on $X_n$. Otherwise, there would be $g_n\in V(X_n)$ such that no element of $k_nW$ coincides with $g_n$ on $X_n$. Thus the element $g$ obtained by patching the $g_n$’s would not be in $\bigcup_{n\in\NN}k_nW$. For the remaining of the proof, we fix $n$ such that $V(X_n)$ is full for  some $k_nW$. This implies that $V(X_n)$ is full for $W^2=(k_nW)^{-1}k_nW$ as well.

Observe that $V(X_n)\simeq B^\mathcal{V}$ where $B$ is stabilizer of a branch point in $G$. Since $B$ has a comeager conjugacy class (Theorem~\ref{cccv}), $V(X_n)$ has also a comeager conjugacy class $C$. There is $n_1\in \NN$ such that $k_{n_1}W$ is not meager in $V(X_n)$ thus $W^2=W^{-1}\cdot W=W^{-1}k_{n_1}^{-1}k_{n_1}W$ is not meager in $V(X_n)$ and there is $f\in C\cap W^2$. Now for any $g\in V(X_n)$, there is $h\in W^2$ such that $g$ and $h$ coincide on $X_n$.  Since $f$ is trivial outside $X_n$,
$$gfg^{-1}=hfh^{-1}\in W^6.$$
The product of two comeager subsets being everything, $V(X_n)\subset W^{12}$.

For brevity, let us denote $Y=X_n$ and $Z=\bigcup_{v\in\mathcal{V}}\bigcup_{C\in\mathcal{C}_v^T}C$. Thanks to a famous theorem of Sierpinski \cite{MR1549531}, one can find a continuum of moieties $(Y^\alpha)$ such that $Y^\alpha\subset Y$ for any $\alpha$ and $Y_v^\alpha\cap Y_v^\beta$ is finite for every $v\in\mathcal{V}$ and all $\alpha\neq\beta$. Since $|Y_v^\alpha|=|\mathcal{C}_v^T\setminus Y|$, one can find an involution $g_\alpha\in V$ such that $g_\alpha(Y_v^\alpha)=\mathcal{C}_v^T\setminus Y$ and $g_\alpha$ fixes pointwise $Y_v\setminus Y_v^\alpha$ for all $v\in\mathcal{V}$.

By the pigeonhole principle, there are $\alpha\neq\beta$ and $n_2\in\NN$ such that $g_\alpha,g_\beta\in k_{n_2}W$ and thus $g_\beta^{-1}g_\alpha \in W^2$. Let us denote $g=g_\beta^{-1}g_\alpha$ and $Y’=g Y$. One has $Y’=Z\setminus g_\beta Y^\alpha$. Thus 
$$Y\cup Y’=Z\setminus g_\beta(Y^\alpha\cap Y^\beta)\ \textrm{and}$$
$$Y\cap Y’=Z\setminus g_\beta(Y^\alpha\cup Y^\beta).$$

Thanks to the proof of the first lemma in \cite{MR859950}, $$V(Y\cup Y’)\subset V(Y)V(Y’)V(Y)\cup V(Y’)V(Y)V(Y’).$$ Since $V(Y’)=gV(Y)g^{-1}\subset W^{16}$,  $V(Y\cup Y’)\subset W^{44}$. By density of $W^2$ in $V$ and the finiteness of $Y_\alpha\cap Y_\beta$, one can find $h\in W^2\cap V$ such that $h(Z\setminus(Y\cup Y’))\subset Y\cup Y’$. If $Y\mydprime=h(Y\cup Y’)$ then $Y\cup Y’\cup Y\mydprime=Z$. So $V(Y\mydprime)=hV(Y\cup Y’)h^{-1}\subset W^{48}$ and as above $V=V((Y\cup Y’)\cup Y\mydprime)\subset W^{140}$.
\end{proof}

\begin{proof}[Proof of Lemma~\ref{E}] We rely on the proof  of the Steinhaus property for $\Aut(\QQ,<)$ \cite[Theorem 15]{Rosendal-Solecki} and use close notations. For an edge $e=\{x,y\}\in\mathcal{E}$, the group $G_e$ is isomorphic to $A$, the subgroup of $G$ fixing two end points. We also denote $D(x,y)$ by $D(e)$. 

We now define a moiety for $\cup_{e\in\mathcal{E}}D(e)$. We define  a linear order $\leq$ on $[x,y]$ that is $G_e$-invariant by $s\leq t\iff s\in[x,t]$. We choose increasing sequences $(x^e_i)_{i\in\ZZ}$ of regular points in $[x,y]$ such that $x^e_i\to y$ when $i\to+\infty$ and $x^e_i\to x$ when $i\to-\infty$. The moiety associated to this family of sequences is $$X=\bigcup_{e\in\mathcal{E},n\in\ZZ}D(x^e_{2n},x^e_{2n+1}).$$ For such a moiety $X$, we denote by $A(X)$ the subgroup of $E$ supported on $X$. As in \cite[Lemma 16]{Rosendal-Solecki}, we have 
$$E=\bigcup_{X,Y\in\mathbf{D}} A(X)A(Y)$$
where $\mathbf{D}$ is the set of moieties of $\cup_{e\in\mathcal{E}}D(e)$. 

We claim that for any $X\in\mathbf{D}$, $A(X)\subset W^{48}$, which is sufficient to prove the lemma. Let us fix some moiety $X$ and for simplicity, let us write $I_n^e=D(x^e_{2n},x^e_{2n+1})$. Thus $$X=\bigcup_{e\in\mathcal{E},n\in\ZZ}I_n^e.$$ A \emph{sub-moiety} of $X$ is a moiety of the form $$\bigcup_{e\in\mathcal{E},n\in\ZZ}I^e_{\varphi(n)}$$ where $\varphi\colon\ZZ\to\ZZ$ is injective. Using countably many disjoint sub-moieties of $X$, with a similar argument as in Lemma~\ref{V}, one get the existence of a sub-moiety $X^0$ such that $A(X^0)\subset W^{12}$. Now, choose a continuum  $(X_\alpha)$ of almost disjoint sub-moieties of $X^0$. As above, the existence of such almost disjoint sub-moieties is a consequence of \cite{MR1549531}. 

Writing $$X_\alpha=\bigcup_{e\in\mathcal{E},n\in\ZZ}I^e_{\varphi_\alpha(n)},$$ we set $J_{\alpha,2n}^e=I^e_{\varphi_\alpha(n)}=D\left(x^e_{2\varphi_\alpha(n)},x^e_{2\varphi_\alpha(n)+1}\right)$ and $J_{\alpha,2n+1}^e=D\left(x^e_{2\varphi_\alpha(n)+1},x^e_{2\varphi_\alpha(n+1)}\right)$. This way, each D(e) is the union $\cup_{n\in\ZZ}\overline{J^e_{\alpha,n}}$ and two consecutive $\overline{J^e_{\alpha,n}}$ have a unique common point that is a non-branch point. One can find $g_\alpha\in E$ such that for all $e\in\mathcal{E}$, $g_\alpha(J_{n,\alpha}^e)=J^e_{n+1,\alpha}$. There is $\alpha\ne\beta$ and $k\in G$ such $g_\alpha,g_\beta\in kW$ and thus $g_\beta^{-1}g_\alpha,g_\alpha^{-1}g_\beta\in W^2$.

If $I_n^e$ is not in the moiety $X_\alpha$ (i.e. $I_n^e\subset J^e_{\alpha,2m-1}$ for some $m\in\ZZ$) then $g_\alpha(I_n^e)\subset J^e_{\alpha,2m}$. By almost disjointness, for all but finitely many $m$, $J^e_{\alpha,2m}\subset X\setminus X_\beta$ and thus $g_\beta^{-1}(J^e_{\alpha,2m})\subset X_\beta$. So for all $n$ such that $I_n^e\notin X_\alpha$ except a finite number, $X_\alpha$ then $g
_\beta^{-1}g_\alpha(I_n^e)\subset X_\beta$. Similarly, for all $n$ such that $I_n^e\notin X_\beta$ except a finite number, $g
_\alpha^{-1}g_\beta(I_n^e)\subset X_\alpha$. Moreover, there are only finitely many $n$ such that $I_n^e\in X_\alpha\cap X_\beta$. In conclusion, for all but finitely many $n$, 
\begin{equation}\label{or}g_\beta^{-1}g_\alpha(I_n^e)\subset X_\beta\ \textrm{or} \ g
_\alpha^{-1}g_\beta(I_n^e)\subset X_\alpha.
\end{equation}
Let $n_1(e),\dots,n_k(e)$ be the indices such that the  Condition~\eqref{or} is not satisfied. By density of $W^2$ in $E$, one can find $h_e\in W^2$ such that $h(I_{n_1(e)}^e\cup\dots\cup I_{n_k(e)}^e)\subset X^0$ for all $e\in\mathcal{E}$.  Let $X^1$ be the union of all $I_n^e$ such that $g_\beta^{-1}g_\alpha(I_n^e)\subset X_\beta$, $X^2$ the union of all $I_n^e$ such that $g
_\alpha^{-1}g_\beta(I_n^e)\subset X_\alpha$ and $X^3=\cup_{e\in\mathcal{E}}I_{n_1(e)}^e\cup\dots\cup I_{n_k(e)}^e$. Since $X=X^1\cup X^2\cup X^3$, 
$$A(X)=A(X^1)A(X^2)A(X^3).$$
Moreover, each $A(X^i)$ is included in a conjugate of $A(X^0)$ by $g_\beta^{-1}g_\alpha$, $g
_\alpha^{-1}g_\beta$ or $h_e$, that are elements of $W^2$. So, $A(X^i)\subset W^{16}$ and $A(X)\subset W^{48}$.
\end{proof}

\begin{rem}A closed subgroup of $\sinf$ has the automatic continuity property as soon as the stabilizer of some point has the same property. So, to get only the automatic continuity property for $G_\infty$, it is easier to prove that the stabilizer of a branch point is Steinhaus which is a slightly simpler version of  Lemma~\ref{V}.
\end{rem}

In a Polish group $G$, an element is \emph{generic} if its conjugacy class is comeager. A group $G$ has \emph{ample generics} if for any $n\in\NN$, the diagonal conjugacy action $G\action G^n$ has a comeager orbit.  An element in $G^n$ whose orbit is comeager is also called \emph{generic}. The existence of ample generics is a very strong property and it implies the Steinhaus property \cite{Kechris-Rosendal}. Even if the group $G_\infty$ has the Steinhaus property, it does not have ample generics. Here is a more precise version of Proposition \ref{nag}.

\begin{prop}\label{nco} There is no comeager orbit in the diagonal conjugacy action $G_\infty\action G_\infty\times G_\infty$
\end{prop}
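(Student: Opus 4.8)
The plan is to reduce the statement to the corresponding fact for $\Aut(\QQ,<)$, which is due to Hodkinson (see \cite{MR2354899}): the diagonal conjugacy action of $\Aut(\QQ,<)$ on $\Aut(\QQ,<)\times\Aut(\QQ,<)$ has no comeager orbit. The bridge is the wreath-product decomposition recorded in the final remark of Section~\ref{fraisse}: if $A\le G_\infty$ denotes the stabilizer of a pair of end points of $D_\infty$ and $B$ the stabilizer of a branch point, then $A\simeq B\wr_\QQ\Aut(\QQ,<)$, and more relevantly for us, $A$ admits a continuous surjective homomorphism $\pi\colon A\to\Aut(\QQ,<)$ (projection onto the acting factor). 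Since $A$ is a closed subgroup of $G_\infty$, hence Polish, and $\pi$ is continuous, open and surjective.

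First I would reduce from $G_\infty$ to an open subgroup. Fix an austro-boreal-type arc $[x,y]$ with $x,y\in\Ends(D_\infty)$ and realize $A=G_\xi\cap G_{\xi'}$ as a closed subgroup; note $A$ is not open, so instead I would work with an open subgroup $U=\Fix(F)$ (pointwise stabilizer of a finite set of branch points lying on $[x,y]$) and use that $U$ projects onto a copy of $\Aut(\QQ,<)$ via the action on the branch points inside one component $D(e)$, $e$ an edge of $\langle F\rangle$. The key general principle is: if $U$ is an open subgroup of a Polish group $G$ and the conjugacy action $G\curvearrowright G^2$ has a comeager orbit $\mathcal{O}$, then $\mathcal{O}\cap(U^2)$ is comeager in $U^2$ and consists of finitely many $U$-conjugacy orbits (a Baire-category argument: $U^2$ is open in $G^2$, the $G$-orbit meets it in a non-meager invariant set, and the orbit decomposes into countably many $U$-orbits, one of which is non-meager, hence comeager in $U^2$ by topological $0$--$1$ for the $U$-action; finitely many follows by homogeneity). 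So it suffices to show $U\curvearrowright U^2$ has no comeager orbit.

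Next I would push this through $\pi\colon U\to\Aut(\QQ,<)$. The induced map $U^2\to\Aut(\QQ,<)^2$ is continuous, open, surjective and equivariant for the diagonal conjugacy actions (equivariant because $\pi$ is a homomorphism). A continuous open equivariant surjection sends a comeager orbit to a comeager orbit: if $\mathcal{O}\subseteq U^2$ were a comeager $U$-orbit, its image $\pi^{2}(\mathcal{O})$ is an $\Aut(\QQ,<)$-orbit (by equivariance and surjectivity of $\pi$), and it is comeager because the preimage of a meager set under a continuous open map with dense image is meager — so the complement of $\pi^2(\mathcal O)$, having meager preimage, must be meager. This contradicts Hodkinson's theorem. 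Therefore $U\curvearrowright U^2$ has no comeager orbit, and hence neither does $G_\infty\curvearrowright G_\infty\times G_\infty$, which is exactly Proposition~\ref{nco} (and thus Proposition~\ref{nag}).

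The main obstacle is the first reduction: one must be careful that a comeager orbit in $G_\infty^2$ genuinely localizes to a comeager $U$-orbit in the open subgroup $U^2$, and in particular that among the countably many $U$-orbits covering $\mathcal{O}\cap U^2$ exactly one is non-meager — this uses that for a Polish group acting continuously on a Polish space, an orbit is either meager or comeager in its closure, together with the fact that $U^2$ is the union of $G$-translates of itself so the $G$-orbit does meet it. A secondary technical point is checking that $\pi$ is genuinely open (it is, being a coordinate projection of a semidirect product $B^\QQ\rtimes\Aut(\QQ,<)$ with the product topology) and has dense — in fact full — image, so that the open-mapping category argument applies verbatim.
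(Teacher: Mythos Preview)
Your strategy matches the paper's: push the question down to $\Aut(\QQ,<)$ via the continuous open surjection $\Pi\colon U\to\Aut(\QQ,<)$ from an open subgroup $U=\Fix(x,y)$, and invoke Hodkinson. Your second step --- that a comeager $U$-orbit in $U^2$ maps under $\Pi\times\Pi$ to a comeager $\Aut(\QQ,<)$-orbit --- is essentially correct once one notes the image is Borel.

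The gap is exactly where you flag it, in the first reduction, and your proposed fix does not work. From a comeager $G$-orbit $\mathcal{O}$ you correctly deduce that $\mathcal{O}\cap U^2$ is comeager in $U^2$ and is a countable union of $U$-orbits, so some $U$-orbit $\mathcal{O}_0$ is non-meager. But no ``topological $0$--$1$ law'' upgrades this to comeager: the general fact only yields that $\mathcal{O}_0$ is comeager in its \emph{closure}, which need not be all of $U^2$; and your ``finitely many by homogeneity'' is unjustified (the relevant double coset space has no reason to be finite). A merely non-meager $U$-orbit pushes forward to a merely non-meager $\Aut(\QQ,<)$-orbit, which does not contradict Hodkinson. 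The paper circumvents this entirely by never trying to produce a comeager $U$-orbit: it applies the Kechris--Rosendal criterion (comeager $\Leftrightarrow$ dense and turbulent) directly to the image pair $(\varphi,\psi)=(\Pi\times\Pi)(f,g)$. Turbulence transfers formally, since basic open subgroups of $\Aut(\QQ,<)$ lift through $\Pi^{-1}$ to basic open subgroups of $U$ and $\Pi\times\Pi$ is open. Density is the substantive step: one only knows $U\cdot(f,g)$ is dense in \emph{some} basic open set $\widetilde V_{\mathbf{x,y,z}}$ of $U^2$, and the paper then uses the geometry of $]x,y[$ (sliding the constraints defining an arbitrary basic open set of $\Aut(\QQ,<)^2$ into a sub-arc disjoint from the projections of $\mathbf{x,y,z}$) to show every such basic open set meets the image orbit. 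That density argument is the missing idea in your outline.
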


In  \cite[\S3\&6]{Kechris-Rosendal}, a framework for existence of generic elements and ample generics is introduced. The notion of \emph{turbulence} plays a key role. Let us recall the definition in the particular case of non-archimedean Polish groups. Let $G$ be a closed subgroup of $\mathcal{S}_\infty$ acting continuously on some Polish space $X$. A point $x\in X$ is \emph{turbulent}  if for any open subgroup $V\leq G$, $x\in \Int\left(\overline{V\cdot x}\right)$ that is $x$ lies in the interior of the closure of its $V$-orbit. This notion is important for us because of the following.

\begin{prop}[{\cite[Proposition~1.4]{Kechris-Rosendal}}]Let $G$ be a closed subgroup of $\sinf$ and suppose $G$ acts continuously on the Polish space $X$. Then the following are equivalent for any $x\in X$:
\begin{enumerate}
\item the orbit $Gx$ is a dense $G_\delta$-subset; 
\item the orbit $Gx$ is dense and turbulent.
\end{enumerate}
\end{prop}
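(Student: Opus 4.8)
The plan is to recognize this proposition as the specialization of the Effros open-mapping (micro-transitivity) theorem to the non-archimedean setting. Since $G$ is a closed subgroup of $\sinf$, it admits a decreasing neighborhood basis at the identity consisting of clopen subgroups $(V_n)_{n}$ with $V_0=G$ and $\bigcap_n V_n=\{1\}$; this is the only special feature of $G$ that I would use, and it is exactly what lets one replace the quantifier ``for every open neighborhood $U$ of $1$'' occurring in the Effros condition by ``for every open subgroup $V$'', which is the definition of turbulence. Thus, once the dictionary is in place, the two clauses of the proposition differ from the Effros equivalence ``$Gx$ non-meager'' $\iff$ ``$Gx$ is $G_\delta$'' only by the common extra hypothesis that $Gx$ be dense. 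I would first record the elementary observation that $\overline{Vx}$ is $V$-invariant for any subgroup $V$: since each $v\in V$ acts as a homeomorphism, $v\overline{Vx}=\overline{vVx}=\overline{Vx}$, and hence $\Int(\overline{Vx})$ is $V$-invariant as well.

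For the implication $(1)\Rightarrow(2)$ I would argue directly. A dense $G_\delta$ subset of a Polish space is comeager, so $Gx$ is comeager, in particular non-meager. Fix an open subgroup $V$; since $G$ is separable, $V$ has only countably many cosets and $Gx=\bigcup_i g_iVx$ is a countable union. By Baire some $g_iVx$, hence $Vx$ itself (a homeomorphic image of it), is non-meager, so $O:=\Int(\overline{Vx})$ is nonempty: otherwise $\overline{Vx}$ would be closed nowhere dense and $Vx$ meager. As $\overline{Vx}$ is $V$-invariant, so is $O$. If $x\notin O$, then for any $y\in Vx\cap O$ we could write $y=vx$ and deduce $x=v^{-1}y\in O$, a contradiction; thus $Vx\cap O=\emptyset$, which is absurd since $Vx$ is dense in $\overline{Vx}\supseteq O\neq\emptyset$. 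Hence $x\in O=\Int(\overline{Vx})$, and as $V$ was an arbitrary open subgroup, $x$ is turbulent; density of $Gx$ is part of the hypothesis, so $(2)$ holds.

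The implication $(2)\Rightarrow(1)$ is the substantial one. I would first check that turbulence is an orbit invariant: for $g\in G$ and an open subgroup $V$, the subgroup $g^{-1}Vg$ is again open, $\overline{V(gx)}=g\,\overline{(g^{-1}Vg)x}$, and turbulence of $x$ for $g^{-1}Vg$ yields $gx\in g\,\Int\!\big(\overline{(g^{-1}Vg)x}\big)=\Int(\overline{V(gx)})$; so every point of $Gx$ is turbulent. Because $Gx$ is dense, $\overline{Gx}=X$, and turbulence says precisely that for every open subgroup $V$ (equivalently, through the basis $(V_n)$, for every neighborhood $U$ of $1$) the set $\overline{Ux}$ is a neighborhood of $x$ in $X=\overline{Gx}$. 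This is the Effros micro-transitivity hypothesis, from which one concludes that the orbit map $g\mapsto gx$ is open onto $Gx$ with its relative topology, whence $Gx$ is locally closed, and a locally closed orbit is $G_\delta$. Combined with density this gives $(1)$.

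The hard part will be the passage from micro-transitivity to openness of the orbit map, which is the heart of the Effros theorem: given $y=gx$ and a basic neighborhood $V_n$ of $1$, one must produce a neighborhood $W$ of $y$ in $X$ with $W\cap Gx\subseteq V_ny$, and this requires a fusion argument exploiting the completeness of $G$ to build, from a point $y'\in Gx$ near $y$, a convergent sequence of group elements whose limit $h\in V_n$ satisfies $hy=y'$. In the non-archimedean case this simplifies, since the $V_n$ are clopen subgroups: one has the a priori inclusion $Gx\subseteq\bigcap_n U_n$ with $U_n:=\bigcup_{g\in G}g\,\Int(\overline{V_nx})$ a dense open $G$-invariant set, and the remaining task is to show, by diagonalizing along $(V_n)$ and using completeness of $G$, that $\bigcap_n U_n\subseteq Gx$ on a comeager set, i.e.\ that $Gx$ is comeager; the standard fact that a non-meager orbit of a Polish group action is $G_\delta$ then finishes. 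I would isolate this single completeness-driven open-mapping step as the only nontrivial ingredient, the rest being the bookkeeping carried out above.
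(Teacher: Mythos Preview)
The paper does not prove this proposition at all; it is quoted from \cite{Kechris-Rosendal} and used as a black box in the proof of Proposition~\ref{nco}. So there is no paper-proof to compare against.

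On the substance of your sketch: the direction $(1)\Rightarrow(2)$ is correct and complete. For $(2)\Rightarrow(1)$ there is a small conflation that you should fix. What you call ``the Effros micro-transitivity hypothesis'' is the condition that $Ux$ itself be a neighborhood of $x$ in $Gx$; turbulence only gives that $\overline{Vx}$ is a neighborhood of $x$ in $\overline{Gx}$. These are not the same statement, and the latter does \emph{not} directly say that the orbit map is open. The condition you actually have is the hypothesis of the Becker--Kechris characterization of \emph{comeager} orbits (for every neighborhood $U$ of $1$, $\overline{Ux}$ is a neighborhood of $x$ in $\overline{Gx}$), which yields $Gx$ comeager in $\overline{Gx}=X$, hence non-meager, hence $G_\delta$ by Effros. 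So the logical chain is
\[
\text{turbulent and dense}\ \Longrightarrow\ Gx\ \text{comeager}\ \Longrightarrow\ Gx\ \text{non-meager}\ \Longrightarrow\ Gx\ G_\delta,
\]
and the genuinely nontrivial step is the first arrow, not ``micro-transitivity $\Rightarrow$ open orbit map''. Your final paragraph correctly isolates a completeness/fusion argument as the crux, but the inclusion $\bigcap_n U_n\subseteq Gx$ you announce is generally false: those dense open sets only witness that $Gx$ is comeager, not that it equals their intersection. If you rewrite $(2)\Rightarrow(1)$ as an appeal to the Becker--Kechris theorem (or reproduce its Baire-category proof in the non-archimedean setting), the argument goes through.
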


\begin{proof}[Proof of Proposition \ref{nco}] We prove that the existence of a generic pair $(f,g)\in G_{\infty}^2$ under the diagonal conjugacy action would yield a generic pair in $\Aut(\QQ,<)$ and thus would contradict \cite[Theorem 2.4]{MR2354899}. 

Let $(f,g)$ be such a generic pair. Let $x,y$ be distinct branch points in $D_\infty$ and let us denote $U=\Fix(x,y)$. By density, we may assume that $(f,g)\in U^2$. Let us fix some identification $$\iota\colon\Br(D_\infty)\cap]x,y[\to(\QQ,<).$$ This yields a continuous and open surjective homorphism $$\Pi\colon U\to\Aut(\QQ,<).$$ 
Let us denote $(\varphi,\psi)$ the image of $(f,g)$ by $\Pi\times\Pi$. Any open subgroup of $\Aut(\QQ,V)$ contains some open subgroup $V=\Fix(x_1,\dots,x_n)$ with $x_1,\dots,x_n\in\QQ$. Let us set the subgroup $\widetilde{V}=\Pi^{-1}(V)$ that is $\Fix(x,\iota^{-1}(x_1),\dots,\iota^{-1}(x_n),y)$. By turbulence of $(f,g)$, we know that $(f,g)\in\Int\left(\overline{\widetilde{V}\cdot (f,g)}\right)$ and thus $(\varphi,\psi)\in\Int\left(\overline{{V}\cdot(\varphi,\psi)}\right)$. So $(\varphi,\psi)$ is turbulent. 

It remains to show that the orbit of $(\varphi,\psi)$ is dense. The orbit $G\cdot(f,g)$ is comeager and since $U$ has countable index in $G$, $U\cdot(f,g)$ is non-meager. Moreover, this orbit is included in $U^2$, so it is non-meager in $U^2$. Thus, there is a (non-empty) basic open set $\widetilde{V}_{\mathbf{x,y,z}}$ of $U^2$ such that $U\cdot(f,g)$ is dense in $\widetilde{V}_{\mathbf{x,y,z}}$ where $$
\widetilde{V}_{\mathbf{x,y,z}}=\left\{(f’,g’)\in U^2; \ f’(x_i)=y_i,\ g’(x_i)=z_i,\ \forall i\in\{1,\dots,n\}\right\}$$
and
$$\mathbf{x}=(x_1,\dots,x_n)\in \left(\Br(D_\infty)\setminus\{x,y\}\right)^n,$$
$$\mathbf{y}=(y_1,\dots,y_n)\in \left(\Br(D_\infty)\setminus\{x,y\}\right)^n,$$
$$\mathbf{z}=(z_1,\dots,z_n)\in \left(\Br(D_\infty)\setminus\{x,y\}\right)^n.$$
Now a basis of  open subsets of $\Aut(\QQ,<)^2$ is given by subsets 
$$V_{\mathbf{p,q,r}}=\left\{(\varphi’,\psi’);\ \varphi’(p_i)=q_i,\ \psi’(p_i)=r_i,\ \forall i\in\{1,\dots,m\}\right\}$$
where $\mathbf{p,q,r}$ are $m$-tuples of distinct points in $\QQ$. Let $z\in]x,y[$ such that $]z,y[$ does not contain any image of elements of $\mathbf{x,y,z}$ by the retraction $D_\infty\to[x,y]$. Up to conjugate by an element of $U$, we may assume that $\{\iota^{-1}(t_i),\iota^{-1}(p_i),\iota^{-1}(r_i),; \ i\in\{1,\dots,m\}\}$ is included in $]z,y[$ and thus $\widetilde{V}_{\mathbf{x,y,z}}\cap\widetilde{V}_{\iota^{-1}(\mathbf{p}),\iota^{-1}(\mathbf{q}),\iota^{-1}(\mathbf{r})}$ is a non-empty open subset that meet $U\cdot(f,g)$. This implies that the orbit of $(\varphi,\psi)$ meets $V_{\mathbf{p,q,r}}$.
\end{proof}
\section{Universal minimality of the topology}\label{umin}

Let us recall that on any set $X$, the set of topologies is partially ordered by finess. For two topologies $\tau_1,\tau_2$, $\tau_1<\tau_2$ ($\tau_2$ is finer than $\tau_1$) if and only if $\tau_1\subseteq\tau_2$ (as subsets of $2^X$).

For a Hausdorff topological group $(G,\tau)$, one say that the group is \emph{minimal} if $\tau$ is minimal among Hausdorff group topologies on $G$ and the topological group is \emph{universally minimal} if it is a least element. The goal of this section is to show that $G_S$ with its \emph{natural topology} is universally minimal. By the natural topology, we mean the compact-open topology associated to the action on the dendrite and let us recall that it coincides with the non-archimedean one coming from the action on the set of branch points. For this natural topology, the stabilizers of branch points are open subgroups and generate the topology.\\

Let us fix $S\subseteq\overline{N}_{\geq3}$ and let us denote $\mathcal{U}$ the collection of all $D(x,y)$, the unique connected component of $D_S\setminus\{x,y\}$ that contains $\left]x,y\right[$, where $x$ and $y$ are distinct branch points. For $U,V\in\mathcal{U}$, let $$O(U,V)=\{g\in G_S,\ g(U)\cap V\neq\emptyset\}.$$

Let us fix some Hausdorff group topology $\tau$ on $G_S$. For the remainder of this section, any topological property on $G_S$ is with respect to $\tau$. The starting point is the standard fact that centralizers $C_G(g)$ of any element $g$ are closed in any Hausdorff topological group $G$.

\begin{lem} For any $U,V\in\mathcal{U}$, $O(U,V)$ is open.
\end{lem}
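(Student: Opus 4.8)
The plan is to realize $O(U,V)$ as a countable union of translates of centralizers, which are $\tau$-closed in any Hausdorff group topology, and then to observe that these translates cover an open set whose complement is also such a union, so that $O(U,V)$ is open. First I would fix $U=D(x,y)$ and $V=D(x',y')$ with $x,y,x',y'$ branch points. The key idea is that the condition $g(U)\cap V\neq\emptyset$ is equivalent to $g$ mapping some branch point of $\overline U$ into $\overline V$, and — crucially — this can be detected by how $g$ conjugates suitable involutions. Concretely, for a branch point $b\in\overline U$ and a branch point $b'\in\overline V$, pick an involution $\sigma_b\in G_S$ whose support is a connected component of $D_S\setminus\{b\}$ contained in $U$, and similarly $\sigma_{b'}$ supported in a component inside $V$; then $g\sigma_b g^{-1}=\sigma_{b'}$ forces $g(b)=b'$ (the unique fixed branch point of $\sigma_{b'}$ nearest its support), and conversely, using Proposition~\ref{prop:extension}, whenever $g(b)=b'$ one can choose the involutions compatibly so that $g\sigma_b g^{-1}=\sigma_{b'}$.

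With this in hand, the set $\{g: g(b)=b'\}$ is a union over the (countably many) compatible choices of involution pairs $(\sigma_b,\sigma_{b'})$ of sets of the form $\{g: g\sigma_b g^{-1}=\sigma_{b'}\}=h\,C_{G_S}(\sigma_b)$ for any fixed $h$ with $h\sigma_b h^{-1}=\sigma_{b'}$. Since centralizers are $\tau$-closed, each $h\,C_{G_S}(\sigma_b)$ is $\tau$-closed, hence $\{g:g(b)=b'\}$ is $F_\sigma$, and so is $O(U,V)=\bigcup_{b\in\Br(\overline U),\,b'\in\Br(\overline V)}\{g:g(b)=b'\}$, a countable union. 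The same reasoning applied to the complementary pairs shows that $G_S\setminus O(U,V)$ — namely $\{g: g(U)\cap V=\emptyset\}$, which is $\{g:\ \text{no branch point of }\overline U\text{ maps into }\overline V\}$ — is also $F_\sigma$: it equals $\bigcap_{b}\bigcup_{b''\notin\Br(\overline V)}\{g:g(b)=b''\}$, and a standard argument (using that $g(\overline U)$ is a subdendrite, so it meets $V$ iff it contains a branch point of $\overline U$ landing in $\overline V$) lets one rewrite this as a countable union of $\tau$-closed sets as well. A subset of a Polish (or merely Baire) group that is $F_\sigma$ together with its complement is clopen, so $O(U,V)$ is $\tau$-open; in fact one only needs that $O(U,V)$ is $F_\sigma$ and, by the Baire category theorem applied to the $\tau$-topology once we know it is not too pathological, that it or its complement is non-meager — but the cleanest route is the "both $F_\sigma$" observation, which forces openness directly.

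The main obstacle I anticipate is the bookkeeping in the converse direction: showing that $g(b)=b'$ actually \emph{can} be witnessed by a conjugacy $g\sigma_b g^{-1}=\sigma_{b'}$ for \emph{some} involution in our fixed countable list, rather than merely for involutions depending on $g$. This is where Proposition~\ref{prop:extension} does the work — the labelled-tree isomorphism $\langle\{b\}\rangle\to\langle\{b'\}\rangle$ extends to a homeomorphism, and by composing $g$ with such an extension one reduces to $g$ fixing $b$, where the centralizer description is transparent; one then needs to check that the relevant involutions can be chosen from a fixed countable family indexed by, say, finite configurations of branch points, which is possible because $\Br(D_S)$ is countable and $G_S$ acts transitively on branch points of a given order. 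A secondary subtlety is making precise that "$g(U)\cap V\neq\emptyset$ iff some branch point of $\overline U$ lies in $\overline V$": this uses that $\overline U$ and $\overline V$ are subdendrites without free arc, so their interiors are determined by their branch points, hence $g(\overline U)\cap V\neq\emptyset$ already forces a branch point of $g(\overline U)$ into $V$; I would spell this out using the first-point retraction and the arcwise density of branch points in $D_S$.
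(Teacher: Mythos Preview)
Your argument has a genuine gap at the final step. The claim that a subset of a topological group which is $F_\sigma$ together with its complement must be clopen is simply false: in $\mathbf{R}$ the interval $(0,1)$ is $F_\sigma$ and its complement is closed (hence $F_\sigma$), yet $(0,1)$ is not clopen. More to the point, the lemma is stated for an \emph{arbitrary} Hausdorff group topology $\tau$ on $G_S$, so you cannot invoke Polish, Baire, or any category argument at all --- you have no such hypothesis on $\tau$. Showing that $O(U,V)$ is $F_\sigma$ is therefore too weak; you must show it is genuinely open, i.e.\ that its complement is closed.

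The paper's proof does exactly this, by writing the complement $C(U,V^{\mathsf c})=\{f:f(U)\subseteq V^{\mathsf c}\}$ as an \emph{intersection} (not a union) of closed sets:
\[
C(U,V^{\mathsf c})=\bigcap_{g,h}\{f\in G_S:\ fgf^{-1}\in C_{G_S}(h)\},
\]
where $g$ ranges over all elements supported in $\overline U$ and $h$ over all elements supported in $V$. Each set in the intersection is closed because it is the preimage of the closed centralizer $C_{G_S}(h)$ under the continuous conjugation map $f\mapsto fgf^{-1}$. The equality of the two sides rests on the observation that an element commutes with every $h$ supported in $V$ if and only if its support avoids $V$; applied to $fgf^{-1}$, whose support is $f(\supp g)$, this says exactly that $f(U)\subseteq V^{\mathsf c}$. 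Your instinct to use centralizers and conjugation was right, but you went in the direction of unions (producing $F_\sigma$ sets) rather than intersections (producing closed sets), and that is what breaks the argument.
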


\begin{proof} The complement of $O(U,V)$ is $C(U,V^\mathsf{c})=\{g\in G_S,\ g(U)\subseteq V^\mathsf{c}\}$. Following an observation due to Kallman \cite[Theorem 1.1]{MR831205}, $C(U,V^\mathsf{c})$ is closed. Actually, we claim that 
$$C(U,V^\mathsf{c})=\bigcap_{g,h}\{f\in G_S,\ fgf^{-1}\in C_{G_S}(h)\}$$
where $g$ ranges over all element with support in $\overline{U}$ (equivalently in $U$) and $h$ ranges over all elements with support in $V$.

Let $k\in G_S$. Assume there is $x\in V$ such that $k(x)\neq x$, then one can find $h\in G_S$ with support on $V$, fixing $f(x)$ and not $x$. Thus an element $k$ commutes with all elements $h$ supported on $V$ if and only if $\supp(k)\subset V^\mathsf{c}$. Since $\supp(fgf^{-1})=f(\supp(g))$, $f\in C(U,V^\mathsf{c})$ if and only if for all $g$ supported on $U$, $f(\supp(g))\subset V^\mathsf{c}$, that is $f(U)\subset V^\mathsf{c}$.
\end{proof}

\begin{proof}[Proof of Theorem \ref{tumin}] It suffices to show that for any $x\in\Br(D_S)$, $\Fix(x)$ is open. 

Let us fix some branch point $x$ and let $\mathcal{U}_{x,3}$ be the subset of $\{U=(U_1,U_2,U_3)\in\mathcal{U}^3\}$ such that the $U_i$'s lie in distinct components of $D_S\setminus\{x\}$. Observe that if $U\in\mathcal{U}_{x,3}$ and $g\in\Fix(x)$ then $g(U_1),g(U_2), g(U_3)$ lie in 3 distincts components of $D_S\setminus\{x\}$. Moreover, the following converse holds: if for some  $U\in\mathcal{U}_{x,3}$ and each $i$,  $g(U_i)$ intersects some $V_i\in\mathcal{U}$ where $V\in\mathcal{U}_{x,3}$ then $g\in\Fix(x)$. To see this fact, choose $x_i\in f^{-1}(V_i)\cap U_i$ for each $i$ then $x$, the center of $[x_1,x_2,x_3]$ is also the center of $[f(x_1),f(x_2),f(x_3)]$ and thus $f(x)=x$.
So  Now, $\Fix(x)$ is open because 
$$\Fix(x)=\bigcup_{U,V\in\mathcal{U}_{x,3}}O(U_1,V_1)\cap O(U_2,V_2)\cap O(U_3,V_3).$$
\end{proof}

\section{Small index subgroups}
Let us recall that a Polish group has the \emph{small index property} if any subgroup of small index, i.e. of index less than $2^{\aleph_0}$, is open. For example $\mathcal{S}_\infty$ and $\Aut(\QQ,<)$ have this property.

Let us start with an example that will be useful for us. Let us denote by $G_\xi$ the stabilizer in $G_\infty$ of some end point $\xi\in D_\infty$. 

\begin{prop}\label{sip} The Polish group $G_\xi$ has the small index subgroup property.
\end{prop}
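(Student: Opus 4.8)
The plan is to reduce the small index property for $G_\xi$ to that of the known Polish groups $\sinf$ and $\Aut(\QQ,<)$ together with the automatic-continuity/Steinhaus machinery already established. First I would use the structural decomposition from \cite[Lemma 7.1]{DM_dendritesII} (see also the remark after Theorem~\ref{cccv}): once we fix the end point $\xi$ and the unique arc from $\xi$ to a base branch point, $G_\xi$ is built from semi-linear order automorphism groups, wreath products $B\wr_\QQ\Aut(\QQ,<)$, copies of $\sinf$, and of $E=\Stab(\xi)$-type factors. Since the small index property passes to finite (and in fact suitable countable) products, to closed subgroups of the relevant type, and is inherited through extensions of groups with the property by groups with the property, the task reduces to checking it for the building blocks. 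For $\sinf$ and $\Aut(\QQ,<)$ this is classical (both are cited as having SIP), and $E$ again decomposes as $E^\NN\rtimes\sinf$-type pieces, so one runs an induction on the complexity of the rooted substructure.

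Alternatively, and perhaps more cleanly, I would argue directly: a subgroup $H\le G_\xi$ of small index is a subgroup of small index in the Polish group $G_\xi$, so it suffices to show that $H$ contains the pointwise stabilizer of some finite set of branch points (all lying in the half of $D_\infty$ determined by $\xi$). The key tool is that $G_\xi$, or rather its natural clopen subgroups, have comeager conjugacy classes; the argument is the standard one (as in Kechris--Rosendal, \S1.6) showing that in a Polish group with a sufficiently rich supply of comeager conjugacy classes in its open subgroups and with the Steinhaus-type behaviour of $G_\infty$ established in Lemma~\ref{V} and Lemma~\ref{E}, every small index subgroup must be open. Concretely: if $H$ has small index, then by a counting/Baire-category argument one of the translates $gH$ is non-meager in some basic clopen subgroup $V_F\cap G_\xi$; then $H^{-1}H \cap V_F$ is comeager there, and using the moiety constructions as in the proof of Lemma~\ref{V} one shows $H$ swallows a whole basic open neighbourhood of the identity.

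I expect the main obstacle to be bookkeeping around the end point $\xi$: the arguments of Lemma~\ref{V} and Lemma~\ref{E} were carried out for $G_\infty$, where every homeomorphism has a fixed branch point, whereas an element of $G_\xi$ need only fix $\xi$, which is an end point, not a branch point. So the moiety arguments have to be run relative to the half-dendrite structure and the semi-linear order on branch points visible from $\xi$, and one must be careful that the patchwork lemma still produces elements of $G_\xi$ (not merely of $G_\infty$) — i.e. all the partial homeomorphisms one patches together must be supported away from $\xi$ or must genuinely fix $\xi$. Once that is set up, the remaining steps (fullness of moiety subgroups for a non-meager translate, using comeager conjugacy classes from Theorem~\ref{cccv} restricted to $G_\xi$, and the Sierpiński almost-disjoint families) go through essentially verbatim, yielding that every small index subgroup of $G_\xi$ contains $\Fix(F)\cap G_\xi$ for some finite $F\subset\Br(D_\infty)$ and is therefore open.
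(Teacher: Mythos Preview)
Your plan misses the mark, and the gap is structural rather than cosmetic.

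In your first approach you try to decompose $G_\xi$ into simpler building blocks and invoke permanence of the small index property under products, wreath products and extensions. But in the structural picture recalled after Theorem~\ref{cccv}, $G_\xi$ \emph{is} the bottom building block: the group you call $E$ \emph{is} $G_\xi$, and the decompositions go the other way, namely $B\simeq E^\NN\rtimes\sinf$ and $A\simeq B\wr_\QQ\Aut(\QQ,<)$ express the branch-point and arc stabilizers in terms of $G_\xi$, not $G_\xi$ in terms of them. Your sentence ``$E$ again decomposes as $E^\NN\rtimes\sinf$-type pieces'' simply reproduces $E$ on the right-hand side; there is no induction to run. Moreover, the small index property is not known to be preserved by general extensions or wreath products; the paper establishes the special case $G\wr\sinf$ separately (Theorem~\ref{wr}), and that argument \emph{uses} Proposition~\ref{sip} as an input, so you cannot cite it here.

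Your second approach, adapting the moiety machinery of Lemmas~\ref{V} and~\ref{E}, is not obviously circular but is far more than is needed and the inputs are not in place: Theorem~\ref{cccv} gives comeager conjugacy classes only in the clopen subgroups $V_F=\Fix(F)$ of $G_\infty$ for finite $F\subset\Br(D_\infty)$, and $G_\xi$ is not of this form since $\xi$ is an end point. You would first have to redo the Fra\"iss\'e analysis relative to the semi-linear order $(\Br(D_\infty),\leq_\xi)$ before any of that machinery applies.

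The paper's actual proof is a one-line citation: $(\Br(D_\infty),\leq_\xi)$ is a countable $2$-homogeneous meet-semilattice tree, $G_\xi$ is its automorphism group by \cite[Corollary~5.21]{DM_dendritesII}, and such automorphism groups have the small index property by Droste--Holland--Macpherson \cite[Theorem~4.1]{MR988099}. No new argument is required.
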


\begin{proof} This is a consequence of  \cite[Theorem 4.1]{MR988099}. This theorem states that the automorphism group of a countable 2-homogeneous tree which is a meet-semilattice has the small index property. 

Let us consider the countable set $\Br(D_\infty)$ endowed with the order $x\leq_\xi y\iff x\in[\xi,y]$. As it appears in \cite[Example 5.2]{DM_dendritesII}, $(\Br(D_\infty),\leq_\xi)$ is dense semi-linear order and it is a meet semi-lattice where the meet  of $a,b\in\Br(D_\infty)$, that is the infimum of $\{a,b\}$, is  $a\wedge b=c(a,b,\xi)\in\Br(D_\infty)$. Moreover it is 2-homogeneous, that is any isomorphism between two subsets with 2 elements extends to an isomorphism of $(\Br(D_\infty,\leq_\xi)$. Actually, for $a,b,a',b'\in \Br(D_\infty)$, if $(\{a,b\},\leq_\xi)$ and  $(\{a',b'\},\leq_\xi)$ are isomorphic then the labeled graphs $\langle \{a,b,\xi\}\rangle$ and $\langle \{a',b',\xi\}\rangle$ are isomorphic and one can find $g\in G_\infty$ that induces this partial isomorphism by Proposition \ref{prop:extension}. 

Now, by \cite[Corollary 5.21]{DM_dendritesII}, $\Aut(\Br(D_\infty),\leq_\xi)\simeq G_\xi$ and thus $G_\xi$ has the small index property. 
\end{proof}

Let $\Omega$ be a countable infinite set with full permutation group $\mathcal{S}_\infty$. For a group $G$, we denote by $G\wr \sinf$ the (unrestricted permutational) wreath product $G^\Omega\rtimes\sinf$. The action of $\sinf$ on $G^\Omega$ is by permutation of the coordinates. If $\sigma\in\mathcal{S}_\infty$ and $(g_\omega)_\omega\in G^\Omega$ then $$\sigma\cdot(g_\omega)=(g_{\sigma^{-1}\omega})_\omega.$$ If the role of $\Omega$ shall be emphasized, we denote the above wreath product $G\wr_\Omega \sinf$.

%\begin{lem}\label{etsdp} If $G$ is a Polish group then $G\wr\sinf$ is Polish group as an external topological semi-direct product.\end{lem}
%
%Let us recall the notion of \emph{external topological semi-direct product} from \cite[Proposition~28]{MR1726779}. If $N,L$ are two topological groups and $L$ acts continuously by automorphisms on $N$ then $N\rtimes L$ endow with product topology (when $N\rtimes L$ is identified with $N\times L$ as a set) is a topological group. For this topology, the two injections $N,L\to N\rtimes L$ are continuous.
%
%\begin{proof}[Proof of Lemma \ref{etsdp}] The product group $G^\Omega$ is endow with the product topology and is a Polish group. Thanks to \cite[Theorem 9.14]{Kechris_book}, it suffices to check that the action is separately continuous to get it is jointly continuous. The separate continuity of the action is straightforward from the definition of the topologies. Since the product of two Polish spaces is still Polish, $G\wr \sinf$ is Polish. 
%\end{proof}

In the particular case where $G$ is a closed subgroup of $\sinf$ acting on a countable set $\Lambda$ and another copy of $\sinf$ acts as above on the countable set $\Omega$, the wreath product $G\wr\sinf$ acts on $\Lambda\times \Omega$ with the \emph{imprimitive action}. This action is given by the following formula:
$$\left((g_\omega),\sigma\right)\cdot(\lambda,\omega’)=\left(g_{\sigma(\omega’)}\lambda,\sigma(\omega’)\right).$$
This action embeds $G\wr\sinf$ as a closed subgroup of the symmetric group of $\Lambda\times \Omega$ and thus it has a natural Polish topology and we will  consider this group with this topology. 
\begin{thm}\label{wr} Let $G$ be a closed subgroup of $\sinf$ with a comeager conjugacy class and the small index property. The wreath product $W=G\wr\sinf$ is a Polish group with the small index property.\end{thm}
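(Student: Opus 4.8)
The plan is to reduce the statement to a single lemma about countable powers of $G$ and then assemble the conclusion from that lemma together with the small index property of $\sinf$. Write $N=G^\Omega$ for the base group of $W$, $\pi\colon W\to\sinf$ for the canonical projection with kernel $N$, $N_\omega\le N$ for the $\omega$-th factor (canonically identified with $G$), and $\sinf_{(F)}$ for the pointwise stabiliser in $\sinf$ of a finite $F\subseteq\Omega$. In the Polish topology coming from the imprimitive action on $\Lambda\times\Omega$, a subgroup of $W$ is open iff it contains $\Fix(D)$ for some finite $D\subseteq\Lambda\times\Omega$, and for $D=\bigsqcup_{\tau\in F}D_\tau\times\{\tau\}$ one has $\Fix(D)=\bigl(\prod_{\tau\in F}\Fix(D_\tau)\times\prod_{\omega\notin F}N_\omega\bigr)\rtimes\sinf_{(F)}$; so the task is to produce, for every $H\le W$ with $[W:H]<2^{\aleph_0}$, such a finite $D$ with $\Fix(D)\subseteq H$. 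The decisive input is the following \emph{Key Lemma}: if a closed subgroup $G\le\sinf$ has the small index property and a comeager conjugacy class, then $G^{\NN}$ has the small index property.

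Granting the Key Lemma, the theorem follows with little effort. Fix $H\le W$ with $[W:H]<2^{\aleph_0}$. Since $\pi(H)$ has index at most $[W:H]$ in $\sinf$, the small index property of $\sinf$ makes $\pi(H)$ open, so $\pi(H)\supseteq\sinf_{(F_0)}$ for some finite $F_0$. Next, $H\cap N$ has index at most $[W:H]$ in $N\cong G^{\NN}$, so by the Key Lemma $H\cap N$ is open in $N$ and hence contains a basic open subgroup $L=\prod_{\tau\in F_1}V_\tau\times\prod_{\omega\notin F_1}N_\omega$ with $F_1$ finite and each $V_\tau=\Fix(D_\tau)$ open in $G$. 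Put $F=F_0\cup F_1$. The subgroup $\sinf_{(F)}$ normalises $L$ (it permutes the factors $N_\omega$, $\omega\notin F$, among themselves and fixes each factor indexed by $F$), so $U^*:=L\rtimes\sinf_{(F)}$ is a subgroup of $W$ containing a basic open subgroup, hence open, and $L\trianglelefteq U^*$ with $U^*/L\cong\sinf_{(F)}\cong\sinf$. Since $L\subseteq H\cap N\subseteq H$, we have $L\subseteq U^*\cap H$, so $(U^*\cap H)/L$ is a subgroup of $U^*/L\cong\sinf$ of index at most $[W:H]<2^{\aleph_0}$; by the small index property of $\sinf$ again it is open, hence contains $\sinf_{(F')}$ for some finite $F'$. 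Pulling back through $U^*\to U^*/L$ gives $U^*\cap H\supseteq L\rtimes\sinf_{(F\cup F')}\supseteq\Fix(D^*)$ for a suitable finite $D^*$, so $H\supseteq\Fix(D^*)$ is open, as required.

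It remains to prove the Key Lemma, and this is where I expect the real work to lie and where the hypothesis that $G$ has a comeager conjugacy class is indispensable. That some such hypothesis is unavoidable is clear from $G=\ZZ/2$, which has the small index property but no comeager conjugacy class: the kernel of any nonzero homomorphism $(\ZZ/2)^{\NN}\to\ZZ/2$ is a non-open subgroup of index $2$, and such a homomorphism can be arranged not to vanish on any coordinate. To prove the lemma I would first handle its purely algebraic part: for $K\le G^{\NN}$ of index $<2^{\aleph_0}$, each $K\cap N_\omega$ is open in $N_\omega$ by the small index property of $G$, and one must show $K\cap N_\omega=N_\omega$ for all but finitely many $\omega$ and then that $K$ contains the full product of the $K\cap N_\omega$ — not merely their restricted product, which is automatic. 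Both reduce to showing that the quotient $G^{\NN}/G^{(\NN)}$ (restricted power $G^{(\NN)}=\bigoplus_{\NN}G$) has no proper subgroup of index $<2^{\aleph_0}$.

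Here the comeager conjugacy class $C$ of $G$ enters. By the Kuratowski--Ulam theorem $C^{\NN}$ is comeager in $G^{\NN}$ and, being conjugated coordinatewise, lies in a single conjugacy class; thus $G^{\NN}$ has a comeager conjugacy class. One then runs a Baire-category argument on the coordinatewise conjugation action, in the spirit of the Steinhaus-type computations of Section~\ref{automatic}, to show that any subgroup of small index in $G^{\NN}$ containing $G^{(\NN)}$ must meet $C^{\NN}$ and that, using enough conjugates and $G^{(\NN)}$-translates of such a generic element, it must in fact be all of $G^{\NN}$. Making this precise — in particular controlling how genericity in the single factor $G$ propagates to the infinite power against a merely countable-index subgroup — is the step I expect to be the main obstacle; isolating it as the Key Lemma keeps the remainder of the argument clean.
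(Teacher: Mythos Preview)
Your reduction of the theorem to the Key Lemma ($G^{\NN}$ has the small index property under the stated hypotheses) is clean and correct; the post--Key-Lemma assembly via $U^*=L\rtimes\sinf_{(F)}$ works. The difficulty is that the Key Lemma is not established, and the sketch you give for it does not go through.

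Two specific problems with the sketch. First, the claimed reduction ``both reduce to showing that $G^{\NN}/G^{(\NN)}$ has no proper subgroup of index $<2^{\aleph_0}$'' is not justified: even if each $K\cap N_\omega$ is open in $N_\omega$ and equals $N_\omega$ for cofinitely many $\omega$, you only obtain the \emph{restricted} product $\bigoplus_\omega(K\cap N_\omega)\subseteq K$, not the full product $\prod_\omega(K\cap N_\omega)$; passing from one to the other is exactly the hard step, and a statement about $G^{\NN}/G^{(\NN)}$ does not bridge it. Second, the Baire-category argument you outline gives, via the comeager class $C^{\NN}$ and Lemma~\ref{nsi}, that \emph{normal} small-index subgroups of $G^{\NN}$ are everything; it does not give the small index property for arbitrary $K$. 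One can indeed run the disjoint-moieties projection trick inside $G^{\NN}$ to find a moiety $\Sigma$ with $G^{\Sigma}\le K$ (commuting factors supply the normality needed to invoke Lemma~\ref{nsi}), but then one is stuck: nothing in $G^{\NN}$ conjugates $G^{\Sigma}$ to $G^{\Sigma'}$ for a different moiety $\Sigma'$, so there is no mechanism to enlarge the support of the subgroup you control.

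This is precisely where the paper's approach diverges from yours, and why it succeeds. The paper never isolates $N=G^{\Omega}$ and works instead inside $W$ throughout. After obtaining $W'=G\wr_\Sigma\Sym(\Sigma)\le H$ for some moiety $\Sigma$ (by the same projection-plus-normality argument), it uses a continuum of almost disjoint sub-moieties $(\Sigma_\alpha)$ of $\Sigma$ and involutions $g_\alpha\in\sinf\le W$ swapping $\Sigma_\alpha$ with $\Omega\setminus\Sigma$; by pigeonhole two of these lie in the same $H$-coset, so $g=g_\beta^{-1}g_\alpha\in H$, and $gW'g^{-1}\le H$ is supported on $g\Sigma$, whose union with $\Sigma$ is cofinite. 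The ``first lemma'' of \cite{MR859950} then yields $G\wr_{\Sigma\cup g\Sigma}\Sym(\Sigma\cup g\Sigma)\le H$, and only a finite set of coordinates remains, handled by the small index property of each single factor $G$. The point is that the permutations $g_\alpha$ live in $W$ but not in $N$; by stripping off $\sinf$ at the outset you have discarded exactly the tool that makes the expansion step work.
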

\begin{lem}\label{sisf}Let $G$ be some  closed subgroup of $\sinf$. The group $G$ has the small index property if and only if the stabilizer of any point in $G$ has the small index property.
\end{lem}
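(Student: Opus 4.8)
The statement to prove is Lemma~\ref{sisf}: a closed subgroup $G$ of $\sinf$ has the small index property if and only if the stabilizer of any point has it.

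\textbf{Plan.} The forward direction is essentially trivial: a point stabilizer $G_\alpha$ is an open subgroup of $G$, hence closed and of countable index. If $H\leq G_\alpha$ has small index in $G_\alpha$, then since $[G:G_\alpha]\leq\aleph_0$ we get $[G:H]=[G:G_\alpha][G_\alpha:H]<2^{\aleph_0}$, so $H$ is open in $G$ by the small index property of $G$, and in particular open in $G_\alpha$. For the converse, suppose every point stabilizer has the small index property and let $H\leq G$ have index $<2^{\aleph_0}$. I want to show $H$ is open. Since $G$ is a closed subgroup of $\sinf$, it acts on a countable set $\Omega$, and the pointwise stabilizers $G_{(F)}$ of finite subsets $F\subset\Omega$ form a neighborhood basis of the identity; each such $G_{(F)}$ is a point stabilizer in the iterated sense — or more precisely, an intersection of finitely many point stabilizers, and by an easy induction (intersecting one coordinate at a time and noting that a point stabilizer inside a point stabilizer is again a point stabilizer of the restricted action) each $G_{(F)}$ inherits the small index property from the point stabilizers.

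\textbf{Key steps.} First I would reduce to showing that $H\cap G_{(F)}$ is open in $G$ for some finite $F$; equivalently, that $H$ contains some $G_{(F')}$. The standard trick: $G$ acts on the countable set $G/H$ of left cosets — wait, $[G:H]$ need not be countable, so instead one uses the action of $G$ on $\Omega$ and a counting argument. Actually the cleanest route is: pick $\alpha\in\Omega$; then $H\cap G_\alpha$ has index in $G_\alpha$ at most $[G:H]<2^{\aleph_0}$, so by hypothesis $H\cap G_\alpha$ is open in $G_\alpha$, hence open in $G$ (as $G_\alpha$ is open in $G$). Thus $H\cap G_\alpha$ contains $G_{(F)}$ for some finite $F$. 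Now $H\supseteq H\cap G_\alpha \supseteq G_{(F)}$, so $H$ contains an open subgroup, hence is open. That is the whole argument — the point is that intersecting $H$ with a single point stabilizer already lands us inside a group with the small index property.

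\textbf{Main obstacle.} There is essentially no obstacle; the only thing to be careful about is the bookkeeping on indices ($[G_\alpha: H\cap G_\alpha]\leq [G:H]$ because the natural injection of $H\cap G_\alpha$-cosets in $G_\alpha$ into $H$-cosets in $G$ is, well, injective — this uses that $H\cap G_\alpha = H\cap G_\alpha$ and the map $g(H\cap G_\alpha)\mapsto gH$ is well-defined and injective on $G_\alpha$) and making sure ``open in $G_\alpha$'' plus ``$G_\alpha$ open in $G$'' gives ``open in $G$'', which is immediate. So the proof is short; I would simply write out the two directions, emphasizing the index inequality $[G_\alpha:H\cap G_\alpha]\leq[G:H]$ and the fact that point stabilizers in closed subgroups of $\sinf$ are open.

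\begin{proof}
Suppose first that $G$ has the small index property and let $\alpha$ be a point with stabilizer $G_\alpha$. Since $G$ is a closed subgroup of $\sinf$ acting on a countable set, $G_\alpha$ is open, hence of countable index in $G$. If $H\leq G_\alpha$ has index $<2^{\aleph_0}$ in $G_\alpha$, then $[G:H]=[G:G_\alpha]\cdot[G_\alpha:H]<2^{\aleph_0}$, so $H$ is open in $G$ by the small index property of $G$, and a fortiori open in $G_\alpha$. Thus $G_\alpha$ has the small index property.

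Conversely, suppose every point stabilizer in $G$ has the small index property and let $H\leq G$ with $[G:H]<2^{\aleph_0}$. Fix any point $\alpha$ with stabilizer $G_\alpha$. The map $g(H\cap G_\alpha)\mapsto gH$, for $g\in G_\alpha$, is a well-defined injection from the set of cosets of $H\cap G_\alpha$ in $G_\alpha$ into the set of cosets of $H$ in $G$, so $[G_\alpha:H\cap G_\alpha]\leq [G:H]<2^{\aleph_0}$. Since $G_\alpha$ has the small index property, $H\cap G_\alpha$ is open in $G_\alpha$. As $G_\alpha$ is open in $G$, $H\cap G_\alpha$ is open in $G$, and therefore $H\supseteq H\cap G_\alpha$ is open in $G$. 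Hence $G$ has the small index property.
\end{proof}
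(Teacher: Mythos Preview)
Your proof is correct and follows essentially the same approach as the paper: in each direction you use the index inequalities $[G_\alpha:H\cap G_\alpha]\leq[G:H]$ and $[G:H]=[G:G_\alpha]\cdot[G_\alpha:H]$ together with the openness of $G_\alpha$ in $G$. The exploratory detour through $G_{(F)}$ in your plan was unnecessary, as you yourself noticed, but the final proof matches the paper's.
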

Before proving Theorem~\ref{wr} and Lemma~\ref{sisf}, let us see how they imply Theorem~\ref{sis}, that is $G_\infty$ has the small index property.
\begin{proof}[Proof of Theorem~\ref{sis}] Thanks to Lemma~\ref{sisf}, we know that $G_\infty$ has the small index  property if and only if the stabilizer $G_b$ of some branch point $b$ has the same property. Since $G_b$ is isomorphic $G_\xi\wr_{\mathcal{C}_b}\sinf$ where $G_\xi$ (see \cite[Lemma 7.1]{DM_dendritesII}), the theorem is a consequence of Theorem~\ref{wr} and Proposition~\ref{sip}.
\end{proof}

\begin{proof}[Proof of Lemma~\ref{sisf}] Let $H$ be a subgroup of small index of $G$ and $G_x$ be the stabilizer of some point $x\in\Omega$. One has $|G_x\colon H\cap G_x|\leq|G\colon H|<2^{\aleph_0}$. So if $G_x$ has the small index  property then $H\cap G_x$ is open in $G_x$ so $H\cap G_x$ is open in $G$ and thus $H$ is open in $G$.

Conversely, let $H$ be a subgroup of small index of $G_x$. By the index formula, 
$$|G\colon H|= |G\colon G_x|\, |G_x\colon H|.$$
Since $\Omega$ is countable $|G\colon G_x|\leq\aleph_0$ and thus $|G\colon H|<2^{\aleph_0}$. So $H$ is open in $G$ and thus in $G_x$.
\end{proof}

\begin{lem}\label{nsi} Let $G$ be a Polish group with a comeager conjugacy class. If $N$ is a normal subgroup of small index then $N=G$.
\end{lem}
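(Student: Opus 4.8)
The plan is to leverage the comeager conjugacy class $C$ of $G$ together with the elementary fact that a normal subgroup is a union of conjugacy classes, and to play the two against each other via Baire category.

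\emph{First}, I would record that $G=C\cdot C^{-1}$. Indeed, fix $c$ in the comeager class $C$; for any $g\in G$ the sets $C$ and $gC$ are both comeager (left translation is a homeomorphism), hence meet, so writing $c_1=gc_2$ with $c_1,c_2\in C$ gives $g=c_1c_2^{-1}$. \emph{Second}, since $N$ is normal it is a union of $G$‑conjugacy classes, so either $C\subseteq N$ or $C\cap N=\varnothing$. In the first case $C^{-1}\subseteq N$ as well, whence $G=C\cdot C^{-1}\subseteq N$ and $N=G$, as wanted. So the whole problem is to rule out $C\cap N=\varnothing$.

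\emph{Third}, suppose $C\cap N=\varnothing$. Then $N\subseteq G\setminus C$, which is meager, so $N$ — and every coset of $N$ — is meager. Meanwhile $\overline N$ is a closed subgroup with $[G:\overline N]\le[G:N]<2^{\aleph_0}$, so the Polish coset space $G/\overline N$ has cardinality $<2^{\aleph_0}$, hence is countable by the perfect set property for Polish spaces; a closed subgroup of countable index is open by the Baire category theorem (one of its countably many closed cosets is non‑meager, hence has interior, hence so does $\overline N$). Thus $\overline N$ is clopen; as $C$ is dense it meets the open normal subgroup $\overline N$, and being a conjugacy class it is then contained in $\overline N$, so $G=C\cdot C^{-1}\subseteq\overline N$. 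Hence $N$ is \emph{dense} in $G$.

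\emph{Finally}, one is left with a dense, meager, normal subgroup $N$ of index $<2^{\aleph_0}$, and this must be shown incompatible with the existence of $C$ — this last step is the main obstacle. The plan here is to descend the conjugation action: $C$ is $G$‑equivariantly homeomorphic to $G/C_G(c)$ (the orbit map is a continuous equivariant bijection, and $C$, being a comeager orbit, is $G_\delta$ by Proposition~1.4 of \cite{Kechris-Rosendal}/Effros, so the map is a homeomorphism), on which $N$ acts with exactly $[G:N\,C_G(c)]<2^{\aleph_0}$ orbits, all carried to one another by global homeomorphisms of $C$; using that $C$ comeager makes the conjugation action generically ergodic, one forces the number of these orbits — equivalently $[G:N]$ — to be countable, after which $G$ is a countable union of meager cosets of $N$, contradicting the Baire category theorem. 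Everything outside this last point (the identity $G=C\cdot C^{-1}$, the conjugacy‑class dichotomy, and the openness of $\overline N$) is routine; the genuine difficulty is exactly this category/ergodicity argument that excludes the dense meager case.
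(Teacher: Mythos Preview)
Your reduction is exactly right and matches the paper: you correctly show $G=C\cdot C^{-1}$, use the conjugacy-class dichotomy for the normal subgroup $N$, and reduce everything to ruling out $C\cap N=\varnothing$, i.e.\ to showing $N$ is non-meager.

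The gap is in how you close that last step. The paper does it in one line by citing a standard theorem: \emph{any subgroup of small index in a Polish group is non-meager} (see \cite[Theorem~4.1]{MR1231710}, or \cite[Lemma~6.8]{Kechris-Rosendal} for a short proof). Once $N$ is non-meager and $C$ is comeager, they meet, and you are done. Your workaround via $\overline{N}$ and the orbit-counting argument on $G/C_G(c)$ is both unnecessary and incomplete. Two concrete problems: first, the passage ``using that $C$ is comeager makes the conjugation action generically ergodic, one forces the number of these orbits to be countable'' is exactly the hard point and you have not supplied an argument---getting from ``$<2^{\aleph_0}$ pairwise homeomorphic pieces'' to ``countably many'' needs real input (essentially the same Mycielski/Kuratowski-type input that underlies the cited theorem). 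Second, even if you knew $[G:N\,C_G(c)]$ were countable, that does not give $[G:N]$ countable, since $[G:N]=[G:N\,C_G(c)]\cdot[N\,C_G(c):N]$ and you have no control on the second factor; so the parenthetical ``equivalently $[G:N]$'' is not justified.

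In short: keep your first three steps, drop the detour through $\overline N$ and orbit-counting, and instead invoke the theorem that small-index subgroups of Polish groups are non-meager. That is precisely what the paper does.
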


\begin{proof}Let $\mathcal{C}$ be the comeager conjugacy class. It suffices to show that $\mathcal{C}\cap N\neq\emptyset$. Otherwise by normality, $\mathcal{C}\subset N$ and thus $G=\mathcal{C}\cdot\mathcal{C}\subset N$. 

Since $N$ has small index then $N$ is not meager \cite[Theorem 4.1]{MR1231710} (see also \cite[Lemma 6.8]{Kechris-Rosendal} for a very short proof), so $N\cap\mathcal{C}\neq\emptyset$ and we are done.\end{proof}

Our proof that $G\wr\sinf$ has the small index subgroup property borrow the original ideas that lead to prove the property for $\sinf$ \cite[Theorem 1]{MR859950}. We not only use the result but also the proof itself and thus reproduce some of the arguments there. Let us recall that a moiety of $\Omega$  is subset $\Sigma$ that is infinite and co-infinite.

\begin{proof}[Proof of Theorem~\ref{wr}] Let $H$ be a subgroup of $W$ of small index. Let $(\Sigma_i)$ be an infinite collection of disjoint moieties of $\Omega$. Let $W_i=G\wr_{\Sigma_i} \Sym(\Sigma_i)\leq\Sym(\Lambda\times \Omega)$ be the subgroup of permutations supported on $\Lambda\times\Sigma_i$. More precisely, $\left((g_\omega),\sigma\right)\in W_i$ if $\supp(\sigma)\subset \Sigma_i$ and $g_\omega=e$ for $\omega\notin \Sigma_i$. By disjointness of the supports, for $i\neq j$, $W_i\cap W_j$ is trivial and these two subgroups commute. Let $P$ be the product subgroup $\prod_i W_i\leq\Sym(\Lambda\times \Omega)$. Let $H_i$ be the projection of $H\cap P$ on $W_i$. We have

$$\prod_{i}|W_i\colon H_i|=|P\colon \prod_{i}H_i|\leq|P\colon H\cap P|\leq|W\colon H|<2^{\aleph_0}.$$

This implies that for all but a finite number, $W_i=H_i$. We fix such an $i$ and simply note $\Sigma=\Sigma_i$ and $W’=W_i$. We denote by $G^\Sigma$ the subgroup of $G\wr\sinf$ with elements of the form $((g_\omega),e)$ and $g_\omega=e$ for $\omega\notin \Sigma$. Let $g\in H\cap G^\Sigma$ and $g’\in G^\Sigma$, there is $h\in H\cap P$ such that $\pi_i(h)=g’$ where $\pi_i\colon P\to W_i$ is the projection. One has $g’g(g’)^{-1}= hgh^{-1}\in H\cap G^\Sigma$. So, the subgroup $H\cap G^\Sigma$ is a normal subgroup of $G^\Sigma$ of small index. The group $G^\Sigma$ has a comeager conjugacy because of  \cite[Lemma 11]{Rosendal-Solecki} and the fact that $G$ has a comeager conjugacy class. By Lemma~\ref{nsi}, $H\cap G^\Sigma=G^\Sigma$. 

We denote by $\Sym(\Sigma)$ the subgroup of $G\wr\sinf$ of elements of the form $((g_\omega),\sigma)$ where $g_\omega=e$ for all $\omega\in\Omega$ and $\supp(\sigma)\subset \Sigma$. Since $\Sym(\Sigma)\cong\sinf$ and the non-trivial normal subgroups of $\sinf$ are the finitary symmetric and alternating subgroups, which are of index $2^{\aleph_0}$, we know that for $H\cap \Sym(\Sigma)=\Sym(\Sigma)$. Now, $W’=G\wr_\Sigma\Sym(\Sigma)$ is generated by $G^\Sigma$ and $\Sym(\Sigma)$. Thus $H\geq W’$. 

Following the proof of \cite[Theorem 1]{MR859950}, we choose a continuum of almost disjoint moieties $(\Sigma_\alpha)$ of $\Sigma$ and an involution $g_\alpha\in\sinf$ exchanging $\Sigma_\alpha$ with $\Omega\setminus\Sigma$ and fixing pointwise $\Sigma\setminus\Sigma_\alpha$. By the pigeonhole principle, for some $\alpha\neq\beta$, $g_\alpha$ and $g_\beta$ are in the same $H$-class and thus $g=g_\beta^{-1} g_\alpha$ is in $H$. Let $\Sigma’=g(\Sigma)$, then $G\wr_{\Sigma’}\Sym(\Sigma’)=gW’g^{-1}\leq H$. Since $\Sigma\cup\Sigma’=\Omega\setminus g_\beta(\Sigma_\alpha\cap\Sigma_\beta)$ and $\Sigma\cap\Sigma’=\Omega\setminus g_\beta(\Sigma_\alpha\cup\Sigma_\beta)$ are infinite, the first lemma of  \cite{MR859950} shows that $H_0=G\wr_{\Sigma\cup\Sigma’}\Sym(\Sigma\cup\Sigma’)\subset H$. 

Let us denote by $F$ the finite set $g_\beta(\Sigma_\alpha\cap\Sigma_\beta)$. For $f\in F$, let $G_f\leq G\wr_\Omega\sinf$ be the corresponding copy of $G$ acting on $\Lambda\times\{f\}$. The subgroup $H_f=H\cap G_f$ has small index and thus is open in $G_f$. Now 

$$H\geq\left(\Pi_{f\in F}H_f\right)\times H_0.$$
The right hand side being open (containing the pointwise stabilizer of a finite number of points), $H$ is open as well.
%For $n\in\Sigma$, we denote by $G_n$ the copy of $G$ at coordinate $n$ and by $H_n$ the projection of $H\cap G^{\Sigma}$ on $G_n$. As above,
%
%$$\prod_{n\in\Sigma}|G_n\colon H_n|=\left|G^\Sigma\colon \prod_{n\in\Sigma}H_n\right|\leq |G^\Sigma\colon H\cap G^\Sigma|<2^{\aleph_0}.$$
%
%By the same argument, there is $n$ such that $G_n=H_n$. For $n,n’\in\Sigma$, there is $\sigma\in S$ such that $\sigma(n)=n’$. So, $H_n$ and $H_n’$ are conjugated by an element of $S\leq H$. This means that $H_n=G_n$ for any $n\in \Sigma$.
\end{proof}

%\begin{lem}[Uncomplete proof !] Let $G$ be a Polish group and $N$ a closed normal subgroup with quotient $Q=G/N$. If $N$ and $Q$ have the small index subgroup property so does $G$.
%\end{lem}
%
%\begin{proof} Let $H\leq G$ be a small index subgroup. The subgroups $H\cap N$ and $\pi(N)$ have small index in, respectively, $N$ and $Q$. So these subgroups are open. Does it mean that $H$ is open in $G$ ?
%\end{proof}

\section{Universal minimal flow}\label{umf}

The goal of this section is to identify, in Theorem~\ref{univminflow}, the universal minimal flow of $G_\infty$. For a general topological group $G$ (for example a locally compact group), this universal minimal flow $M(G)$ is a huge compact space, often non-metrizable. After \cite{Pestov98,MR1937832,MR2140630} a general framework emerged to identify universal minimal flows of some Polish groups. One may have a look to \cite{MR2277969} for a survey.

 Let us recall that a Hausdorff topological group is \emph{extremely amenable} if any continuous action on a compact space has a fixed point. The idea to identify the universal minimal is to find an extremely amenable subgroup $G^*$ of the Polish group $G$ and consider the completion $\widehat{G/G^*}$ (for the quotient of the right  uniform structure on $G$) of $G/G^*$. The extreme amenability of $G^*$ can be obtained thanks to a Ramsey property.  We recommend \cite{melleray2015polish,MR3613453} for more details about this strategy and for relations between the existence of a comeager orbit in $M(G)$ and the metrizability of $M(G)$.
 
 We follow this strategy and we introduce the tools we use below.

\begin{defn} Let $X$ be a dendrite. A linear order $\prec$ is on $\Br(X)$ is \emph{converging} if for any $x,y,z\in\Br(X)$, $y\in]x,z[\implies y\prec x\ \mathrm{or}\ y\prec z$.
\end{defn}

The meaning of the adjective converging appears in the following lemma: minimizing sequences converge to a unique point that we call the root below.
\begin{lem}\label{min}Let $X$ be a dendrite and $\prec$ be a converging linear ordering on $\Br(X)$. There is a unique point $x_0\in X$ which is the limit of any minimizing sequence. Moreover, for any $a,b\in \Br(X)$, if $a\in[x_0,b]$ then $a\preceq b$.
\end{lem}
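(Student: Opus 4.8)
The plan is to construct the point $x_0$ as a nested intersection and then verify that every minimizing sequence accumulates only at $x_0$. First I would fix a minimizing sequence $(a_n)$ for $\prec$, meaning that $a_{n+1}\preceq a_n$ for all $n$ and $\{a_n\}$ is cofinal downward in $\prec$ among branch points (if $\prec$ has a least element $a$, take $a_n=a$ for all $n$ and the statement is easy, so assume $\prec$ has no least element). For $n\leq m$ consider the arc $[a_n,a_m]$; I claim the center $c(a_n,a_m,a_k)$ for $k$ large is $\preceq$-decreasing, which will let me speak of a well-defined ``limit of the arcs $[a_n,a_m]$''. Concretely, for $n$ fixed, the points $a_m$ with $m\geq n$ all lie in $X$, and the first-point maps $\pi_{[a_n,a_m]}$ organize the $a_k$'s; by compactness of $X$, a subsequence of $(a_n)$ converges to some point $x_0\in X$. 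The real content is to show $x_0$ does not depend on the chosen minimizing sequence or subsequence.

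The key step is the following monotonicity observation, which uses the converging hypothesis directly: if $a,b,c\in\Br(X)$ and $b\in\,]a,c[$, then $b\prec a$ or $b\prec c$, hence $b$ is not the $\prec$-largest of the three. Equivalently, along any arc $[a,c]$ in $\Br(X)$, the $\prec$-minimum of any three collinear points is \emph{not} the middle one only when\dots\ — more usefully, I would prove: for $a,b\in\Br(X)$ with $a\in[x_0,b]$, one has $a\preceq b$. To see this, pick a minimizing sequence $(c_n)\to x_0$; for $n$ large the center $c(a,b,c_n)$ lies in $[x_0,a]\subseteq[x_0,b]$ (since $c_n$ is close to $x_0$ and $a\in[x_0,b]$), so in particular $a\in\,]c(a,b,c_n),b[\,$ is impossible only if $a=c(a,b,c_n)$; in the generic case $a\in\,]m_n,b[$ where $m_n=c(a,b,c_n)\to x_0$, and applying the converging condition to the triple $(m_n,a,b)$ gives $a\prec m_n$ or $a\prec b$. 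If $a\prec b$ we are done; otherwise $a\prec m_n$ for all large $n$, but $m_n$ is squeezed between $x_0$ and $a$ and one can choose the approximating branch points on $[x_0,a]$ to violate minimality of the $c_n$'s — this forces $a\preceq b$ after all. This same squeezing argument, applied symmetrically, shows that any two candidate limits $x_0,x_0'$ would each lie ``below'' the other in the sense that branch points on $[x_0,x_0']$ are $\prec$-comparable in contradictory ways unless $x_0=x_0'$, giving uniqueness; and it shows every minimizing sequence converges (not merely subconverges) to this common $x_0$, since any two subsequential limits coincide.

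I expect the main obstacle to be the bookkeeping around points of $[x_0,b]$ that happen \emph{not} to be branch points: the converging condition is only a hypothesis about triples of branch points, while $x_0$ itself may be an end point or a regular point, so I must everywhere approximate arcs by branch points lying on them (which is possible since $\Br(X)$ is arcwise dense when $X=D_\infty$, and more generally one can always find branch points arbitrarily close to any point of a non-degenerate arc in a dendrite without free arcs). Making the phrase ``minimizing sequence converges'' precise — i.e., ruling out two distinct accumulation points — is where the converging axiom is used most delicately, via the observation that if $x_0\neq x_0'$ were both limits, then picking a branch point $w\in\,]x_0,x_0'[$ and minimizing sequences $(a_n)\to x_0$, $(b_n)\to x_0'$, eventually $w\in\,]a_n,b_n[$, so $w\prec a_n$ or $w\prec b_n$; since both $a_n$ and $b_n$ can be taken $\prec$-smaller than any prescribed branch point, this contradicts cofinality of the minimizing sequences. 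Once uniqueness and convergence are established, the ``moreover'' clause is exactly the monotonicity statement proved above, so the lemma follows.
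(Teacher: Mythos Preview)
Your uniqueness argument in the final paragraph is correct and is essentially the paper's proof: pick a branch point $w\in\,]x_0,x_0'[$, observe that eventually $w\in\,]a_n,b_n[$, apply the converging hypothesis to get $w\prec a_n$ or $w\prec b_n$, and contradict the minimizing property. The paper does exactly this (with a single minimizing sequence and two subsequential limits), so the heart of the lemma is fine.

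The problem is in your treatment of the ``moreover'' clause. You set $m_n=c(a,b,c_n)$ and claim $m_n\to x_0$, then try to apply the converging condition to $(m_n,a,b)$. But this is miscalculated: since $a\in[x_0,b]$ and $c_n\to x_0$, for large $n$ the point $a$ separates $c_n$ from $b$, i.e.\ $a\in[c_n,b]$, and therefore $c(a,b,c_n)=a$. So $m_n=a$ eventually, it does not tend to $x_0$, and the triple $(m_n,a,b)$ degenerates. The fix is to drop $m_n$ entirely and use $c_n$ itself: for large $n$ one has $a\in\,]c_n,b[$, so the converging hypothesis gives $a\prec c_n$ or $a\prec b$; the first alternative is ruled out because $(c_n)$ is minimizing (eventually $c_n\preceq a$), hence $a\preceq b$. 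This is precisely the paper's one-line argument, and it makes your detour through centers and ``squeezing'' unnecessary.

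Two smaller remarks. First, the opening plan about nested intersections and the centers $c(a_n,a_m,a_k)$ never gets used and can be deleted; compactness plus the uniqueness argument already gives convergence of every minimizing sequence. Second, your worry about arcwise density of branch points is legitimate for a general dendrite, but in the paper's setting (Wa\.zewski dendrites, and in particular $D_\infty$) branch points are arcwise dense, so one can always choose the separating branch point $w$; you do not need to build any further approximation machinery.
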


\begin{proof} Let $(x_n)$ be a minimizing sequence (i.e. for any $x\in\Br(X)$, there is $N$ such that for any $n\geq N$, $x_n\preceq x$). By compactness, this sequence has at least one adherent point in $X$. Assume there are two adherent points, that is we have two subsequences $x_{\varphi(n)},x_{\psi(n)}$ converging respectively to $x_\varphi$ and $x_\psi$. Let $x\in]x_\varphi,x_\psi[$. For $n$ large enough, $x\in]x_{\varphi(n)},x_{\psi(n)}[$ and thus $x\prec x_{\varphi(n)}$ or $x\prec x_{\psi(n)}$. So we have a contradiction and $(x_n)$ converges to some point $x_0$. Replacing $(x_{\varphi(n)})$ and $(x_{\psi(n)})$ by any two minimizing sequences, the same argument shows that  the limit point $x_0$ is independent of the choice of the minimizing sequence. 

Now, let $a,b\in \Br(X)$ with $a\in[x_0,b]$. For $n$ large enough $a\succeq x_n$ and $a\in[x_n,b]$ thus $a\preceq b$.
\end{proof}

The point $x_0$ is called the \emph{root} of the converging linear order $\prec$.
\begin{defn} Let $X$ be a dendrite and $\prec$ a converging linear order with root $x_0$. The order $\prec$ is \emph{convex} if for $a,b\in\Br(X)$, $c=c(a,b,x_0)$,  $a'\in[a,c]$  and $b'\in[b,c]$,$$ a\prec b\implies a'\prec b’.$$ \end{defn}

\begin{rem}Let us observe that a general converging linear order is not necessarily convex. Let us consider the  simple dendrite $D$ in Figure~\ref{notconvex} with the converging linear order $\prec$ such that
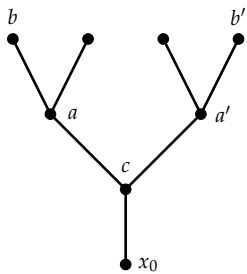
\begin{figure}
\begin{tikzpicture}[line cap=round,line join=round,>=triangle 45,x=1.0cm,y=1.0cm]
\clip(0.3255133394532428,-0.1927511237105577) rectangle (3.8177891182026134,3.438421987148181);
\draw [line width=1.pt] (2.,0.)-- (2.,1.);
\draw [line width=1.pt] (1.,2.)-- (1.5,3.);
\draw [line width=1.pt] (1.,2.)-- (0.5,3.);
\draw [line width=1.pt] (2.,1.)-- (1.,2.);
\draw [line width=1.pt] (2.,1.)-- (3.,2.);
\draw [line width=1.pt] (3.,2.)-- (2.5,3.);
\draw [line width=1.pt] (3.,2.)-- (3.5,3.);
\begin{scriptsize}
\draw [fill=black] (2.,0.) circle (2pt);
\draw[color=black] (2.3,0) node {$x_0$};
\draw [fill=black] (2.,1.) circle (2pt);
\draw[color=black] (2,1.3) node {$c$};
\draw [fill=black] (1.,2.) circle (2pt);
\draw[color=black] (1.3,2) node {$a$};
\draw [fill=black] (1.5,3.) circle (2pt);
\draw [fill=black] (0.5,3.) circle (2pt);
\draw[color=black] (0.5,3.3) node {$b$};
\draw [fill=black] (3.,2.) circle (2pt);
\draw[color=black] (3.3,2) node {$a’$};
\draw [fill=black] (2.5,3.) circle (2pt);
\draw [fill=black] (3.5,3.) circle (2pt);
\draw[color=black] (3.5,3.3) node {$b’$};
\end{scriptsize}
\end{tikzpicture}
\caption{The simple dendrite $D$ with a converging but not convex linear order $\prec$.}\label{notconvex}
\end{figure}
\begin{itemize}
\item $x_0$ is the root,
\item $c\prec a,a’,b,b’$,
\item $a,a’\prec b,b’$,
\item $a\prec a’$,
\item $b’\prec b$.
\end{itemize}

The conditions $a\prec a’$ and $b’\prec b$ show that this order is not convex.

\end{rem}
We denote by $\CLO(X)$ the set of convex converging linear orders on $\Br(X)$. It is clear from the definition that $\CLO(X)$ is a metrizable $\Homeo(X)$-flow since it is a closed invariant subspace of the space of all linear orders $\LO(\Br(X))$ on $\Br(X)$ which is compact for the pointwise convergence.

We observe that a convex converging linear order $\prec$ induces an linear order $\prec^x$ on the connected components around a given point $x$.

\begin{lem}\label{obsorder}Let $x\in X$, $\prec\in CLO(X)$ with root $x_0$. Let $C,C’\in\mathcal{C}_x$ distinct and that do not contain the root. Then,

$$(\forall c\in C,\ \forall c’\in C',\ c\prec c')\lor (\forall c\in C,\ \forall c’\in C',\ c\succ c’).$$ \end{lem}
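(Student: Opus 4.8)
The plan is to show that, given two distinct connected components $C, C' \in \mathcal{C}_x$ neither of which contains the root $x_0$, the order $\prec$ cannot "interleave" elements of $C$ and $C'$. So suppose for contradiction that there are $c_1, c_2 \in C$ and $c' \in C'$ with $c_1 \prec c' \prec c_2$ (the other interleaving pattern is symmetric). The key geometric observation is that, since $C$ and $C'$ are distinct components of $X\setminus\{x\}$ and $x_0 \notin C \cup C'$, the point $x$ lies on the arc $[c', c_i]$ for $i = 1,2$; moreover $x = c(c', c_1, x_0) = c(c', c_2, x_0)$, because the component of $x_0$ is yet a third component (or $x_0 = x$, in which case the statement is even easier and handled separately). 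So with $a = c'$, $b = c_1$, and center $c = x$, convexity applies along the arcs $[c', x]$ and $[c_1, x]$.

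First I would pin down the relation between $\prec$ on $C$ and $\prec$ on $C'$ using convexity twice. Apply the convexity condition with $a = c_1$, $b = c'$, center $c = c(c_1, c', x_0) = x$, and $a' = b' = x$: this is degenerate, so instead I would take interior points slightly inside. More carefully: pick a point $d$ in $]c_1, x[$ that is a branch point, and similarly approach. Actually the cleaner route is: by convexity, $c_1 \prec c'$ together with moving both endpoints toward the common center $x$ along $[c_1,x]$ and $[c',x]$ forces the comparison to be "monotone" as we slide toward $x$. The essential consequence I want to extract is that the truth value of "$p \prec q$" for $p \in C$, $q \in C'$ depends only on which components $C, C'$ are (not on the particular $p, q$), which is exactly the claim. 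I would establish this by noting that for any $p_1, p_2 \in C$ and $q \in C'$, the points $c(p_1, q, x_0) = c(p_2, q, x_0) = x$, so convexity with $a = q$, $b = p_1$, and pushing $b$ along $[p_1, x]$ shows $q \prec p_1 \iff q \prec p_1'$ for any $p_1' \in [p_1, x] \cap \Br(X)$, and by density of branch points and a limiting argument this transfers the comparison across all of $C$.

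The main obstacle I anticipate is the degeneracy of the convexity condition when the relevant center coincides with $x$ itself and the "interior" points $a', b'$ one wants to use are forced to be $x$ (which need not be a branch point, and in any case lies in neither $C$ nor $C'$). I would handle this by working with branch points strictly between, i.e. choosing $a' \in ]a, x[ \cap \Br(X)$ and $b' \in ]b, x[ \cap \Br(X)$ — these exist and are arcwise dense since $D_\infty$ has branch points dense in every arc — and then taking $a', b'$ to approach $x$. Combined with Lemma~\ref{min} (which controls comparisons along arcs emanating from $x_0$) to rule out the case where one of the components is the root component, this gives the dichotomy. The remaining bookkeeping — checking that the two halves of the disjunction are genuinely exclusive (immediate since $\prec$ is a linear order and $C, C'$ are nonempty) and that the argument is symmetric in $C$ and $C'$ — is routine.
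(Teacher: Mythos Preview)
Your core idea is right and is essentially the paper's approach, but you have manufactured an obstacle that is not there and then invoked an unnecessary (and, as stated, ill-defined) limiting argument to get around it.

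The point you are missing is that you never need to push the intermediate points all the way to $x$. Fix any $a\in C\cap\Br(X)$ and $b\in C'\cap\Br(X)$ and assume $a\prec b$. For arbitrary $c\in C$, $c'\in C'$, set $a'=c(c,a,x)$ and $b'=c(b,c',x)$. Since $c,a$ lie in the same connected component $C$ of $X\setminus\{x\}$, the center $a'$ lies on $[c,a]\subset C$; in particular $a'\neq x$, and $a'$ is a genuine branch point (it is one of $c,a$, or a point where three arcs meet). Likewise $b'\in C'\cap\Br(X)$. Now $c(a,b,x_0)=x$ because $x_0\notin C\cup C'$, and $a'\in[a,x]$, $b'\in[b,x]$, so convexity gives $a'\prec b'$. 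Finally $c(c,c',x_0)=x$ as well, with $a'\in[c,x]$ and $b'\in[c',x]$; if $c'\prec c$ then convexity would force $b'\prec a'$, a contradiction. Hence $c\prec c'$.

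So the ``degeneracy'' you worry about never occurs: the intermediate points $a',b'$ are bona fide branch points strictly inside $C$ and $C'$, and convexity is applied twice at finite distance from $x$. The appeal to arcwise density of branch points and a limiting argument is unnecessary---and in fact problematic as written, since $\prec$ is a relation on the countable set $\Br(X)$ with no continuity property to invoke in a limit. Your sentence ``pushing $b$ along $[p_1,x]$ shows $q\prec p_1\iff q\prec p_1'$ for any $p_1'\in[p_1,x]\cap\Br(X)$'' is already enough: to reach an arbitrary $p_2\in C$, just take $p_1'=p_2'=c(p_1,p_2,x)$, which lies on both $[p_1,x]$ and $[p_2,x]$. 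That is exactly the paper's two-line proof.
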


\begin{proof} Let $y_0$ be the image of the root $x_0$ by the first point map to the subdendrite $C\cup\{x\}\cup C'$. Since $x_0\notin C\cup C’$, $y_0=x$.

Choose $a\in C$ and $b\in C'$. Assume that  $a\prec b$. Now by convexity of the order, for any $c\in C$ and $c'\in C'$, Let $a'=c(c,a,x)$ and $b'=c(b,c',x)$. By convexity, $a'\prec b'$ and thus $c\prec c'$.
\end{proof}

In the first case, we write $C\prec^x C’$ and otherwise we write $C’\prec^x C$. This defines a linear order on $\mathcal{C}_x$ if $x=x_0$ and on $\mathcal{C}_x\setminus C_x(x_0)$ if $x\neq x_0$.

\begin{rem}Let us observe that convex and converging linear orders have the following stability property: If $\prec\in\CLO(X)$ and $Y$ is a subdendrite of $X$ then $\prec |_{\Br(Y)}\in\CLO(Y)$. If $x_0$ is the root of $\prec$ then $\pi_Y(x_0)$ is the root of $\prec |_{\Br(Y)}$.
\end{rem}
We will see  in Theorem~\ref{univminflow} that the universal minimal $G_\infty$-flow is $\CLO(D_\infty)$. For the remaining of this section, we fix some $\xi\in\Ends(D_\infty)$. For a branch point $c$, let us denote by $\mathcal{C}_{c,\xi}$ the space $\mathcal{C}_c\setminus C_c(\xi)$.
\begin{lem}\label{cclo} For each branch point $c$, fix a linear order $\prec^c$ on the set $\mathcal{C}_{c,\xi}$ that is isomorphic to $\QQ$ with its standard linear order <. Then there is a convex converging linear order $\prec _0$ on $D_\infty$ such that the root is $\xi$ and for any branch point $c$, the linear order induced on the components of $\mathcal{C}_{c,\xi}$  is $\prec^c$.
\end{lem}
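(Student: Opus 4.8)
The plan is to build $\prec_0$ by a lexicographic rule anchored at $\xi$, and then to check separately that it is a linear order, that it is converging with root $\xi$, that it is convex, and that it induces the prescribed orders on the sets $\mathcal C_{c,\xi}$. I will write $a\leq_\xi b$ for $a\in[\xi,b]$ (the $\xi$-rooted semi-linear order, as in \cite[Example 5.2]{DM_dendritesII}), and for a point $t\neq c$ let $C_c(t)$ denote the component of $D_\infty\setminus\{c\}$ containing $t$. For distinct branch points $x,y$ put $c:=c(x,y,\xi)$, so that $c\in[\xi,x]\cap[\xi,y]\cap[x,y]$; note $c\neq\xi$, since an end point is never interior to an arc with branch-point endpoints. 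I declare $x\prec_0 y$ exactly when either (a) $c=x$ (equivalently $x\leq_\xi y$), or (b) $c\notin\{x,y\}$ and $C_c(x)\prec^{c}C_c(y)$. A short preliminary check is that in case (b) the point $c$ must be a branch point with $C_c(x)$ and $C_c(y)$ distinct elements of $\mathcal C_{c,\xi}$: being interior to $[x,y]$ and distinct from $\xi$ and from $x$, it separates $x$ from both $y$ and $\xi$, so it is not a regular point and neither of the two components is $C_c(\xi)$. Hence $\prec^c$ applies, $\prec_0$ is well defined, and trichotomy is immediate. (Only that $\prec^c$ is a linear order will be used; its order type plays no role in this lemma.)

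The main obstacle will be \textbf{transitivity}. Suppose $x\prec_0 y\prec_0 z$. The essential observation is that $p:=c(x,y,\xi)$ and $q:=c(y,z,\xi)$ both lie on the arc $[\xi,y]$, hence are $\leq_\xi$-comparable; I treat the case $p\leq_\xi q$, the case $q\leq_\xi p$ being symmetric under exchanging $x$ and $z$. If $p=x$ then $x\leq_\xi q\leq_\xi z$, so $x\prec_0 z$; and $p\in\{y,z\}$ cannot occur, since it would force $y\prec_0 x$ or $z\prec_0 y$. So assume $p\notin\{x,y,z\}$. Since $q$ lies on $[\xi,y]$ beyond $p$, it sits in $C_p(y)$, and $[\xi,z]$ runs through $q$; therefore $[\xi,x]$ and $[\xi,z]$ branch apart exactly at $p$, which yields $c(x,z,\xi)=p$ and $C_p(z)=C_p(y)$. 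Combining with $C_p(x)\prec^{p}C_p(y)$ (from $x\prec_0 y$) gives $C_p(x)\prec^{p}C_p(z)$, i.e.\ $x\prec_0 z$.

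With linearity established, \textbf{converging} and \textbf{convexity} are quick. For converging: if $y\in\,]x,z[$, then $[x,z]=[x,w]\cup[w,z]$ with $w:=c(x,z,\xi)$, and $w$ is the unique $\leq_\xi$-minimum of $[x,z]$, so $y\leq_\xi x$ or $y\leq_\xi z$, i.e.\ $y\prec_0 x$ or $y\prec_0 z$. For convexity: given $a\prec_0 b$, $c:=c(a,b,\xi)$, and $a'\in[a,c]$, $b'\in[b,c]$ with $a',b'\neq c$ (the meaningful reading of the definition), in the principal case $c\notin\{a,b\}$ one has $C_c(a')=C_c(a)$ and $C_c(b')=C_c(b)$ — moving along $[a,c]$ without reaching $c$ stays in $C_c(a)$ — while $c$ still separates $a'$ from $b'$, so $c(a',b',\xi)=c$ and $a\prec_0 b$ transports to $a'\prec_0 b'$; the degenerate positions $c\in\{a,b\}$ are handled directly via case (a) (e.g.\ if $c=a$ then $a'=a$ and any $b'\neq a$ in $[b,a]$ satisfies $a\leq_\xi b'$, hence $a\prec_0 b'$). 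Thus $\prec_0\in\CLO(D_\infty)$.

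Finally I would identify the root and the induced orders. For the \textbf{root}: enumerate $\Br(D_\infty)=\{b_1,b_2,\dots\}$, let $m_n$ be the $\leq_\xi$-meet of $b_1,\dots,b_n$ (never equal to $\xi$, again because $\xi$ is an end point), and pick a branch point $x_n\in\,]\xi,m_n[$; then $x_n\leq_\xi b_i$ hence $x_n\prec_0 b_i$ for all $i\le n$, while the meet of all the $b_i$ is $\xi$, so $m_n\to\xi$ and thus $x_n\to\xi$. So $(x_n)$ is a minimizing sequence converging to $\xi$, and Lemma~\ref{min} identifies the root of $\prec_0$ with $\xi$. For the \textbf{induced orders}: given $C,C'\in\mathcal C_{c,\xi}$ and branch points $u\in C$, $u'\in C'$, the point $c$ separates $u$ from both $u'$ and $\xi$, so $c(u,u',\xi)=c\notin\{u,u'\}$, whence $u\prec_0 u'\iff C\prec^{c}C'$; so $\prec_0$ restricts on $\mathcal C_{c,\xi}$ to exactly $\prec^c$, completing the argument. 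I expect the only genuinely delicate point to be the dendrite computation inside transitivity — establishing $c(x,z,\xi)=p$ and $C_p(z)=C_p(y)$ from $p\leq_\xi q$.
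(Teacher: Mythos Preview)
Your approach is the same as the paper's: the identical lexicographic definition of $\prec_0$, and the same case analysis for transitivity based on the relative position of $p=c(x,y,\xi)$ and $q=c(y,z,\xi)$ along $[\xi,y]$. There is one small oversight: your argument for $p\leq_\xi q$ tacitly assumes $p\neq q$ --- the claim ``$q$ lies on $[\xi,y]$ beyond $p$, it sits in $C_p(y)$'' and the conclusion $C_p(z)=C_p(y)$ both fail when $p=q$, and the ``symmetric'' case has the same blind spot. The paper handles $d=e$ as a separate case, and it does need separate treatment: when $p=q\notin\{x,y,z\}$ one reads off $C_p(x)\prec^p C_p(y)$ and $C_p(y)\prec^p C_p(z)$ from the two hypotheses, and transitivity of $\prec^p$ itself gives $C_p(x)\prec^p C_p(z)$ (hence also $c(x,z,\xi)=p$). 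Apart from this easily patched point your proof is correct, and your explicit verification that the root is $\xi$ and that the induced orders on $\mathcal C_{c,\xi}$ are the prescribed $\prec^c$ is more thorough than the paper's sketch.
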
 
\begin{proof} We define $\prec_0$ in the following way: for $a\neq b\in\Br(D_\infty)$, if $c(a,b,\xi)=a$ then $a\prec_0 b$ (and $b\prec_0 a$ if $c(a,b,\xi)=b$). If $c=c(a,b,\xi)\neq a,b$ then $a\prec_0b\iff C_c(a)\prec^c C_c(b)$.

Let us check it is a convex converging linear order. Totality and antisymmetry are immediate. Let $a,b,c\in\Br(D_\infty)$ such that $a\prec_0 b\prec_0 c$ and let us note $d=c(a,b,\xi)$ and $e=c(b,c,\xi)$. There are three (mutually exclusive) possibilities: $e\in]\xi,d[$, $d=e$ or $e\in]d,b]$. In the first case $a\in C_e(b)$, $C_e(a)\prec^e C_e(c)$. In the second one, $a\in[\xi,c[$ (if $a=d$) or $C_d(a)\prec^d C_d(e)$ and in the last one $C_d(a)\prec^d C_d(c)$. So, in all cases, $a\prec_0 c$.

This order is converging because if $a,b\in\Br(D_\infty)$ and $c=c(a,b,\xi)$ then $a$ is maximum on $[c,a]$ and $b$ is a maximum on $[c,b]$. Finally this order is convex by construction.
\end{proof}

\begin{rem}The order $\prec_0$ depends a priori on the choice of the linear orders $\prec^c$ for all branch points. Actually, a different choice of orders isomorphic to $(\QQ,<)$ leads to an order $\prec$ such that there is $g\in G_\infty$ fixing $\xi$ with $x\prec y\iff g(x)\prec_0 g(y)$ for all $x,y\in\Br(D_\infty)$. This can be obtained thanks to a back and forth argument on $\Br(D_\infty)$. In what follows, we will not use that fact and we will fix the order $(\prec^c)_{c\in\Br(D_\infty)}$ in the proof of Proposition~\ref{fraise} and thus  we will forgot the dependency on this choice.\end{rem}

\begin{prop}\label{Ramsey} The group $\Stab_{G_\infty}(\prec _0)$ is extremely amenable.
\end{prop}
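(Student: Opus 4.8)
The plan is to invoke the Kechris--Pestov--Todorcevic correspondence \cite{MR2140630}: a closed subgroup of $\mathcal{S}_\infty$ is extremely amenable if and only if it is the automorphism group of a Fra\"iss\'e structure whose age is a Ramsey class consisting of rigid finite structures. Since $\Stab_{G_\infty}(\prec_0)$ is the stabilizer of a point in the metrizable $G_\infty$-flow $\CLO(D_\infty)\subseteq\LO(\Br(D_\infty))$, it is a closed subgroup of $G_\infty$, hence of $\mathcal{S}_\infty$, so it suffices to realize it as such an automorphism group and verify the Ramsey property.

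First I would identify $\Stab_{G_\infty}(\prec_0)$ with $\Aut(\mathbf{K}^\ast)$, where $\mathbf{K}^\ast=(\Br(D_\infty),B,\prec_0)$ is the betweenness structure of Remark~\ref{structure} expanded by the linear order $\prec_0$. Indeed $\Aut(\Br(D_\infty),B)=G_\infty$ by \cite[Proposition 2.4]{DMW}, and an element of $G_\infty$ preserves $\prec_0$ exactly when it lies in $\Stab_{G_\infty}(\prec_0)$. The age $\mathcal{K}^\ast$ of $\mathbf{K}^\ast$ consists of the finite positive-type betweenness structures equipped with a convex converging linear order, where the role played in Section~\ref{umf} by the root $\xi$ is taken in a finite structure by the $\prec$-minimum. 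By Lemma~\ref{min} and the Remark following Lemma~\ref{cclo}, such an order is equivalent to the data of the position of the root together with, for each branch vertex $c$, a linear order on the set of branches at $c$ not containing the root, the global order being the induced lexicographic one. With this description the Fra\"iss\'e conditions are straightforward: heredity is clear, and for amalgamation one first amalgamates the underlying positive-type betweenness structures, which is possible by \cite[Lemma 29.1]{MR1388893} as recalled in Remark~\ref{structure}, and then extends the order convexly, amalgamating freely the finitely many finite linear orders of branches at the new vertices; since finite linear orders amalgamate, the resulting convex converging order restricts to the two given ones. Ultrahomogeneity of $\mathbf{K}^\ast$ is a back-and-forth argument of the type sketched in the Remark after Lemma~\ref{cclo}: a partial isomorphism of finite substructures respects the tree structure, hence extends geometrically by Proposition~\ref{prop:extension}, and at each step a new point can be sent to a branch point in the prescribed geometric position and in the prescribed $\prec_0$-interval, using that branch points are arcwise dense and that $\prec_0$ restricted to each $\mathcal{C}_{c,\xi}$ is isomorphic to $(\QQ,<)$, hence dense. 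This exhibits $\mathbf{K}^\ast$ as the Fra\"iss\'e limit of $\mathcal{K}^\ast$ with $\Aut(\mathbf{K}^\ast)=\Stab_{G_\infty}(\prec_0)$.

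It then remains to establish the Ramsey property for $\mathcal{K}^\ast$, namely that the finite convex converging orderings of positive-type betweenness structures form a Ramsey class; this is where I would appeal to \cite{MR3436366}, after matching the objects of $\mathcal{K}^\ast$ --- under the ``root plus ordered branches'' description above --- with the finite ordered trees treated there and checking that the notions of embedding agree. Every member of $\mathcal{K}^\ast$ is rigid since it carries a linear order, so the Kechris--Pestov--Todorcevic correspondence yields that $\Aut(\mathbf{K}^\ast)=\Stab_{G_\infty}(\prec_0)$ is extremely amenable. The main obstacle is precisely this last step: one must make the dictionary between convex converging linear orders on a subdendrite and the ordered-tree structures of \cite{MR3436366} completely explicit and verify compatibility of the embedding relations; the rest --- the Fra\"iss\'e-theoretic bookkeeping for $\mathcal{K}^\ast$ and the identification of $\Stab_{G_\infty}(\prec_0)$ as an automorphism group --- is routine given Proposition~\ref{prop:extension}, Remark~\ref{structure} and the homogeneity of $D_\infty$.
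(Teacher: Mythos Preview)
Your approach is essentially the paper's: invoke the KPT correspondence, identify $\Stab_{G_\infty}(\prec_0)$ as the automorphism group of a Fra\"iss\'e limit, and cite \cite{MR3436366} for the Ramsey property. The paper carries this out in one stroke by working directly in the language of \cite{MR3436366}: it introduces the class $\mathcal{CT}$ of finite treeable \emph{semi-lattices} $(A,\leq,\wedge,\prec)$ with a convex linear extension, proves (Proposition~\ref{fraise}) that its Fra\"iss\'e limit is $(\Br(D_\infty),\leq,\wedge,\prec_0)$ with $a\wedge b=c(a,b,\xi)$, and observes (Lemma~\ref{groupiso}) that its automorphism group is $\Stab_{G_\infty}(\prec_0)$. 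Since $\wedge$ is already the primitive operation in \cite{MR3436366}, no translation step is needed and the Ramsey property is a direct citation.

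Your route via $(\Br(D_\infty),B,\prec_0)$ reaches the same destination but with two wrinkles worth flagging. First, with $B$ purely relational the age of $\mathbf{K}^\ast$ consists of \emph{all} finite subsets with the induced $B$ and $\prec_0$, not only the positive-type ones; to force substructures to be positive-type (equivalently $c$-closed) you must add the center map---or, what amounts to the same thing, $\wedge$---to the signature, at which point you have literally recovered the paper's $\mathcal{CT}$. Second, the ``dictionary'' you identify as the main obstacle is exactly what the paper's choice of language eliminates: once $\wedge$ is primitive, the embeddings in your class and those in $\mathcal{CT}$ coincide on the nose, since $B$ and $(\leq,\wedge)$ are interdefinable on $\wedge$-closed sets. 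So your proof is correct modulo that signature adjustment, and then collapses to the paper's.
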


To prove the extreme amenability of this group, we used the seminal idea that a closed subgroup of $S_\infty$ is extremely amenable if and only if it is the automorphism group of some Fraïssé limit of a Fraïssé order class with the Ramsey property \cite[Theorem 4.7]{MR2140630}. We now describe the Fraïssé class and the Ramsey theorem needed to prove Proposition~\ref{Ramsey}. We  essentially follow \cite{MR3436366}.

A (meet) \emph{semi-lattice} is a poset $(A,\leq)$ such that for any two elements $a,b\in A$, the pair $\{a,b\}$ has a greatest lower bound (that is an infimum) denoted by $a\wedge b$ and called the \emph{meet} of $a$ and $b$. It satisfies the following three properties for all $a,b,c\in A$:
\begin{itemize}
\item $a\wedge a=a$,
\item $a\wedge b=b\wedge a$ and 
\item $(a\wedge b)\wedge c=a\wedge(b\wedge c)$.
\end{itemize}
Actually, from a binary operation $\wedge$ satisfying the above three properties, one can recover the partial order $\leq$ by defining $a\leq b\iff a\wedge b=a$.
A semi-lattice $(A,\leq,\wedge)$ is \emph{treeable} if it has a minimum called the \emph{root} and all the sets $\da a=\{b\in A;\ b\leq a\}$ are linearly ordered. 

A linear order $\prec$ on a treeable semi-lattice $(A,\leq,\wedge)$ is a \emph{linear extension} of $\leq$ if $a< b\implies a\prec b$ and it is \emph{convex} if for any $a,a’,b,b’\in A$ such that $a\wedge b\preceq a’\preceq a$ and $a\wedge b\preceq b’\prec b$, $a\prec b\iff a’\prec b’$.

We denote by $\mathcal{CT}$ the class of finite treeable semi-lattices with a convex linear extension $(A,\leq,\wedge,\prec)$. 

\begin{rem} If $(A,\leq_A,\wedge_A,\prec_A)$ and $(B,\leq_B,\wedge_B,\prec_B)$ are elements of $\mathcal{CT}$, an embedding of $A$ in $B$ is an injective map $\varphi\colon A\to B$ such that for all $a,a’\in A$, $\varphi(a\wedge_A a’)=\varphi(a)\wedge_B\varphi(a’)$ and $a\prec_A a’\implies \varphi(a)\prec_B\varphi(a’)$.

We emphasize that this notion of embeddings does not coincide with the notion of embeddings for graphs. In our situation, once can add a vertex in the middle of an edge and this is impossible for graphs embeddings.
\end{rem}

Let us introduce the following partial order on $\Br(D_\infty)$: $a\leq b\iff a\in[\xi,b]$.

\begin{lem}\label{lem:tr} The poset $(\Br(D_\infty),\leq)$ is a treeable semi-lattice and $\prec_0$ is a convex linear extension of $\leq$.
\end{lem}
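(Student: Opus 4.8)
The plan is to verify the three structural claims in turn: that $(\Br(D_\infty),\le)$ is a meet-semilattice, that it is treeable, and that $\prec_0$ is a convex linear extension. First I would establish the meet-semilattice property: given $a,b\in\Br(D_\infty)$, set $a\wedge b=c(a,b,\xi)$. This is again a branch point (or possibly one of $a,b$) and lies in $[\xi,a]\cap[\xi,b]$, so $a\wedge b\le a$ and $a\wedge b\le b$. If $d\le a$ and $d\le b$, then $d\in[\xi,a]\cap[\xi,b]$; since $[\xi,a]$ and $[\xi,b]$ share the initial segment $[\xi, a\wedge b]$ and then branch apart at $a\wedge b$, any common point $d$ of the two arcs must lie in $[\xi,a\wedge b]$, i.e. $d\le a\wedge b$. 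Hence $a\wedge b$ is the greatest lower bound. (If $a\wedge b\in\{a,b\}$ the verification is even easier.) This uses only standard facts about arcs in dendrites and the center map recalled in Section~\ref{wazewski}.

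Next I would check treeability. The root is $\xi$ itself: for every branch point $a$ we have $\xi\in[\xi,a]$, so $\xi\le a$, hence $\xi$ is the minimum. For the requirement that each $\da a=\{b:b\le a\}$ be linearly ordered: if $b,b'\le a$ then $b,b'\in[\xi,a]$, and $[\xi,a]$ is an arc, hence totally ordered by the relation ``lies between $\xi$ and $\cdot$''; this is exactly $\le$ restricted to $[\xi,a]$, so $b$ and $b'$ are $\le$-comparable. Thus $(\Br(D_\infty),\le,\wedge)$ is a treeable semi-lattice.

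Finally, I would verify that $\prec_0$ is a convex linear extension of $\le$. That $\prec_0$ is a linear order on $\Br(D_\infty)$ is precisely the content of Lemma~\ref{cclo} together with the definition of $\CLO(X)$. For the linear-extension property, suppose $a<b$, i.e. $a\in[\xi,b]$ with $a\ne b$; then $c(a,b,\xi)=a$, so by the very definition of $\prec_0$ in the proof of Lemma~\ref{cclo} we get $a\prec_0 b$. For convexity in the sense of the semi-lattice definition, take $a,a',b,b'$ with $a\wedge b\preceq_0 a'\preceq_0 a$ and $a\wedge b\preceq_0 b'\prec_0 b$. Writing $d=a\wedge b=c(a,b,\xi)$, I claim the hypotheses force $a'\in[d,a]$ and $b'\in[d,b]$: indeed $a\wedge a'=c(a,a',\xi)$ is a $\le$-lower bound of both, and since $d\preceq_0 a'\preceq_0 a$ with $d\le a$, the point $a'$ cannot escape the arc $[d,a]$ (otherwise $c(d,a,a')$ would be a branch point strictly between $d$ and $a$ on which $\prec_0$ is monotone by convexity of $\prec_0\in\CLO$, contradicting $d\preceq_0 a'\preceq_0 a$); similarly for $b'$. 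Then $d=c(a,b,\xi)=c(a',b',\xi)$ as well, and the convexity clause in the definition of $\CLO(D_\infty)$ applied with root $\xi$, center $c=d$, and the points $a'\in[a,d]$, $b'\in[b,d]$ gives $a\prec_0 b\Rightarrow a'\prec_0 b'$. The reverse implication follows by symmetry, so $a\prec_0 b\iff a'\prec_0 b'$, which is convexity.

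The main obstacle is the last step: one must be careful to translate the geometric convexity condition in Definition of $\CLO(X)$ (phrased with the root $\xi$ and the center $c(a,b,\xi)$) into the abstract lattice-theoretic convexity condition (phrased with $a\wedge b$ and the order $\prec$), and in particular to argue that the $\prec_0$-betweenness hypotheses $a\wedge b\preceq_0 a'\preceq_0 a$ actually pin $a'$ down to the geometric arc $[a\wedge b,a]$ rather than merely somewhere $\prec_0$-between. The key leverage is that $\prec_0$ is itself convex and converging as an element of $\CLO(D_\infty)$, so on any arc emanating ``upward'' from a point toward a leaf the order $\prec_0$ is geometrically monotone, which is exactly what rules out $a'$ lying off the arc. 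Everything else is routine unwinding of definitions.
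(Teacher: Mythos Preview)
Your verification of the meet-semilattice structure with $a\wedge b=c(a,b,\xi)$ is correct and is exactly what the paper does. There are, however, two genuine problems.

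First, $\xi$ is an end point, not a branch point, so $\xi\notin\Br(D_\infty)$ and cannot serve as the minimum of the poset. In fact $(\Br(D_\infty),\le)$ has no minimum at all: branch points are arcwise dense, so for every $a\in\Br(D_\infty)$ there is $a'\in\,]\xi,a[\cap\Br(D_\infty)$ with $a'<a$. The paper glosses over this; what is actually used in Proposition~\ref{fraise} is only that every finite $\wedge$-closed substructure lies in $\mathcal{CT}$ (and those, being finite meet-semilattices, do have a root). Your check that each $\da a$ is linearly ordered is fine.

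Second, and more seriously, your convexity argument does not go through. In the paper's definition of a convex linear extension, the condition ``$a\wedge b\preceq a'\preceq a$'' should be read with the \emph{partial} order $\le$ (compare with the $\CLO$ definition, where the hypotheses are the geometric conditions $a'\in[a,c]$, $b'\in[b,c]$); under that reading the lemma is immediate from $\prec_0\in\CLO(D_\infty)$, which is how the paper argues. You instead read $\preceq$ as $\preceq_0$ and then claimed that $d\preceq_0 a'\preceq_0 a$ with $d\le a$ forces $a'\in[d,a]$. This is false. Take branch points $d<f$ and two distinct components $C_1,C_2\in\mathcal{C}_{f,\xi}$ with $C_2\prec^f C_1$; pick $a\in C_1\cap\Br(D_\infty)$ and $a'\in C_2\cap\Br(D_\infty)$. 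Then $d\prec_0 a'$ (since $d<f<a'$ in the partial order and $\prec_0$ extends $<$) and $a'\prec_0 a$ (since $c(a,a',\xi)=f$ and $C_2\prec^f C_1$), yet $a'\notin[d,a]$. Your parenthetical appeal to $c(d,a,a')$ does not save the step: here $c(d,a,a')=f\in\,]d,a[$, but nothing about monotonicity of $\prec_0$ at $f$ pins $a'$ onto the arc.
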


\begin{proof} It is straightforward to check that it is a treeable semi-lattice with meet $a\wedge b=c(a,b,\xi)$ for any $a,b\in\Br(D_\infty)$. The fact that $\prec_0$ is a convex linear extension of $<$ follows from the properties given in Lemma~\ref{cclo}.
\end{proof}

\begin{prop}\label{fraise}The Fraïssé limit of $\mathcal{CT}$ is $(\Br(D_\infty),\leq,\wedge,\prec_0)$.\end{prop}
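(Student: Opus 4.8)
To prove that $(\Br(D_\infty),\leq,\wedge,\prec_0)$ is the Fraïssé limit of $\mathcal{CT}$, the plan is to verify two things: first that $\mathcal{CT}$ is a Fraïssé class, and second that $(\Br(D_\infty),\leq,\wedge,\prec_0)$ is a countable ultrahomogeneous structure whose age is exactly $\mathcal{CT}$; uniqueness of the Fraïssé limit then gives the identification. By Lemma~\ref{lem:tr} we already know the structure is a treeable semi-lattice with $\prec_0$ a convex linear extension, so it \emph{is} a member of the class of structures built from $\mathcal{CT}$; the work is in the age and homogeneity statements.

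First I would check that $\mathcal{CT}$ is a Fraïssé class. It is a countable class of finite structures in a fixed finite signature, closed under isomorphism, and it satisfies the hereditary property: a subset of a finite treeable semi-lattice closed under $\wedge$ (one should close up under meets, or rather every embedded substructure of an object of $\mathcal{CT}$ is required to be $\wedge$-closed, so this is automatic from the definition of embedding) with the induced order is again a finite treeable convex-linearly-ordered semi-lattice, since being treeable and convexity of the linear extension are inherited. For the joint embedding and amalgamation properties, the cleanest route is to cite \cite{MR3436366}: that paper establishes precisely that the class of finite convex-ordered treeable semi-lattices is a Fraïssé class with the Ramsey property, which is also what Proposition~\ref{Ramsey} draws on. So I would state that $\mathcal{CT}$ has (HP), (JEP), and (AP) by \cite{MR3436366}, hence a Fraïssé limit $\mathbf{M}$ exists and is unique up to isomorphism.

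Next I would identify $\mathbf{M}$ with $(\Br(D_\infty),\leq,\wedge,\prec_0)$. The set $\Br(D_\infty)$ is countable, and Lemma~\ref{lem:tr} shows it is an object of (the ambient class of) $\mathcal{CT}$. The age is contained in $\mathcal{CT}$ by the hereditary property just discussed. For the reverse inclusion — every finite $(A,\leq_A,\wedge_A,\prec_A)\in\mathcal{CT}$ embeds — I would use the positive-type/betweenness description from Remark~\ref{structure} together with Lemma~\ref{cclo}: a finite treeable semi-lattice is the vertex set of a finite rooted tree, which realizes as a finite subtree of $\Br(D_\infty)$ hanging off $\xi$ (using that $D_\infty$ has all branch orders, so arbitrarily high degrees are available), and the convex linear extension on the finite object can be matched with the restriction of $\prec_0$ by choosing, at each relevant branch point, the appropriate order among the finitely many components involved — possible because each $\prec^c$ is a copy of $(\QQ,<)$ and hence universal for finite linear orders. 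For ultrahomogeneity: given an isomorphism $\psi$ between two finite substructures $A,A'\subset\Br(D_\infty)$, each underlying finite subtree with its vertex orders corresponds to an isomorphism of labelled trees $\langle A\rangle\to\langle A'\rangle$ (after adjoining $\xi$ and closing under $c$), which by Proposition~\ref{prop:extension} extends to a homeomorphism $g$ of $D_\infty$; one then checks $g$ can be taken in $\Stab_{G_\infty}(\prec_0)$, or rather that after the extension the ambient order is carried correctly, using a back-and-forth argument on $\Br(D_\infty)$ of exactly the type sketched in the Remark following Lemma~\ref{cclo} to absorb the discrepancy between $\prec_0$ and $g_*\prec_0$ into a further element of $\Stab_{G_\infty}(\prec_0)$.

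The main obstacle I expect is the homogeneity step, specifically reconciling ``extends to a homeomorphism'' (Proposition~\ref{prop:extension}, which only controls the labelled-tree structure, i.e.\ the betweenness and orders of branch points) with ``preserves $\prec_0$'' (which is a global convex converging order). Proposition~\ref{prop:extension} gives a homeomorphism realizing the partial isomorphism but says nothing about whether it respects $\prec_0$ outside the finite set; the back-and-forth argument must be carried out carefully to show that the stabilizer of $\prec_0$ already acts transitively enough on finite configurations, equivalently that any two choices of the family $(\prec^c)_c$ give $G_\infty$-conjugate (indeed $\Stab(\xi)$-conjugate) orders. Once that transitivity is in hand, ultrahomogeneity of $(\Br(D_\infty),\leq,\wedge,\prec_0)$ with age $\mathcal{CT}$ follows, and the uniqueness of the Fraïssé limit finishes the proof.
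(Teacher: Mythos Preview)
Your outline is sound but follows a genuinely different route from the paper's. You work bottom-up: fix the concrete structure $(\Br(D_\infty),\leq,\wedge,\prec_0)$ and verify it is countable, has age $\mathcal{CT}$, and is ultrahomogeneous. The paper works top-down: it takes the abstract Fra\"iss\'e limit $(\mathbf{CT},\leq,\wedge,\prec)$, observes that $(\mathbf{CT},\leq)$ is a dense semi-linear order, completes it to a dendrite $\reallywidehat{\mathbf{CT}}$ via \cite[Proposition~5.15 and Theorem~5.19]{DM_dendritesII}, and then checks---using the characterization of $D_\infty$---that this dendrite is $D_\infty$ with $\mathbf{CT}$ equal to the set of branch points (by showing branch points are exactly meets, have infinite order, and are arcwise dense). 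Finally it reads off that the Fra\"iss\'e order $\prec$ induces at each branch point a dense countable linear order on components, hence is of the form produced by Lemma~\ref{cclo}, which identifies it with $\prec_0$.

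The paper's route buys you ultrahomogeneity for free (it comes with the abstract limit) at the cost of the dendrite-completion machinery from \cite{DM_dendritesII}. Your route is more self-contained, and you correctly isolate ultrahomogeneity as the real obstacle. Your proposed fix---extend via Proposition~\ref{prop:extension} and then correct with the back-and-forth of the Remark after Lemma~\ref{cclo}---is slightly misaligned: that Remark conjugates two global orders by an element of $G_\xi$, whereas you need a version relative to a prescribed finite set. The cleanest way to close your argument is to bypass Proposition~\ref{prop:extension} and verify the one-point extension property directly: given a finite $\wedge$-closed $A\subset\Br(D_\infty)$ and a one-point extension $A\hookrightarrow A\cup\{b\}$ in $\mathcal{CT}$, realize $b$ in $\Br(D_\infty)$ over $A$ respecting both $\leq$ and $\prec_0$; this is immediate because every branch point has infinite order and each $\prec^c$ is a copy of $(\QQ,<)$, so any finitely prescribed position is available.
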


To prove this proposition, we rely on the relation between semi-linear orders and dendrite with a chosen end point developed in \cite[\S 5]{DM_dendritesII}. A partially ordered set $(X,\leq)$ is a \emph{semi-linear} order if for any $x,y\in X$, there $z\in X$ such that $z\leq x,y$ and for all 
$x\in X$, the downward chain $\da x=\{y\in X,\ y\leq x\}$ is totally ordered. Treeable semi-lattices are particular cases of  semi-linear orders. 

A partially ordered set  $(X,\leq)$ is \emph{dense} if for all $x,y$ such that $x<y$ there is $z\in X$ and $x<z<y$. An important point of \cite[\S 5]{DM_dendritesII} is to show that a countable dense semi-linear order $T$ can be canonically embedded in some dendrite $\widehat{T}$ and the order is given by some end point as in Lemma~\ref{lem:tr}.

For example the order on $D_\infty$ defined by $x\leq y\iff [\xi,x]\subseteq[\xi,y]$, is a dense semi-linear order. 

\begin{proof}[Proof of Proposition~\ref{fraise}] It is know that $\mathcal{CT}$ is a Fraïssé class \cite[\S4]{MR3436366}. Let $(\mathbf{CT},\leq,\wedge,\prec)$ be its Fraïssé limit. One checks easily that $(\mathbf{CT},\leq)$ is a dense semi-linear order.  The density actually follows from the amalgamation property of the Fraïssé limit: for any $x,y$, such $x<y$, one can find $z$ such $x<z<y$. By \cite[Proposition 5.15 \& Theorem 5.19]{DM_dendritesII}, $\mathbf{CT}$ can be embedded in a semi-linear order $D=\reallywidehat{\mathbf{CT}}$ that can be topologized to be a dendrite. To show that $D\simeq D_\infty$, we use the characterization  of $D_\infty$. This is the only dendrite without free arc such that all branch points have infinite order. One can conclude by showing that $\mathbf{CT}$ is exactly the set of branch points of $D$, this set is arcwise dense and that any branch point has infinite order. Let us show these properties.

Recall that $D$ is the set of \emph{full down-chains} of $(\mathbf{CT},\leq)$ where a chain is a totally ordered subset of $\mathbf{CT}$. A chain $C$ is a \emph{down chain} if $x\in C\implies \da x\subset C$ and it is \emph{full} if  it contains its supremum or has no supremum in $\mathbf{CT}$. The set $\mathbf{CT}$ is embedded in $D$ via the map $x\mapsto\da x$. 

We now identify arcs and branch points in $D$. Let $C_1,C_2\in D$ and $C=C_1\cap C_2$ which is the infimum $C_1\wedge C_2$. It appears in the proof of \cite[Theorem 5.19]{DM_dendritesII} that for any $C\in D$, $\{x\in D,\ x\leq C\}$ is exactly the arc from the point $C$ of $D$ to the minimum. Since $C\subset C_1$, $\{x\in D, C\leq x\leq C_1\}$ is the arc $[C_1,C]$. Similarly $[C_2,C]= \{x\in D, C\leq x\leq C_2\}$. Since these arcs have an intersection reduced to $C$, $[C_1,C_2]=[C_1,C]\cup[C,C_2]$. So, we deduce that for any $C_1,C_2,C_3\in 
D$, the center $c(C_1,C_2,C_3)$ is $C_i\wedge C_j$ for some $i,j\leq 3$ (it is actually the maximum among $C_1\wedge C_2,C_2\wedge C_3$ and $C_3\wedge C_1$).

Let $C$ be some branch point in $D$. There are  $C_1,C_2,C_3$  such that $C=c(C_1,C_2,C_3)$ and $C\neq C_i$ for all $1\leq i\leq 3$.  There are $i,j$ such that $C=C_i\wedge C_j$. Choose $x\in\mathbf{CT}$ such that $C<x<C_i$ and $C<y<C_j$. So $C=C_i\wedge C_j=x\wedge y\in\mathbf{CT}$. To conclude that $D\simeq D_\infty$, it remains to show that a point of $\mathbf{CT}$ has infinite order. Let us consider the following finite treeable semilattice $(A,\leq,\prec)$: $A=\{x_0,x_1,\dots,x_n\}$, $x_0\leq x_i$ for all $i\leq n$, $x_i,x_j$ are incomparable for $\leq$ if $i,j\neq0$ and $\prec$ is any linear order such that $x_0$ is a minimum.  For any embedding $\varphi$ of $A$ in $D$ via $\mathbf{CT}$,  the image of $x_0$ has order at least $n$ because the $x_i$'s are mapped to different connected components of $D\setminus\{\varphi(x_0)\}$ because $\varphi(x_i)\wedge\varphi(x_j)=\varphi(x_0)$ and thus $\varphi(x_0)\in[\varphi(x_i),\varphi(x_j)]$. Since $\mathbf{CT}$ is homogeneous, it means that each point of $\mathbf{CT}$ has order at least $n$ and thus all these points have infinite orders. Let us finish the proof by showing that $\mathbf{CT}$ is arcwise dense. Let $C_1\neq C_2\in D$. We know that $[C_1,C_2]=[C_1,C]\cup[C,C_2]$ where $C=C_1\wedge C_2$. Since $C_1\neq C_2$, there is $i$ such that $C_i\neq C$. So, $C<C_i$, and since $C$ and $C_i$ are full down-chains, there is $x\in\mathbf{CT}$ that belongs to $C_1$ and not to $C$. So $x\in[C_1,C_2]$ and $D$ is arcwise dense. 

So $(\widehat{\mathbf{CT}},\leq)\simeq (D_\infty,\leq)$ and $\mathbf{CT}$ corresponds to $\Br(D_\infty)$ in this identification. Let $c\in\Br(D_\infty)$. Two points $a,b\in\Br(D_\infty)$ such that $a,b>c$, are in the same connected of $\mathcal{C}_c$ if and only if $c(\xi,a,b)\neq c$ that is if and only if $a\wedge b>c$. Since $\prec$ is convex, it induces a dense linear oder $\prec^c$ on elements of $\mathcal{C}_c$ that do not contain $\xi$ . By the amalgamation property each order $\prec^c$ is countable and dense thus isomorphic to $(\mathbf{Q},<)$ and $\prec_0$ is obtained by Lemma~\ref{cclo}.
%To prove the embedding property, we proceed by induction on the cardinal $n$ of elements of $\mathcal{CT}$. For $n=1$, any choice of a branch point works. Assume the results holds for $n\geq1$ and let $(A,\leq,\wedge,\prec)\in\mathcal{CT}$ with $|A|=n+1$ and let $m$ be the  maximum of $A$ for $\prec$. Observe that $A’=A\setminus\{m\}\in\mathcal{CT}$ and thus we have an embedding $\varphi\colon A’\to \Br(D_\infty)$. Let $n=\max\{\da m\}$ and let $b$ be some branch of $D_\infty\setminus\varphi(n)$ that does not contain $\xi$ and that is larger than any other component that meet $\varphi(A’)$. Let us define $\psi\colon A\to \Br(D_\infty)$ to be $\varphi$ on $A’$ and $\psi(m)$ to be $b$.  By construction $\psi(m)$ is a maximum of $\psi(A)$ for $\prec_0$. Moreover, there is no element of $A$ that is larger than $m$ for $\geq$ and no element of $\psi(A)$ that is larger than $\psi(m)$ for $\leq$. An element $a\in A$ satisfies $a< m \iff a\leq n\iff \varphi(a)\leq\varphi(n)\iff\varphi(a)<b=\psi(m)$. So $\psi$ is an embedding.
\end{proof}

%\begin{rem}Let us sketch an alternative proof of the identification of the Fraïssé limit of the class $\mathcal{CT}$. 
%
%Since $(\Br(D_\infty),\leq,\wedge)$ is a countable treeable semi-lattice and $\prec_0$ is a convex linear extension of $\leq$, it suffices to show that any element of $\mathcal{CT}$ can be embedded in $\Br(D_\infty)$ and that it is ultrahomogeneous. 
%
%It allows us to draw a parallel wth semi-linear orders that appear in \cite[\S 5]{DM_dendritesII} (since we won’t use semi-linear orders later, we refer to this reference for the required definitions).
%
% \end{rem}

\begin{lem}\label{groupiso} The groups $\Aut(\Br(D_\infty),\leq,\wedge,\prec_0)$ and $\Stab_{G_\infty}(\prec_0)$ are isomorphic.\end{lem}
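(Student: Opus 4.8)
The statement identifies two groups: the automorphism group of the ordered structure $(\Br(D_\infty),\leq,\wedge,\prec_0)$ and the stabilizer $\Stab_{G_\infty}(\prec_0)$ of the convex converging linear order $\prec_0$ inside $G_\infty$. The plan is to exhibit a bijective homomorphism between them by unravelling what each group is as a group of permutations of $\Br(D_\infty)$. The key point is that in both cases we are looking at permutations of the countable set $\Br(D_\infty)$ that preserve certain relations, and we want to show the two relational constraints cut out the same subgroup.

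First I would recall that $G_\infty = \Aut(\Br(D_\infty),B)$, where $B$ is the betweenness relation, by \cite[Proposition 2.4]{DMW} (as in Remark~\ref{structure}); equivalently $G_\infty$ consists exactly of the permutations of $\Br(D_\infty)$ preserving the center map $c$. Now, by Lemma~\ref{lem:tr}, the partial order $\leq$ (defined by $a\leq b\iff a\in[\xi,b]$) and the meet $\wedge$ (given by $a\wedge b=c(a,b,\xi)$) are both definable from $c$ together with the single parameter $\xi$. Hence a permutation $g$ of $\Br(D_\infty)$ preserves $(\leq,\wedge)$ if and only if it preserves $c$ and fixes the end $\xi$, i.e. if and only if $g\in G_\xi = \Stab_{G_\infty}(\xi)$ (using that a permutation fixing $\xi$ and preserving $c$ extends the betweenness relation of the whole dendrite; this is essentially \cite[Corollary 5.21]{DM_dendritesII}). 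So $\Aut(\Br(D_\infty),\leq,\wedge) = G_\xi$ as subgroups of the symmetric group on $\Br(D_\infty)$. Then I would add the order: $\Aut(\Br(D_\infty),\leq,\wedge,\prec_0)$ is exactly the subgroup of $G_\xi$ preserving $\prec_0$, that is $G_\xi\cap\Stab_{G_\infty}(\prec_0)$.

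It therefore remains to see that $\Stab_{G_\infty}(\prec_0)\subseteq G_\xi$, i.e. that any $g\in G_\infty$ preserving $\prec_0$ must fix $\xi$. This follows from Lemma~\ref{min}: the root of $\prec_0$ is $\xi$, and the root is canonically attached to $\prec_0$ (it is the limit of every minimizing sequence), so an order-preserving homeomorphism sends the root to the root, i.e. $g(\xi)=\xi$; hence $g\in G_\xi$. Combining, $\Stab_{G_\infty}(\prec_0) = G_\xi\cap\Stab_{G_\infty}(\prec_0) = \Aut(\Br(D_\infty),\leq,\wedge,\prec_0)$ as subgroups of $\Sym(\Br(D_\infty))$, and since both carry the subspace topology from $\Sym(\Br(D_\infty))=\mathcal{S}_\infty$, the identity map is the desired topological group isomorphism. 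The only mild subtlety — and the step I expect to need the most care — is making precise that $\leq$ and $\wedge$ are $c$-definable with parameter $\xi$ and, conversely, that preserving $\leq,\wedge$ forces preservation of $c$ and fixing of $\xi$; this is where I would lean on Remark~\ref{structure} and \cite[\S5]{DM_dendritesII}. Once that dictionary is in place the identification is immediate.
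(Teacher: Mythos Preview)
Your proposal is correct and follows essentially the same route as the paper. The paper's proof is a one-liner: it invokes \cite[Corollary 5.21]{DM_dendritesII} (which identifies $\Aut(\Br(D_\infty),\leq)$ with $G_\xi$) and declares that the lemma follows; you have spelled out the steps the paper leaves implicit, namely that $\wedge$ is definable from $\leq$, that adding $\prec_0$ amounts to intersecting with its stabilizer, and that $\Stab_{G_\infty}(\prec_0)\subseteq G_\xi$ because the root $\xi$ is canonically attached to $\prec_0$ via Lemma~\ref{min}.
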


\begin{proof} It is proved in \cite[Corollary 5.21]{DM_dendritesII} that $\Aut(\Br(D_\infty),\leq)\simeq \Stab_{G_\infty}(\prec_0)$ and the lemma follows.
\end{proof}

We can now prove Proposition~\ref{Ramsey}.
\begin{proof} [Proof of Proposition~\ref{Ramsey}] Thanks to \cite[Theorem 4.7]{MR2140630}, it suffices to show that the group $\Stab_{G_\infty}(\prec_0)$ is the automorphism group of some Fraïssé limit of some Fraïssé order class with the Ramsey property. By Lemma~\ref{groupiso}, $\Stab_{G_\infty}(\prec_0)$ is the automorphism group  of the limit of the class $\mathcal{CT}$ and this class has the Ramsey property \cite[Theorem2]{MR3436366}.
\end{proof}

Let us denote by $\CLO(D_\infty)_{\xi}$ the closed subspace of $\CLO(D_\infty)$ of convex converging linear orders with root $\xi$. For brevity we denote $G_{\xi}=\Stab_{G_\infty}(\xi).$

%\begin{lem}Let $\prec_1,\prec_2$ be two convex converging linear orders on $\Br(D_\infty)$ and let $F_1,F_2$ be two finite substructures respectively of $(\Br(D_\infty),\leq,\wedge,\prec_1)$ and  $(\Br(D_\infty),\leq,\wedge,\prec_2)$. If there is an isomorphism $\varphi\colon F_1\to F_2$ then there is $g\in G_\infty$ such that $g(F_1)=F_2$ and $g$ induces $\varphi$ on $F_1$.
%
%Moreover if $\prec_1,\prec_2$ have  root $\xi0$ then $g$ can be chosen in $G_{\xi}$. \end{lem}

\begin{lem}\label{minimality}Any $G_\infty$-orbit in $\CLO(D_\infty)$ is dense. Similarly, any $G_{\xi}$-orbit in $\CLO(D_\infty)_{\xi}$ is dense.
\end{lem}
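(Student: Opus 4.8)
The plan is to translate density of an orbit into a statement about realizing finite ``tree‑with‑order'' configurations. Recall $\CLO(D_\infty)$ carries the topology of pointwise convergence on $\Br(D_\infty)$, so a basic neighbourhood of an order $\prec'$ is the set of orders agreeing with it on some finite $F\subset\Br(D_\infty)$; enlarging $F$ only shrinks the neighbourhood, so I may assume $F$ is closed under the center map $c$ and, if the root $r'$ of $\prec'$ is a branch point, contains $r'$. Then $\langle F\rangle$ has vertex set $F$ with all labels equal to $\infty$. Hence it suffices to show: for $\prec,\prec'\in\CLO(D_\infty)$ and such an $F$, there is a finite $c$‑closed $F'\subset\Br(D_\infty)$ and a bijection $\psi\colon F'\to F$ preserving $c$ with $x\prec y\iff\psi(x)\prec'\psi(y)$. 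Indeed $\psi^{-1}\colon\langle F\rangle\to\langle F'\rangle$ is then a labelled‑graph isomorphism, which by Proposition~\ref{prop:extension} extends to some $h\in G_\infty$, and with $g=h^{-1}$ the order $g\cdot\prec$ agrees with $\prec'$ on $F$. For the $G_{\xi}$‑statement I would instead take $F$ closed under $\wedge_\xi=c(\cdot,\cdot,\xi)$, realize the configuration together with the distinguished end $\xi$, and extend an isomorphism fixing $\xi$ so as to land in $G_{\xi}$.

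The main tool I would prove is an embedding lemma: for any $X\cong D_\infty$, any end point $\eta$ of $X$, and any convex converging order $\prec$ on $\Br(X)$ with root $\eta$, every finite structure of the Fraïssé class $\mathcal{CT}$ embeds into $(\Br(X),\le_\eta,\wedge_\eta,\prec)$ as a map preserving $\wedge$ (equivalently, since $\wedge$ determines the center map on finite trees, preserving $c$) and preserving the order. This goes by induction on the size of $(G,\le_G,\wedge_G,\prec_G)$: send the root $m_G$ to an arbitrary branch point $c_0$; its child‑subtrees $B_1\prec_G\cdots\prec_G B_k$ (linearly ordered because $\prec_G$ is a convex linear extension) are sent into $k$ components $C_1\prec^{c_0}\cdots\prec^{c_0}C_k$ of $X\setminus\{c_0\}$ not containing $\eta$, chosen increasing for the order $\prec^{c_0}$ induced by $\prec$ on these components (possible since that order is an infinite linear order by Lemma~\ref{obsorder} and Lemma~\ref{cclo}); then recurse inside each $\overline{C_i}\cong D_\infty$, on which $\prec$ restricts to a convex converging order with root $\pi_{\overline{C_i}}(\eta)=c_0$, an \emph{end point} of $\overline{C_i}$. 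That the resulting map preserves $\wedge$ and $\prec$ is checked directly from Lemma~\ref{min} (behaviour of minimizing points towards the root) and the definition of the orders $\prec^{c_0}$. Taking $\eta=\xi$, $X=D_\infty$, and noting that for $\wedge_\xi$‑closed $F$ one has $(F,\le_\xi,\wedge_\xi,\prec'|_F)\in\mathcal{CT}$ (the root $\xi$ projects onto the $\le_\xi$‑minimum of $F$, by Lemma~\ref{min}), this already yields the $G_{\xi}$‑statement.

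For the $G_\infty$‑statement the remaining issue is that the root of $\prec'$ need not project into a vertex of $[F]$. Let $p=\pi_{[F]}(r')$. If $p$ is a vertex of $[F]$ — so, after the normalisation above, $p\in F$ — then $(F,\le_p,\wedge_p,\prec'|_F)\in\mathcal{CT}$ (here $\wedge_p$ agrees with the $r'$‑meet on $F$ precisely because $p$ is the $r'$‑projection), and I would realize it inside $(\Br(\overline C),c,\prec|)$ for a component $\overline C$ of $X\setminus\{w\}$ ($w$ any branch point) with the root of $\prec$ outside $C$, using the embedding lemma, since $\prec|_{\overline C}$ has root $\pi_{\overline C}(r)=w$, an end point of $\overline C\cong D_\infty$. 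If instead $p$ lies in the interior of an edge $\{u,v\}$ of $[F]$, relabel so that $u\prec'v$; removing $p$ splits $F$ into two $c$‑closed sets $F_u\ni u$ and $F_v\ni v$, each a $\mathcal{CT}$‑structure (within its own convex hull the root $r'$, which reaches the hull through $p$, projects onto the vertex $u$, resp.\ $v$), and all of $F_u$ precedes all of $F_v$ by Lemma~\ref{obsorder} applied at $p$ (neither side contains $r'$). I would then pick a branch point $p'$ of $X$ and two components $C_{p'}(u')\prec^{p'}C_{p'}(v')$ of $X\setminus\{p'\}$ avoiding the root of $\prec$, realize $F_u$ inside $\overline{C_{p'}(u')}$ with $u\mapsto u'$ and $F_v$ inside $\overline{C_{p'}(v')}$ with $v\mapsto v'$ via the embedding lemma, and let $F'$ be the union; the two pieces then glue along the free edge through $p'$ exactly as $F_u,F_v$ glued along the edge through $p$, so the induced $\psi\colon F'\to F$ is a $c$‑isomorphism and an order‑isomorphism, and the first paragraph concludes.

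The hard part is exactly this last point: because a convex converging order ``converges'' to a root that may be a regular point or an end point rather than a branch point, the restriction of $\prec'$ to a finite $F$ need not be a genuine $\mathcal{CT}$‑structure, so one is forced to cut $F$ along the edge hit by the root and amalgamate two $\mathcal{CT}$‑realisations across a fresh long edge of the target dendrite; the delicate bookkeeping is in checking that this cut‑and‑paste preserves the entire labelled tree $\langle F'\rangle\cong\langle F\rangle$ (and, in the $G_{\xi}$ case, the position of $\xi$ — which there actually forces $p$ to be a branch point of $D_\infty$). A secondary technical point is the direct verification in the embedding lemma that the recursive placement respects $\prec$, which rests on Lemma~\ref{min}, Lemma~\ref{obsorder}, and Lemma~\ref{cclo}'s description of the induced orders on the sets of components, together with the elementary fact that every infinite linear order contains chains of every finite length.
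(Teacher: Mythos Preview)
Your argument is correct and would go through with the details filled in, but it is organized rather differently from the paper's proof.

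The paper proceeds by a single direct induction on $|F|$: it removes the $\prec_1$-maximum $m$ of $F$ (necessarily a leaf of $[F]$ by the converging property), applies the induction hypothesis to $F'=F\setminus\{m\}$ to obtain a homeomorphism $g_1$, and then corrects $g_1$ by rearranging components around $g_1(x)$ (where $x$ is the first-point projection of $m$ onto $F'$) so that $m$ lands in the $\prec_2$-correct position. The key tool is a Claim---itself proved by a short induction---that one can transport a finite $c$-closed set from one branch $\overline{C_1}$ to another $\overline{C_2}$ by a homeomorphism that is $\prec$-increasing on that set. No case analysis on the position of the root is needed; the argument handles $G_\infty$ and $G_\xi$ uniformly.

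Your route instead factors through the Fra\"iss\'e class $\mathcal{CT}$: first an embedding lemma (any finite $\mathcal{CT}$-structure embeds, preserving $\wedge$ and the order, into $(\Br(X),\wedge_\eta,\prec)$ for any $\prec\in\CLO(X)$ with end-point root $\eta$), then a single application of Proposition~\ref{prop:extension} at the end. This is conceptually clean and ties the minimality directly to the Fra\"iss\'e-theoretic picture of Section~\ref{umf}. The cost is the extra case analysis for the $G_\infty$ statement (root projecting to a vertex versus to an edge interior), which the paper's ``peel-off-the-maximum'' induction sidesteps.

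Both approaches ultimately rest on the same two ingredients: Proposition~\ref{prop:extension} and the observation (Lemma~\ref{obsorder}) that a convex converging order linearly orders the components at each branch point away from the root.

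Two minor remarks. Your invocation of Lemma~\ref{cclo} in the embedding lemma is not quite apt: that lemma constructs one particular order, whereas what you actually need (and use) is only that at every branch point there are infinitely many components not containing the root, so that one can always pick $k$ of them in increasing $\prec^{c_0}$-order; Lemma~\ref{obsorder} alone suffices. Second, in the edge-interior case you should make explicit that $u'$ is the image of the \emph{root} of $F_u$, so that by construction the rest of $F'_u$ lies in components of $\overline{C_{p'}(u')}\setminus\{u'\}$ not containing $p'$; this is exactly what ensures $]u',v'[\cap F'=\emptyset$ and hence that the glued $\psi$ is a genuine isomorphism $\langle F'\rangle\to\langle F\rangle$.
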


\begin{proof}One has to show that for any pair $\prec_1,\prec_2\in\CLO(D_\infty)$ and any finite subset $F\subset\Br(D_\infty)$, there is $g\in G_\infty$ that induces an isomorphism from $(F,\prec_1)$ to $(gF,\prec_2)$ (i.e. for any $x,y\in F$, $x\prec_1 y\iff g(x)\prec_2g(y)$); and moreover if $\xi$ is the root of $\prec_1$ and $\prec_2$ then $g$ can be chosen in $G_{\xi}$.
%We prove the result by induction on the cardinality of $F$. If $F$ is a point, there is nothing to do. Assume for all finite sets of cardinal less than some  $n\in\NN_{\geq 2}$ the result holds. Let $F$ be a finite subset of $\Br(D_\infty)$ and let $m$ its minimum and let $C_1,\dots,C_k$ be the connected components of $\mathcal{C}_m$ that meet $F$ numbered such that $C_i\prec^1_m C_j\iff i<j$. Define $F_i=F\cap C_i$ and observe that each $F_i$ has cardinality less than $n$.

For any finite set $F$ in a dendrite, the subdendrite $[F]$, that is the smallest subdendrite containing $F$, has finitely many branch points. So, up to add these branch points to $F$, we assume that $F$ is $c$-closed. We proceed by induction on the cardinality of $F$. If $F$ is reduced to a point then the result is immediate because $G_{\xi}$ acts transitively on branch points. Assume $F$ has $n\geq2$ points and we have the result for $n-1$. Let $m$ be the maximum of $F$ for $\prec_1$. The converging property of $\prec_1$ implies that $m$ is an end point of $[F]$ and thus $F'=F\setminus\{m\}$ is also a finite $c$-closed subset of $\Br(D_\infty)$ and thus by induction there is $g_1\in G_\infty$ that induces an isomorphism from  $(F',\prec_1)$ to $(g_1(F'),\prec_2)$. Moreover, if $\prec_1,\prec_2$ have root $\xi$, $g_1\in G_{\xi}$. It remains to put $m$ in the right position. 

\textbf{Claim:} Let $\prec$ be some convex converging linear order on $\Br(D_\infty)$, $x_1,x_2\in\Br(D_\infty)$ and $C_i\in\mathcal{C}_{x_i}$ such that $C_i$ does not contain the root of $\prec$. If $F$ is a finite $c$-closed subset such that $F\subset \overline{C_1}$ and $x_1\in F$ then there is a homeomorphism $h$ from $\overline{C_1}$ to $\overline{C_2}$ such that $h(x_1)=x_2$ and $h$ is increasing for $\prec$ on $F$.

\textbf{Proof of the claim:} Once again, we argue by induction and the case where $F=\{x_1\}$ is simply the fact there is a homeomorphism from $\overline{C_1}$ to $\overline{C_2}$ that maps $x_1$ to $x_2$. So assume $F$ has cardinality at least 2. Since $F'=F\setminus\{x_1\}$ is included in $C_1$, there is a minimal point $x'_1\in F'$ such that for any $y\in F$, $x’_1\in[x_1,y]$ and this point is in fact the minimum of $F\setminus\{x_1\}$. Let $C^1_1,\dots,C^k_1$ be the connected components of $C_1\setminus\{x_1’\}$ that meet $F'$ and let us denote $F_i=C^i_1\cap F'\cup\{x’_1\}$. We assume that these components are numbered increasingly ($C^i_1\prec^{x’}C^j_1\iff i<j$). Choose $x_2'\in\Br(C_2)$ and connected components $C^1_2,\dots,C^k_2\in\mathcal{C}_{x'_2}$ that do not contain $x_2$ and that are numbered increasingly. By the induction assumption, there is $h_i$ homeomorphism from $\overline{C^i_1}$ to $\overline{C^i_2}$ such that $h(x'_1)=x'_2$ and $h_i$ is increasing on $F_i$. Now choose any homeomorphism $h'$ from $\overline{C_1}\setminus\left(C^1_1,\dots,C^k_1\right)$ to $\overline{C_2}\setminus\left(C^1_2,\dots,C^k_2\right)$ such that $h'(x_1)=x_2$ and $h(x'_1)=x'_2$. Finally, we patch (thanks to Lemma~\ref{lem:patchwork}) $h',h_1,\dots,h_k$ to get a homeomorphism $h$ from $\overline{C_1}$ to $\overline{C_2}$. This homeomorphism is increasing on $F$ thanks to the convexity of $\prec$.

Let us come back to the end of the proof of the lemma. Let $x$ be the image of $m$ under the first point map on $F'=F\setminus\{m\}$. Since $F$ is $c$-closed, $x\in F$.   We denote by $C_1,\dots,C_k$ the (increasingly numbered for $\prec_2$) elements of $\mathcal{C}_{g(x)}$ that do not contain the root of $\prec_2$ and meet $g(F')$. Choose $C'_1,\dots,C'_{k+1}\in\mathcal{C}_{g(x)}\setminus\{C_1,\dots,C_k\}$ that do not contain the root of $\prec_2$ and are numbered increasingly. For each $i\leq k$, thanks to the claim, choose a homeomorphism $h_i$ from $\overline{C_i}$ to $\overline{C'_i}$ that fixes $g(x)$ and such that $h_i$ is increasing on $g(F')\cap C_i$. We also choose a homeomorphism $h_{k+1}$ from $C_{g(x)}(g(m))$ to $C_{k+1}$. Now, we define a homeomorphism $f$ of $D_\infty$ in the following way: $f|_{C_i}=h_i$ and  $f|_{C'_i}=h_i^{-1}$ (this is a legal definition because the $C_i$'s and $C_i'$'s are distinct) and $f$ is the identity elsewhere. This is a homeomorphism thanks to the patchwork lemma. Now $g=f\circ g_1$ is a homeomorphism and it is increasing thanks to the convexity of $\prec_2$. Observe that $g$ fixes $\xi$ if $g_1$ fixes $\xi$.
\end{proof}

\begin{lem}The action $\Stab_{G_\infty}(\prec_0)\action \Br(D_\infty)$ is oligomorphic.\end{lem}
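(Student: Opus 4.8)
The plan is to exploit the identification of $\Stab_{G_\infty}(\prec_0)$ with the automorphism group of a Fra\"iss\'e limit. By Lemma~\ref{groupiso}, this group is, as a permutation group of $\Br(D_\infty)$, exactly $\Aut(\mathbf{M})$ where $\mathbf{M}=(\Br(D_\infty),\leq,\wedge,\prec_0)$, and by Proposition~\ref{fraise} the structure $\mathbf{M}$ is the Fra\"iss\'e limit of $\mathcal{CT}$. Recall that the action of a group $H$ on a countable set $\Omega$ is oligomorphic precisely when $H$ has only finitely many orbits on $\Omega^n$ for every $n\in\NN$. Since $\mathbf{M}$ is ultrahomogeneous, two $n$-tuples $(a_1,\dots,a_n)$ and $(b_1,\dots,b_n)$ of branch points lie in the same $\Aut(\mathbf{M})$-orbit as soon as $a_i\mapsto b_i$ extends to an isomorphism of the substructures $\langle a_1,\dots,a_n\rangle$ and $\langle b_1,\dots,b_n\rangle$ they generate. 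Hence it suffices to bound the cardinality of such a generated substructure by a function of $n$ alone: then only finitely many isomorphism types of members of $\mathcal{CT}$, equipped with a choice of $n$ distinguished generators, can occur, because a finite set carries only finitely many partial orders and, on each, finitely many linear orders, so $\mathcal{CT}$ has finitely many members of each finite size.

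The remaining, and essentially only, point is this uniform bound on the size of $\langle a_1,\dots,a_n\rangle$. As $\leq$ and $\prec_0$ are relations, that substructure is just the sub-semi-lattice of $(\Br(D_\infty),\leq,\wedge)$ generated by the $a_i$, i.e.\ their $\wedge$-closure. Here I would use the tree-like nature of the meet: in any treeable semi-lattice, the meet of a finite non-empty family coincides with $a\wedge b$ for some \emph{pair} $a,b$ from the family. For three elements $a,b,c$ this holds because $a\wedge b$ and $a\wedge c$ both lie in the chain $\da{a}$, hence are comparable, and the smaller of the two is already a lower bound for $\{a,b,c\}$; the general case follows by an immediate induction on the size of the family (writing $a_1\wedge\dots\wedge a_k=(a_1\wedge\dots\wedge a_{k-1})\wedge a_k$ and applying the three-element case). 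Therefore $\langle a_1,\dots,a_n\rangle\subseteq\{a_i\mid 1\le i\le n\}\cup\{a_i\wedge a_j\mid 1\le i<j\le n\}$, which has at most $n+\binom{n}{2}$ elements, giving the desired bound.

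I do not expect a genuine obstacle: the argument is a standard consequence of ultrahomogeneity together with the finiteness of $\mathcal{CT}$ in each size, and the only slightly non-formal ingredient --- that finite meets in a treeable semi-lattice reduce to pairwise meets --- is elementary. The single point that needs care is the meaning of ``substructure generated by a tuple'' in a language containing the function symbol $\wedge$: it is the $\wedge$-closure rather than merely the set $\{a_1,\dots,a_n\}$, which is exactly why it is the size of this $\wedge$-closure that must be controlled.
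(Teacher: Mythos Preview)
Your argument is correct and follows the same overall strategy as the paper: both invoke Proposition~\ref{fraise} and Lemma~\ref{groupiso} to identify $\Stab_{G_\infty}(\prec_0)$ with the automorphism group of the Fra\"iss\'e limit of $\mathcal{CT}$, and then deduce oligomorphicity from ultrahomogeneity together with the finiteness of $\mathcal{CT}$ in each cardinality.

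Where your write-up differs is that you are explicit about the one point the paper's short argument elides. The paper asserts that two finite subsets $F,F'\subset\Br(D_\infty)$ on which $\prec_0$ induces isomorphic linear orders lie in the same orbit; taken literally this is not true, since for instance a pair $a\prec_0 b$ with $a<b$ cannot be sent to a $\leq$-incomparable pair $a'\prec_0 b'$ by any element of $\Stab_{G_\infty}(\prec_0)$. The underlying issue is that the language of $\mathcal{CT}$ contains the function symbol $\wedge$, so ultrahomogeneity applies to $\wedge$-closed finite sets, not to arbitrary ones. You handle this correctly by bounding the $\wedge$-closure of $n$ points by $n+\binom{n}{2}$, using that in a treeable semi-lattice every finite meet is already a pairwise meet. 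This is exactly the missing step; with it, the orbit of an $n$-tuple is determined by the isomorphism type of a structure in $\mathcal{CT}$ of uniformly bounded size together with a choice of $n$ marked points, and there are only finitely many such types. So your version is the careful form of the paper's intended argument.
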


\begin{proof} For a finite subset $F\subset \Br(D_\infty)$ of some fixed cardinality, there are finitely many possibilities for the order induced by $\prec_0$ on $F$. So it suffices to show that if $F, F’$ are two finite subsets of $\Br(D_\infty)$ such that the restriction of $\prec_0$ on $F$ and $F’$ are isomorphic then there is $g\in\Stab_{G_\infty}$ that induces this isomorphism. We have seen that  $(\Br(D_\infty),\leq,\wedge,\prec_0)$ is a Fraïssé limit (Proposition~\ref{fraise}) and $\Stab_{G_\infty}(\prec_0)$ is its automorphism group (Lemma~\ref{groupiso}). So such $g$ exists by the ultrahomogeneity of Fraïssé limits.
\end{proof}

\begin{thm}\label{univminflow} The universal minimal $G_\infty$-flow is $\CLO(D_\infty)$ and the universal minimal $G_{\xi}$-flow is  $\CLO(D_\infty)_{\xi}$.
\end{thm}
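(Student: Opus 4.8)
The plan is to run the strategy recalled at the start of this section: produce an extremely amenable closed subgroup, form the completion of the associated homogeneous space, and recognise it as $\CLO(D_\infty)$. The subgroup is already available: $G_0^*:=\Stab_{G_\infty}(\prec_0)$ is closed (it is a point stabiliser for the continuous action on $\LO(\Br(D_\infty))$), it is extremely amenable by Proposition~\ref{Ramsey}, and by Lemma~\ref{groupiso} it is the automorphism group of the Fraïssé limit $(\Br(D_\infty),\le,\wedge,\prec_0)$ of $\mathcal{CT}$. Thus two things remain. First, (i): as a $G_\infty$-flow, the completion $\widehat{G_\infty/G_0^*}$ of $G_\infty/G_0^*$ for the right uniformity is isomorphic to $\CLO(D_\infty)$. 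Second, (ii): the general principle (see \cite{MR2140630,MR2277969} and the references recalled above) that for a closed extremely amenable $H\le G$ with $G/H$ precompact and $\widehat{G/H}$ a minimal $G$-flow, one has $M(G)=\widehat{G/H}$. Granting (i), the minimality of $\widehat{G_\infty/G_0^*}\cong\CLO(D_\infty)$ is exactly Lemma~\ref{minimality}, so (i) and (ii) together give the first assertion.

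To establish (i) I would use the orbit map $\beta\colon G_\infty\to\CLO(D_\infty)$, $g\mapsto g\cdot\prec_0$. It is continuous and $G_\infty$-equivariant, its point stabiliser is $G_0^*$ so its fibres are the left cosets of $G_0^*$, and its image is dense by Lemma~\ref{minimality}; hence it descends to a continuous $G_\infty$-equivariant injection $\bar\beta\colon G_\infty/G_0^*\hookrightarrow\CLO(D_\infty)$ with dense image. Now $\CLO(D_\infty)$ is a closed subspace of the compact space $\LO(\Br(D_\infty))$, whose uniformity has as basic entourages the relations ``two orders agree on a fixed finite $F\subseteq\Br(D_\infty)$''. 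Pulling these back through $\bar\beta$, and using that $\prec_0$ is reconstructed from the orders $\prec^c$ as in Lemma~\ref{cclo} --- so that the restriction of $g\cdot\prec_0$ to $F$ is governed by the values of $g^{-1}$ on the branch points and vertices of the subdendrite $[F]$, exactly as in the proof of Lemma~\ref{minimality} --- I would check that the uniformity $\bar\beta$ pulls back on $G_\infty/G_0^*$ is precisely the quotient of the right uniformity of $G_\infty$. Finally, since $G_0^*$ acts oligomorphically (as shown just above) and ultrahomogeneously (Proposition~\ref{fraise}) on $\Br(D_\infty)$, each open subgroup $\Fix(F)$ of $G_\infty$ gives rise to only finitely many double cosets $\Fix(F)\backslash G_\infty/G_0^*$; that is, $G_\infty/G_0^*$ is precompact for this uniformity. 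A precompact uniform space has a compact completion, which therefore embeds densely and equivariantly into the compact $G_\infty$-flow $\CLO(D_\infty)$ and so surjects onto it; hence $\widehat{G_\infty/G_0^*}\cong\CLO(D_\infty)$, and the precompactness just obtained is the one demanded in (ii).

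For the second assertion I would repeat this inside $G_\xi$. Since $\prec_0$ has root $\xi$ we have $\prec_0\in\CLO(D_\infty)_\xi$ and $G_\xi\cdot\prec_0\subseteq\CLO(D_\infty)_\xi$; as $G_0^*\le G_\xi$ (an order automorphism fixes the root, by Lemma~\ref{min}), the stabiliser of $\prec_0$ in $G_\xi$ is again $G_0^*$, which is extremely amenable. Lemma~\ref{minimality} says every $G_\xi$-orbit in $\CLO(D_\infty)_\xi$ is dense, i.e. $\CLO(D_\infty)_\xi$ is a minimal $G_\xi$-flow; the same oligomorphicity gives precompactness of $G_\xi/G_0^*$; and the verbatim form of (i), carried out inside $G_\xi$, yields $\widehat{G_\xi/G_0^*}\cong\CLO(D_\infty)_\xi$. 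Hence $M(G_\xi)=\CLO(D_\infty)_\xi$.

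The delicate point is the matching of uniformities in (i): one must make sure that the subspace uniformity which $\CLO(D_\infty)$ induces on the orbit of $\prec_0$ is the full right uniformity of $G_\infty/G_0^*$ rather than a strictly coarser (though still precompact) one. This is exactly where the distinction stressed after the definition of $\mathcal{CT}$ --- an embedding in $\mathcal{CT}$ may insert a vertex in the middle of an edge, unlike a graph embedding --- has to enter, by passing from $F$ to the branch points and vertices of $[F]$. Once this verification is in hand, the remainder is bookkeeping together with the appeal to the Kechris-Pestov-Todorcevic machinery recalled at the beginning of the section.
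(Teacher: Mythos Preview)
Your proposal is correct and follows essentially the same strategy as the paper: identify the extremely amenable subgroup $H=\Stab_{G_\infty}(\prec_0)$, use oligomorphicity to get co-precompactness, show $\widehat{G/H}\cong\CLO(D_\infty)$ via the orbit map, and then use the extreme amenability of $H$ to map onto any minimal flow. The only notable difference is that where you propose to verify by hand that the subspace uniformity on the orbit coincides with the quotient right uniformity (your ``delicate point''), the paper simply invokes \cite[Corollary~1]{MR3080786}, which packages exactly this bi-uniform-continuity statement for co-precompact subgroups; your remark tying this step to the edge-subdivision feature of $\mathcal{CT}$-embeddings is a digression --- that distinction matters for the Ramsey/Fra\"iss\'e side, not for the uniformity comparison, which only needs oligomorphicity of $H$.
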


\begin{cor}The universal minimal $G_\infty$-flow  is metrizable and has a comeager orbit.
\end{cor}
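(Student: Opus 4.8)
The plan is to read both assertions off Theorem~\ref{univminflow}, which identifies the universal minimal $G_\infty$-flow with $\CLO(D_\infty)$. Metrizability is immediate and was essentially already observed: $\CLO(D_\infty)$ is a closed subspace of $\LO(\Br(D_\infty))$, which is itself a closed subspace of the compact metrizable space $2^{\Br(D_\infty)^2}$ (since $\Br(D_\infty)$ is countable), hence compact metrizable. So the substance is the existence of a comeager orbit.

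For that I would fix the point $\prec_0\in\CLO(D_\infty)$ and set $G^*:=\Stab_{G_\infty}(\prec_0)$, and show that the orbit $G_\infty\cdot\prec_0$ is comeager. The input I would use, established in the course of proving Theorem~\ref{univminflow}, is the $G_\infty$-equivariant identification $\CLO(D_\infty)\cong\widehat{G_\infty/G^*}$ carrying $eG^*$ to $\prec_0$. Because $G^*$ is closed, the quotient uniform space $G_\infty/G^*$ is Hausdorff, so the canonical map $G_\infty/G^*\to\widehat{G_\infty/G^*}$ is a topological embedding with dense image; transported through the identification, this dense image is precisely the orbit $G_\infty\cdot\prec_0$, which therefore, with the subspace topology inherited from $\CLO(D_\infty)$, is homeomorphic to $G_\infty/G^*$. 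Now $G_\infty/G^*$ is Polish, being the quotient of the Polish group $G_\infty$ by a closed subgroup; hence it is completely metrizable, and by Alexandrov's theorem a completely metrizable subspace of the metrizable space $\CLO(D_\infty)$ is $G_\delta$. Since $G_\infty\cdot\prec_0$ is also dense (Lemma~\ref{minimality}) and $\CLO(D_\infty)$, being compact, is a Baire space, the orbit is a dense $G_\delta$, hence comeager. The same reasoning applied to $G_\xi$ acting on $\CLO(D_\infty)_{\xi}$ (again with stabilizer $G^*$ at $\prec_0$) yields the analogous statement there.

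The one step that deserves care is the transfer of the canonical dense copy of $G_\infty/G^*$ across the isomorphism of Theorem~\ref{univminflow}: one must check that this copy is the orbit of $\prec_0$ and carries the correct subspace topology, after which the conclusion is just Alexandrov's theorem plus the Baire category theorem. If one prefers to avoid the completion, an alternative is to prove directly that $G_\infty\cdot\prec_0$ is $G_\delta$ by writing it as the set of $\prec\in\CLO(D_\infty)$ with no $\prec$-minimum and realizing, over every finite $c$-closed subset of $\Br(D_\infty)$, every admissible one-point extension in the Fraïssé class $\mathcal{CT}$; each such requirement is open, and dense by the density argument of Lemma~\ref{minimality}, and a back-and-forth argument in the style of Lemma~\ref{minimality} shows that their intersection is exactly the orbit. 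I expect the completion-based route to be the shorter one.
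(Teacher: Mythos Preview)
Your argument is correct. The paper does not give a separate proof of this corollary: it is stated immediately after Theorem~\ref{univminflow} without proof, relying on the general framework from \cite{melleray2015polish,MR3613453} mentioned at the start of Section~\ref{umf}, where metrizability of $M(G)$ and the existence of a comeager orbit are linked via precisely the $\widehat{G/G^*}$ picture you invoke. Your write-up simply unpacks that general fact in this specific instance: the metrizability was already recorded when $\CLO(D_\infty)$ was introduced, and the comeager-orbit part follows because $G_\infty/G^*$ is Polish, embeds homeomorphically as a dense subset of its completion $\widehat{G_\infty/G^*}\cong\CLO(D_\infty)$, and a Polish subspace of a metrizable space is $G_\delta$. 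So you are supplying the argument the paper leaves implicit; there is nothing to compare.
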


Let us recall that any topological group $G$ has a left and right uniform structures. The right uniform structure $\mathcal{U}_r$ has a fundamental system of entourages given  by sets
$$\{(g,h)\in G\times G, gh^{-1}\in V\}$$
where $V$ is a symmetric neighborhood of the identity. For any closed subgroup $H$ this right uniform structure $\mathcal{U}_r$ yields  a uniform structure on the quotient space $G/H$ compatible with the quotient topology. A fundamental system of entourages is given by sets 
$$\{(gH,vgH),\ g\in G, v\in V\}$$ where $V$ is a symmetric neighborhood of the identity in $G$. We denote by $\widehat{G/H}$ the completion of $G/H$ with respect to this uniform structure. 

The closed subgroup $H$ is \emph{co-precompact} if $\widehat{G/H}$ is compact. This is equivalent to the following condition: for any neighborhood of the identity $V$, there is a finite subset $F\subset G$ such that $G=VFH$. If $G$ is an oligomorphic subgroup of $\mathcal{S}_\infty$ and $H$  a closed subgroup of $G$ then $H$ is co-precompact if and only if it itself oligomorphic. See \cite[\S 2]{MR3080786}.

Let $X$ be a $G$-flow and $x\in X$ be some $H$-fixed point. It is a standard fact the orbit map
$$\begin{array}{rcl}
G/H&\to& X\\
gH&\mapsto&gx\end{array}$$

is uniformly continuous and thus extends to a continuous map from $\widehat{G/H}$ to $X$. See \cite[Lemma 2.15 and \S 6.2]{MR2277969}.

\begin{proof}[Proof of Theorem~\ref{univminflow}]
For brevity, let us write $G=G_\infty$ (respectively $G=G_{\xi}$) and $H=\Stab(<_0)$. We identify $G/H$ with the $G$-orbit of $<_0$ in $\CLO(D_\infty)$. Since $\CLO(D_\infty)$ (respectively $\CLO(D_\infty)_{\xi}$) is a minimal $G$-flow and $H=\Stab(<_0)$ is an oligomorphic subgroup, thus co-precompact, we have a homeomorphism $\reallywidehat{G/H}\simeq \CLO(D_\infty)$ (respectively $\reallywidehat{G/H}\simeq \CLO(D_\infty)_{\xi}$). This follows from the fact that the identification $G/H$ with its orbit in $\CLO(D_\infty)$ is bi-uniformly continuous and $H$ is co-precompact, see  \cite[Corollary 1]{MR3080786}. Now, if $X$ is a minimal $G$-flow, since $H$ is extremely amenable, we have an orbit map $G/H\to X$, $gH\mapsto gx_0$ where $x_0$ is a $H$-fixed point. This maps is uniformly continuous and thus extends to $\reallywidehat{G/H}\to X$ and by minimality of $X$, it is surjective. So, $\reallywidehat{G/H}\simeq\CLO(D_\infty)$ (respectively $\CLO(D_\infty)_{\xi}$) is the universal minimal flow of $G$.
\end{proof}

\begin{rem}%The strategy to identify the universal minimal flow fo $G_\infty$ does not generalise straightforwardly to $G_{m}$ for $m\in\NN_{\geq3}$, because the class $\mathcal{CT}_m$ does not have the Ramsey property \cite[Lemma 4]{MR3436366}. 

In \cite{kwiatkowska2018universal}, written at the same time this work was done but this work was finalized much later, Aleksandra Kwiatkowska describes the universal minimal flow $M(G_S)$ of all Wa\.zewski groups $G_S$. So Theorem~\ref{univminflow} appears as a particular case of her results.  Her description of $M(G_S)$ allows to prove that $M(G_S)$ is metrizable if and only if $S$ is finite.\end{rem}

%\begin{que}\label{ufb} We identify the universal minimal flow of $G_\infty$. It would be interesting as well to identify the universal Furstenberg boundary of $G_\infty$. Recall that a compact $G$-space is a \emph{Furstenberg boundary} if the action is minimal and strongly proximal. There is a unique Furstenberg boundary (called the universal Furstenberg boundary) such that any other Furstenberg boundary is a continuous equivariant image this universal Furstenberg boundary. We proved that $D_\infty$ is a Furstenberg boundary \cite[Theorem 10.1]{DM_dendrites} and so we know that the universal boundary is between $\CLO(D_\infty)$ and $D_\infty$. Is it $D_\infty$ itself ? 

%This would imply that the stabiliser of any point of $D_\infty$ is amenable. The question of amenability of stabilizers of points seems to be difficult because we lack tools to prove that non-archimedean polish groups are amenable without showing they are extremely amenable.\end{que}

\section{Amenability and Furstenberg boundaries}
%Let $G$ be a topological group and $G\action X$ be a $G$-flow. The flow $X$ is \emph{strongly proximal} if for any probability measure $\mu$ on $X$, there is a net $(g_i)$ such that $g_i^*\mu$ converges to some Dirac mass in $\Prob(X)$ with respect to the weak-* topology. This is equivalent to say that the $G$-flow $\Prob(X)$ is proximal see \cite[Chapter III]{GlasnerLNM}. A minimal and strongly proximal $G$-flow is called a \emph{Furstenberg boundary} for $G$. For example, $D_S$ is a Furstenberg boundary for $G_S$ where $S$ is any subset of $\{3,\dots,\infty\}$, see \cite[Theorem 10.1]{DM_dendrites}.
%
%There is a unique Furstenberg boundary (called the \emph{universal Furstenberg boundary}) such that any other Furstenberg boundary is a continuous equivariant image of this universal Furstenberg boundary. This flow is also called the \emph{universal strongly proximal flow}.\\
\subsection{Amenability}
Let $G$ be some topological group. Let us recall that a $G$-flow $X$ is \emph{strongly proximal} if the induced action on the $G$-flow of probability measures on $X$ is proximal. This is equivalent to the fact that for any probability measure $m$ on $X$, the adherence of the $G$-orbit of $m$ contains a Dirac mass \cite[Chapter III]{GlasnerLNM}. 

A topological group is \emph{amenable} if its universal Furstenberg boundary is a point, that is any strongly proximal minimal flow is trivial. Below, we recall a few conditions equivalent to amenability. Let $\ell^\infty(G)$ be the Banach space of all bounded functions on $G$. A function $f\in\ell^\infty(G)$ is \emph{right uniformly continuous} if the orbit map
$$\begin{array}{rcl}
G&\to&\ell^\infty(G)\\
g&\mapsto& R_g(f)
\end{array}$$

is continuous where $R_g(f)(h)=f(hg)$. Let us denote by $C_b^{ru}(G)$ the closed subspace of bounded right uniformly continuous functions on $G$ and let us observe that $G$ acts isometrically on $C_b^{ru}(G)$ by left translations $L_g(f)(h)=f(g^{-1}h)$. A \emph{mean} on $C_b^{ru}(G)$ is a linear functional $m$ such that 

\begin{enumerate}
\item $f\geq0\implies m(f)\geq0$,
\item $m(\mathbb{1}_G)=1$.
\end{enumerate}
Moreover, $m$ is said to be $G$-invariant if $m(L_g(f))=m(f)$ for all $f\in C_b^{ru}(G)$ and all $g\in G$.

\begin{thm}\label{amenequiv}If $G$ is a topological group, the following conditions are equivalent.
\begin{enumerate}
\item $G$ is amenable,
\item any $G$-flow has an invariant probability measure, 
\item any affine $G$-flow has a fixed point,
\item any strongly proximal $G$-flow has a fixed point,
\item there is a $G$-invariant mean on $C_b^{ru}(G)$.
\end{enumerate}
\end{thm}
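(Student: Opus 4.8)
The statement to prove is Theorem~\ref{amenequiv}, the standard list of equivalent characterizations of amenability for a general topological group. This is a classical result, so the proof plan is to assemble a cycle of implications from known fixed-point/mean-theoretic arguments rather than to invent anything new. Here is how I would organize it.

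\textbf{Strategy.} The plan is to prove the chain $(3)\Rightarrow(2)\Rightarrow(4)\Rightarrow(1)\Rightarrow(3)$ together with $(3)\Leftrightarrow(5)$, closing the loop. First I would recall that by definition $G$ is amenable iff its universal Furstenberg boundary (the universal strongly proximal minimal flow) is trivial; this makes $(1)\Leftrightarrow(4)$ essentially a definitional unwinding, since any nontrivial strongly proximal minimal flow would be a nontrivial Furstenberg boundary, and conversely if every strongly proximal flow has a fixed point then the universal one is a point (a minimal flow with a fixed point is a point). For $(3)\Rightarrow(2)$: given a $G$-flow $X$, the space $\Prob(X)$ of regular Borel probability measures with the weak-$*$ topology is a compact convex $G$-space, and the action is affine and continuous, so by $(3)$ it has a fixed point, which is exactly an invariant probability measure on $X$. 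For $(2)\Rightarrow(4)$: a strongly proximal flow $X$ has, by $(2)$, an invariant measure $m$; strong proximality forces the orbit closure of $m$ in $\Prob(X)$ to contain a Dirac mass $\delta_x$, but $m$ being a fixed point means its orbit closure is $\{m\}$, so $m=\delta_x$ and $x$ is a fixed point of $X$.

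\textbf{The remaining implications.} For $(4)\Rightarrow(1)$ I would again invoke the definition: if every strongly proximal minimal flow has a fixed point, then in particular the universal Furstenberg boundary has a fixed point, hence (being minimal) is trivial, so $G$ is amenable. For $(1)\Rightarrow(3)$, or equivalently the direction that produces fixed points in affine flows: given a compact convex $G$-space $Q$, pass to a minimal closed convex $G$-invariant subset $Q_0$ (exists by Zorn, using compactness), and observe that the $G$-action on $Q_0$ is strongly proximal onto its set of extreme points — more precisely, by the barycenter map and Namioka–Phelps-type arguments one extracts a strongly proximal minimal flow, which by amenability has a fixed point; lifting back gives a fixed point in $Q_0\subseteq Q$. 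Alternatively, and more cleanly, I would route $(1)\Rightarrow(3)$ through $(5)$: an invariant mean on $C_b^{ru}(G)$ yields, for any affine flow $Q$ with a point $q_0$, the functional $f\mapsto m(g\mapsto \langle \mu, f\circ(g\cdot q_0)\rangle)$-style averaging producing a fixed barycenter. Finally $(3)\Leftrightarrow(5)$: the set of means on $C_b^{ru}(G)$ is a weak-$*$ compact convex subset of $C_b^{ru}(G)^*$ on which $G$ acts by affine homeomorphisms via $L_g$, so $(3)$ gives an invariant mean; conversely an invariant mean lets one average any continuous affine action over orbit maps (which land in $C_b^{ru}$ because orbit maps into a compact convex set in a locally convex space, composed with continuous linear functionals, are right uniformly continuous) to produce a fixed point.

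\textbf{Main obstacle.} The routine parts ($(3)\Rightarrow(2)\Rightarrow(4)$ and $(3)\Leftrightarrow(5)$) are quick. The genuinely delicate point is the direction $(1)\Rightarrow(3)$ (equivalently $(4)\Rightarrow(3)$ or $(5)\Rightarrow(3)$): one must extract, from the hypothesis about \emph{strongly proximal} flows (or from a mean), a fixed point in an \emph{arbitrary} affine flow. The careful argument requires the existence of irreducible (minimal) closed convex invariant subsets and the fact that the extreme-point boundary of such a set carries a strongly proximal action, plus measurability/barycenter considerations for general topological groups (no local compactness available). I would handle this by citing the Furstenberg boundary machinery as developed in \cite[Chapter III]{GlasnerLNM}, where precisely this equivalence is established, and limit my write-up to indicating the averaging construction from $(5)$: given an affine $G$-flow $Q$ and $q_0\in Q$, the map $g\mapsto g\cdot q_0$ composed with any $\varphi\in C_b(Q)$ linear is in $C_b^{ru}(G)$, so $\varphi\mapsto m(g\mapsto \varphi(g\cdot q_0))$ is a continuous linear functional represented by a point $\bar q\in Q$ (using that $Q$ is compact convex in a locally convex space) which one checks is $G$-fixed. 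With these references in place the theorem follows, and I would phrase the final write-up as a short assembly of the cycle with pointers to \cite{GlasnerLNM} for the one substantive step.
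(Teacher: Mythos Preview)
Your proposal is correct and in fact goes further than the paper does: the paper gives no proof of this theorem at all, simply stating that ``A proof of this theorem can be found in \cite[Theorem III.3.1]{GlasnerLNM}.'' Your sketch assembles the standard cycle of implications and ultimately defers to the same reference for the delicate step, so there is nothing to correct.
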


A proof of this theorem can be found in \cite[Theorem III.3.1]{GlasnerLNM}. 
%Let us introduce a relative version of amenability. Let $G$ be a topological group and $H$ a subgroup. The subgroup $H$ is relatively amenable in $G$ if $H$ has a fixed point in  the universal Furstenberg boundary of $G$. It is clear from the definitions that an amenable subgroup (with respect to the induced topology) is relatively amenable and a topologicla group $G$ is amenable if and only if it is relatively amenable as a subgroup of itself. Theorem~\ref{amenequiv} generalizes straightforwardly in the following way.
%
%\begin{thm}If $G$ is a topological group and $H$ a subgroup, the following conditions are equivalent.
%\begin{enumerate}
%\item $H$ is relatively amenable in $G$,
%\item any $G$-flow has a $H$-invariant probability measure, 
%\item any affine $G$-flow has a $H$-fixed point,
%\item there is a $H$-invariant mean on $C_b^{lu}(G)$.
%\end{enumerate}
%\end{thm}
%
%Relative amenability was considered in \cite{MR3267522} more specifically in the case of locally compact groups. Actually a relative version of extreme amenability appeared previously in \cite{MR3076153}. Let us prove an other characterisation of amenability or its relative version.

\begin{lem}\label{amen} A topological group $G$ is amenable if and only if there is an invariant probability measure on its universal minimal flow $M(G)$.

%A subgroup $H$ of a topological group $G$ is relatively amenable if and only if there is a $H$-invariant probability measure on $M(G)$.
\end{lem}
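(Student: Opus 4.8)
The plan is to deduce this equivalence from Theorem \ref{amenequiv}, using the universal property of the universal minimal flow $M(G)$. First I would prove the easy direction: if $G$ is amenable, then by Theorem \ref{amenequiv}(2) every $G$-flow carries an invariant probability measure, and in particular $M(G)$ does. For the converse, suppose there is a $G$-invariant probability measure $\mu$ on $M(G)$, and let $X$ be an arbitrary $G$-flow; I must produce an invariant probability measure on $X$ (and then invoke Theorem \ref{amenequiv}(2)$\Rightarrow$(1)). Passing to a minimal subflow $X_0 \subseteq X$ (which exists by a standard Zorn's lemma argument on closed invariant subsets), the universal property of $M(G)$ gives a continuous $G$-equivariant surjection $\pi \colon M(G) \to X_0$. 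Then the pushforward $\pi_*\mu$ is a $G$-invariant probability measure on $X_0$, hence on $X$.

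The key steps, in order, are: (i) state that $M(G)$ exists and that every $G$-flow has a minimal subflow; (ii) for the forward implication, apply Theorem \ref{amenequiv}(1)$\Rightarrow$(2) directly to $M(G)$; (iii) for the reverse implication, take an arbitrary $G$-flow $X$, extract a minimal subflow $X_0$, and use the universality of $M(G)$ to obtain a $G$-map $\pi\colon M(G)\to X_0$; (iv) push the invariant measure $\mu$ forward along $\pi$ to get an invariant measure on $X_0$, viewed as a measure on $X$ supported on $X_0$; (v) conclude amenability via Theorem \ref{amenequiv}(2)$\Rightarrow$(1). Each of these is routine: pushforward of a Borel probability measure along a continuous map is a Borel probability measure, and $G$-equivariance of $\pi$ makes $\pi_*\mu$ invariant since $g_*(\pi_*\mu) = \pi_*(g_*\mu) = \pi_*\mu$.

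I do not expect any serious obstacle here; the lemma is essentially a formal consequence of the universal property of $M(G)$ together with the characterization of amenability by existence of invariant measures on flows. The only point requiring a modicum of care is that the universal property of $M(G)$ produces a map onto a \emph{minimal} flow, so one must first pass to a minimal subflow of an arbitrary flow $X$ before applying universality; a measure on that subflow is then automatically a measure on $X$. One should also note the trivial observation that a probability measure on a closed subspace extends (by zero outside) to a probability measure on the ambient space, preserving invariance.
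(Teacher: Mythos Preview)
Your proposal is correct and follows essentially the same argument as the paper's own proof: the forward implication is declared immediate, and for the converse the paper also passes to a minimal subflow $X_0$ of an arbitrary $G$-flow $X$, invokes the universal property of $M(G)$ to get a continuous $G$-map $M(G)\to X_0$, and pushes the invariant measure forward. Your write-up is simply more detailed (e.g.\ spelling out the pushforward invariance and the extension-by-zero remark), but there is no difference in strategy.
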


\begin{proof} The condition is clearly necessary. Let us show that  it is sufficient. Let $X$ be a $G$-flow. Let us choose a minimal subflow $X_0$. By the universal property of $M(G)$, there is a continuous surjective $G$-map $M(G)\to X_0$. The image of an invariant probability measure on $M(G)$ is an invariant probability measure on $X_0$ and thus one gets an invariant probability measure on $X$ and thus $G$ is amenable.
\end{proof}

Let us fix an end $\xi\in D_\infty$ and denote by $\CLO(D_\infty)_\xi$, the subset of $\CLO(D_\infty)$ of orders $\prec$ with root $\xi$. This condition is equivalent to
$$\forall y\in\Br(D_\infty),\ y\in]x,\xi]\implies y\prec x.$$

For a branch point $b$, let us recall that $\mathcal{C}_{b,\xi}$ is the space $\mathcal{C}_b\setminus C_b(\xi)$. For any countable set $X$, we denote by $\LO(X)$ the set of linear orders on $X$ with its usual topology as a closed subspace of $\{0,1\}^{X^2\setminus\Delta}$.
\begin{lem}\label{identification} The subset $\CLO(D_\infty)_\xi$ is closed in $\CLO(D_\infty)$ and homeomorphic to the product space $\Pi_{b\in\Br(D_\infty)}\LO(\mathcal{C}_{b,\xi})$.\end{lem}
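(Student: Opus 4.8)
The plan is to define a map $\Phi\colon \CLO(D_\infty)_\xi \to \prod_{b\in\Br(D_\infty)}\LO(\mathcal{C}_{b,\xi})$ sending a convex converging order $\prec$ with root $\xi$ to the tuple $(\prec^b)_{b\in\Br(D_\infty)}$, where $\prec^b$ is the induced order on $\mathcal{C}_{b,\xi}$ furnished by Lemma~\ref{obsorder} (this applies since no component in $\mathcal{C}_{b,\xi}$ contains the root $\xi$). First I would check that $\Phi$ is well defined: Lemma~\ref{obsorder} guarantees that for distinct $C,C'\in\mathcal{C}_{b,\xi}$ one of the two alternatives $C\prec^b C'$ or $C'\prec^b C$ holds, and transitivity/antisymmetry of $\prec^b$ are inherited from $\prec$ by picking representatives, so each $\prec^b$ is a genuine linear order. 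Next I would verify injectivity: given the family $(\prec^b)$ one recovers $\prec$ by the formula used in Lemma~\ref{cclo}, namely for $a\neq b\in\Br(D_\infty)$ with $c=c(a,b,\xi)$ one has $a\prec b$ iff $c=a$, or $c=b$ and then $b\prec a$, or $c\notin\{a,b\}$ and then $a\prec b \iff C_c(a)\prec^c C_c(b)$; this determines $\prec$ uniquely (here one uses that the order is converging, so $c\preceq a$ and $c\preceq b$, and convex, so the component comparison at $c$ decides the comparison of $a,b$).

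For surjectivity I would invoke Lemma~\ref{cclo} almost verbatim: given any family of linear orders $(\prec^b)_{b}$ on the sets $\mathcal{C}_{b,\xi}$ — not necessarily isomorphic to $(\QQ,<)$, but the construction in Lemma~\ref{cclo} never used that hypothesis, only that each $\prec^b$ is a linear order — the same formula produces a convex converging linear order $\prec_0$ with root $\xi$ inducing exactly the given $\prec^b$; so I would remark that Lemma~\ref{cclo}'s proof goes through without the density assumption and yields $\Phi(\prec_0)=(\prec^b)_b$. Then I would check that $\Phi$ and $\Phi^{-1}$ are continuous: both are determined coordinatewise by the betweenness relation and the center map on $\Br(D_\infty)$ applied to finitely many points, so membership of a pair in an order on one side is a clopen condition expressed through finitely many coordinates on the other side; hence $\Phi$ is a continuous bijection between compact Hausdorff spaces (the product is compact, and $\CLO(D_\infty)_\xi$ is a closed subspace of the compact space $\LO(\Br(D_\infty))$), so it is a homeomorphism. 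Finally, to see that $\CLO(D_\infty)_\xi$ is closed in $\CLO(D_\infty)$, I would note that the root-$\xi$ condition is exactly $\forall y\in\Br(D_\infty),\ y\in\,]x,\xi]\implies y\prec x$, an intersection over all $x,y$ of clopen conditions, hence closed; alternatively it is closed because it is the continuous image of a compact space inside a Hausdorff space.

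The main obstacle I anticipate is bookkeeping rather than conceptual depth: one must be careful that the recovery formula for $\prec$ from the family $(\prec^b)$ is genuinely well defined and transitive (this is precisely the content of the verification in Lemma~\ref{cclo}), and that the convexity and convergence of the reconstructed $\prec_0$ hold — but all of this is already done in Lemma~\ref{cclo}, so the real work is just observing that the hypothesis "$\prec^c$ isomorphic to $(\QQ,<)$" there is never used for these particular conclusions. The continuity of $\Phi^{-1}$ requires noting that whether $a\prec_0 b$ depends only on the single coordinate $\prec^{c(a,b,\xi)}$ (and on finitely many center computations), which is immediate from the defining formula.
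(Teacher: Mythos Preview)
Your proposal is correct and follows essentially the same approach as the paper: define the forward map via Lemma~\ref{obsorder}, construct the inverse by the formula of Lemma~\ref{cclo} (noting, as you do, that the $(\QQ,<)$ hypothesis there is irrelevant), and observe that the root-$\xi$ condition is a conjunction of closed conditions. Your write-up is in fact more explicit than the paper's about well-definedness, transitivity, and continuity in both directions, but the strategy is identical.
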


\begin{proof}The condition of convergence to $\xi$ is given by a collection of closed conditions. Thus $\CLO(D_\infty)_\xi$ is closed in $\CLO(D_\infty)$.

We have seen in Lemma \ref{obsorder} that any convex converging linear order $\prec$ induces a linear order $\prec^b$ on branches around $b$ that do not contain the root. So we get a continuous map 

$$\begin{array}{rlc}
\CLO(D_\infty)_\xi&\to&\prod_{b\in\Br(D_\infty)}\LO(\mathcal{C}_{b,\xi})\\
\prec&\mapsto&(\prec^b).
\end{array}$$

Conversely, from $(\prec^b)$, we can construct an order $\prec$ by defining $a\prec b$ if and only if $a=c(a,b,\xi)$ or $a\prec^c b$ where $c=c(a,b,\xi)$. One can check, as in Lemma~\ref{cclo},  that this definition yields an element in $\CLO(D_\infty)_\xi$ and this operation is the inverse of the map above.
\end{proof}

In the remaining of this section, we denote by $G$ the group $G_\infty$ and by $G_\xi$ the stabilizer of the end point $\xi$.

\begin{prop}\label{invmeas}There is a $G_\xi$-invariant measure on $\CLO(D_\infty)_\xi$.
\end{prop}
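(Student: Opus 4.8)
The plan is to exploit the product decomposition of $\CLO(D_\infty)_\xi$ obtained in Lemma~\ref{identification}. That lemma identifies $\CLO(D_\infty)_\xi$ $G_\xi$-equivariantly with $\prod_{b\in\Br(D_\infty)}\LO(\mathcal{C}_{b,\xi})$, where $G_\xi$ permutes the branch points (preserving the order $\leq$ with respect to $\xi$) and, for each fixed $b$, acts on $\mathcal{C}_{b,\xi}$ by bijections realizing all finite partial isomorphisms (this is essentially the oligomorphic / ultrahomogeneity statement already used in the proof of Proposition~\ref{sip} and Lemma~\ref{minimality}). So the problem reduces to: build, for each $b$, a probability measure $\mu_b$ on $\LO(\mathcal{C}_{b,\xi})$ which is invariant under the (large) permutation group of $\mathcal{C}_{b,\xi}$ induced by $G_\xi$, and then assemble these into a product measure $\mu=\bigotimes_b \mu_b$ on the product space; finally check that this product measure is genuinely $G_\xi$-invariant once one accounts for the fact that $G_\xi$ also moves the indexing set $\Br(D_\infty)$.

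First I would recall the classical fact that the space $\LO(\mathbf{Q})$ (or $\LO$ of any countably infinite set) carries a probability measure invariant under the full symmetric group $\mathcal S_\infty$ of that set: one obtains it by pushing forward the Lebesgue (product-uniform) measure on $[0,1]^{\mathcal C_{b,\xi}}$ under the map sending a generic (injective on a conull set) real-valued function to the linear order it induces; equivalently this is the unique $\mathcal S_\infty$-invariant measure, and it is in fact the unique invariant measure on the universal minimal flow $\LO$ of $\mathcal S_\infty$. Since the image of $G_\xi$ in $\mathrm{Sym}(\mathcal C_{b,\xi})$ is a subgroup of $\mathcal S_\infty$, this measure $\mu_b$ is in particular invariant under that image. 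Take $\mu_b$ to be this canonical measure for every branch point $b$; note it does not depend on $b$ in any essential way, only on the abstract countable set $\mathcal C_{b,\xi}$.

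Next I would form the product measure $\mu=\bigotimes_{b\in\Br(D_\infty)}\mu_b$ on $\prod_b \LO(\mathcal C_{b,\xi})$, which is a Borel probability measure by Kolmogorov's extension theorem (the index set is countable). To check $G$-invariance for $g\in G_\xi$: such $g$ induces a bijection $b\mapsto g(b)$ of $\Br(D_\infty)$ together with, for each $b$, a bijection $\mathcal C_{b,\xi}\to\mathcal C_{g(b),\xi}$; on the product space this is the composite of a coordinate permutation and a coordinatewise measure-preserving map (each factor map $\LO(\mathcal C_{b,\xi})\to\LO(\mathcal C_{g(b),\xi})$ pushes $\mu_b$ to $\mu_{g(b)}$ because both are the canonical $\mathcal S_\infty$-invariant measure and the map is induced by a bijection of the underlying sets). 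A product measure is invariant under coordinate permutations when the factor measures are permuted accordingly, and invariant under coordinatewise measure-isomorphisms; hence $g_*\mu=\mu$. Transporting $\mu$ back through the homeomorphism of Lemma~\ref{identification} gives the desired $G_\xi$-invariant probability measure on $\CLO(D_\infty)_\xi$.

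The main obstacle — and the point that needs care rather than cleverness — is verifying that the $G_\xi$-action on $\CLO(D_\infty)_\xi$ really is, through the identification of Lemma~\ref{identification}, of this "permute the coordinates and act by bijections on each fibre" form, and in particular that the bijection $\mathcal C_{b,\xi}\to\mathcal C_{g(b),\xi}$ induced by $g$ is simply a set bijection (so that it transports the canonical invariant measure to the canonical invariant measure). This follows from the description of $\prec^b$ in Lemma~\ref{obsorder} together with the equivariance already noted in the proof of Lemma~\ref{identification}, but writing it cleanly requires being explicit about how $g$ acts on $\mathcal{C}_{b,\xi}$ (it sends a component $C\in\mathcal C_{b,\xi}$ to $g(C)\in\mathcal C_{g(b),\xi}$, and $C\prec^b C' \iff g(C)\prec^{g(b)}g(C')$ precisely because $\prec^b$ is read off from the order on branch points inside those components, which $g$ preserves). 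Once this bookkeeping is in place, the measure-theoretic part is routine. (One could alternatively avoid Kolmogorov's theorem and the product-measure manipulation entirely by invoking amenability of $\mathcal S_\infty$-type groups, but the explicit construction above is cleaner and self-contained.)
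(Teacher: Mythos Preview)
Your proposal is correct and follows essentially the same route as the paper: use the product decomposition of Lemma~\ref{identification}, place on each factor $\LO(\mathcal{C}_{b,\xi})$ the $\mathcal{S}_\infty$-invariant probability measure (the paper obtains it by invoking amenability of $\mathcal{S}_\infty$, you by the explicit random-reals construction), form the product measure, and verify $G_\xi$-invariance via the description of the action as a coordinate permutation composed with fibrewise bijections. The paper carries out the invariance check by an explicit cylinder computation using the auxiliary bijections $f_b\colon\mathcal{C}_{b_0,\xi}\to\mathcal{C}_{b,\xi}$ and the cocycle $\sigma(g,b)$, which amounts to exactly the bookkeeping you flag as the ``main obstacle''.
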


If $f$ is a bijection between two countable sets $X$ and $Y$, it induces a bijection $f_*$  between linear orders on $X$ and on $Y$. If $\prec\in\LO(X)$, $f_*\!\prec\in\LO(Y)$ is defined by $y(f_*\!\prec)y'\iff f^{-1}(y)\prec f^{-1}(y')$.

Lemma~\ref{identification} gives an identification between $\CLO(D_\infty)$ and $\Pi_{b\in\Br(D_\infty)}\LO(\mathcal{C}_{b,\xi})$. Let us describe how $G_\xi$ acts on the product via this identification. Any $g\in G_\xi$ induces a bijection $\sigma(g,b)\colon \mathcal{C}_{b,\xi}\to\mathcal{C}_{gb,\xi}$.
Now, if $\prec\in\CLO(D_\infty)$ corresponds to $(\prec^b)_{b\in\Br(D_\infty)}$ then $g_*\!\prec$ corresponds to $\left(\sigma(g,g^{-1}b)_*\!\prec^{g^{-1}b}\right)_{b\in\Br(D_\infty)}$.

\begin{proof}Let us choose $b_0\in\Br(D_\infty)$ and for each $b\in\Br(D_\infty)$, fix some bijection $f_b\colon \mathcal{C}_{b_0,\xi}\to\mathcal{C}_{b,\xi}$. For example, this bijection can be induced by an element $g\in G_\xi$ such that $g(b_0)=b$. 

Since $\mathcal{S}_\infty$ is amenable, there is an invariant probability $\mu_0$ on $\LO\left(\mathcal{C}_{b_0,\xi}\right)$ under all bijections of $\mathcal{C}_{b_0,\xi}$. Let us denote $\mu_b=(f_b)_*\mu_0$ that is a probability measure on $\LO\left(\mathcal{C}_{b,\xi}\right)$ and finally set $\mu$ to be the product measure of all $\mu_b$ on $\prod_{b\in\Br(D_\infty)}\LO(\mathcal{C}_{b,\xi})\simeq \CLO(D_\infty)$. We aim to prove that for any $g\in G$, $g^*\mu=\mu$. It suffices to prove this equality on cylinders. So, choose distincts $b_1,\dots,b_n\in \Br(D_\infty)$ and mesurable sets $A_i\subset \LO(\mathcal{C}_{b_i,\xi})$ and set $A$ to be the cylinder $$A=\prod_{i=1}^n A_i\times \prod_{b\neq b_i}\LO\left(\mathcal{C}_{b,\xi}\right).$$ 
One has $$g_*(A)=\prod_{i=1}^n \sigma(g,b_i)_*A_i\times\prod_{b\neq gb_i}\LO\left(\mathcal{C}_{b,\xi}\right)$$
and thus $g^*\mu(A)=\mu(g_*(A))=\prod_{i=1}^n\mu_{gb_i}(\sigma(g,b_i)_*(A_i))$. One can compute

\begin{align*}
\mu_{gb_i}\left(\sigma(g,b_i)_*(A_i)\right)&=\mu_0\left(f_{gb_i}^{-1}\left(\sigma(g,b_i)_*(A_i)\right)\right)\\
&=\mu_0\left((f_{gb_i}^{-1}\circ\sigma(g,b_i)\circ f_{b_i})_*((f_{b_i}^{-1})_*(A_i))\right).\end{align*}
Since $f_{gb_i}^{-1}\circ\sigma(g,b_i)\circ f_{b_i}$ is a bijection of $\mathcal{C}_{b_0,\xi}$ and $\mu_0$ is invariant under $\Sym\left(\mathcal{C}_{b_0,\xi}\right)$, 
\begin{align*}
\mu_{gb_i}\left(\sigma(g,b_i)_*(A_i)\right)&=\mu_0\left((f_{b_i}^{-1})_*(A_i)\right)\\
&=\mu_{b_i}(A_i)
\end{align*}
and thus $\mu\left(g_*(A)\right)=\mu(A)$.
\end{proof}
As a consequence of Theorem~\ref{univminflow}, Proposition \ref{invmeas} and Lemma \ref{amen}, one has the following.
\begin{thm}The topological group $G_\xi$ is amenable.
\end{thm}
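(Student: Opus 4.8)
The plan is to combine the three results that immediately precede the statement: the identification of the universal minimal flow $M(G_\xi)$ with $\CLO(D_\infty)_\xi$ (Theorem~\ref{univminflow}), the existence of a $G_\xi$-invariant probability measure on $\CLO(D_\infty)_\xi$ (Proposition~\ref{invmeas}), and the criterion that a topological group is amenable if and only if its universal minimal flow carries an invariant probability measure (Lemma~\ref{amen}). So the whole argument is essentially a one-line assembly, and I expect there to be no real obstacle at all — the work has been done in the preceding lemmas.

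Concretely, I would argue as follows. By Theorem~\ref{univminflow}, the universal minimal $G_\xi$-flow $M(G_\xi)$ is (homeomorphic as a $G_\xi$-flow to) $\CLO(D_\infty)_\xi$. By Proposition~\ref{invmeas}, there is a $G_\xi$-invariant Borel probability measure $\mu$ on $\CLO(D_\infty)_\xi$, hence on $M(G_\xi)$. By Lemma~\ref{amen}, a topological group is amenable precisely when its universal minimal flow admits an invariant probability measure; applying this to $G=G_\xi$ gives that $G_\xi$ is amenable. That is the proof.

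The only thing worth a sentence of care is checking that the hypotheses of Lemma~\ref{amen} are genuinely met — namely that $\CLO(D_\infty)_\xi$ really is \emph{the} universal minimal flow of $G_\xi$ and not merely \emph{a} minimal flow. This is exactly the content of the second half of Theorem~\ref{univminflow}, which is stated for $G_\xi$ in parallel with $G_\infty$; the minimality of the $G_\xi$-action on $\CLO(D_\infty)_\xi$ comes from Lemma~\ref{minimality}, and the universality from the extreme amenability of $\Stab_{G_\infty}(\prec_0)$ together with co-precompactness (oligomorphicity of the point stabilizer). So there is nothing left to prove. If one preferred to avoid quoting the universal minimal flow identification, an alternative route would be to push $\mu$ forward along the canonical map $M(G_\xi)\to X_0$ to any minimal subflow $X_0$ of an arbitrary $G_\xi$-flow $X$ — but since Lemma~\ref{amen} already encapsulates exactly this maneuver, invoking it directly is cleanest.

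In short, the proof is: "As a consequence of Theorem~\ref{univminflow}, Proposition~\ref{invmeas} and Lemma~\ref{amen}, the topological group $G_\xi$ is amenable." — which, I note, is already the sentence the excerpt ends with, so the statement is self-proving given the machinery built up in Section~\ref{umf} and the earlier part of this section.
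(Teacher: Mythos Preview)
Your proposal is correct and is exactly the paper's own proof: the paper states the theorem as an immediate consequence of Theorem~\ref{univminflow}, Proposition~\ref{invmeas} and Lemma~\ref{amen}, with no further argument.
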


Finally, Theorem~\ref{ufb} is obtained as the following corollary.

\begin{cor}For any point $x\in D_\infty$, the stabilizer $G_x$ of $x$ in $G$ is amenable.
\end{cor}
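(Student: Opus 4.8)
The plan is to deduce the amenability of $G_x$ for an arbitrary point $x\in D_\infty$ from the amenability of the end-point stabilizer $G_\xi$ established above, by a short case analysis on the order of $x$ together with the standard permanence properties of amenability for topological groups: closure under passing to conjugate subgroups, under inverse limits (equivalently, unrestricted products), and under group extensions.

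First, if $x$ is an end point, then by Proposition~\ref{prop:extension} the labelled graphs $\langle\{x\}\rangle$ and $\langle\{\xi\}\rangle$ are isomorphic, so $G_\infty$ acts transitively on $\Ends(D_\infty)$ and $G_x$ is conjugate in $G_\infty$ to $G_\xi$; hence $G_x$ is amenable. Now suppose $x$ is a regular point or a branch point, and let $\mathcal{C}_x$ be the set of connected components of $D_\infty\setminus\{x\}$, of cardinality $2$ or $\aleph_0$ respectively. For each $C\in\mathcal{C}_x$ the closure $\overline{C}$ is homeomorphic to $D_\infty$, with $x$ an end point of it (\cite[Lemma 2.14]{DM_dendritesII}), so via the Patchwork Lemma~\ref{lem:patchwork}, exactly as in \cite[Lemma 7.1]{DM_dendritesII}, one gets a topological group isomorphism
$$G_x\;\cong\;G_\xi\wr_{\mathcal{C}_x}\Sym(\mathcal{C}_x)\;=\;G_\xi^{\,\mathcal{C}_x}\rtimes\Sym(\mathcal{C}_x):$$
an element $g\in G_x$ is exactly the data of a permutation of $\mathcal{C}_x$ together with, for each $C$, a homeomorphism $\overline{C}\to\overline{g(C)}$ fixing $x$, and any such data patches to an element of $G_x$.

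It then remains to see that this wreath product is amenable. The group $\Sym(\mathcal{C}_x)$ is $\Sym(2)$ or $\sinf$, both amenable (for $\sinf$, because its finitary permutations form a dense locally finite subgroup). The group $G_\xi^{\,\mathcal{C}_x}$ is the inverse limit of the finite powers $G_\xi^{\,F}$ over finite $F\subseteq\mathcal{C}_x$, each amenable, hence amenable. And $G_x$ is an extension $1\to G_\xi^{\,\mathcal{C}_x}\to G_x\to\Sym(\mathcal{C}_x)\to1$ of an amenable group by an amenable group, so $G_x$ is amenable. Since every point of $D_\infty$ is an end point, a regular point, or a branch point, this settles all cases.

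The only input beyond bookkeeping is the list of permanence properties above, and of these only the amenability of an unrestricted product (equivalently an inverse limit) of amenable topological groups really deserves comment: it follows from a weak-$*$ compactness argument transporting invariant means on the finite sub-products to the full product. Closure under group extensions is classical (via, e.g., fixed points on affine flows, cf. Theorem~\ref{amenequiv}). Accordingly I do not anticipate any genuine obstacle.
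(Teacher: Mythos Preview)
Your argument is correct and follows essentially the same route as the paper: a case split on the order of $x$, the wreath-product decomposition $G_x\cong G_\xi\wr_{\mathcal{C}_x}\Sym(\mathcal{C}_x)$ from \cite[Lemma 7.1]{DM_dendritesII}, and the standard permanence of amenability under products and extensions. The paper's version is simply terser; your added remarks on why $\sinf$ and $G_\xi^{\,\mathcal{C}_x}$ are amenable are fine elaborations of facts the paper invokes without comment.
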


\begin{proof} Thanks to \cite[Lemma 7.1]{DM_dendritesII}, for $x\in D_\infty\setminus\Ends(D_\infty)$, $G_x$ splits homeomorphically as $\left(\prod_{i\in\NN}G_{\xi_i}\right)\rtimes \mathcal{S}_\infty$ if $x\in\Br(D_\infty)$ and splits as $G_\xi^2\rtimes\ZZ/2\ZZ$ if $x$ is a regular point. As it is well known, a product of amenable groups is amenable and an extension of an amenable group by another amenable group is amenable as well. So $G_x$ is amenable.
\end{proof}

\begin{rem} For any finite subset $F\subset D_\infty$, the stabilizer and the pointwise stabilizer of $F$ are amenable groups.\end{rem}

\subsection{Universal Furstenberg boundary}
Let $\varphi\colon G/G_\xi\to D_\infty$ be the continuous orbit map $gG_\xi\mapsto g\xi$. Since the $G$-orbit of $\xi$, that is $\Ends(D_\infty)$, is a $G_\delta$ in $D_\infty$, Effros theorem (\cite[Theorem 7.12]{hjorth2000classification}) implies that $\varphi$ is a homeomorphism on its image.

This map $\varphi$ is uniformly continuous for the uniform structure coming from the right uniform structure on $G$, thus it extends uniformly continuously to a $G$-equivariant surjective map $\overline{\varphi}\colon\widehat{G/G_\xi}\to D_\infty$. 
A fundamental system of entourages for the uniform structure on $G/G_\xi$ coming from the right uniform structure on $G$, is given by sets

$$U_V=\{(gG_\xi,vgG_\xi),\ v\in V, g\in G\}$$
where $V$ is symmetric neighborhood of the identity in $G$. This uniform structure is metrizable (see the introduction of \cite{melleray2015polish} for example). Let us push forward this uniform structure on $\Ends(D_\infty)$ via $\varphi$. So, a fundamental system of entourages for this uniform structure is given by sets

$$U_V=\{(\zeta,\eta),\ \exists v\in V,\ \eta=v\zeta\}$$

where $V$ is symmetric neighborhood of the identity in $G$. Let us denote by $\overline{\Ends(D_\infty)}$ the completion of $\Ends(D_\infty)$ by this uniform structure. So, this space $\overline{\Ends(D_\infty)}$ is isomorphic to $\widehat{G/G_\xi}$ as uniform $G$-space but we introduce it because we think this is more convenient to speak about Cauchy sequences of ends points instead of Cauchy sequences of $G_\xi$-cosets.
For $b\in\Br(D_\infty)$ and $\eta\in\Ends(D_\infty)$, we denote by $\overline{C_b(\eta)}$ the adherence of  $C_b(\eta)$ in $\overline{\Ends{D_\infty}}$.

\begin{lem}\label{basis}Let $\xi\in \Ends(D_\infty)$. Let $(b_n)$ be a sequence of branch points in $D_\infty$ converging to $\xi$. The collection $\left\{\overline{C_{b_n}(\xi)}\right\}$  is a basis of neighborhoods of $\xi$ in $\overline{\Ends(D_\infty)}$.
\end{lem}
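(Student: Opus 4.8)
The plan is to work directly with the entourages defined just above the statement. A basis of these entourages is obtained by taking $V=\Fix(F)$ for $F\subset\Br(D_\infty)$ finite, since these subgroups form a basis of symmetric neighbourhoods of the identity in $G_\infty$; so a neighbourhood basis of $\xi$ in $\overline{\Ends(D_\infty)}$ is given by the closures $\overline{W_F}$ of the ``balls'' $W_F:=\{\eta\in\Ends(D_\infty):\exists v\in\Fix(F),\ v\xi=\eta\}=\Fix(F)\cdot\xi$. Thus the lemma reduces to comparing the sets $W_F$ with the sets $C_{b_n}(\xi)\cap\Ends(D_\infty)$: one must show every $\overline{C_{b_n}(\xi)}$ contains some $\overline{W_F}$, and every $\overline{W_F}$ contains some $\overline{C_{b_n}(\xi)}$.

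First I would record two facts about $W_F$. \textbf{(A)} By Proposition~\ref{prop:extension}, applied to the finite sets $F\cup\{\xi\}$ and $F\cup\{\eta\}$, an end point $\eta$ lies in $W_F$ if and only if $\eta$ and $\xi$ attach to the subdendrite $[F]$ ``in the same way'': writing $q=\pi_{[F]}(\xi)$ for the first point of $[F]$ met on the arc towards $\xi$, this means that $\pi_{[F]}(\eta)$ is the same vertex $q$ of $\langle F\rangle$ as for $\xi$ (when $q$ is a vertex) or lies in the same open edge of $\langle F\rangle$ as $q$ (when $q$ is a regular point of $[F]$). In both cases $W_F=\Omega_F\cap\Ends(D_\infty)$, where $\Omega_F$ is the union of the connected components of $D_\infty\setminus[F]$ hanging off $[F]$ at $q$ (resp.\ along the open edge through $q$); in particular $\Omega_F$ is open with $\xi\in\Omega_F$. \textbf{(B)} If $F\subseteq F'$ then $W_{F'}\subseteq W_F$ and the relation ``$\eta\sim_F\xi$'' is transitive, from which $\overline{W_F}\cap\Ends(D_\infty)=W_F$ (the completion adds only ideal points to $W_F$). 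Facts (A)--(B) reduce both required inclusions to containments of genuine open subsets of $D_\infty$.

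For \emph{(i) each $\overline{C_{b_n}(\xi)}$ is a neighbourhood of $\xi$}: fix $n$, choose a branch point $a\in\,]b_n,\xi[$ and set $F=\{b_n,a\}$. Then $[F]=[b_n,a]$, $q=\pi_{[F]}(\xi)=a$, and fact (A) gives $W_F=\bigl(D_\infty\setminus\overline{C_a(b_n)}\bigr)\cap\Ends(D_\infty)$, which is contained in $C_{b_n}(\xi)\cap\Ends(D_\infty)$ because any point not in $\overline{C_a(b_n)}$ is connected to $\xi$ inside $D_\infty\setminus\{b_n\}$ (here one uses that $a$ separates $b_n$ from $\xi$). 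Hence $\overline{W_F}\subseteq\overline{C_{b_n}(\xi)}$, and as $\overline{W_F}$ is a neighbourhood of $\xi$, so is $\overline{C_{b_n}(\xi)}$. For \emph{(ii) every neighbourhood of $\xi$ contains some $\overline{C_{b_n}(\xi)}$}: given finite $F$, enlarging $F$ (which only shrinks $\overline{W_F}$, so is harmless) we may assume $q=\pi_{[F]}(\xi)$ is a vertex of $\langle F\rangle$ lying on the approach of the $b_n$ to $\xi$; since $b_n\to\xi$, for $n$ large $b_n\in\,]q,\xi[$, and then $C_{b_n}(\xi)\subseteq\Omega_F$, i.e.\ $C_{b_n}(\xi)\cap\Ends(D_\infty)\subseteq W_F=\overline{W_F}\cap\Ends(D_\infty)$ by (B), whence $\overline{C_{b_n}(\xi)}\subseteq\overline{W_F}$.

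The main work is the first half of the second paragraph: converting Proposition~\ref{prop:extension} into the clean combinatorial statement that $\sim_F$-equivalence of end points is exactly ``having the same attaching locus on $\langle F\rangle$'', with due care for the case when $q$ is an interior (regular) point of an edge of $[F]$ rather than a vertex; this is what delivers the open set $\Omega_F$ and fact (B). The other delicate point is step (ii): it is precisely here that one must exploit that $b_n\to\xi$ in $D_\infty$ forces $b_n$ eventually to lie beyond (to separate from $\xi$) any prescribed branch point on the approach to $\xi$, which is the link between the metric convergence of $(b_n)$ and the nesting of the neighbourhoods $\overline{C_{b_n}(\xi)}$.
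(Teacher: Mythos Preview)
Your argument follows the same route as the paper's: for (i) the paper also picks a branch point $b'\in\,]b,\xi[$ and observes $U_{V_{\{b,b'\}}}(\xi)\subset C_b(\xi)$; for (ii) the paper replaces $F$ by its $c$-closure $F'$, shows $\bigcap_{b\in F'}C_b(\xi)\subset U_{V_F}(\xi)$ via Proposition~\ref{prop:extension} (your fact (A) is a more explicit version of this), and then asserts $C_{b_n}(\xi)\subset\bigcap_{b\in F'}C_b(\xi)$ for large $n$.

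There is, however, one genuine gap in your step (ii), and it is worth flagging that the paper's own proof shares it. Your assertion ``for $n$ large $b_n\in\,]q,\xi[$'' does not follow from $b_n\to\xi$ alone: one can have $b_n\to\xi$ with every $b_n$ off the arc $[q,\xi]$ (take $p_n\in\Br(D_\infty)\cap\,]q,\xi[$ with $p_n\to\xi$ and pick $b_n\neq p_n$ a branch point in a component of $D_\infty\setminus\{p_n\}$ not meeting $[q,\xi]$, close enough to $p_n$). For such a sequence and $F=\{q',q\}$ with $q\in\,]q',\xi[$, the point $q'$ lies in $C_{b_n}(\xi)$ for every $n$ (since $b_n\notin[q',\xi]$), so end points near $q'$ witness that no $\overline{C_{b_n}(\xi)}$ is contained in $\overline{W_F}$; the collection is then not a neighbourhood basis. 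The paper's inclusion $C_{b_n}(\xi)\subset\bigcap_{b\in F'}C_b(\xi)$ fails for the same reason: $C_{b_n}(\xi)\subset C_b(\xi)$ is equivalent to $b_n\in[b,\xi]$, not merely to $b_n\in C_b(\xi)$. In the only application (proof of Theorem~\ref{ufba}) the $b_n$ are chosen on an arc $[\eta,\eta']$, and under that extra hypothesis both your argument and the paper's go through: for each $b\in F'$ the arcs $[b,\xi]$ and $[b_0,\xi]$ share a terminal segment $[c(b,b_0,\xi),\xi]$, so $b_n\in[b,\xi]$ eventually.
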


\begin{proof}Let us first show that $\overline{C_b(\xi)}$ is a neighborhood of $\xi$ in $\overline{\Ends(D_\infty)}$. Let us choose a branch point $b'\in]b,\xi[$. Now for any $g\in G_\infty$ fixing $b$ and $b’$, $g\xi\in C_b(\xi)$. In particular, $U_{V_{\{b,b'\}}}(\xi)=\{\eta\in\Ends(D_\infty),\ \exists g\in V_{\{b,b'\}},\ \eta=g\xi\}\subset C_b(\xi)$ and thus $\overline{U_{V_{\{b,b'\}}}(\xi)}\subset \overline{C_b(\xi)}$ which shows that $\overline{C_b(\xi)}$ is a neighborhood of $\xi$ in $\overline{\Ends(D_\infty)}$.

Let $F$ be some finite subset of $\Br(D_\infty)$. Let $F'$ be the $c$-closure of $F$. The intersection $\cap_{b\in F'}C_b(\xi)\subset U_{V_F}(\xi)=\{\eta\in\Ends(D_\infty),\ \exists g\in V_F,\ \eta=g\xi\}$ because if $\eta\in\cap_{b\in F’}C_b(\xi)$ then $\xi$ and $\eta$ lie in the same connected component of $D_\infty\setminus F'$. This component has at most two points in its boundary. The labelled graphs $\langle F'\cup\{\xi\}\rangle$ and $\langle F'\cup\{\eta\}\rangle$ are isomorphic and thus, one can find $g\in V_{F'}$ such that $g\xi=\eta$ by Proposition \ref{prop:extension}.
So we have $\cap_{b\in F'}C_b(\xi)\subset U_{V_F}(\xi)$ and $\cap_{b\in F'}\overline{C_b(\xi)}\subset \overline{U_{V_F}(\xi)}$. Choose $n$ large enough such that $b_n\in \cap_{b\in F'} C_b(\xi)$. One has $C_{b_n}(\xi)\subset \cap_{b\in F'} C_b(\xi)$ and the same holds for the adherences. This shows that the collection $\left\{\overline{C_{b_n}(\xi)}\right\}$ is a basis of neighborhoods of $\xi$.
\end{proof}

We can now prove Theorem~\ref{ufba}, that is $\widehat{G/G_\xi}$ is the universal Furstenberg boundary of $G$.
\begin{proof}[Proof of Theorem~\ref{ufba}]

Since $G_\xi$ is oligomorphic, $\widehat{G/G_\xi}$ is compact. Let $H$ be the stabilizer of $\prec_0$ from Lemma \ref{cclo}. Since $H$ fixes $\xi$, the uniformly continuous map $G/H\to G/G_\xi$ extends continuously to an equivariant surjective map $\widehat{G/H}\to\widehat{G/G_\xi}$. The minimality of $\widehat{G/H}$ implies the one of $\widehat{G/G_\xi}$.

We have seen that $G_\xi$ is amenable. Let $X$ be a minimal strongly proximal $G$-flow. By amenability, there is a $G_\xi$-fixed point $x$. The orbit map $gG_\xi\mapsto gx$ extends continuously to a $G$-map $\widehat{G/G_\xi}\to X$ an by minimality this map is surjective. 

It remains to show that the action of $G$ on $\widehat{G/G_\xi}$ is strongly proximal. We follow the strategy that was used in the proof that the action of $G_\infty$ on $D_\infty$ is strongly proximal \cite[Theorem 10.1]{DM_dendrites}.
Let $m$ be a Borel probability measure on $\overline{\Br(D_\infty)}$. Since $\Ends(D_\infty)$ is uncountable, there is $\eta\in\Ends(D_\infty)$ such that $m(\{\eta\})=0$. Let $\eta’$ be another end point, $(b_n), (b'_n)$ be sequences of branch points in $[\eta,\eta']$ converging respectively to $\eta$ and $\eta'$. Thanks to Lemma~\ref{basis}, $m\left(\overline{C_{b_n}(\eta)}\right)\to 0$ and thus $m\left(\overline{\Ends(D_\infty)}\setminus\overline{C_{b_n}(\eta)}\right)\to 1$. Let $g_n\in G$ fixing $\eta,\eta'$ and such that $g_nb_n=b'_n$. For any $b\in\B$, one can find $n$ large enough such that $b'_n\in C_b(\eta')$ and thus $\overline{\Ends(D_\infty)}\setminus\overline{C_{b'_n}(\eta)}\subset \overline{C_b(\eta')}$. This shows that $(g_n)_\ast m(\overline{C_b(\eta')})\to 1$ and thus $(g_n)_\ast m\to \delta_{\eta'}$.
\end{proof}

\begin{rem}
The universal Furstenberg boundary of $G_\infty$ can also be recovered from 	\cite[Theorem 7.5]{2018arXiv181200392Z} and Theorem~\ref{ufb}.\end{rem}

\begin{prop}\label{nothomeo} The map $\overline{\varphi}\colon\widehat{G/G_\xi}\to D_\infty$ is not a homeomorphism.\end{prop}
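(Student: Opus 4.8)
The plan is to show $\overline{\varphi}\colon\widehat{G/G_\xi}\to D_\infty$ fails to be injective by exhibiting two distinct points of $\widehat{G/G_\xi}$ (equivalently, of $\overline{\Ends(D_\infty)}$) that map to the same point of $D_\infty$. The natural candidate is a branch point $b$: since $b$ has infinitely many complementary components in $D_\infty$, one should be able to approach $b$ by end points ``from different directions'' in a way that the right-uniform structure on $G/G_\xi$ keeps apart. Concretely, I would fix a branch point $b$ and two distinct components $C_1, C_2 \in \mathcal{C}_b$, pick end points $\eta_n \in C_1$ and $\eta'_n \in C_2$ with $\eta_n \to b$ and $\eta'_n \to b$ in $D_\infty$. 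Both sequences are $\overline{\varphi}$-convergent to $b$, so if they are Cauchy in $\overline{\Ends(D_\infty)}$ they give (at least) two points of the fibre over $b$; the point is that $(\eta_n)$ and $(\eta'_n)$ cannot be ``interleaved'' into a single Cauchy sequence.

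The key technical step is to use Lemma~\ref{basis}-type reasoning to understand the uniform structure near such limit points. For a branch point $b$, consider the entourage $U_{V_F}$ where $F = \{b, b_1, b_2\}$ with $b_i \in C_i$ close to $b$: an element $g \in V_F$ fixes $b$, hence preserves the partition $\mathcal{C}_b$ and in particular cannot move an end point of $C_1$ to an end point of $C_2$. Therefore for any $\zeta \in C_1$ and $\zeta' \in C_2$ we have $(\zeta,\zeta') \notin U_{V_F}$, so $\eta_n$ and $\eta'_m$ stay at uniform ``distance'' at least the $U_{V_F}$-scale for all $n, m$. This shows the two sequences are not equivalent Cauchy sequences, hence define distinct points $\omega_1 \neq \omega_2$ of $\overline{\Ends(D_\infty)} \cong \widehat{G/G_\xi}$; meanwhile $\overline{\varphi}(\omega_i) = \lim \varphi(\eta_n^{(i)}) = b$ in $D_\infty$. (One must first check the sequences are Cauchy: given a finite $F' \subset \Br(D_\infty)$, once $\eta_n$ is deep enough in $C_1$ all the $\eta_n$ lie in a common component of $D_\infty \setminus F'$, so by the argument in Lemma~\ref{basis} they are pairwise $U_{V_{F'}}$-close; this is exactly the content already extracted there.)

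I expect the main obstacle to be bookkeeping around which sets are genuinely neighborhoods and which Cauchy classes coincide: one needs that the $\overline{C_{b_n}(\eta_n)}$'s really do separate the two limit points, which requires choosing the approximating branch points $b_i \in C_i$ and invoking that a homeomorphism fixing $b$ together with one further point in each of $C_1, C_2$ cannot swap the two components. A clean way to package this is: the fibre $\overline{\varphi}^{-1}(b)$ surjects onto (a quotient related to) the set of ``directions'' $\mathcal{C}_b$, which is infinite, whereas for $D_\infty$ to be the completion we would need this fibre to be a single point. In fact I would state slightly more — that $\overline{\varphi}$ is injective over $\Ends(D_\infty)$ but has large fibres over $\Br(D_\infty)$ (and over regular points) — since the later remark that $K$ is a countable union of $G_\infty$-orbits suggests the fibre over a branch point is itself built from a copy of $\widehat{\Sym(\mathcal{C}_b)/(\text{stab})}$-type data; but for the bare statement of Proposition~\ref{nothomeo} it suffices to produce the two points $\omega_1 \neq \omega_2$ above with $\overline{\varphi}(\omega_1) = \overline{\varphi}(\omega_2)$, contradicting injectivity and hence the homeomorphism property.
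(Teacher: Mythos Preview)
Your proposal is correct and follows essentially the same idea as the paper: pick a branch point $b$, two components $C_1,C_2\in\mathcal{C}_b$, and end-point sequences in each converging to $b$; then observe that any $g$ fixing $b$ and a further branch point in $C_1$ preserves $C_1$, so the entourage $U_{V_F}$ separates the two sequences. The paper packages this slightly more economically --- it interleaves the two sequences into a single sequence that is Cauchy in $D_\infty$ but not in $\overline{\Ends(D_\infty)}$, which avoids having to verify separately that each sequence is Cauchy, and it uses only $F=\{b,b'\}$ with $b'\in C_1$ rather than three points --- but the core mechanism is identical.
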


\begin{proof}We continue to identify $\widehat{G/G_\xi}$ with $\overline{\Ends(D_\infty)}$. Since the spaces $\widehat{G/G_\xi}$ and  $D_\infty$ are compact, they have a unique uniform structure and thus it suffices to show there is a sequence $(\xi_n)$ of end points which is Cauchy in $D_\infty$ but not in $\overline{\Ends(D_\infty)}$. Let $b\in \Br(D_\infty)$ and $C_1\neq C_2\in \mathcal{C}_b$. Choose $(\xi_{2n})$ sequence of end points of $C_1$ converging to $b$ in $D_\infty$ and similarly, choose $(\xi_{2n+1})$ sequence of $C_2$ converging to $b$ in $D_\infty$. The sequence $(\xi_n)$ converges in $D_\infty$ and thus is Cauchy but if $b'\in C_1$ and $F=\{b,b'\}$ then there is no $g\in V_F$ such that $gC_2=C_1$ and thus for any $n,m\in\NN$, $(\xi_{2m},\xi_{2n+1})\notin U_{V_F}$ and thus $(\xi_n)$ is not Cauchy in $\overline{\Ends(D_\infty)}$. 
\end{proof}

\subsection{Another description of the universal Furstenberg boundary}\label{isomorphic} Let us finish this paper with another description of the universal Furstenberg boundary of $G$. This will be the compact space $K$ below. 

For each $b\in\Br(D_\infty)$, let us consider $\mathcal{C}_b$, the space of connected component around $b$, with the discrete topology. Let $\overline{\mathcal{C}_b}$ be its Alexandrov compactification and let us denote by $C_b^\infty$  the added point. The product $\prod_{b\in\Br(D_\infty)}\overline{\mathcal{C}_b}$ is a metrizable totally disconnected compact space. The group $G$ acts continuously on this product space in the following way

$$g(C_b)_{b}=(g(C_{g^{-1}b}))_b$$

where we use the convention $g(C_b^\infty)=C_{gb}^\infty$ for any $g\in G$ and $b\in \Br(D_\infty)$. Let us define

$$K=\left\{(C_b)\in \prod_{b\in\Br(D_\infty)}\overline{\mathcal{C}_b},\ \forall b,b’,\ b’\notin C_b \implies  C_{b’}=C_{b’}(b)\right\}.$$

where $C_b(x)$ is the element of $\mathcal{C}_b$ that contains $x$ for $x\neq b$. For $C_b^\infty$, we use the convention that for any $b’\in\Br(D_\infty)$, $b’\notin C_b^\infty$.
For $x\in D_\infty\setminus\Br(D_\infty)$, let $C(x)\in \prod_{b\in\Br(D_\infty)}\overline{\mathcal{C}_b}$ be $(C_b(x))_b$. For each $b\in\Br(D_\infty)$, let us enumerate $\mathcal{C}_{b}=\{C_b^n\}_{n\in\NN}$.  For $n\in\overline{\NN}$ and $b_0\in\Br(D_\infty)$ we define $C^n(b_0)$ to be $(C_b)$ where $C_b=C_b(b_0)$ for $b\neq b_0$ and $C_{b_0}=C_{b_0}^n$. 

\begin{lem}\label{lem:orbits} The space $K$ is a closed $G$-invariant subset. For any $C\in K$, there is $x\in D_\infty\setminus\Br(D_\infty)$ such that $C=C(x)$ or there is $(b,n)\in \Br(D_\infty)\times\overline{\NN}$ such that $C=C^n(b)$.

\end{lem}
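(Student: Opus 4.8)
The statement has two parts: that $K$ is a closed $G$-invariant subset of the product, and that every point of $K$ is of one of the two listed forms. I would treat these separately, dealing with the topological/invariance claims first since they are routine, and then focusing on the structural dichotomy, which is the heart of the lemma.

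For closedness, I would observe that $K$ is cut out by conditions of the form: for each ordered pair $(b,b')$ of branch points and each $C \in \mathcal{C}_b$ with $b' \notin C$, the coordinate at $b'$ is forced to equal $C_{b'}(b)$. Each such condition involves only the two coordinates indexed by $b$ and $b'$, which live in discrete (hence also discrete-plus-point) factors, so the condition is clopen in the relevant two factors and thus closed in the product; $K$ is the intersection of all these closed sets, hence closed. For $G$-invariance, I would take $C = (C_b) \in K$ and $g \in G$, write $(gC)_b = g(C_{g^{-1}b})$, and check the defining implication: if $b' \notin (gC)_b = g(C_{g^{-1}b})$, then $g^{-1}b' \notin C_{g^{-1}b}$, so by the definition of $K$ applied to $C$ we get $C_{g^{-1}b'} = C_{g^{-1}b'}(g^{-1}b)$; applying $g$ and using that $g$ is a homeomorphism of $D_\infty$ permuting the $\mathcal{C}_b$'s compatibly with the center map, $(gC)_{b'} = g(C_{g^{-1}b'}) = g(C_{g^{-1}b'}(g^{-1}b)) = (gC)_{b'}(b)$. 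The convention $g(C_b^\infty) = C_{gb}^\infty$ handles the point at infinity, since the convention "$b' \notin C_b^\infty$ for all $b'$" is $G$-equivariant.

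For the dichotomy, fix $C = (C_b) \in K$. The main case distinction is whether or not $C_b = C_b^\infty$ for \emph{some} $b$. \textbf{Case 1:} there is $b_0$ with $C_{b_0} = C_{b_0}^\infty = C_{b_0}^n$ for the appropriate $n \in \overline{\NN}$ (here I am being slightly loose: $C_{b_0}^\infty$ corresponds to $n = \infty$ in the enumeration $\mathcal{C}_{b_0} = \{C_{b_0}^n\}_{n \in \NN}$ extended to $\overline{\NN}$). Then for any $b' \neq b_0$, since $b' \notin C_{b_0}^\infty$ by convention, the defining property of $K$ forces $C_{b'} = C_{b'}(b_0)$. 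Hence $C = C^n(b_0)$ in the notation of the lemma. I should also check consistency — that no \emph{other} coordinate is a point at infinity — but in fact $C_{b'}(b_0)$ is a genuine element of $\mathcal{C}_{b'}$, so this is automatic, and if two coordinates $b_0, b_0'$ were both at infinity the implication applied both ways would force $C_{b_0'} = C_{b_0'}(b_0)$, a genuine component, a contradiction; so the $b_0$ in Case 1 is unique. \textbf{Case 2:} $C_b \in \mathcal{C}_b$ (a genuine component) for every $b$. Here I want to produce a point $x \in D_\infty \setminus \Br(D_\infty)$ with $C_b = C_b(x)$ for all $b$. The natural candidate is $x \in \bigcap_{b \in \Br(D_\infty)} C_b$. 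I would argue that this intersection is nonempty: the family $\{\,\overline{C_b}\,\}_{b}$ (closures in $D_\infty$) has the finite intersection property — given finitely many $b_1,\dots,b_k$, the defining property of $K$ and the betweenness structure force $\overline{C_{b_1}} \cap \cdots \cap \overline{C_{b_k}} \neq \emptyset$ (concretely, among $b_1,\dots,b_k$ there is one, say $b_j$, that is "deepest", i.e.\ lies in $C_{b_i}$ for all $i \neq j$ — this follows from the $K$-condition making the $C_b$'s coherent — and then $\overline{C_{b_j}}$ is contained in every $\overline{C_{b_i}}$); by compactness of $D_\infty$ the total intersection contains a point $x$. Then $C_b(x) = C_b$ for every $b$ (if $x \in C_b$ then $C_b(x) = C_b$ since components partition $D_\infty \setminus \{b\}$). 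Finally $x$ cannot be a branch point: if $x = b_1 \in \Br(D_\infty)$, then $x \notin C_{b_1}$, contradicting $x \in \bigcap_b C_b$; it also cannot be a regular point for the same reason (regular points are not branch points but we only need $x \notin \Br(D_\infty)$, which is what we want). Hence $C = C(x)$.

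\textbf{Main obstacle.} The delicate point is Case 2 — showing $\bigcap_b C_b \neq \emptyset$ and that it is a single non-branch point. The finite intersection property needs the combinatorial observation that the coherence condition defining $K$ makes any finite subfamily of the $C_b$'s "nested along a path", so that one of them is contained in all the others; this is exactly the dendritic structure (any finite subset of $\Br(D_\infty)$ spans a finite tree, and the components $C_b$ pick out, coherently, one component at each vertex, which must all point toward a common "end" of that finite tree). Once the finite intersection property is in hand, compactness and the partition property of components finish it cleanly. I expect the write-up of this nesting claim — carefully using the implication $b' \notin C_b \Rightarrow C_{b'} = C_{b'}(b)$ to order the $b_i$'s — to be the only place requiring genuine care; everything else is bookkeeping with conventions.
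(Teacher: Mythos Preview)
Your argument has a genuine gap: the case $C = C^n(b_0)$ with $n$ \emph{finite} is not covered. Such a $C$ has no coordinate at infinity, so it falls into your Case~2; but here the intersection point is precisely the branch point $b_0$, contradicting your claim that ``$x$ cannot be a branch point''. The flaw is that compactness only yields $x \in \bigcap_b \overline{C_b}$, not $x \in \bigcap_b C_b$; since $\overline{C_b} = C_b \cup \{b\}$, the point $x$ may well equal some branch point $b_0$, and then $x \notin C_{b_0}$ is no contradiction at all. Concretely, for $C = C^n(b_0)$ one has $C_{b_0} = C_{b_0}^n$ and $C_b = C_b(b_0)$ for $b \neq b_0$; letting $b \to b_0$ inside $C_{b_0}^n$, the open intersections $C_b \cap C_{b_0}^n$ shrink to nothing, while $\bigcap_b \overline{C_b} = \{b_0\}$.

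A second, related issue: your ``deepest $b_j$'' argument for the finite intersection property is incorrect. From $b_j \in C_{b_i}$ you cannot conclude $\overline{C_{b_j}} \subseteq \overline{C_{b_i}}$. With $b_1 = b_0$ and $b_2 \in C_{b_0}^n$ as above, each of $b_1, b_2$ lies in the other's component, yet neither $C_{b_1}$ nor $C_{b_2}$ contains the other --- they merely overlap along $]b_1,b_2[$. The paper handles both difficulties at once: it first shows (by taking a branch point strictly between two putative infinity-coordinates) that at most one $C_b$ equals $C_b^\infty$; it then observes that any two of the remaining closed components meet (either one is contained in the other, or $]b,b'[ \subset C_b \cap C_{b'}$), invokes the Helly property for subdendrites to get $\bigcap \overline{C_b} \neq \emptyset$, and shows this intersection is a single point $x$. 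The dichotomy is read off only at the end, according to whether $x$ is a branch point (giving $C = C^n(x)$ for some $n \in \overline{\NN}$) or not (giving $C = C(x)$). Your Case~1 is therefore not a separate case to be split off first; the correct case distinction is on the nature of the intersection point $x$, and it happens last.
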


\begin{proof} The conditions that define $K$ are $G$-invariant and closed for the product topology. 
We claim that if $C\in K$ then there is at most one $b$ such that $C_b=C^\infty_b$. Assume there are $b\neq b’$ that satisfy this condition. Choose $b\mydprime\in]b,b’[$ then $C_{b\mydprime}$ should be $C_{b\mydprime}(b)$ and $C_{b\mydprime}(b’)$ but these components are distinct.

Let $C\in K$. For any $b,b’$ such that $C_b\neq C_b^\infty$ and $C_{b’}\neq C_{b’}^\infty$ then $C_b\cap C_{b’}\neq\emptyset$ because either one is included in the other or $]b,b’[\subset C_b\cap C_{b’}$. By the Helly’s property \cite[Lemma 2.1]{DM_dendrites}, the intersection $\bigcap \overline{C_b}$ over all $b$’s such that $C_b\neq C_b^\infty$ is non-empty and convex. If  $x\neq y$ lie in this intersection then for $b\in]x,y[\cap\Br(D_\infty))$, $C_b=C_b(x)$ and $C_b=C_b(y)$ which is impossible. Thus this intersection is reduced to one point. 

We conclude this lemma by observing that if $x$ is this intersection point then for any branch point $b\neq x$ then $C_b=C_b(x)$.
\end{proof}

\begin{lem} The map $\Ends(D_\infty)\to K$ given by $\xi\mapsto C(\xi)$ is $G$-equivariant and injective. Moreover the image is dense in $K$.
\end{lem}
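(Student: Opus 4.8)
The plan is to verify the three claims—$G$-equivariance, injectivity, and density—directly from the definitions, using the correspondence between end points and their systems of containing branch-point components.

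\textbf{Equivariance.} First I would check that $\xi\mapsto C(\xi)$ intertwines the two $G$-actions. By definition, $C(\xi)=(C_b(\xi))_{b\in\Br(D_\infty)}$, and the $G$-action on the product sends $(C_b)_b$ to $(g(C_{g^{-1}b}))_b$. So $g\cdot C(\xi)$ has $b$-coordinate $g(C_{g^{-1}b}(\xi))$. Since $g$ is a homeomorphism with $g(g^{-1}b)=b$, it maps the component of $D_\infty\setminus\{g^{-1}b\}$ containing $\xi$ onto the component of $D_\infty\setminus\{b\}$ containing $g\xi$, i.e. $g(C_{g^{-1}b}(\xi))=C_b(g\xi)$. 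Hence $g\cdot C(\xi)=C(g\xi)$, and since $\xi\in\Ends(D_\infty)$ implies $g\xi\in\Ends(D_\infty)$, the map is $G$-equivariant. One should also note that $C(\xi)$ genuinely lies in $K$: if $b'\notin C_b(\xi)$ then $b\in C_{b'}(\xi)=C_{b'}(b)$ since the arc $[b,\xi]$ meets $b'$ only if $b'\in[b,\xi]$, and in the latter case $C_b(\xi)=C_b(b')$; a short case analysis with the center map $c(b,b',\xi)$ gives the defining condition of $K$.

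\textbf{Injectivity.} If $\xi\neq\eta$ are distinct end points, pick any branch point $b\in\,]\xi,\eta[$ (which is non-empty and contains branch points since $D_\infty$ has no free arc). Then $\xi$ and $\eta$ lie in different components of $D_\infty\setminus\{b\}$, so $C_b(\xi)\neq C_b(\eta)$ and $C(\xi)\neq C(\eta)$.

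\textbf{Density.} This is the main point and the one requiring the most care. I want to show every basic open neighborhood of a point of $K$ meets the image. A basic open set is determined by prescribing finitely many coordinates $C_{b_1},\dots,C_{b_k}$; by Lemma~\ref{lem:orbits} the closure $\overline{\mathcal C_b}$ is the Alexandrov compactification, so a prescribed coordinate is either a single component $C_{b_i}^{n_i}\in\mathcal C_{b_i}$ or the point at infinity $C_{b_i}^\infty$. Given $C=(C_b)_b\in K$ and such a finite set of coordinates, one enlarges $\{b_1,\dots,b_k\}$ to a $c$-closed finite set $F$ (adding centers); the defining relations of $K$ force the prescribed data to be consistent with a single combinatorial ``direction'' in the finite tree $\langle F\rangle$. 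Concretely: if all prescribed coordinates are finite components, Lemma~\ref{lem:orbits} shows $C=C(x)$ for some $x\in D_\infty\setminus\Br(D_\infty)$ or $C=C^n(b)$, and in either case one can pick an end point $\xi$ lying in the intersection $\bigcap_i C_{b_i}$ of the prescribed components (this intersection contains an arc, hence an end point of $D_\infty$, because the closures of any open connected subset of $D_\infty$ is homeomorphic to $D_\infty$), so that $C(\xi)$ agrees with $C$ on $b_1,\dots,b_k$. If some prescribed coordinate is $C_{b_i}^\infty$, then by the first claim in the proof of Lemma~\ref{lem:orbits} there is at most one such $i$, say $i=1$; the condition $C_{b_1}=C_{b_1}^\infty$ together with the other finitely many constraints still leaves infinitely many components of $D_\infty\setminus\{b_1\}$ available, and one may choose an end point $\xi$ in a component $C_{b_1}^m$ with $m$ larger than any index among the finitely many constraints while still respecting the coordinates $C_{b_2},\dots,C_{b_k}$ (using that these all contain $b_1$). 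Then $C(\xi)$ agrees with $C$ on the prescribed $b_i$ for $i\ge 2$ and has $b_1$-coordinate $C_{b_1}^m$, which lies in the prescribed neighborhood of $C_{b_1}^\infty$ in the Alexandrov topology. In all cases the image meets the basic open set, so the image is dense.

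The genuinely delicate step is the last one: matching the point-at-infinity coordinates in the Alexandrov compactification, which is why one needs the observation (from the proof of Lemma~\ref{lem:orbits}) that at most one coordinate of a point of $K$ can be at infinity, together with the homogeneity of $D_\infty$ to produce end points in prescribed components. The finite-coordinate case, by contrast, reduces immediately to Lemma~\ref{lem:orbits} and the existence of end points in closures of connected open subsets.
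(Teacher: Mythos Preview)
Your proof is correct and follows essentially the same route as the paper's: equivariance and injectivity are handled identically, and density is reduced via Lemma~\ref{lem:orbits} to the three cases $C=C(x)$, $C=C^n(b)$ with $n$ finite, and $C=C^\infty(b)$. The only cosmetic difference is that the paper phrases density by exhibiting, for each $C\in K$, an explicit sequence of end points $(\xi_n)$ with $C(\xi_n)\to C$ (e.g.\ $\xi_n\to x$ in the first case, $\xi_n\in C_b^k$ with $\xi_n\to b$ in the second, $\xi_n\in C_b^n$ in the third), whereas you intersect each basic open neighbourhood with the image; both arguments rest on the same observations, in particular that at most one coordinate can be $C_b^\infty$ and that the remaining coordinates then all contain $b$.
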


\begin{proof} The $G$-equivariance is the following straightforward computation:

$$gC(\xi)=(g(C_{g^{-1}b}(\xi))_b=(C_{b}(g\xi))_b=C(g\xi).$$

It is injective because if $\xi,\eta\in\Ends(D_\infty)$ are distinct then for any $b\in]\xi,\eta[\cap\Br(D_\infty)$, $C_b(\xi)\neq C_b(\eta)$. To prove density it suffices to show that for any $C=(C_b)\in K$ there is $(\xi_n)$ sequence of  $\Ends(D_\infty)$ such that for any $b\in \Br(D_\infty)$, $C_b(\xi_n)\to C_b$.

If $C=C(x)$ for $x$ non-branch point then for any sequence $(\xi_n)$ converging to $x$ in $D_\infty$ will be suitable because for any $b$, $C_b(x)$ is open and contains $x$ thus $C_b(\xi_n)=C_b(x)$ for $n$ large enough. If $C=C^k(b)$ with $k$ finite then any sequence $(\xi_n)$ of end points in $C_b^k$ converging to $x$ in $D_\infty$ will be suitable for the same argument. Finally, if $C=C^\infty(b)$ then a sequence $(\xi_n)$ such that $\xi_n\in C^n(b)$ for any $n\in\NN$ will be suitable. Actually for any $b’\neq b$,  $C_{b’}(\xi_n)=C_{b’}(b)$ for all $n$ except at most one and $C_b(\xi)\to C_b^\infty$ because $\xi_n$ eventually leaves any finite union of elements of $\mathcal{C}_b$.
\end{proof}

\begin{prop} The spaces $\overline{\Ends{D_\infty}}$ and $K$ are isomorphic as $G$-flows. Moreover, there are countably many $G$-orbits.
\end{prop}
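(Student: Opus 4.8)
The plan is to view both $\overline{\Ends(D_\infty)}$ and $K$ as $G$-equivariant compactifications of the $G$-space $\Ends(D_\infty)$ and to show that they are the same one. On the $K$-side this is essentially the content of the two lemmas just proved: $\xi\mapsto C(\xi)$ is a $G$-equivariant embedding of $\Ends(D_\infty)$ onto a dense subspace of the compact space $K$, and it is a homeomorphism onto its image since $\{C_b(\xi)\}_{b\in\Br(D_\infty)}$ is a neighbourhood basis of $\xi$ both in $D_\infty$ and in $K$. On the other side, $\Ends(D_\infty)\cong G/G_\xi$ is dense in $\overline{\Ends(D_\infty)}=\widehat{G/G_\xi}$, which is compact because $G_\xi$ is oligomorphic. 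A compactification of a space is the completion of the (automatically precompact, separated) uniformity it induces, and two compactifications are equivalent precisely when these induced uniformities coincide; so it is enough to check that the uniformity $\mathcal{U}_{\mathrm{end}}$ induced on $\Ends(D_\infty)$ by $\widehat{G/G_\xi}$ agrees with the uniformity $\mathcal{U}_K$ induced by the embedding into $K$. The resulting homeomorphism $\overline{\Ends(D_\infty)}\to K$ is then automatically $G$-equivariant, since $\mathrm{id}_{\Ends(D_\infty)}$ is.

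First I would record explicit bases of entourages on the two sides. For $\mathcal{U}_{\mathrm{end}}$ one may take the relations $U_F=\{(\zeta,\eta):\eta\in\Fix(F)\cdot\zeta\}$ with $F\subset\Br(D_\infty)$ finite, and since the $c$-closed finite sets are cofinal we may assume $F$ is $c$-closed. Then $[F]$ is a finite simplicial tree with vertex set $F$, and every connected component $X$ of $D_\infty\setminus F$ has $\overline X\cap F$ equal either to an edge $\{u,v\}$ of $\langle F\rangle$ (an ``edge component'') or to a single vertex $b\in F$ (a ``branch component''). Using Proposition~\ref{prop:extension} inside the subdendrite $\overline X\cong D_\infty$, together with the patchwork lemma to extend maps by the identity on $D_\infty\setminus X$ so as to stay inside $\Fix(F)$, one obtains the crucial description of the $\Fix(F)$-orbits on $\Ends(D_\infty)$: two ends $\zeta,\eta$ are in the same orbit if and only if they lie in a common edge component, or they lie in (possibly different) branch components attached at one and the same vertex $b\in F$. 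For $\mathcal{U}_K$, since each $\overline{\mathcal{C}_b}$ is the Alexandrov compactification of a countable discrete space, a basis of entourages of $\prod_{b}\overline{\mathcal{C}_b}$, restricted to the subset $\Ends(D_\infty)$, is given by the relations $E_{(b_i,S_i)}=\{(\zeta,\eta):\text{for every }i,\ C_{b_i}(\zeta)=C_{b_i}(\eta)\ \text{or}\ C_{b_i}(\zeta),C_{b_i}(\eta)\notin S_i\}$, where $b_1,\dots,b_n\in\Br(D_\infty)$ and each $S_i\subset\mathcal{C}_{b_i}$ is finite.

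Then I would establish the two inclusions. For $U_F\subseteq E_{(b_i,S_i)}$: given $(b_i,S_i)$, let $F$ be the $c$-closure of $\{b_1,\dots,b_n\}$ together with one branch point chosen inside each $C\in\bigcup_i S_i$; if $\eta=g\zeta$ with $g\in\Fix(F)$ then $C_{b_i}(\eta)=g\bigl(C_{b_i}(\zeta)\bigr)$, and since $g$ fixes $b_i$ and fixes the chosen branch point of each $C\in S_i$, one gets $C_{b_i}(\zeta)=C_{b_i}(\eta)$ whenever either of them lies in $S_i$, i.e. $(\zeta,\eta)\in E_{(b_i,S_i)}$. For $E_{(b_i,S_i)}\subseteq U_F$: given $c$-closed $F$, take $\{b_1,\dots,b_n\}=F$ and $S_i=\{C\in\mathcal{C}_{b_i}:C\cap F\neq\emptyset\}$; using that the component $X_\zeta$ of $D_\infty\setminus F$ containing $\zeta$ is $\bigcap_{b\in F}C_b(\zeta)$, one reads off from $(C_b(\zeta))_{b\in F}$ whether $X_\zeta$ is an edge component and which one, or else at which vertex it is attached, and then the defining condition of $E_{(b_i,S_i)}$ forces $X_\zeta$ and $X_\eta$ to be the same edge component, or two branch components attached at a common vertex; by the orbit description this means $(\zeta,\eta)\in U_F$. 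Hence $\mathcal{U}_{\mathrm{end}}=\mathcal{U}_K$ and $\overline{\Ends(D_\infty)}\cong K$ as $G$-flows. For the last assertion, Lemma~\ref{lem:orbits} shows every point of $K$ is $C(x)$ with $x$ an end point or a regular point, or $C^n(b)$ with $n\in\NN$, or $C^\infty(b)$; since $G$ acts transitively on $\Ends(D_\infty)$, on $\Reg(D_\infty)$, on pairs consisting of a branch point and a complementary component, and equivariantly on $\{C^\infty(b):b\in\Br(D_\infty)\}$, these are exactly four $G$-orbits, in particular countably many.

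The step I expect to be the main obstacle is the explicit description of the $\Fix(F)$-orbits on $\Ends(D_\infty)$, and the attendant bookkeeping in the inclusion $E_{(b_i,S_i)}\subseteq U_F$. No new idea is needed beyond Proposition~\ref{prop:extension} and the patchwork lemma, but one must handle with some care the combinatorics of the components of $D_\infty\setminus F$, the finitely many ``types'' of ends they carry, and the fact that (since $F$ is $c$-closed) the closure of each such component meets $F$ in exactly one or two points.
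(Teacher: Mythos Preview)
Your argument is correct and, at its core, the same as the paper's: both identify $\overline{\Ends(D_\infty)}$ and $K$ by showing that the two precompact uniformities they induce on $\Ends(D_\infty)$ coincide, and both rely on the same combinatorial description of the $\Fix(F)$-orbits on $\Ends(D_\infty)$ (common edge component, or branch components attached at one vertex). The only packaging difference is that the paper, using metrizability, checks this via Cauchy sequences (``$(\xi_n)$ converges in $\overline{\Ends(D_\infty)}$ iff $(C(\xi_n))$ converges in $K$''), whereas you compare explicit bases of entourages $U_F$ and $E_{(b_i,S_i)}$ directly; the case analysis in the paper's sequence argument corresponds exactly to your two inclusions. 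Your orbit count is in fact sharper than the paper's: the paper writes ``one for each $n\in\overline{\NN}$'', but since the stabilizer of a branch point acts transitively on its set of components, all $C^n(b)$ with finite $n$ lie in a single orbit, giving exactly the four orbits you describe; either way the statement ``countably many'' holds.
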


\begin{proof} The spaces $\overline{\Ends{D_\infty}}$ and $K$ are compact and metrizable so it suffices to prove that for any sequence for end points $(\xi_n)$, $(\xi_n)$ is Cauchy in $\overline{\Ends{D_\infty}}$ if and only if $(C(\xi_n))$ is Cauchy in $K$. That is,  $(\xi_n)$ converges in $\overline{\Ends{D_\infty}}$ if and only if $(C(\xi_n))$ converges in $K$. This will show that the two spaces are homeomorphic and the existence of a dense $G$-orbit will imply that the homeomorphism is $G$-equivariant.

Let $(\xi_n)$ be a convergent sequence in $\overline{\Ends{D_\infty}}$. This means that for any finite set $F$ of branch points, there is $N\in \NN$ such that for any $n,m\geq N$, there is $g\in G$ fixing pointwise $F$ and such that $g\xi_n=\xi_m$.

Let $b\in \Br(D_\infty)$. For any $k\in\NN$, choose a branch point $b_k\in C_b^{k}$. Since for any element $g\in G$ fixing $b$ and $b_k$, $C^k_b$ is $g$-invariant, one has that either eventually $C_b(\xi)=C_b^k$ or eventually $C_b(\xi)\neq C^k_b$. Thus $(C_b(\xi))$ is convergent in $\overline{\mathcal{C}_b}$ and $C(\xi_n)$ is convergent in $K$.

Conversely assume that $(\xi_n)$ is  a sequence of end points such that $C(\xi_n)$ is convergent in $K$. Let $F$ be some finite set of branch points. Up to enlarge $F$, we may assume that $F$ is $c$-closed. 

First let us assume there is $b\in F$ such that $C_b(\xi_n)\to C_b^\infty$. %In this case, for any $b’\neq b$, eventually $b’\notin C_b(\xi_n)$ and thus $C_{b’}(\xi_n)=C_{b’}(b)$. 
Let $N$ such that for $n\geq N$, $C_b(\xi_n)\neq C_b(b’)$ for all $b’\in F\setminus\{b\}$. For $n,m\geq N$, choose $g\in G$, switching $C_b(\xi_n)$ and $C_b(\xi_m)$ such that $g\xi_n=\xi_m$ and such that $g$ is the identity on $D_\infty\setminus (C_b(\xi_n)\cup C_b(\xi_m))$. In particular, g fixes pointwise $F$ and thus $\xi_n$ is convergent in $\overline{\Ends{D_\infty}}$. 

Now, assume there is no $b\in F$ such that $C_b(\xi_n)\to C_b^\infty$. This means that for any $b\in F$, $C_b(\xi_n)$ is eventually equal to some $C_b$. The intersection  $\cap_{b\in F} C_b$ is one of the connected component of $D_\infty\setminus F$. Since $F$ is $c$-closed, there are at most two elements of $F$  in its boundary. If there is only one then this intersection $\cap_{b\in F} C_b$ is some $C_b$, which do not contain any $b’\in F$ and for $n,m$ large enough, $\xi_n,\xi_m\in C_b$ and thus one can find $g\in G_\infty$ fixing pointwise $D_\infty\setminus C_b$ such that $g\xi_n=\xi_m$. If there are two points $b_1,b_2$ in the boundary then $\cap_b C_b=C_{b_1}\cap C_{b_2}=D(b_1,b_2)$ does not contain any $b\in B$. So, for $n,m$ large enough, $\xi_n,\xi_m\in D(b_1,b_2)$ and thus one can find $g\in G_\infty$ fixing pointwise $D_\infty\setminus D(b_1,b_2)$ such that $g\xi_n=\xi_m$. In both cases $(\xi_n)$ is convergent in $\overline{\Ends{D_\infty}}$.

The statement about the number of orbits follows from Lemma~\ref{lem:orbits}. Any $C\in K$ is of the form $C=C(x)$ for $x$ non-branch point or $C=C^n(b)$ for some $n\in\overline{N}$ and some branch point $b$. In the first case, this gives two orbits, depending wether $x$ is a regular or an end point. In the second case, this gives countably many orbits, one for each $n\in\overline{N}$.
\end{proof}

\begin{rem} Since $K$ is totally disconnected and $D_\infty$ is connected, this gives another proof of Proposition~\ref{nothomeo}.
\end{rem}
%\section{Questions}
%
%\begin{itemize}
%\item Let $\KK$ be the Fraïssé structure given by the action of $ G_\infty$. Describe the associated Fraïssé class $\mathcal{K}$ as finite sets with the inbetweeness ternary relation such that any intersection of intervals is an interval. 
%%\item What is the universal minimal flow of $G_S$ ? see \cite[p.5]{MR3469133} and \cite{MR2140630}. It would be interesting to identify this universal minimal flow (that is at least as big as $D_S$ itself) and to understand if it is metrizable or not. What is the universal Furstenberg boundary ?
%%\item Does $G_\infty$ have the strong small index property ?
%%\item Is it true that an extension of groups with the small index subgroup properties has also the property?
%%\item Is $\Br(D_\infty)$ $\omega$-stable ? If it is the case, it almost implies existence of ample generics see a few lines after \cite[Theorem 6.2]{Kechris-Rosendal}.
%\end{itemize}

%\section{Remarks}
%\begin{itemize}
%%\item The Fraïssé class associated to a Wa\.zewski dendrite has the amalgamation property but the strong one \cite[Proposition 2.3]{MR2581750}
%\item 
%Wa\.zewski dendrites have a dense general betweenness relation. \cite[Definition 12.7]{MR1632579}
%%\item The action of $G_n$ on $\Br(D_n)$ is primitive because it is 2-transitive.  It is  not highly transitive.
%\item It has cofinite Jordan sets \cite[\S 10.4]{MR1632579}
%\item Write that as an oligomorphic group $\Homeo(D_S)$ is \emph{almost free} \cite{MR1989491}.
%\item Add Theorem 10 of \cite{Kechris-Rosendal} as a consequence.
%\end{itemize}
\bibliographystyle{halpha}
\bibliography{dendrite}

\begin{thebibliography}{MVTT15}

\bibitem[AN98]{MR1388893}
S.~A. Adeleke and Peter~M. Neumann.
\newblock Relations related to betweenness: their structure and automorphisms.
\newblock {\em Mem. Amer. Math. Soc.}, 131(623):viii+125, 1998.

\bibitem[Bow99]{BowditchAMS}
Brian~Hayward Bowditch.
\newblock Treelike structures arising from continua and convergence groups.
\newblock {\em Mem. Amer. Math. Soc.}, 139(662):viii+86, 1999.

\bibitem[BYMT17]{MR3613453}
Ita\"\i Ben~Yaacov, Julien Melleray, and Todor Tsankov.
\newblock Metrizable universal minimal flows of {P}olish groups have a comeagre
  orbit.
\newblock {\em Geom. Funct. Anal.}, 27(1):67--77, 2017.

\bibitem[Cam09]{MR2581750}
Peter~J. Cameron.
\newblock Oligomorphic permutation groups.
\newblock In {\em Perspectives in mathematical sciences. {II}}, volume~8 of
  {\em Stat. Sci. Interdiscip. Res.}, pages 37--61. World Sci. Publ.,
  Hackensack, NJ, 2009.

\bibitem[DHM89]{MR988099}
M.~Droste, W.~C. Holland, and H.~D. Macpherson.
\newblock Automorphism groups of infinite semilinear orders. {I}, {II}.
\newblock {\em Proc. London Math. Soc. (3)}, 58(3):454--478, 479--494, 1989.

\bibitem[DM16a]{DM_dendrites}
Bruno Duchesne and Nicolas Monod.
\newblock Group actions on dendrites and curves.
\newblock Preprint, {\tt arxiv:}{\tt 1609.00303v2}, 2016.

\bibitem[DM16b]{DM_dendritesII}
Bruno Duchesne and Nicolas Monod.
\newblock Structural properties of dendrite groups.
\newblock Preprint, {\tt arxiv:}{\tt 1610.08488}, 2016.

\bibitem[DMW18]{DMW}
B.~{Duchesne}, N.~{Monod}, and P.~{Wesolek}.
\newblock {Kaleidoscopic groups: permutation groups constructed from dendrite
  homeomorphisms}.
\newblock {\em ArXiv e-prints}, January 2018, 1801.09787.

\bibitem[DNT86]{MR859950}
John~D. Dixon, Peter~M. Neumann, and Simon Thomas.
\newblock Subgroups of small index in infinite symmetric groups.
\newblock {\em Bull. London Math. Soc.}, 18(6):580--586, 1986.

\bibitem[Gla76]{GlasnerLNM}
Shmuel Glasner.
\newblock {\em Proximal flows}.
\newblock Lecture Notes in Mathematics, Vol. 517. Springer-Verlag, Berlin-New
  York, 1976.

\bibitem[GW02]{MR1937832}
E.~Glasner and B.~Weiss.
\newblock Minimal actions of the group {$\Bbb S(\Bbb Z)$} of permutations of
  the integers.
\newblock {\em Geom. Funct. Anal.}, 12(5):964--988, 2002.

\bibitem[GW08]{MR2353899}
Eli Glasner and Benjamin Weiss.
\newblock Topological groups with {R}okhlin properties.
\newblock {\em Colloq. Math.}, 110(1):51--80, 2008.

\bibitem[HHLS93]{MR1231710}
Wilfrid Hodges, Ian Hodkinson, Daniel Lascar, and Saharon Shelah.
\newblock The small index property for {$\omega$}-stable {$\omega$}-categorical
  structures and for the random graph.
\newblock {\em J. London Math. Soc. (2)}, 48(2):204--218, 1993.

\bibitem[Hjo00]{hjorth2000classification}
Greg Hjorth.
\newblock {\em Classification and orbit equivalence relations}.
\newblock Number~75. American Mathematical Soc., 2000.

\bibitem[HLP14]{Hrushovski-Loeser-Poonen}
Ehud Hrushovski, Fran{\c{c}}ois Loeser, and Bjorn Poonen.
\newblock Berkovich spaces embed in {E}uclidean spaces.
\newblock {\em Enseign. Math.}, 60(3-4):273--292, 2014.

\bibitem[Kal86]{MR831205}
Robert~R. Kallman.
\newblock Uniqueness results for homeomorphism groups.
\newblock {\em Trans. Amer. Math. Soc.}, 295(1):389--396, 1986.

\bibitem[Kec13]{MR3469133}
Alexander~S. Kechris.
\newblock Dynamics of non-archimedean {P}olish groups.
\newblock In {\em European {C}ongress of {M}athematics}, pages 375--397. Eur.
  Math. Soc., Z{\"u}rich, 2013.

\bibitem[KPT05]{MR2140630}
A.~S. Kechris, V.~G. Pestov, and S.~Todorcevic.
\newblock Fra{\"\i}ss{\'e} limits, {R}amsey theory, and topological dynamics of
  automorphism groups.
\newblock {\em Geom. Funct. Anal.}, 15(1):106--189, 2005.

\bibitem[KR07]{Kechris-Rosendal}
Alexander~Sotirios Kechris and Christian Rosendal.
\newblock Turbulence, amalgamation, and generic automorphisms of homogeneous
  structures.
\newblock {\em Proc. Lond. Math. Soc. (3)}, 94(2):302--350, 2007.

\bibitem[Kwi18]{kwiatkowska2018universal}
Aleksandra Kwiatkowska.
\newblock Universal minimal flows of generalized wa{\.z}ewski dendrites.
\newblock {\em The Journal of Symbolic Logic}, 83(4):1618--1632, 2018.

\bibitem[MVTT15]{melleray2015polish}
Julien Melleray, Lionel~Nguyen Van~Th{\'e}, and Todor Tsankov.
\newblock Polish groups with metrizable universal minimal flows.
\newblock {\em International Mathematics Research Notices}, 2016(5):1285--1307,
  2015.

\bibitem[Nad92]{Nadler}
Sam~Bernard Nadler, Jr.
\newblock {\em Continuum theory}, volume 158 of {\em Monographs and Textbooks
  in Pure and Applied Mathematics}.
\newblock Marcel Dekker, Inc., New York, 1992.
\newblock An introduction.

\bibitem[NVT13]{MR3080786}
L.~Nguyen Van~Th{\'e}.
\newblock More on the {K}echris-{P}estov-{T}odorcevic correspondence:
  precompact expansions.
\newblock {\em Fund. Math.}, 222(1):19--47, 2013.

\bibitem[Pes98]{Pestov98}
Vladimir~G. Pestov.
\newblock On free actions, minimal flows, and a problem by {E}llis.
\newblock {\em Trans. Amer. Math. Soc.}, 350(10):4149--4165, 1998.

\bibitem[Pes06]{MR2277969}
Vladimir Pestov.
\newblock {\em Dynamics of infinite-dimensional groups}, volume~40 of {\em
  University Lecture Series}.
\newblock American Mathematical Society, Providence, RI, 2006.
\newblock The Ramsey-Dvoretzky-Milman phenomenon, Revised edition of {{\i}t
  Dynamics of infinite-dimensional groups and Ramsey-type phenomena} [Inst.
  Mat. Pura. Apl. (IMPA), Rio de Janeiro, 2005; MR2164572].

\bibitem[Ros09]{rosendal2009automatic}
Christian Rosendal.
\newblock Automatic continuity of group homomorphisms.
\newblock {\em Bulletin of Symbolic Logic}, 15(2):184--214, 2009.

\bibitem[RS07]{Rosendal-Solecki}
Christian Rosendal and S{\l}awomir Solecki.
\newblock Automatic continuity of homomorphisms and fixed points on metric
  compacta.
\newblock {\em Israel J. Math.}, 162:349--371, 2007.

\bibitem[Sie28]{MR1549531}
W.~Sierpi{\'n}ski.
\newblock Sur une d{\'e}composition d'ensembles.
\newblock {\em Monatsh. Math. Phys.}, 35(1):239--242, 1928.

\bibitem[Sok15]{MR3436366}
Miodrag Soki{\'c}.
\newblock Semilattices and the {R}amsey property.
\newblock {\em J. Symb. Log.}, 80(4):1236--1259, 2015.

\bibitem[SY16]{ShiYe2016}
Enhui Shi and Xiangdong Ye.
\newblock Periodic points for amenable group actions on dendrites.
\newblock {\em Proc. Amer. Math. Soc.}, 2016.

\bibitem[Tru92]{MR1162490}
J.~K. Truss.
\newblock Generic automorphisms of homogeneous structures.
\newblock {\em Proc. London Math. Soc. (3)}, 65(1):121--141, 1992.

\bibitem[Tru07]{MR2354899}
J.~K. Truss.
\newblock On notions of genericity and mutual genericity.
\newblock {\em J. Symbolic Logic}, 72(3):755--766, 2007.

\bibitem[{Zuc}18]{2018arXiv181200392Z}
Andy {Zucker}.
\newblock {Maximally highly proximal flows}.
\newblock {\em arXiv e-prints}, page arXiv:1812.00392, Dec 2018, 1812.00392.

\end{thebibliography}
\end{document}